\documentclass[reqno,11pt,final]{amsart}

\usepackage[backend=biber,
  style=alphabetic, 
  giveninits=true, 
  doi=false, 
  url=false, 
  maxbibnames=99, 
  maxcitenames=99,
  maxalphanames=99,
  sorting=nyt
]{biblatex}

\AtEveryBibitem{
  \clearfield{issn}
  \clearfield{isbn}
}

\DeclareFieldFormat[article]{title}{\mkbibemph{#1}}

\DeclareFieldFormat[inproceedings]{title}{\mkbibemph{#1}}

\DeclareFieldFormat[incollection]{title}{\mkbibemph{#1}}

\DeclareFieldFormat{booktitle}{#1}

\DeclareFieldFormat{journaltitle}{#1}

 \DeclareFieldFormat{title}{\mkbibemph{#1}\addcomma\space}

\DeclareBibliographyDriver{book}{%
  \printnames{author}%
  \setunit{\addcomma\space}%
  \printfield[title]{title}%
  \setunit{\addcomma\space}%
  \printfield{series}%
  \setunit{\space}%
  \printfield{volume}%
  \setunit{\addperiod\space}%
  \printlist{publisher}%
  \setunit{\addcomma\space}%
  \printfield{year}%
  \setunit{\addcomma\space}%
  \printfield{pages}%
  \finentry
}

\renewbibmacro*{volume+number+eid}{%
  \mkbibbold{\printfield{volume}}%
  \setunit{\addcomma\space}%
  \printtext{no.\space\printfield{number}}%
}

\DeclareFieldFormat{pages}{#1}

\DeclareBibliographyDriver{article}{%
  \printnames{author}%
  \setunit{\addcomma\space}%
  \printfield[title]{title}%
  \setunit{\addcomma\space}%
  \printfield{journaltitle}%
  \setunit{\addspace}%
  \usebibmacro{volume+number+eid}%
  \setunit{\addspace}%
  \printtext{\mkbibparens{\printfield{year}}}%
  \setunit{\addcomma\space}%
  \printfield{pages}%
  \finentry
}

\usepackage{rotating}
\usepackage{xcolor}
\usepackage{transparent}
\usepackage{amsmath,amssymb,amscd}
\usepackage[mathscr]{eucal}
\usepackage{mathtools}
\usepackage{url}
\usepackage[colorlinks = true,linkcolor = blue,urlcolor  = blue,citecolor = blue,anchorcolor = blue,linktocpage=true,
]{hyperref}
\usepackage{amsthm}

\usepackage{stmaryrd}
\usepackage{bm}
\usepackage{comment}

\allowdisplaybreaks[2]

\usepackage[inline]{enumitem}
\setlist[enumerate,1]{label=(\arabic*),ref=\arabic*}
\setlist[itemize,1]{itemsep=0pt}
\usepackage{setspace}
\usepackage[math]{cellspace}
\usepackage{tikz}
\usetikzlibrary{positioning}
\usetikzlibrary{cd}
\usetikzlibrary{calc}
\usetikzlibrary{shapes.geometric}
\usetikzlibrary{decorations.pathreplacing,decorations.markings}
\usetikzlibrary{backgrounds}
\usetikzlibrary{arrows.meta}

\tikzset{
  morphism/.style={
    commutative diagrams/.cd,
    every arrow,
    every label
  }
}

\allowdisplaybreaks

\definecolor{mygreen}{rgb}{0,0.7,0.3}
\definecolor{myblue}{rgb}{0,0.50,1.20}
\definecolor{myorange}{rgb}{1,0.5,0.1}

\definecolor{fillred}{rgb}{1,0.9,0.9}
\definecolor{fillgreen}{rgb}{0.9,1,0.9}

\usepackage[inline]{showlabels}
\definecolor{refkey}{rgb}{0,0.7,0.3}
\definecolor{labelkey}{rgb}{1,0,0}

\makeatletter\showlabels{blx@bibitem}\makeatother

\DeclareMathOperator{\Ad}{Ad}

\DeclareMathOperator{\id}{id}
\DeclareMathOperator{\Hom}{Hom}
\DeclareMathOperator{\Iso}{Iso}
\DeclareMathOperator{\End}{End}
\DeclareMathOperator{\Equiv}{Equiv}

\DeclareMathOperator{\rank}{rank}
\DeclareMathOperator{\Det}{\mathsf{Det}}
\DeclareMathOperator{\InvStr}{\mathsf{Inv}}
\DeclareMathOperator{\hol}{\mathsf{geom}}
\DeclareMathOperator{\mon}{mon}
\DeclareMathOperator{\GL}{GL}
\DeclareMathOperator{\SL}{SL}
\DeclareMathOperator{\PSL}{PSL}
\DeclareMathOperator{\PGL}{PGL}

\DeclareMathOperator{\GSp}{GSp}
\DeclareMathOperator{\PGSp}{PGSp}
\DeclareMathOperator{\Sp}{Sp}
\DeclareMathOperator{\Spec}{Spec}
\DeclareMathOperator{\Rep}{Rep}
\DeclareMathOperator{\Img}{Im}
\DeclareMathOperator{\Ker}{Ker}

\DeclareMathOperator{\Loc}{Loc}
\DeclareMathOperator{\interior}{int}
\DeclareMathOperator{\Lie}{Lie}
\DeclareMathOperator{\Obj}{Obj}
\DeclareMathOperator*{\colim}{colim}
\DeclareMathOperator{\Fin}{Fin}
\DeclareMathOperator{\lin}{lin}
\DeclareMathOperator{\Cone}{Cone}

\DeclareMathOperator{\torsion}{\mathsf{tor}}
\DeclareMathOperator{\sign}{sign}
\DeclareMathOperator{\parity}{\mathsf{parity}}
\DeclareMathOperator{\betti}{betti}

\numberwithin{equation}{section}

\usepackage{thmtools}
\usepackage{cleveref}

\declaretheorem[name=Theorem,numberwithin=section]{thm}
\declaretheorem[sharenumber=thm,name=Theorem]{theorem}

\declaretheorem[sharenumber=thm,name=Corollary]{cor}
\declaretheorem[sharenumber=thm,name=Corollary]{corollary}

\declaretheorem[sharenumber=thm,name=Lemma]{lemma}

\declaretheorem[sharenumber=thm,name=Proposition]{prop}

\declaretheorem[name=Theorem]{introthm}

\theoremstyle{definition}

\declaretheorem[sharenumber=thm,name=Definition]{dfn}
\declaretheorem[sharenumber=thm,name=Definition]{definition}

\declaretheorem[sharenumber=thm,name=Example]{example}

\declaretheorem[sharenumber=thm,name=Remark]{remark}

\declaretheorem[sharenumber=introthm,name=Example]{introex}

\crefname{thm}{Theorem}{Theorems}
\crefname{theorem}{Theorem}{Theorems}

\crefname{cor}{Corollary}{Corollaries}
\crefname{corollary}{Corollary}{Corollaries}

\crefname{lem}{Lemma}{Lemmas}
\crefname{lemma}{Lemma}{Lemmas}

\crefname{prop}{Proposition}{Propositions}
\crefname{proposition}{Proposition}{Propositions}

\crefname{introthm}{Theorem}{Theorems}
\crefname{introconj}{Conjecture}{Conjectures}
\crefname{introcor}{Corollary}{Corollaries}
\crefname{introex}{Example}{Examples}

\crefname{dfn}{Definition}{Definitions}
\crefname{definition}{Definition}{Definitions}

\crefname{ex}{Example}{Examples}
\crefname{example}{Example}{Examples}

\crefname{claim}{Claim}{Claims}

\crefname{conj}{Conjecture}{Conjectures}
\crefname{conjecture}{Conjecture}{Conjectures}

\crefname{rem}{Remark}{Remarks}
\crefname{remark}{Remark}{Remarks}

\crefname{conv}{Notation}{Notations}
\crefname{notation}{Notation}{Notations}

\crefname{assum}{Assumption}{Assumptions}
\crefname{assumption}{Assumption}{Assumptions}

\crefname{lemdef}{Lemma-Definition}{Lemma-Definitions}

\crefname{figure}{Figure}{Figures}
\crefname{section}{Section}{Sections}
\crefname{subsection}{Section}{Sections}
\crefname{appendix}{Appendix}{Appendices}

\newcommand{\fgl}{\mathfrak{gl}}
\newcommand{\fsp}{\mathfrak{sp}}
\newcommand{\fgsp}{\mathfrak{gsp}}
\newcommand{\fpgsp}{\mathfrak{pgsp}}

\newcommand\X{{\mathcal{X} }}

\newcommand{\cO}{\mathcal{O}}

\newcommand{\bC}{\mathbb{C}}

\newcommand{\bQ}{\mathbb{Q}}

\newcommand{\bZ}{\mathbb{Z}}

\newcommand{\reg}[1]{#1\text{-}\mathrm{reg}}
\newcommand{\Alg}{\mathsf{Alg}}
\newcommand{\Set}{\mathsf{Set}}
\newcommand{\Grpd}{\mathsf{Grpd}}

\newcommand{\iHom}{\underline{\Hom}}
\newcommand{\BG}[1]{B #1
}

\newcommand{\mtx}[1]{\begin{pmatrix} #1 \end{pmatrix}}
\newcommand{\bmtx}[1]{\begin{bmatrix} #1 \end{bmatrix}}
\newcommand{\otinv}{\mathop{\otimes}\mathord{-}1}
\newcommand{\detExtend}{\mathsf{extend}}

\tikzset{->-/.style 2 args={
	postaction={decorate},
	decoration={markings, mark=at position #1 with {\arrow[thick, #2]{>}}} 
    },
    ->-/.default={0.5}{}
}
\tikzset{-<-/.style 2 args={
	postaction={decorate},
	decoration={markings, mark=at position #1 with {\arrow[thick, #2]{<}}} 
    },
    -<-/.default={0.5}{}
}

\tikzset{->>-/.style 2 args={
	postaction={decorate},
	decoration={markings, mark=at position #1 with {\arrow[thick, #2]{>>}}} 
    },
    ->>-/.default={0.5}{}
}
\tikzset{-<<-/.style 2 args={
	postaction={decorate},
	decoration={markings, mark=at position #1 with {\arrow[thick, #2]{<<}}} 
    },
    -<<-/.default={0.5}{}
}

\newcommand{\crs}[2]{\fill[white] (#1) circle(#2);} 

\pgfdeclarelayer{bg}    
\pgfdeclarelayer{top}    
\pgfsetlayers{bg,main,top}  

\begin{document}

\title[Adjoint torsion of 3-manifolds for semisimple algebraic groups]{Adjoint Reidemeister torsion of 3-manifolds with torus boundary for semisimple algebraic groups}

\author[Tsukasa Ishibashi]{Tsukasa Ishibashi}
\address{Tsukasa Ishibashi, Mathematical Institute, Tohoku University, 6-3 Aoba, Aramaki, Aoba-ku, Sendai, Miyagi 980-8578, Japan.}
\email{tsukasa.ishibashi.a6@tohoku.ac.jp}

\author[Yuma Mizuno]{Yuma Mizuno}
\address{Yuma Mizuno, School of Mathematical Sciences, University College Cork, Western Road, Cork, Ireland.}
\email{mizuno.y.aj@gmail.com}

\date{\today}

\begin{abstract}
  Let $M$ be a compact oriented $3$-manifold with boundary consisting of tori,
  and let $G$ be a semisimple algebraic group.
  We define the adjoint torsion function on the moduli stack of $G$-local systems on $M$ 
  satisfying a certain
  regularity condition,
  extending the construction by Porti for $G = \mathrm{SL}_2$.
  When $M$ is a cusped hyperbolic manifold,
  we prove that the local system associated with the image of the complete hyperbolic structure via a principal embedding $\mathrm{PGL}_2 \to G$ 
  satisfies the regularity condition. 
  Moreover, we provide a formula expressing its adjoint torsion as a product of $\mathrm{PGL}_2$-torsions associated with the simple $\mathrm{PGL}_2$-modules with multiplicity given by the exponents of the Lie algebra of $G$. 
  
  We compute the adjoint $\mathrm{PGSp}_4$-torsions of the figure-eight knot complement for two boundary-unipotent local systems, one is arising from the complete hyperbolic structure via a principal embedding, 
  and the other is defined over a number field of degree $6$ and not arising from any $\mathrm{PGL}_2$-local system via principal embeddings.  
\end{abstract}

\maketitle

\setcounter{tocdepth}{1}
\tableofcontents

\section{Introduction}\label{sec:intro}

\subsection{Background}
Three-dimensional Chern-Simons theory is a subject of significant interest in both theoretical physics and mathematics. 
From a mathematical perspective, it provides a framework for giving quantum invariants of three-manifolds and knots~\cite{Witten89}.
As one aspect, the perturbation theory of the Chern-Simons theory with a complex gauge group $G_{\mathbb{C}}$ is expected to yield an asymptotic series for each representation $\rho : \pi_1 M \to G_{\mathbb{C}}$~\cite{DGLZ}. It takes the form
\begin{equation}
  \widehat{\Phi}^{(M,\, \rho)}(h) = e^{v_\rho / h} \Phi^{(M,\, \rho)} (h),
  \label{eq:asymptotic expansion}
\end{equation}
where $\Phi^{(M,\, \rho)} (h)$ is a formal power series in $h$:
\begin{equation}
  \Phi^{(M,\, \rho)} (h) = \delta_\rho (1 + c_{1, \rho} h + c_{2, \rho} h^2 + \cdots).
  \label{eq:formal power series}
\end{equation}
For example, for $G_{\mathbb{C}} = \SL_2(\mathbb{C})$ and $M = S^3 \setminus 4_1$, where $4_1$ is the figure-eight knot,
the series associated with the geometric representation $\mathsf{geom}: \pi_1(M) \to \SL_2(\bC)$ corresponding to the complete hyperbolic structure is computed as
\begin{equation*}
  v_\mathsf{geom} = i \cdot 2.02988 \dots, \quad
\end{equation*}
\begin{align*}
  \Phi^{(S^3 \setminus 4_1,\, \mathsf{geom})} (h) = 
  \frac{1}{\sqrt[4]{3}} \biggl( 1 
  + \frac{11}{72\sqrt{-3}} h
  + \frac{697}{2(72\sqrt{-3})^{2}} h^{2}
  + \frac{724351}{30(72\sqrt{-3})^{3}} h^{3}
  + \cdots \biggr).
\end{align*}

The asymptotic series \eqref{eq:asymptotic expansion} has been studied in detail when $G_{\mathbb{C}} = \SL_2(\mathbb{C})$~\cite{Gukov, DimofteGaroufalidis, GZ2024}.
In this case, there are geometric interpretations of the values $v_\rho$ and $\delta_\rho$ in terms of hyperbolic geometry. Assume that $M$ is a cusped hyperbolic $3$-manifold, 
e.g., the complement of a hyperbolic knot. 
Let $\mathsf{geom}: \pi_1(M) \to \SL_2(\bC)$ be the geometric representation (also called the holonomy representation) corresponding to the complete hyperbolic structure. 
Then it is expected that $v_\mathsf{geom} = i (\mathsf{vol} + i \mathsf{cs})$, where $\mathsf{vol}$ is the hyperbolic volume and $\mathsf{cs}$ is the Chern-Simons invariant of $M$, 
and $\delta_\mathsf{geom}$ is proportional to the square root of the Ray-Singer analytic torsion associated with $\mathsf{geom}$.
Moreover, there is a well-known conjecture in knot theory, the \emph{volume conjecture}, which relates the asymptotic series \eqref{eq:asymptotic expansion} for the complement of a hyperbolic knot $K$ to a polynomial invariant of knots, the colored Jones polynomials. Specifically,
the original volume conjecture~\cite{Kashaev} and its higher order refinement~\cite{Gukov} state that the asymptotic series should coincide with the asymptotics of the Kashaev invariant $\langle K \rangle_N$:
\begin{equation}
  \langle K \rangle_N \sim N^{3/2} \widehat{\Phi}^{(S^3 \setminus K,\, \mathsf{geom})} \left(\frac{2\pi i}{N}\right).
\end{equation}
We note that the Kashaev invariant coincides with the \emph{colored Jones polynomial} evaluated at the $N$-th root of unity $q=\exp(2\pi i /N)$~\cite{MurakamiMurakami}.
We also note that the asymptotic series \eqref{eq:asymptotic expansion} associated with boundary-unipotent representations other than $\mathsf{geom}$ appears in the \emph{refined quantum modularity conjecture}~\cite{GZ2024}, which is a far-reaching refinement of the volume conjecture,
suggesting the importance of studying the series associated
with more general representations than $\mathsf{geom}$.

\subsection{Adjoint Reidemeister torsion for $G_{\mathbb{C}} = \SL_2(\mathbb{C})$}
From the perspective of the Cheeger-M\"{u}ller theorem~\cite{Cheeger, Muller}, 
the Ray-singer analytic torsion, which is expected to appear in the asymptotic series \eqref{eq:asymptotic expansion},
is also related to the Reidemeister's combinatorial torsion defined using a CW structure.

In the case of $G_{\mathbb{C}} = \SL_2(\mathbb{C})$, there is a variant of Reidemeister torsion 
due to Porti~\cite{Porti} for $3$-manifolds with torus boundary components. 
Its inverse square root is expected to coincide with the constant term $\delta_\rho$~\cite{DuboisGaroufalidis, GukovMurakami}.

In general, given a finite CW complex $M$, a representation $\rho:\pi_1 (M) \to \SL_n (\mathbb{C})$
of the fundamental group,
and a homology orientation on $M$, we have the (sign-refined) Reidemeister torsion $\torsion_M (\rho)$,
which is a simple homotopy invariant~\cite{Whitehead, Turaev}.
If the twisted chain complex $C_\bullet (M; \rho)$ is acyclic (\emph{i.e.}, the homology groups $H_i (M; \rho)$ are all trivial),
the torsion is defined as a non-zero complex number.
In general, it is interpreted as a volume form on the determinant 
line $\det H_\bullet (M; \rho)$.

Porti showed that if $M$ is a $3$-manifold whose boundary is a disjoint union $\bigsqcup_i T_i$ of tori and 
$\rho : \pi_1 (M) \to \SL_2(\mathbb{C})$ is a representation
satisfying a certain genericity condition,
then by choosing a curve $\gamma_i$ on each $T_i$ one can naturally trivialize the determinant line $\det H_\bullet (M; \Ad \circ \rho)$,
where $\Ad : \SL_2(\mathbb{C}) \to \SL(\mathfrak{sl}_2(\mathbb{C})) \cong \SL_3(\mathbb{C})$ is the adjoint representation.
Consequently, one has the torsion invariant $\torsion_{M, \gamma} (\Ad \circ \rho)$ defined as a complex number.
Porti also showed that the required genericity assumption is satisfied for the geometric representation $\mathsf{geom}: \pi_1(M) \to \SL_2(\bC)$ when $M$ is a cusped hyperbolic $3$-manifold.
When $M$ is a knot complement and $\gamma$ is chosen to be the meridian, this value is conjecturally related to 
$\delta_\rho$ in the asymptotic series~\eqref{eq:formal power series}.

Another important aspect of Porti's definition is the algebraicity of the torsion with respect to representations: it is defined as a rational function on the irreducible component $X_0(M)$ (called the \emph{distinguished component}) of the character variety $X(M) \coloneqq \Hom (\pi_1 (M), \SL_2(\mathbb{C}))\sslash\PSL_2(\mathbb{C})$ containing the geometric representation.

There are several interesting developments and related works 
concerning Porti's torsion.
\begin{description}
  \item[Asymptotic series in Chern-Simons theory]
  As mentioned above, the constant term $\delta_\rho$ in the asymptotic series~\eqref{eq:formal power series} is expected to be expressed in terms of Porti's torsion.
  Also, there is a mathematical definition of 
  the asymptotic series \eqref{eq:asymptotic expansion} given by Dimofte-Garoufalidis~\cite{DimofteGaroufalidis}
  using an ideal triangulation of $M$,
  and it is expected that the constant term of the asymptotic series in their definition
  (``the 1-loop invariant'') coincides with Porti's torsion.
  This has been proved for fibered $3$-manifolds~\cite{DGY},
  for fundamental shadow links~\cite{PandeyWong},
  and for hyperbolic two-bridge knots~\cite{OhtsukiTakata, GaroufalidisYoon}.
  See also~\cite{Siejakowski} for a reduction of the conjecture 
  based on a geometric interpretation of Thurston's gluing equations.
  \item [Relation to the twisted Alexander polynomial]
Under suitable assumptions, the adjoint $\SL_2$-torsion $\torsion_{M, \lambda} (\Ad \circ \rho)$ associated with a multi-longitude $\lambda \subset \partial M$
can be obtained as a derivative of the twisted Alexander polynomial~\cite{Yamaguchi, DuboisYamaguchi}.
In this situation, one can compute the torsion by using Fox calculus.
  \item [Vanishing property]
Gang-Kim-Yoon conjectured that the adjoint $\SL_2(\mathbb{C})$-torsions satisfy a certain vanishing property,
which is derived from the physics of wrapped M5-branes on the manifold~\cite{GangKimYoon}.
Their conjecture says that, for each level set of the trace function, 
the inverse sum of the adjoint torsions over 
a level set of the trace function vanishes.
The conjecture has been verified 
for hyperbolic two-bridge knots~\cite{Yoon22} and
for hyperbolic once-punctured torus bundles with tunnel number one~\cite{TranYamaguchi}.
See also~\cite{PortiYoon, Wakijo, Yoon22_twist}.
Moreover, this vanishing property is expected to be refined to a quadratic relation for the perturbation series $\Phi^{(K),\sigma}(h)$,
as observed in~\cite{GZ2024}.
\end{description}

The purpose of this paper is to extend the Porti's definition to general $G_{\mathbb{C}}$.

\subsection{Main results}
Throughout the paper, we work in an algebraic setting.
Let $k$ be a field of characteristic $0$, and let $G$ be a connected semisimple algebraic group over $k$.

\subsubsection{Definition of the adjoint torsion function}

Let $M$ be a compact oriented $3$-manifold whose boundary is a disjoint union of tori:
$\partial M = \bigsqcup_{i=1}^m T_i$.
We take a family of oriented simple closed curves $\gamma = (\gamma_i)_{i=1}^m$ on $\partial M$
such that each $\gamma_i$ is on $T_i$.
We fix a CW structure on $M$, $\partial M$, and $\gamma$
such that the inclusions $\gamma \subset \partial M \subset M$ are subcomplexes.

We denote by $\Loc_{G, M}$ the moduli stack of $G$-local systems on $M$ (\cref{def:moduli of G-local systems}).
Its underlying set is given by
\begin{equation}
  \lvert \Loc_{G, M} \rvert = 
  \{ (K, P) \mid 
  \text{$P$ is a $G$-local systems on $M$ 
    over a field extension $K / k$
  } \} / \sim
\end{equation}
where the two local systems are equivalent if they are isomorphic after
extending the coefficient fields to a common field extension.
The adjoint representation of $G$ on $\mathfrak{g}$ assigns to each $G$-local system 
$P \in \lvert \Loc_{G, M}\rvert$ a linear local system $P \times^G \mathfrak{g}$ on $M$.
Let $H_\bullet(M; P \times^G \mathfrak{g})$ and $H^\bullet(M; P \times^G \mathfrak{g})$ 
denote the twisted homology and cohomology of $M$ with local coefficients in $P \times^G \mathfrak{g}$.

We say that a $G$-local system $(K, P) \in \lvert \Loc_{G, M}\rvert$ 
is \emph{boundary-adjoint-regular} if the following conditions are satisfied:
\begin{enumerate}
  \item $H^0 (M; P \times^G \mathfrak{g}) = 0$,
  \item the restriction map $H^1 (M; P \times^G \mathfrak{g}) \to H^1 (\partial M; P|_{\partial M} \times^G \mathfrak{g})$ 
  is injective,
  \item $\dim_K H^0 (\partial M; P|_{\partial M} \times^G \mathfrak{g}) = m \cdot \rank G$.
\end{enumerate}
The condition (1) means that $P$ has no global flat section over $M$, (2) is an immersion condition at $P$, and (3) is an expected-dimension condition for the invariant space for monodromy homomorphisms on $\partial M$,
which is a homological analogue of the regular condition in the sense of Steinberg \cite{Steinberg}.

We actually define the moduli stack $\Loc_{G, M}^{\reg{\partial\text{-}\mathrm{Ad}}}$
of boundary-adjoint-regular $G$-local systems on $M$
as a (locally closed) substack of $\Loc_{G, M}$,
whose underlying set $\lvert \Loc_{G, M}^{\reg{\partial\text{-}\mathrm{Ad}}} \rvert$ consists 
of boundary-adjoint-regular $G$-local systems
as defined above. See \eqref{eq:adjoint regular substack},
Definition~\ref{def:boundary-regular}, and 
Lemma~\ref{lem:boundary regular substack}.

Let $\mathfrak{o} : \det H_\bullet (M; \mathbb{Q}) \cong \mathbb{Q}$ be a homological orientation of $M$.

\begin{introthm}[{Definition \ref{def:adjoint torsion function}}]
  We define the \emph{adjoint torsion function} as a morphism of stacks
  \begin{align}
    \torsion_{G, M, \gamma, \mathfrak{o}}^{\mathrm{Ad}} : \Loc_{G, M}^{\reg{\partial\text{-}\mathrm{Ad}}} \to \mathbb{A}_k^1,
  \end{align}
  whose value 
  $\torsion_{G, M, \gamma, \mathfrak{o}}^{\mathrm{Ad}}(P) \in K$
  at a boundary-adjoint-regular local system $(K, P) 
  \in \lvert \Loc_{G, M}^{\reg{\partial\text{-}\mathrm{Ad}}} \rvert$
  is defined as a non-acyclic Reidemeister torsion associated with $P \times^{G} \mathfrak{g}$.
\end{introthm}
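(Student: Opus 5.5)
The plan is to follow Porti's construction for $\SL_2$, but carry it out in families over the stack, so that the output is a regular function rather than a pointwise number. Three things have to be established for a boundary-adjoint-regular $(K,P)$:
\begin{itemize}
  \item a canonical basis (up to a sign fixed by $\mathfrak{o}$) of $\det H_\bullet(M; P\times^G\mathfrak{g})$ determined by $\gamma$;
  \item the fact that the resulting non-acyclic torsion $\torsion(C_\bullet(M;P\times^G\mathfrak{g}), \text{bases})\in K^\times$ is independent of the CW structure and of the cell lifts;
  \item the fact that these values glue to a morphism $\Loc_{G,M}^{\reg{\partial\text{-}\mathrm{Ad}}}\to\mathbb{A}^1_k$.
\end{itemize}

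\textbf{Step 1: homology.} Write $E=P\times^G\mathfrak{g}$; the Killing form makes $E\cong E^*$. By condition (1) and duality, $H^0(M;E)=0$ and $H_3(M;E)\cong H^0(M,\partial M;E)^*=0$. Consider the long exact sequence of the pair $(M,\partial M)$. Together with condition (2) and Poincaré–Lefschetz duality, the half-dimensionality argument gives the following: the image of $H^1(M;E)\to H^1(\partial M;E)$ is Lagrangian, of dimension $\tfrac12\dim H^1(\partial M;E)$.

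On each torus $T_i$, the Euler characteristic is zero, so $\dim H^1(T_i;E)=2\dim H^0(T_i;E)=2\rank G$ by condition (3). Hence $\dim H^1(M;E)=m\rank G$. Using $\chi(M)=0$ and $H^3=0$, we also get $\dim H^2(M;E)=m\rank G$, and dually for homology.

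Next I would check that $H_0(\gamma_i;E)\to H_0(T_i;E)\to H_1(M;E)$ and its dual statement in degree $2$ give isomorphisms
\[
\textstyle\bigoplus_i H^0(T_i;E)\;\cong\; H_2(M;E), \qquad H_1(M;E)\;\cong\; \bigoplus_i H_1(\gamma_i;E)\ \text{(after dualizing)}.
\]
Choose a basis $(v_{i,1},\dots,v_{i,r})$ of the invariants $H^0(T_i;E)$. Using the fundamental classes $[T_i]$ and $[\gamma_i]$, this produces bases of $H_2$ and $H_1$. Taking the basis of $H_1$ that is dual with respect to the Killing form, the total element of $\det H_\bullet$ becomes independent of the choice of the $v$'s. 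This is because the two determinants transform by inverse factors, so the contributions cancel in the alternating product.

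Here I expect the \textbf{main obstacle}. In Porti's case one proves that the map $H_1(\gamma_i;E)\to H_1(M;E)$ is an isomorphism only for generic $\gamma$. In our setting, the definition must either restrict to those $P$ for which the map is an isomorphism, or define the torsion as the determinant of the comparison map times the torsion, so that it is zero when the map fails to be an isomorphism. I would take the second option: the torsion $\torsion$ of $C_\bullet$ relative to bases of $H_2$ (from $\partial M$) and of $H_1$ (from $M$ via $\gamma$-duals) is always well defined. This is consistent with the target being $\mathbb{A}^1$ rather than $\mathbb{G}_m$.

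\textbf{Step 2: invariance.} Independence of the cell lifts follows from $\det\Ad(g)=1$, since $G$ is semisimple. Independence of subdivision follows from the standard multiplicativity of torsion under short exact sequences, as in Turaev. The sign is fixed by the homological orientation $\mathfrak{o}$ and Turaev's sign refinement.

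\textbf{Step 3: algebraicity.} Present $\Loc_{G,M}$ as a quotient $[\Hom(\pi_1M,G)/G]$, or via a CW presentation $[\mathcal{R}/G^{\text{vertices}}]$. Over the regular locus, the ranks of the boundary maps of $C_\bullet$ are constant, and $H^0(\partial M;E)$ forms a vector bundle of rank $m\rank G$, which makes the substack locally closed. Locally, choose sections of this bundle and compute the torsion as an alternating product of determinants of minors. The result is a regular function on the atlas that is invariant under conjugation, by Step 2, so it descends to a morphism of stacks.
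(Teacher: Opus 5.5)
Your construction is correct in substance, and you resolve the key issue the same way the paper does. Your ``second option'' is precisely the paper's Porti form $h_{\gamma,\mathcal{L}}$: multiply the torsion taken relative to bases of $H_2$ and $H_1$ by the determinant of the comparison map $H_2(M;E)\to H_1(M;E)$. That form is allowed to be degenerate, so the torsion is a unit exactly on the $\gamma$-regular locus. Your Step 1 is the paper's \cref{prop:adjoint regularity criterion}.

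Where you genuinely differ is in how you obtain a morphism of stacks. The paper never chooses bases or an atlas. It defines $\torsion_{M,\gamma,\mathfrak{o}}(A,\mathcal{L})=\epsilon_{\mathfrak{o},\mathcal{L}}\, h_{\gamma,\mathcal{L}}(\Phi_{\mathcal{L}}(1_{\mathsf{vol}_{\mathcal{L}}}))$ for every $k$-algebra $A$ and every $\mathcal{L}\in\Loc_{\SL,M}^{\reg{\partial}}(A)$. This uses Knudsen's canonical isomorphism $\Phi:\det C_\bullet\cong\det H_\bullet$ and the geometric volume form coming from the $\SL$-structure on the fibres. Invariance under isomorphisms is then automatic, and naturality in $A$ directly gives $\Loc_{\SL,M}^{\reg{\partial}}\to\mathbb{A}^1_k$. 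The adjoint version is simply the pullback along $P\mapsto P\times^G(\mathfrak{g},\omega_{\mathfrak{g}})$, which lands in $\Loc_{\SL,M}$ because $\det\Ad=1$ (your Step 2).

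Your route (markings, explicit minors, invariance under change of marking, descent from $[\mathcal{R}/G^S]$) is essentially the paper's computational recipe, \cref{theorem:torsion explicit}, promoted to a definition. It is more concrete. The cost is that you must check by hand independence of every local choice ($b_i$, $v$, the basis of $\mathfrak{g}$, cell orders and orientations) and the descent. The paper's formalism packages all of this, controls signs functorially, and yields a general torsion on $\Loc_{\SL,M}$ that is reused later, e.g.\ for multiplicativity under direct sums. Subdivision invariance is not needed for this statement: the paper fixes the CW structure and proves invariance separately, via elementary expansions and Chapman's theorem.

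Some points need fixing.

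(i) The Killing form plays no role in Step 1, and your cancellation argument is backwards. Under $v\mapsto vA$, both $h_2=(v\cap[T_i])$ and $h_1=(v\cap[\gamma_i])$ scale by $\det A$, so $\wedge h_2\otimes(\wedge h_1)^{\otinv}$ is invariant. Replacing $h_1$ by a dual basis would instead make this element scale by $(\det A)^2$.

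(ii) Condition (3) fixes only the total $\dim H^0(\partial M;E)=m\rank G$, not $\dim H^0(T_i;E)=\rank G$ for each $i$. Commuting pairs in adjoint groups, such as two commuting involutions in $\PGL_2$, can have fewer than $\rank G$ invariants. Nothing in the construction needs the per-torus statement, since the Porti form uses all of $H^0(\partial M;E)$.

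(iii) Constant ranks of the boundary maps at field points do not make $\operatorname{Im}\partial$ and $\operatorname{Ker}\partial$ locally free direct summands over non-reduced bases, so your local families $b_i$ need not exist there. This is why the paper builds into the substack the condition that $H_i(M;\mathcal{L}\otimes_A B)$ is projective of constant rank for every $A\to B$. For a bounded complex of projectives, that condition forces all images and kernels to be projective, which is exactly what Knudsen's $\Phi$ requires.

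(iv) Independence of the basis of $\mathfrak{g}$ used to build the cell bases needs $\chi(M)=0$, since the result changes by $(\det B)^{\chi(M)}$. This is the paper's independence of $\omega_{\mathfrak{g}}$.
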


In the case $G=\SL_2$, our adjoint torsion function corresponds to the Porti's torsion formulated in terms of the stack $\Loc_{\SL_2,M}$ instead of the character variety (GIT quotient). Kitayama-Terashima \cite{KitayamaTerashima} introduced adjoint $\PGL_n$-torsion as a set-theoretic function, which has been later formulated algebraically also on the character variety by Porti \cite{Porti18}. 

The adjoint torsion function is a simple homotopy invariant (\cref{thm:topological_invariance}).
If $\rank G$ is even, then
the adjoint torsion function does not depend on the choice of 
a homological orientation $\mathfrak{o}$ (\cref{lemma:torsion of even local system}).

\subsubsection{Principal embedding of the geometric representation}
Over an algebraically closed field $k$,
there is a principal embedding $\iota : \PGL_2 \to G$ \cite{Kostant} that sends 
the element $\begin{bmatrix}
  1 & 1 \\ 0 & 1
\end{bmatrix} \in \PGL_2(k)$ to a regular unipotent element in $G(k)$ in the sense 
of Steinberg~\cite{Steinberg}.
The map $\iota$ induces a morphism
$\iota : \Loc_{\PGL_2, M} \to \Loc_{G, M}$  between the moduli stacks of local systems.
We show that the local system arising from the complete hyperbolic structure 
via principal embeddings are boundary-adjoint-regular. 
\begin{introthm}[\cref{thm:geom_regular}]
  Let $k = \mathbb{C}$. Let $\iota : \PGL_2 \to G$ be a principal embedding.
  Suppose that the interior of $M$ admits a complete hyperbolic structure.
  Then the local system $\iota(\mathsf{geom}) \in \Loc_{G,M}$, 
  which is the image of the geometric local system associated with the complete hyperbolic structure under the principal embedding, 
  is boundary-adjoint-regular.
\end{introthm}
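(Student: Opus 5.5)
The plan is to decompose $\mathfrak{g}$ under the principal $\PGL_2$ (Kostant): as a $\PGL_2$-module via $\Ad\circ\iota$,
\[
\mathfrak{g} \cong \bigoplus_{j=1}^{\rank G} V_{2e_j},
\]
where $e_1,\dots,e_{\rank G}$ are the exponents of $\mathfrak{g}$ and $V_{2e}=\mathrm{Sym}^{2e}(k^2)$ is the irreducible module of dimension $2e+1$. These are honest $\PGL_2$-modules because the weights $2e$ are even. The local system $\iota(\mathsf{geom})\times^G\mathfrak{g}$ then splits as a direct sum of the local systems $E_{2e_j}\coloneqq \mathsf{geom}\times^{\PGL_2}V_{2e_j}$, and all three conditions of boundary-adjoint-regularity split accordingly, since restriction maps respect the direct sum. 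It therefore suffices to prove, for each $n=2e\ge 2$ even:

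(a) $H^0(M;E_n)=0$;

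(b) $H^1(M;E_n)\to H^1(\partial M;E_n)$ is injective;

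(c) $\dim H^0(T_i;E_n)=1$ for each boundary torus $T_i$.

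Summing over $j$ then gives $m\cdot\rank G$ in (3).

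For (c), the holonomy of $\mathsf{geom}$ on each cusp torus is a nontrivial parabolic subgroup $\cong\bZ^2$. After conjugation it consists of matrices $\begin{bmatrix}1&a\\0&1\end{bmatrix}$ with $a$ ranging over a lattice and not all $a$ zero. A single nontrivial unipotent acts on $V_n$ by one Jordan block, so its invariants are one-dimensional, namely the highest weight line. The whole $\bZ^2$ fixes this same line, so the invariant space is exactly one-dimensional.

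For (a), $\mathsf{geom}(\pi_1M)$ is Zariski dense in $\PGL_2(\bC)$, being a non-elementary Kleinian group. Hence invariants of $\pi_1 M$ coincide with $\PGL_2$-invariants of $V_n$, and these vanish for $n>0$.

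Part (b) is the main obstacle. I would follow the known approach for $\mathrm{Sym}^n$ of the holonomy of a finite-volume hyperbolic manifold, namely the vanishing theorems of Menal-Ferrer and Porti, which extend Raghunathan and Garland. First, $L^2$/Hodge theory on the cusped manifold (Garland's argument, or Raghunathan's vanishing for nonunitary representations of lattices) shows that the kernel of $H^1(M;E_n)\to H^1(\partial M;E_n)$, i.e.\ the image of compactly supported cohomology or cuspidal cohomology, vanishes for $\mathrm{Sym}^n$ with $n\ge1$.

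The alternative I would try first, if I want a more self-contained argument, is a dimension count. By (c) and Poincaré duality on the torus, $\dim H^1(T_i;E_n)=2$. The half-lives-half-dies argument for the long exact sequence of the pair $(M,\partial M)$ then gives that the image of $H^1(M;E_n)$ in $H^1(\partial M;E_n)$ has dimension $m$. This uses that $E_n$ carries a nondegenerate invariant bilinear form, which holds for $n$ even via the Killing form. Moreover $\chi(M)=0$ together with (a), and duality $H^3(M)\cong H_0(M,\partial M)=0$, gives $\dim H^1-\dim H^2=0$. Injectivity is therefore equivalent to $\dim H^1(M;E_n)=m$. That equality is exactly the Menal-Ferrer--Porti result, proved via the Matsushima--Murakami / Garland $L^2$ vanishing of cuspidal cohomology. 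I would cite that result rather than reprove it.
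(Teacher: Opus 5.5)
Your proposal is correct and follows essentially the same route as the paper. You decompose $\iota^*\mathfrak{g}$ via Kostant into the local systems $\mathsf{geom}\times^{\PGL_2}V_{2m_i+1}$, split the three regularity conditions over the summands, and rely on Menal-Ferrer--Porti for the decisive injectivity/half-dimension statement on $H^1$; the paper then feeds these facts into Proposition~\ref{prop:adjoint regularity criterion}, whose half-dimension condition is equivalent to your injectivity condition once $H^0(M)=0$. The only difference is that you prove $H^0(M;E_n)=0$ (Zariski density of a non-elementary Kleinian group) and $\dim H^0(T_i;E_n)=1$ (a nontrivial unipotent acts by a single Jordan block) directly, whereas the paper cites Menal-Ferrer--Porti for these two facts as well.
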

The adjoint torsion of $\iota(\mathsf{geom})$ is computed as follows. 
Let $V_d$ be the $d$-dimensional simple module of $\PGL_2$ for each positive odd integer $d$.
The pullback of the adjoint representation by the principal embedding $\iota$ 
decomposes as
\begin{equation*}
  \iota^* \mathfrak{g} \cong \bigoplus_{i} V_{2 m_i + 1},
\end{equation*}
where the sequence of numbers $(m_i)_i$ are called the \emph{exponents} of $\mathfrak{g}$~\cite{Kostant}.
Table \ref{table:exponents} lists the exponents associated with simple Lie algebras. The following gives a generalization of \cite[Proposition 5.6]{Porti18} for $G=\PGL_n$:

\begin{introthm}[\cref{thm:adjoint_torsion_hol}]\label{cor:intro ad tor decomp}
  We have
  \begin{equation}\label{eq:intro ad tor decomp}
      \torsion_{G, M,\gamma,\mathfrak{o}}^{\Ad}(\iota(\mathsf{geom})) = 
        \prod_{i}
          \torsion_{M,\gamma,\mathfrak{o}}(\mathsf{geom} \times^{\PGL_2} V_{2 m_i + 1}),
  \end{equation}
  where, in the right-hand side, 
  $\torsion_{M,\gamma,\mathfrak{o}}(\mathcal{L})$ denotes
  the Reidemeister torsion associated with a linear local system $\mathcal{L}$ 
  (see Definition \ref{def:torsion function}).
\end{introthm}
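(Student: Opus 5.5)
The plan is to reduce the statement to the multiplicativity of Reidemeister torsion under direct sums of local systems, together with a check that the trivializations of the determinant lines, built from the curves $\gamma$, are compatible with the decomposition.

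\textbf{Step 1: decompose the complex.} Put $P=\iota(\mathsf{geom})$. Then
\[
P\times^G\mathfrak{g}\cong \mathsf{geom}\times^{\PGL_2}\iota^*\mathfrak{g}\cong\bigoplus_i \mathsf{geom}\times^{\PGL_2}V_{2m_i+1}.
\]
Hence the twisted cellular chain complex $C_\bullet(M;P\times^G\mathfrak{g})$ splits as a direct sum of the complexes $C_\bullet(M;\mathcal{L}_i)$, where $\mathcal{L}_i=\mathsf{geom}\times^{\PGL_2}V_{2m_i+1}$. The same splitting holds on $\partial M$ and on $\gamma$. The cellular bases are compatible with the splitting once we choose bases of $\mathfrak{g}$ adapted to the decomposition. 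The torsion depends on the choice of basis of $\mathfrak{g}$ only through a determinant factor raised to the Euler characteristic. Here $\chi(M)=0$, so this factor is trivial and an adapted basis may be used.

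\textbf{Step 2: compare the trivializations.} The adjoint torsion is defined through a trivialization of $\det H_\bullet(M;P\times^G\mathfrak{g})$. In Porti's framework this comes from two facts: $H^1$ injects into $H^1(\partial M)$, and $H_1(M)$ receives the classes $[\gamma_j\otimes v]$ for $v$ in the invariant part on $T_j$, with duality for $H_2$. Each of these constructions is natural with respect to direct sums. So the trivialization of the sum is the tensor product of the trivializations for the individual $\mathcal{L}_i$. This requires each $\mathcal{L}_i$ to satisfy the regularity conditions of Definition~\ref{def:torsion function}, so that $\torsion_{M,\gamma,\mathfrak{o}}(\mathcal{L}_i)$ is defined.

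The conditions are verified summand by summand. Condition (1) and condition (2), the injectivity, pass to summands automatically because cohomology splits. For condition (3), each $V_{2m_i+1}$ restricted to a cusp group contributes exactly a one-dimensional invariant space per torus, since the cusp group is a parabolic containing a regular unipotent element. This gives $m\cdot\rank G$ in total, which is the content of \cref{thm:geom_regular}.

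\textbf{Step 3: multiplicativity and signs.} The main obstacle is the sign in the multiplicativity formula for torsion of a direct sum of non-acyclic complexes with based homology. Multiplicativity holds up to a sign $(-1)^{N}$, where $N$ is computed from the parities of the dimensions of the chains and of the homology groups in the various degrees. The sign-refined convention with $\mathfrak{o}$ contributes its own factors for each summand.

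I would first use the fact that the odd-dimensional modules $V_{2m_i+1}$ come from $\PGL_2$ representations. The chain dimensions are then multiples of $2m_i+1$, and the homology is concentrated in degrees $1$ and $2$, each of dimension $m$ per summand. With these dimensions I would compute the sign term explicitly and show it cancels pairwise, using the Turaev sign $\tau$ conventions adopted in the paper's definition. If this direct count is messy, the fallback is to arrange the bases so that the homology bases are ordered summand by summand in each degree. The reordering sign is then $(-1)$ raised to a sum of products $m\cdot m$ across degrees $1$ and $2$, which combine to an even exponent.
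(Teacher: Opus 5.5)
Your route is the paper's. \cref{thm:adjoint_torsion_hol} is proved there by combining Kostant's decomposition (\cref{prop:adjoint_decomp}) with \cref{lemma:torsion of direct sum}, which asserts \emph{exact} multiplicativity $\torsion_{M,\gamma,\mathfrak{o}}(\mathcal{L}\oplus\mathcal{L}')=\torsion_{M,\gamma,\mathfrak{o}}(\mathcal{L})\cdot\torsion_{M,\gamma,\mathfrak{o}}(\mathcal{L}')$ for boundary-regular local systems. Your Steps 1 and 2 are essentially fine. Step 2 spells out the boundary-regularity of each summand, which is needed for the right-hand side to be defined and which the paper uses implicitly.

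The gap is Step 3. You call the sign the main obstacle but never determine it, and your fallback targets a sign that does not occur. Reordering the basis of $H^0(\partial M)$ permutes $h_1$ and $h_2$ by the same permutation, so its effect cancels between degrees $1$ and $2$. Intrinsically, the Porti form of a direct sum is the tensor product of the Porti forms: the braiding sign $(-1)^{rr'}$ in the monoidality axiom \eqref{eq:epsilon monoidal} cancels the one in the splitting of $\det H_\bullet$. Also, the paper does not use Turaev's sign conventions. Its signs come from Knudsen's determinant functor with Nicolaescu's inverse structure, and from the parity sign $\epsilon_{\mathfrak{o},\mathcal{L}}$.

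The signs that actually need checking live at chain level. Take summands of ranks $n,n'$ and set $c_k=\lvert\mathsf{cell}_k\rvert$ and $E=c_0+c_2=c_1+c_3$. Comparing $1_{\mathsf{vol}_{\mathcal{L}\oplus\mathcal{L}'}}$ with $1_{\mathsf{vol}_{\mathcal{L}}}\otimes 1_{\mathsf{vol}_{\mathcal{L}'}}$ produces three signs:
\begin{itemize}
\item $(-1)^{nn'\binom{c_k}{2}}$ from un-interleaving the cell-by-cell basis of $C_k$. Contrary to what Step 1 suggests, adapted bases of $\mathfrak{g}$ do not make this basis block diagonal.
\item $(-1)^{nn'\sum_{i<j}c_ic_j}$ from the braidings in $\det C_\bullet(\mathcal{L}\oplus\mathcal{L}')\cong\det C_\bullet(\mathcal{L})\otimes\det C_\bullet(\mathcal{L}')$.
\item $(-1)^{nn'E}$ from the normalization $(-1)^{\frac12 D(D+1)}$ of $\varepsilon_{(A,D)}$, with $D=\sum_{i\ \mathrm{even}}\rank C_i$, in the last step \eqref{eq:vol complex end} of the geometric volume form.
\end{itemize}
Since $\sum_k\binom{c_k}{2}+\sum_{i<j}c_ic_j=\binom{2E}{2}\equiv E \pmod 2$ by $\chi(M)=0$, these cancel. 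Together with Knudsen's compatibility of $\Phi$ with split exact sequences, this is exactly \cref{lemma:torsion of direct sum}.

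For the orientation signs: every $V_{2m_i+1}$ has odd rank, so all $\epsilon_{\mathfrak{o},\mathcal{L}_i}$ equal a single sign $\epsilon$, while $\epsilon_{\mathfrak{o},P\times^G\mathfrak{g}}=\epsilon^{\rank G}$. So these multiply correctly as well.

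The simplest repair is to drop Step 3 and apply \cref{lemma:torsion of direct sum} inductively to the summands validated in Step 2.
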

As a consequence, we see $\torsion_{G, M,\gamma,\mathfrak{o}}^{\Ad}(\iota(\mathsf{geom})) \neq 0$ (\cref{cor:torsion_geom_nonzero}).

\begin{table}[t]
  \centering
  \begin{tabular}{cl}
    {Type} & {Exponents} \\
    \hline
    ${A}_n$ & $1, 2, 3, \ldots, n$ \\
    ${B}_n$ & $1, 3, 5, \ldots, 2n - 1$ \\
    ${C}_n$ & $1, 3, 5, \ldots, 2n - 1$ \\
    ${D}_n$ & $1, 3, 5, \ldots, 2n - 3, n - 1$ \\
    ${E}_6$ & $1, 4, 5, 7, 8, 11$ \\
    ${E}_7$ & $1, 5, 7, 9, 11, 13, 17$\\
    ${E}_8$ & $1, 7, 11, 13, 17, 19, 23, 29$\\
    ${F}_4$ & $1, 5, 7, 11$ \\
    ${G}_2$ & $1, 5$ \\
  \end{tabular}
  \caption{Exponents associated with simple Lie algebras of given types.}
  \label{table:exponents}
\end{table}

The torsion functions in the right-hand side of \eqref{eq:intro ad tor decomp} coincide with 
those associated with $\SL_2$-representations
defined by Menal-Ferrer and Porti~\cite{MFP_torsion}.
Their values at the geometric local system can be computed using \texttt{hyperbolic\_SLN\_torsion} method
in SnapPy~\cite{SnapPy}.

\subsubsection{Adjoint torsion of a local system not arising from $\PGL_2$-local systems via principal embeddings}
We compute the adjoint torsion of an explicit example of a boundary-adjoint-regular local system for the figure-eight knot complement and the projective symplectic group $G=\PGSp_4$ (i.e., the split adjoint group of type $C_2$) that does not arise from a $\PGL_2$-local system via any principal embedding, where we don't have a formula like \eqref{eq:intro ad tor decomp} and thus the torsion cannot be computed from known $\PGL_2$-torsions.

\begin{introex}
  Let $K$ be a number field defined by
  \begin{equation*}
    K \coloneqq \mathbb{Q} [\omega] / (\omega^6 - \omega^5 + 3\omega^4 - 5\omega^3 + 8\omega^2 - 6\omega + 8).
  \end{equation*}
  Zickert~\cite{Zickert} found a boundary-unipotent $\PGSp_4(K)$-local system $\mathcal{L}$
  on the figure-eight knot complement
  by using the method of the Ptolemy relations.
  We note that the Ptolemy relations are interpreted as the relation among 
  cluster coordinates in higher Teichm\"uller theory~\cite{FockGoncharov, LeIan, GoncharovShen}.
  One can verify that the local system $\mathcal{L}$ is boundary-adjoint-regular but does 
  not arise from a $\PGL_2$-local system via any principal embedding.
  For a meridian $\mu$, we compute
  \begin{equation*}
    \torsion_{\PGSp_4, S^3 \setminus 4_1, \mu}^{\Ad} (P) = \frac{85}{16} \omega^{5} - \frac{33}{8} \omega^{4} + \frac{217}{16} \omega^{3} - \frac{99}{8} \omega^{2} + \frac{321}{8} \omega - 11.
  \end{equation*}
  which has the following complex embeddings:
  \begin{align*}
    \torsion_{\PGSp_4, S^3 \setminus 4_1, \mu} (P) \mapsto
    \begin{cases}
    -3.459966243820608\\
    1.104983121910304 + 38.11233948347826 \sqrt{-1}\\
    1.104983121910304 - 38.11233948347826 \sqrt{-1}.
    \end{cases}
  \end{align*}
\end{introex}

\subsubsection{Related works}
\begin{itemize}
    \item Kitayama-Terashima \cite{KitayamaTerashima} provided a method to compute the adjoint $\PGL_n$-torsion by using the Fock-Goncharov coordinates \cite{FockGoncharov} on the moduli stack of framed $\PGL_n$-local systems when $M$ is a mapping torus over a punctured surface. This method is effective in explicit computations in coordinates.
    \item Naef and Safronov \cite{NaefSafronov} describe the adjoint torsion function of closed oriented $3$-manifolds $M$ from the viewpoint of derived algebraic geometry. Their torsion is defined on the moduli stack of $G$-local systems on $M$, which is viewed as a $(-1)$-shifted symplectic stack. They provide an application to the cohomological Donaldson-Thomas invariants of the moduli stacks of local systems on $M$. 
    They also provide insights to the relation with the $1$-loop determinant in the cotangent AKSZ theory. 
\end{itemize}

\subsection{Future work: computation of adjoint torsion by cluster coordinates}
When $M$ is a mapping torus of a homeomorphism $f$ on a punctured surface $\Sigma$, a $G$-local system on $M$ corresponds (generically one-to-one) to a $G$-local system on $\Sigma$ which is invariant under $f$. Kitayama and Terashima \cite{KitayamaTerashima} provide a method to compute the adjoint torsion for $G=\PGL_n$ by using the Fock-Goncharov coordinates on the moduli stack $\X_{G,\Sigma}$ of framed $G$-local systems on $\Sigma$ \cite{FockGoncharov} for $G=\PGL_n$. See also \cite{NTY,TerashimaYamazaki} for a background on this method. There is also a cluster theoretic interpretation \cite{Mizuno} of the $1$-loop invariant of Dimofte-Garoufalidis \cite{DimofteGaroufalidis}.

Later, cluster coordinates on $\X_{G,\Sigma}$ for general semisimple algebraic group $G$ of adjoint type is found by Goncharov-Shen \cite{GoncharovShen}, generalizing and providing a uniform explanation of earlier works of Zickert \cite{Zickert} and Le \cite{LeIan}. Then it is straightforward to generalize the result of Kitayama-Terashima for such $G$. 

In a future work, we plan to proceed further. For any $3$-manifold within the class of consideration in this paper, we may define a cluster-like coordinates on the moduli stack $\X_{G,M}$ upon choosing a 3D ideal triangulation. The idea is similar to the \emph{gluing variety} of Garoufalidis-Goerner-Zickert \cite{GaroufalidisGoernerZickert} and Dimofte-Gabella-Goncharov \cite{DimofteGabellaGoncharov} for $G=\PGL_n$, while we lose the nice symmetry existing in type $A_n$ in general. We will develop a theory of gluing variety associated with a mutation network and then compute the adjoint torsion in the associated coordinate system. 

\subsection*{Acknowledgements}
Considerable part of this work is substantialized during the first-named authors' visit to University College Cork in October 2025. He is grateful to Robert Osburn for his hospitality. 
Tsukasa Ishibashi is partially supported by JSPS KAKENHI Grant Number JP24K16914. Yuma Mizuno is partially supported by the Irish Research Council Advanced Laureate Award IRCLA/2023/1934. 


\section{Determinants of complexes}
\label{sec:determinant functor}
In this section, we recollect a general framework to compute the determinants of complexes with functorial control over signs. An explicit computation using the sign convention in \cite{Nicolaescu} will be carried out in \cref{sec:computation}. 

\subsection{Graded invertible modules}
A \emph{groupoid} is a category whose morphisms are all isomorphisms. 
Let $A$ be a commutative ring. 
We denote by $\mathcal{P}_A$ the groupoid of graded invertible $A$-modules.
An object of $\mathcal{P}_A$ is a pair $(L, \alpha)$, where $L$ is a projective $A$-module 
of rank $1$ and $\alpha : \Spec A \to \mathbb{Z}$ is a locally constant function.
The set of morphisms between two objects are given by
\begin{align}
  \Hom_{\mathcal{P}_A} ((L, \alpha), (M, \beta)) \coloneqq
  \begin{cases}
  \Iso(L, M) &\text{if $\alpha = \beta$}\\
    \emptyset &\text{otherwise}
  \end{cases}
\end{align}
We will often write $L \in \mathcal{P}_A$, only indicating the underlying module $L$, and denote by $\lvert L \rvert : \Spec A \to \mathbb{Z}$ the locally constant function $\alpha$.
The groupoid $\mathcal{P}_A$ is equipped with a symmetric monoidal structure where
the tensor product is given by
\begin{align}
  (L, \alpha) \otimes (M, \beta) \coloneqq (L \otimes M, \alpha + \beta),
\end{align}
and the unit object is given by $1 \coloneqq 1_A \coloneqq (A, 0)$.
For $L, M \in \mathcal{P}_A$,
the braiding 
\begin{equation}\label{eq:braiding def}
  \beta_{L, M} : L \otimes M \to M \otimes L
\end{equation}
is locally given by
\begin{align}\label{eq:sign braiding}
  \ell \otimes m \mapsto (-1)^{\lvert L \rvert \cdot \lvert M \rvert} m \otimes \ell
\end{align}
for $\ell \in L$ and $m \in M$,
and the unit object is given by $1 \coloneqq (A, 0)$.

An \emph{inverse structure} \cite[Definition A.16]{Knudsen} on $\mathcal{P}_A$ is a pair 
$((-)^{\otinv}, \varepsilon)$, where $(-)^{\otinv} : \mathcal{P}_A \to \mathcal{P}_A$ is a 
braided monoidal functor, and $\varepsilon : \id \otimes (-)^{\otinv} \to 1$ is a monoidal natural 
transformation.
In other words, an inverse structure consists of the following data:
\begin{enumerate}
  \item for each $L \in \mathcal{P}_A$, an object $L^{\otinv} \in \mathcal{P}_A$,
  \item for each morphism $f : L \to M$ in $\mathcal{P}_A$, a morphism 
    $f^{\otinv} : L^{\otinv} \to M^{\otinv}$ in $\mathcal{P}_A$,
  \item for each $L, M \in \mathcal{P}_A$, a morphism
    $\theta_{L, M} : (L \otimes M)^{\otinv} \to L^{\otinv} \otimes M^{\otinv}$ in $\mathcal{P}_A$,
  \item a morphism $1^{\otinv} \to 1$ in $\mathcal{P}_A$,
  \item for each $L \in \mathcal{P}_A$, a morphism
    $\varepsilon_L : L \otimes L^{\otinv} \to 1$ in $\mathcal{P}_A$,
\end{enumerate}
such that the following diagrams commute:
\begin{align}\label{eq:theta natural}
    \begin{tikzpicture}[scale=1.3, commutative diagrams/every diagram, baseline={(current bounding box.center)}]
      \node (0) at (0,0) {$(L \otimes M)^{\otinv}$};
      \node (1) at (4,0) {$(L^{\otinv} \otimes M^{\otinv})$};
      \node (2) at (0,-1) {$(L' \otimes M')^{\otinv}$};
      \node (3) at (4,-1) {$L'^{\otinv} \otimes M'^{\otinv}$};
      \draw[morphism] (0) edge node[above] {$\theta_{L, M}$} (1);
      \draw[morphism] (0) edge node[left] {$(f \otimes g)^{\otinv}$} (2);
      \draw[morphism] (2) edge node[below] {$\theta_{L', M'}$} (3);
      \draw[morphism] (1) edge node[right] {$f^{\otinv} \otimes g^{\otinv}$} (3);
    \end{tikzpicture}
\end{align}
  \begin{align}\label{eq:theta assoc}
    \begin{tikzpicture}[scale=1.3, commutative diagrams/every diagram, baseline={(current bounding box.center)}]
      \node (0) at (0,0) {$((L \otimes M) \otimes N)^{\otinv}$};
      \node (1) at (4,0) {$(L \otimes M)^{\otinv} \otimes N^{\otinv}$};
      \node (2) at (8,0) {$(L^{\otinv} \otimes M^{\otinv}) \otimes N^{\otinv}$};
      \node (3) at (0,-1) {$(L \otimes (M \otimes N))^{\otinv}$};
      \node (4) at (4,-1) {$L^{\otinv} \otimes (M \otimes N)^{\otinv}$};
      \node (5) at (8,-1) {$L^{\otinv} \otimes (M^{\otinv} \otimes N^{\otinv})$};
      \draw[morphism] (0) edge node[above] {$\theta_{L \otimes M, N}$} (1);
      \draw[morphism] (1) edge node[above] {$\theta_{L ,M} \otimes \id$} (2);
      \draw[morphism] (0) edge node[left] {$\mathsf{assoc}^{\otinv}$} (3);
      \draw[morphism] (3) edge node[below] {$\theta_{L, M \otimes N}$} (4);
      \draw[morphism] (4) edge node[below] {$\id \otimes \theta_{M ,N}$} (5);
      \draw[morphism] (2) edge node[right] {$\mathsf{assoc}$} (5);
    \end{tikzpicture}
  \end{align}
  \begin{align}\label{eq:theta unit}
    \begin{tikzpicture}
      [scale=1.3, xscale = 0.6, commutative diagrams/every diagram, baseline={(current bounding box.center)}]
      \node (0) at (0,0) {$(1 \otimes L)^{\otinv}$};
      \node (1) at (4,0) {$1^{\otinv} \otimes L^{\otinv}$};
      \node (2) at (4,-1) {$1 \otimes L^{\otinv}$};
      \node (3) at (0,-1) {$L^{\otinv}$};
      \draw[morphism] (0) edge node[above] {$\theta_{1, L}$} (1);
      \draw[morphism] (0) edge node[left] {} (3);
      \draw[morphism] (2) edge node[below] {} (3);
      \draw[morphism] (1) edge node[right] {} (2);
    \end{tikzpicture}\quad
    \begin{tikzpicture}
      [scale=1.3, xscale = 0.6, commutative diagrams/every diagram, baseline={(current bounding box.center)}]
      \node (0) at (0,0) {$(L \otimes 1)^{\otinv}$};
      \node (1) at (4,0) {$L^{\otinv} \otimes 1^{\otinv}$};
      \node (2) at (4,-1) {$L^{\otinv} \otimes 1$};
      \node (3) at (0,-1) {$L^{\otinv}$};
      \draw[morphism] (0) edge node[above] {$\theta_{L, 1}$} (1);
      \draw[morphism] (0) edge node[left] {} (3);
      \draw[morphism] (2) edge node[below] {} (3);
      \draw[morphism] (1) edge node[right] {} (2);
    \end{tikzpicture}
  \end{align}
  \begin{align}\label{eq:theta comm}
    \begin{tikzpicture}[scale=1.3, commutative diagrams/every diagram, baseline={(current bounding box.center)}]
      \node (0) at (0,0) {$(L \otimes M)^{\otinv}$};
      \node (1) at (4,0) {$(L^{\otinv} \otimes M^{\otinv})$};
      \node (2) at (0,-1) {$(M \otimes L)^{\otinv}$};
      \node (3) at (4,-1) {$M^{\otinv} \otimes L^{\otinv}$};
      \draw[morphism] (0) edge node[above] {$\theta_{L, M}$} (1);
      \draw[morphism] (0) edge node[left] {$\beta_{L, M}^{\otinv}$} (2);
      \draw[morphism] (2) edge node[below] {$\theta_{M, L}$} (3);
      \draw[morphism] (1) edge node[right] {$\beta_{L^{\otinv}, M^{\otinv}}$} (3);
    \end{tikzpicture}
  \end{align}
  \begin{align}\label{eq:epsilon naturality}
    \begin{tikzpicture}[scale=1.3, commutative diagrams/every diagram, baseline={(current bounding box.center)}]
      \node (0) at (0,0) {$L \otimes L^{\otinv}$};
      \node (1) at (3,0) {$1$};
      \node (2) at (0,-1) {$M \otimes M^{\otinv}$};
      \node (3) at (3,-1) {$1$};
      \draw[morphism] (0) edge node[above] {$\varepsilon_L$} (1);
      \draw[morphism] (0) edge node[left] {$f \otimes f^{\otinv}$} (2);
      \draw[morphism] (2) edge node[below] {$\varepsilon_M$} (3);
      \draw[morphism] (1) edge node[right] {$\cong$} (3);
    \end{tikzpicture}
  \end{align}
  \begin{align}
    \begin{tikzpicture}\label{eq:epsilon unit}
      [scale=1.3, commutative diagrams/every diagram, baseline={(current bounding box.center)}]
      \node (0) at (0,0) {$1 \otimes 1^{\otinv}$};
      \node (1) at (2,0) {$1$};
      \node (2) at (0,-1) {$1 \otimes 1$};
      \draw[morphism] (0) edge node[above] {$\varepsilon_1$} (1);
      \draw[morphism] (0) edge node {} (2);
      \draw[morphism] (2) edge node[below] {$\cong$} (1);
    \end{tikzpicture}
  \end{align}
  \begin{align}\label{eq:epsilon monoidal}
    \begin{tikzpicture}[scale=1.3, xscale=1.1, commutative diagrams/every diagram, baseline={(current bounding box.center)}]
      \node (0) at (0,0) {$(L \otimes M) \otimes (L \otimes M)^{\otinv}$};
      \node (1) at (3.7,0) {$(L \otimes M) \otimes (L^{\otinv} \otimes M^{\otinv})$};
      \node (2) at (8,0) {$(L \otimes L^{\otinv}) \otimes (M \otimes M^{\otinv})$};
      \node (3) at (0,-1) {$1$};
      \node (4) at (8,-1) {$1 \otimes 1$};
      \draw[morphism] (0) edge node[above] {$\id \otimes \theta$} (1);
      \draw[morphism] (1) edge node[above] {$\id \otimes \beta \otimes \id$} (2);
      \draw[morphism] (0) edge node[left] {$\varepsilon_{L \otimes M}$} (3);
      \draw[morphism] (2) edge node[right] {$\varepsilon_L \otimes \varepsilon_M$} (4);
      \draw[morphism] (3) edge node[below] {$\cong$} (4);
    \end{tikzpicture}
  \end{align}  

By $\varepsilon_L : L \times L^{\otinv} \to 1$, the object $L^{\otinv}$ is a right dual of $L$.
We regard $L^{\otinv}$ as the left dual of $L$ as well,
using the map
\begin{align}\label{eq:left dual}
  L^{\otinv} \otimes L \xrightarrow{\beta_{L^{\otinv}, L}} 
  L \otimes L^{\otinv} \xrightarrow{\varepsilon_L} 1.
\end{align}

For inverse structures $(\sigma, \varepsilon)$ and $(\sigma', \varepsilon')$,
a morphism between them is a monoidal natural transformation $\alpha : \sigma \to \sigma'$
such that $\varepsilon = \varepsilon' \circ (\id \otimes \alpha)$.
We denote by $\InvStr (\mathcal{P}_A)$ the category of inverse structures on $\mathcal{P}_A$.

\begin{lemma}\label{lemma:smul otinv}
  Let $((-)^{\otinv}, \varepsilon)$ be an inverse structure on $\mathcal{P}_A$.
  For $f : L \to M$ in $\mathcal{P}_A$ and $a \in A^{\times}$,
  we have $(a \cdot f)^{\otinv} = a^{-1} \cdot f^{\otinv}$.
\end{lemma}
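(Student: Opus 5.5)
The plan is to use only the naturality of $\varepsilon$, diagram \eqref{eq:epsilon naturality}, together with the fact that $L$ and $L^{\otinv}$ are invertible $A$-modules. Write $g = a\cdot f : L \to M$; it is an isomorphism because $a \in A^{\times}$. Applying \eqref{eq:epsilon naturality} to $f$ and to $g$ gives
\begin{equation*}
  \varepsilon_M \circ (f \otimes f^{\otinv}) = \varepsilon_L, \qquad
  \varepsilon_M \circ (a f \otimes (af)^{\otinv}) = \varepsilon_L .
\end{equation*}
Both identities are read after identifying the target $1$ with itself via the identity. That this is legitimate is the content of the unlabeled isomorphism $1\cong 1$ in \eqref{eq:epsilon naturality}, which is the identity of $1$.

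Next, by $A$-bilinearity of the tensor product, $af \otimes (af)^{\otinv} = a\,(f \otimes (af)^{\otinv})$. Hence
\begin{equation*}
  \varepsilon_M \circ \bigl(f \otimes (a (af)^{\otinv})\bigr) = \varepsilon_M \circ (f \otimes f^{\otinv}).
\end{equation*}
Here $\varepsilon_M$ is an isomorphism and $f\otimes -$ is injective on morphisms $L^{\otinv}\to M^{\otinv}$, since $f$ is an isomorphism and $\Hom(L^{\otinv},M^{\otinv})$ embeds into $\Hom(L\otimes L^{\otinv}, M\otimes M^{\otinv})$ by tensoring with the invertible module $L$. Cancelling gives $a\,(af)^{\otinv} = f^{\otinv}$, which is the claim.

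The main point needing care is the injectivity of $h\mapsto f\otimes h$. I would handle it concretely. Both $L^{\otinv}$ and $M^{\otinv}$ are rank-one projective modules, so $\Hom(L^{\otinv},M^{\otinv})$ is itself invertible. Moreover, $\Hom(L\otimes L^{\otinv},M\otimes M^{\otinv}) \cong \Hom(L,M)\otimes \Hom(L^{\otinv},M^{\otinv})$, and tensoring with the generator $f$ of the invertible module $\Hom(L,M)$ is an isomorphism. One can also argue locally on $\Spec A$, where every module is free of rank one and all maps are scalars, and there the equation reduces to an identity of units.

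There is a subtlety: \eqref{eq:epsilon naturality} is stated for the identity on $1$, and the functor $(-)^{\otinv}$ is only assumed to be defined on morphisms of the groupoid. The scalar $a$ must therefore be a unit so that $af$ is a morphism in $\mathcal{P}_A$, and the lemma's hypothesis $a \in A^{\times}$ ensures exactly this. No other axiom of the inverse structure is needed.
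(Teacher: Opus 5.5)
Your proposal is correct and is essentially the paper's argument. The paper's proof just applies the naturality of $\varepsilon$ to $f$ and to $a\cdot f$. You additionally spell out the cancellation the paper leaves implicit: moving the scalar across $\otimes$ by bilinearity, then cancelling the isomorphisms $\varepsilon_M$ and $f$ and the invertible module $L$.
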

\begin{proof}
  This follows from the naturality \eqref{eq:epsilon naturality} of $\varepsilon$ 
  applied for $f$ and $a \cdot f$.
\end{proof}

\subsection{Determinant functors}
We recall the notion of determinant functors following \cite{Deligne, Knudsen}.
\begin{definition}
  Let $\mathcal{E}$ be an exact category,
  and $w$ be a set of morphisms in $\mathcal{E}$ containing all isomorphisms
  and being closed under composition.
  We denote by $\mathcal{E}_w$ the subcategory of $\mathcal{E}$
  whose morphisms are those in $w$.
  A \emph{determinant functor} on $(\mathcal{E}, w)$ consists of the following data:
  \begin{enumerate}
    \item a functor $\det : \mathcal{E}_w \to \mathcal{P}_A$,
    \item for each short exact sequence $\Sigma : 0 \to A \to B \to C \to 0$ in $\mathcal{E}$,
      a morphism $\det(\Sigma) : \det(B) \to \det(A) \otimes \det(C)$ in $\mathcal{P}_A$,
  \end{enumerate}
  such that they satisfy naturality with respect to morphisms in $w$ of short exact sequences,
  associativity, and commutativity.
  We refer to \cite[Definition 2.4]{Breuning} for the precise definition.
  We denote by $\Det (\mathcal{E}, w)$ the category of determinant functors on $(\mathcal{E}, w)$.
\end{definition}

In this paper, we only consider determinants on the following exact categories.
\begin{example}
  Let $A$ be a commutative ring.
  \begin{enumerate}
    \item $(\mathsf{proj}(A), \mathrm{iso})$: $\mathcal{E}$ is the category of 
      finitely generated projective $A$-modules,
      and $w$ is the set of isomorphisms.
    \item $(C^{\mathrm{b}}(\mathsf{proj}(A)), \mathrm{qis})$: $\mathcal{E}$ is the category of
      bounded complexes of finitely generated projective $A$-modules,
      and $w$ is the set of quasi-isomorphisms.
  \end{enumerate}
\end{example}

For categories $C$ and $D$,
we denote by $\Equiv (C, D)$ the category of equivalences from $C$ to $D$.

\begin{theorem}[{\cite[Theorem 2.3]{Knudsen}}]\label{theorem:det functor}
  We have a functor 
  \begin{align*}
    \detExtend :
    \InvStr (\mathcal{P}_A) \to \Equiv
    \bigl(
      \Det (\mathsf{proj}(A), \mathrm{iso}),
      \Det (C^{\mathrm{b}} (\mathsf{proj}(A)), \mathrm{qis})
    \bigr)
  \end{align*}
  such that the quasi-inverse $(\detExtend_\sigma)^{-1}$ for $\sigma \in \InvStr (\mathcal{P}_A)$ is defined
  to be the restriction functor.
\end{theorem}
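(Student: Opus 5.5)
Given an inverse structure $\sigma=((-)^{\otinv},\varepsilon)$, I will construct $\detExtend_\sigma$ by the classical Knudsen--Mumford recipe, relying on \cite{Knudsen}. Let $\det$ be a determinant functor on $(\mathsf{proj}(A),\mathrm{iso})$. For a bounded complex $C^\bullet$ of finitely generated projectives I set
\begin{equation*}
  \det(C^\bullet) \coloneqq \bigotimes_{i} \det(C^i)^{(-1)^i},
\end{equation*}
where odd-degree factors are replaced by their $\otinv$-inverse via $\sigma$. The tensor product is taken in increasing order of $i$, and all reorderings are performed with the braiding \eqref{eq:sign braiding}. For a short exact sequence of complexes, the structure map is the termwise tensor product of the maps $\det(\Sigma^i)$, with $\theta$ used to distribute $(-)^{\otinv}$ over tensors and $\beta$ used to shuffle factors. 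Naturality, associativity and commutativity then follow from the corresponding axioms for $\det$ on $\mathsf{proj}(A)$, together with the coherence diagrams \eqref{eq:theta natural}--\eqref{eq:epsilon monoidal}.

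The substantive point is functoriality along quasi-isomorphisms. First, for an acyclic complex $E^\bullet$ I will build a canonical trivialization $\det(E^\bullet)\cong 1$. Split $E^\bullet$ into short exact sequences $0\to Z^i\to E^i\to Z^{i+1}\to 0$; each $Z^i$ is projective, by induction from the top degree. Applying $\det$ to each sequence and contracting each $\det(Z^i)\otimes\det(Z^i)^{\otinv}$ with $\varepsilon$, or with its left-dual form \eqref{eq:left dual}, gives the trivialization. Second, for a quasi-isomorphism $f$, I take its mapping cone, which is acyclic, and use the cone sequence $0\to D\to\Cone(f)\to C[1]\to 0$. Combined with the canonical identification $\det(C[1])\cong\det(C)^{\otinv}$, this yields $\det(f):\det(C)\to\det(D)$.

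The hard step, in my view, is checking that $\det(f\circ g)=\det(f)\circ\det(g)$ and that the sequence maps remain natural under quasi-isomorphisms. This is the sign-sensitive heart of Knudsen--Mumford. I would handle it by first proving that the acyclic trivialization is compatible with short exact sequences of acyclic complexes. From that, the $3\times3$-lemma-type diagrams needed for composition reduce to the coherence data of $\sigma$.

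It remains to show that $\detExtend_\sigma$ is an equivalence with quasi-inverse given by restriction. Restricting to complexes concentrated in degree $0$ recovers $\det$ on the nose, so $\mathrm{res}\circ\detExtend_\sigma=\id$. Conversely, let $D$ be a determinant functor on complexes. Filtering a complex by its stupid truncations gives short exact sequences whose graded pieces are $C^i[-i]$. Together with the identification $D(P[1])\cong D(P)^{\otinv}$ (itself obtained from the acyclic cone of $\id_P$), this produces a natural isomorphism $D\cong\detExtend_\sigma(D|_{\mathsf{proj}})$. The uniqueness of this isomorphism uses the compatibility axioms of a determinant functor.

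Functoriality in $\sigma$ is the last step. A morphism of inverse structures $\alpha$ induces componentwise isomorphisms on odd-degree factors, and the condition $\varepsilon=\varepsilon'\circ(\id\otimes\alpha)$ ensures these commute with the acyclic trivializations.
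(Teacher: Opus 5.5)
The paper gives no argument of its own for this statement: it is quoted from Knudsen. What the paper actually uses later are Knudsen's specific normalizations (his Definition 2.24 (b), (c), (d*) and Definition 3.1, invoked in the proofs of \cref{lemma:sign of elementary expansion} and \cref{lemma:det homology explicit}).

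Your outline reconstructs the same overall architecture: termwise $\det$ with odd-degree factors inverted, a trivialization of acyclic complexes by splitting into cycles, and stupid truncations to show that restriction is a quasi-inverse. I see no step that would fail. Where you differ is in how quasi-isomorphisms are treated. You define $\det(f)$ for every quasi-isomorphism at once from the cone sequence $0\to D\to\Cone(f)\to C[1]\to 0$. The normalization used in the paper instead computes $\det$ of an injective quasi-isomorphism with acyclic cokernel directly, as the sequence map followed by the acyclic trivialization, with no shift involved; this is exactly how elementary expansions are handled.

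Your route is more uniform, but it costs three things you should make explicit:
\begin{itemize}
\item The shift identification $\det(C[1])\cong\det(C)^{\otinv}$ must be normalized through the trivialization of $\Cone(\id_C)$, as you already do for the determinant functor $D$ in the converse direction. Otherwise $\det(\id_C)=\id$ is not automatic.
\item For composable $f,g$, the composition law cannot be read off a short exact sequence of the three cones, because the octahedral triangle is not one. You need an auxiliary complex such as the cone of $\Cone(f)\to\Cone(g\circ f)$. It is an extension of $\Cone(f)[1]$ by $\Cone(g\circ f)$, and it surjects onto $\Cone(g)$ with acyclic kernel.
\item In the converse direction you must check that $D\cong\detExtend_\sigma(D|_{\mathsf{proj}(A)})$ respects $D$ on quasi-isomorphisms. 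This holds because $D(f)$ is forced by $D$ on cylinder or cone sequences together with $D(0\to E)$ for acyclic $E$.
\end{itemize}

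Finally, an extension is unique only up to unique isomorphism, and that isomorphism can be a nontrivial rank-dependent sign on complexes not concentrated in degree $0$. For example, conjugating by $(-1)^{\sum_{i\ \mathrm{odd}}\rank C^i}$ yields another valid extension with the same values on objects. This is harmless for the theorem as stated. However, your $\detExtend_\sigma$ would have to be matched on the nose with Knudsen's before the explicit sign formulas of \cref{lemma:det homology explicit sign} could rest on it.
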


Consequently, choices of an inverse structure on $\mathcal{P}_A$
and a determinant functor on $(\mathsf{proj}(A), \mathrm{iso})$
determine a determinant functor on $(C^{\mathrm{b}} (\mathsf{proj}(A)), \mathrm{qis})$.
Roughly speaking, these input data correspond to the ``sign convention'',
and the above theorem states that we have a determinant functor on complexes
once we fix a sign convention.

\begin{theorem}[{\cite[Definition 3.1]{Knudsen}}]
  \label{theorem:det C to det H}
  Let $(C^i, d^i)_{i \in \mathbb{Z}} \in C^b (\mathsf{proj}(A))$.
  Suppose that $\Img d^i$, $\Ker d^i$, and $H^i (C)$ are finitely 
  generated projective $A$-modules for any $i$. 
  Let $\det$ be a determinant functor on $(C^{\mathrm{b}} (\mathsf{proj}(A)), \mathrm{qis})$.
  Then we have an isomorphism
  \begin{align}
    \Phi : \det C \cong \det H (C),
  \end{align}
  where $H (C) = (H^i (C))_i$ is a complex whose differentials are all zero.
\end{theorem}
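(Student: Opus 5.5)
The plan is to build $\Phi$ out of two families of short exact sequences attached to the complex, and then to use the quasi-isomorphism invariance of $\det$ to reduce everything to complexes of modules rather than genuine complexes. Write $Z^i = \Ker d^i$ and $B^i = \Img d^{i-1}$. For each $i$ there are short exact sequences of finitely generated projective $A$-modules
\begin{align*}
  0 \to Z^i \to C^i \xrightarrow{d^i} B^{i+1} \to 0, \qquad 0 \to B^i \to Z^i \to H^i(C) \to 0 .
\end{align*}
They are genuinely short exact sequences in $\mathsf{proj}(A)$ because of the hypotheses that $Z^i$, $B^i$ and $H^i(C)$ are projective; this is the only place those hypotheses are needed. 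Applying the determinant functor on $(\mathsf{proj}(A),\mathrm{iso})$, which is the restriction of $\det$ by \cref{theorem:det functor}, gives isomorphisms
\begin{align*}
  \det C^i \cong \det Z^i \otimes \det B^{i+1}, \qquad \det Z^i \cong \det B^i \otimes \det H^i(C).
\end{align*}

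Next we pass from the individual modules to the whole complex. There is a canonical comparison $\det C \cong \bigotimes_i (\det C^i)^{(-1)^i}$, obtained by filtering $C$ by its stupid truncations. These give termwise-split short exact sequences of complexes, and the associativity and commutativity axioms of $\det$ make the resulting identification independent of the order in which the filtration is peeled off, up to the fixed braiding signs. The same construction applied to $H(C)$, with zero differentials, gives $\det H(C) \cong \bigotimes_i (\det H^i(C))^{(-1)^i}$. Here the factors with exponent $-1$ are interpreted via the chosen inverse structure.

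Substituting the two isomorphisms above into $\bigotimes_i (\det C^i)^{(-1)^i}$, each $\det B^i$ appears exactly twice with opposite exponents. It enters once from $C^{i-1}$, through $\det B^{(i-1)+1}$, and once from $Z^i$ inside $C^i$. We cancel each such pair using $\varepsilon_{\det B^i}$ and its left-dual version \eqref{eq:left dual}, braiding factors past each other with the Koszul signs \eqref{eq:sign braiding}. What remains is $\bigotimes_i (\det H^i(C))^{(-1)^i} \cong \det H(C)$, and composing all these maps defines $\Phi$.

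An alternative and cleaner route, closer to the spirit of Knudsen, is the following. We want to realise the isomorphism $\det C \cong \det Z \otimes \det B[1]$, together with the analogous one for $Z$, as images under $\det$ of short exact sequences of complexes
\begin{align*}
  0 \to Z \to C \to B[1] \to 0, \qquad 0 \to B \to Z \to H(C) \to 0 ,
\end{align*}
where $Z$ and $B$ carry zero differentials. The difficulty is that neither sequence is directly a quasi-isomorphism statement, since $C$ and $Z\oplus B[1]$ are not quasi-isomorphic. One therefore routes through the acyclic complex $B \xrightarrow{\id} B$, whose determinant is trivialised via $\varepsilon$.

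The main obstacle is well-definedness, namely that $\Phi$ does not depend on the auxiliary choices. These are the order of the tensor factors and the order in which the pairs $\det B^i$ are cancelled. Checking this means verifying that all the braiding and $\varepsilon$ sign contributions assemble consistently, using the coherence diagrams \eqref{eq:theta assoc}--\eqref{eq:epsilon monoidal} together with the commutativity axiom of $\det$. I would handle this by fixing one canonical order, say increasing $i$, and simply defining $\Phi$ with that order, so that independence becomes unnecessary. Naturality in quasi-isomorphisms that induce isomorphisms on the $Z^i$, $B^i$ and $H^i$ then follows from the naturality of $\det(\Sigma)$ and of $\varepsilon$, \eqref{eq:epsilon naturality}.
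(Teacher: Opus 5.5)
Your proposal is correct, but your main construction is not the one the paper uses. The paper takes $\Phi$ to be Knudsen's map, which it recalls in the proof of \cref{lemma:det homology explicit}. That map never leaves the category of complexes. It is the composite $\det C\cong\det Z\otimes\det B[1]\cong\det\Cone(\iota)\cong\det Q\otimes\det H(C)\cong\det H(C)$, where $\iota\colon B\to Z$ is the inclusion. The first three isomorphisms are $\det(\Sigma)$ for the short exact sequences of complexes $0\to Z\to C\to B[1]\to 0$, $0\to Z\to\Cone(\iota)\to B[1]\to 0$ and $0\to Q\to\Cone(\iota)\to H(C)\to 0$. The last one is the trivialization $\det Q\cong 1$ of the acyclic complex $Q$, which is the cone of $\id_B$.

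Your first route works degreewise instead. You identify $\det C$ with $\bigotimes_i(\det C^i)^{(\otinv)^i}$, split each $\det C^i$ using the module-level sequences, and cancel the two copies of each $\det B^i$ by hand with $\varepsilon$ and braidings. This is the classical Milnor--Turaev construction. It does produce an isomorphism, and it is the more explicit of the two; it is essentially the computation the paper carries out in \cref{lemma:det homology explicit}.

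What it costs you:
\begin{itemize}
\item The map depends on the ordering you fix, and a priori it agrees with Knudsen's $\Phi$ only up to a sign depending on the ranks of the $B^i$ and $H^i(C)$. The sign $(-1)^\alpha$ in \cref{lemma:det homology explicit sign}, and hence in \cref{theorem:torsion explicit}, is computed for Knudsen's $\Phi$ specifically, so using your $\Phi$ would require redoing that computation.
\item Knudsen's choice-free definition gives, directly from the determinant-functor axioms, the compatibilities the paper relies on. These are base change in \cref{lemma:naturality torsion} and compatibility with quasi-isomorphisms in the middle square of \eqref{eq:torsion simple homotopy}. For your construction you would have to check these separately.
\end{itemize}

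Your second route is essentially the paper's. The only change is that you replace the cone step by $0\to B\to Z\to H(C)\to 0$, followed by $\det B\otimes\det B[1]\cong\det\Cone(\id_B)\cong 1$.

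One correction there: the ``difficulty'' you name is not one. A determinant functor on $(C^{\mathrm{b}}(\mathsf{proj}(A)),\mathrm{qis})$ assigns $\det(\Sigma)$ to every short exact sequence of complexes, whether or not quasi-isomorphisms are involved. The class $\mathrm{qis}$ only restricts the morphisms along which $\det$ is functorial. So $0\to Z\to C\to B[1]\to 0$ directly gives $\det C\cong\det Z\otimes\det B[1]$; this is exactly \eqref{eq:det C to det H step 1}. Acyclicity is needed only at the end, to cancel the leftover copies of $\det B$ and $\det B[1]$.
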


We will use \cref{theorem:det C to det H} for complexes
with decreasing indices. 
To do so,
for a complex $C_\bullet = (C_i, \partial_i)_{i \in \mathbb{Z}}$
where the differentials $\partial_i : C_i \to C_{i-1}$ decrease the indices,
we regard it as a complex with increasing indices by setting $C^i \coloneqq C_{-i}$
and $d^i \coloneqq \partial_{-i}$.

\section{Moduli stack of local systems and twisted complexes}\label{sec:torsion}
Throughout this section, let $k$ be a field.
We denote by $\Alg_k$ the category of $k$-algebras.
We denote by $\Grpd$ the 2-category of groupoids.
A \emph{stack} is a pseudofunctor $\Alg_k \to \Grpd$ 
that satisfies the sheaf conditions with respect to
the fppf topology. For example, given a scheme $X$,
\begin{align*}
    \Alg_k \to \Grpd, \quad A \mapsto X(A)=\Hom_{\mathsf{Sch}}(\Spec A, X)
\end{align*}
defines a stack, where the set $X(A)$ is regarded as a discrete groupoid.

For a stack $X$, 
we will often denote the extension of scalar functor 
$X(f) : X(A) \to X(B)$ associated with a map $f : A \to B$ by $(-) \otimes_A B$, with given $f$ understood.

\subsection{Moduli stack of $G$-local systems}
\label{sec:moduli of G-local systems}
In this section, we review basic definitions and properties of $G$-local systems.
Let $G$ be a smooth affine algebraic group over $k$.
Let $M$ be a finite CW complex.
We denote by $\Pi_1 M$ the fundamental groupoid of $M$.
It is a groupoid whose objects are the points of $M$,
and morphisms from $x$ to $y$ are homotopy classes of paths from $x$ to $y$.
For any category $C$, and morphisms $f : x \to y$ and $g : y \to z$ in $C$,
we denote by $f \gg g$ the ``path ordered notation'' of the composition.
We have $f \gg g = g \circ f$ in the usual notation.
We will use this notation particularly when $C$ is a fundamental groupoid.

We denote by $\BG{G}$ the classifying stack of $G$.
Explicitly, for a $k$-algebra $A$, the groupoid $\BG{G}(A)$ 
is the category of $G_A$-torsors. 

For stacks $X$ and $Y$, the \emph{mapping stack} $\iHom(X, Y)$ is defined by
\begin{align}
  \iHom(X, Y) (A) \coloneqq \Hom_{\Hom(\Alg_k, \Grpd)} (X \times \Hom(A, -), Y).
\end{align}

\begin{definition}[Moduli stack of $G$-local systems]
  \label{def:moduli of G-local systems}
  We define the moduli stack $\Loc_{G, M}$ of $G$-local systems on $M$ as the following mapping stack:
  \begin{equation}\label{eq:G-loc stack}
    \Loc_{G, M} = \iHom (\underline{\Pi_1 M}, \BG{G})
  \end{equation}
  where $\underline{\Pi_1 M}$ is the constant stack associated with the fundamental groupoid $\Pi_1 M$.
\end{definition}

\begin{remark}\label{rem:moduli of G-local systems}
  We have the following equivalence:
  \begin{alignat}{1}
    \Loc_{G, M}(A) &\simeq \Hom_{\Grpd}(\Pi_1 M, \BG{G}(A))
  \end{alignat}
  where the right-hand side is the functor category.
\end{remark}

By Remark \ref{rem:moduli of G-local systems},
we see that an object of $\Loc_{G, M}(A)$ consists of the following data:
\begin{enumerate}
  \item for any $x \in M$, a $G_A$-torsor $P_x$,
  \item for any $x, y \in M$ and any (homotopy class of) path $\gamma : x \to y$,
  an isomorphism of $G_A$-torsors $P_\gamma : P_x \to P_y$
  such that
  \begin{equation*}
    P_{\id_x} = \id_{P_x}, \quad
    P_{\gamma_1 \gg \gamma_2} = P_{\gamma_2} \circ P_{\gamma_1}.
  \end{equation*}
\end{enumerate}
A morphism from $P$ to $P'$ in $\Loc_{G, M}(A)$
is a family of isomorphisms of $G_A$-torsors
$\varphi_x : P_x \to P'_x$ for any $x \in M$
such that for any path $\gamma : x \to y$,
the following diagram commutes:
\begin{equation}\label{eq:morphism of G-local systems}
  \begin{tikzpicture}[scale=1.3, commutative diagrams/every diagram, baseline={(current bounding box.center)}]
    \node (a) at (0,0) {$P_x$};
    \node (b) at (2,0) {$P_y$};
    \node (c) at (0,-1) {$P'_x$};
    \node (d) at (2,-1) {$P'_y$};
    \draw[morphism] (a) to node[above] {$P_\gamma$} (b);
    \draw[morphism] (a) to node[left] {$\varphi_x$} (c);
    \draw[morphism] (b) to node[right] {$\varphi_y$} (d);
    \draw[morphism] (c) to node[below] {$P'_\gamma$} (d);
  \end{tikzpicture}
\end{equation}

\begin{definition}[Moduli stack of marked $G$-local systems]
  \label{def:moduli of marked $G$-local systems}
  Let $S \subset M$ be a finite set such that each connected component of $M$ contains at least one point of $S$.
  We regard $S$ as the full subcategory of $\Pi_1 M$ whose objects are the points in $S$.
  We define the moduli stack 
  \begin{align}
    \Loc_{G, M, S} : \Alg_k \to \Grpd
  \end{align}
  of \emph{marked $G$-local systems}
  so that
  for any $k$-algebra $A$, 
  an object of $\Loc_{G, M, S}(A)$ consists of 
  an object $P \in \Loc_{G, M}(A)$ and
  a section $p_s \in P_s(A)$ for any $s \in S$; 
  a morphism from $(P, p)$ to $(P', p')$ is a morphism $\varphi : P \to P'$ 
  in $\Loc_{G, M}(A)$ such that $\varphi_s(p_s) = p'_s$ for any $s \in S$.
\end{definition}

\begin{lemma}\label{lemma:no auto in Loc G M S}
  Any object in $\Loc_{G, M, S}(A)$ has a trivial automorphism group.
\end{lemma}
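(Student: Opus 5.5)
Let $(P,p)$ be an object of $\Loc_{G,M,S}(A)$ and let $\varphi$ be an automorphism of it, that is, a family $\varphi_x : P_x \to P_x$ of automorphisms of $G_A$-torsors making \eqref{eq:morphism of G-local systems} commute (with $P' = P$) and satisfying $\varphi_s(p_s) = p_s$ for all $s \in S$. The plan is to show $\varphi_x = \id_{P_x}$ for every $x \in M$. We do this in two steps: first at the marked points, then at arbitrary points by transport along paths.

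\textbf{Step 1: marked points.} Fix $s \in S$. The torsor $P_s$ has the section $p_s \in P_s(A)$, so $P_s$ is trivial: the map $G_A \to P_s$, $g \mapsto p_s \cdot g$, is an isomorphism of (right) $G_A$-torsors. An automorphism of the trivial torsor is left multiplication by a unique element $h \in G(A)$, determined by $\varphi_s(p_s) = p_s \cdot h$. The condition $\varphi_s(p_s) = p_s$ forces $h = 1$. Hence $\varphi_s = \id_{P_s}$. Put differently, a torsor morphism is $G_A$-equivariant, so it is determined by its value on a single section that trivializes the torsor; this holds functorially in $A$-algebras $B$, because every section of $P_s$ over $B$ has the form $p_s \cdot g$ with $g \in G(B)$.

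\textbf{Step 2: arbitrary points.} Let $x \in M$. By hypothesis the connected component of $M$ containing $x$ contains some $s \in S$, so there is a path $\gamma : s \to x$. The commutativity of \eqref{eq:morphism of G-local systems} gives $\varphi_x \circ P_\gamma = P_\gamma \circ \varphi_s = P_\gamma$. Since $P_\gamma$ is an isomorphism, $\varphi_x = \id_{P_x}$. Therefore $\varphi = \id$.

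There is no real obstacle here. The only point that needs care is Step 1: a morphism of $G_A$-torsors must be checked as a morphism of sheaves, not merely on $A$-points. This is handled by the equivariance argument above, since the section $p_s$ makes $P_s$ globally trivial.
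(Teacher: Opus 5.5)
Your proof is correct and follows the paper's argument. As there, the marking forces $\varphi_s=\id_{P_s}$ at each $s\in S$, and the naturality square for a path $s\to x$ then gives $\varphi_x=\id_{P_x}$ for every $x\in M$; your only addition is spelling out, via equivariance on the trivialized torsor, why $\varphi_s(p_s)=p_s$ forces $\varphi_s=\id_{P_s}$, a step the paper leaves implicit.
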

\begin{proof}
  Let $\varphi : (P, p) \to (P, p)$ be an automorphism.
  For any $s \in S$,
  we have $\varphi_s (p_s) = p_s$, which implies that $\varphi_s = \id_{P_s}$.
  For any $x \in M$, we choose a point $s_x \in S$ and a path $\gamma_x : s_x \to x$.
  Noting that $\varphi_{s_x} = \id_{P_{s_x}}$,
  the commutative diagram \eqref{eq:morphism of G-local systems} for $\gamma_x$
  shows that $\varphi_x = \id_{P_x}$.
\end{proof}

For a group $\Gamma$, we denote by $\bullet_\Gamma$ the groupoid with a single object 
whose automorphism group is $\Gamma$.

\begin{lemma}[Monodromy correspondence]\label{lemma:monodromy of Loc G M S}
We have a bijection
\begin{equation}
  \label{eq:mon G}
  \mon_{G, M}(A) : \Obj (\Loc_{G, M, S}(A)) / \mathord{\cong} \xlongrightarrow{\cong} 
  \Obj \left(\Hom_{\Grpd} \left(S,\ \bullet_{G(A)}\right)\right)
\end{equation}
where $\Obj (C)$ is the set of objects of a category $C$.
\end{lemma}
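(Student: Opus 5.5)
The plan is to construct the map $\mon_{G,M}(A)$ explicitly and then check injectivity and surjectivity directly. Here $S$ is viewed as a full subcategory of $\Pi_1 M$, and an object of $\Hom_{\Grpd}(S, \bullet_{G(A)})$ is a functor assigning to each path class $\gamma : s \to t$ between marked points an element $g_\gamma \in G(A)$ compatibly with composition.

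\emph{Definition of the map.} Let $(P,p) \in \Loc_{G,M,S}(A)$ and $\gamma : s \to t$ with $s,t \in S$. The point $p_t$ trivializes the torsor, giving $P_t \cong G_A$. Hence there is a unique $g_\gamma \in G(A)$ with $P_\gamma(p_s) = p_t \cdot g_\gamma$; the precise left/right convention is chosen so that functoriality matches the order $\gamma_1 \gg \gamma_2$. Functoriality then follows from $P_{\gamma_1 \gg \gamma_2} = P_{\gamma_2} \circ P_{\gamma_1}$ together with $G$-equivariance of $P_{\gamma_2}$. If $\varphi : (P,p) \to (P',p')$ is an isomorphism, then applying $\varphi_t$ to the defining equation and using $\varphi_s(p_s) = p'_s$, $\varphi_t(p_t) = p'_t$ and \eqref{eq:morphism of G-local systems} shows that the resulting functors agree. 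So the map descends to isomorphism classes.

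\emph{Injectivity.} Suppose $(P,p)$ and $(P',p')$ give the same functor. For $s \in S$, define $\varphi_s$ as the unique torsor isomorphism with $p_s \mapsto p'_s$. For general $x$, pick as in \cref{lemma:no auto in Loc G M S} a path $\gamma_x : s_x \to x$ and set $\varphi_x := P'_{\gamma_x} \circ \varphi_{s_x} \circ P_{\gamma_x}^{-1}$. The main check is that this is independent of the chosen path and commutes with every $P_\delta$. For two choices $\gamma_x : s \to x$ and $\gamma'_x : s' \to x$, the loop $\gamma_x \gg (\gamma'_x)^{-1} : s \to s'$ is a morphism in $S$. On this morphism $P$ and $P'$ have the same monodromy element, so $\varphi_{s'} \circ P_{\gamma_x \gg \gamma_x'^{-1}} = P'_{\gamma_x\gg\gamma_x'^{-1}} \circ \varphi_s$, which gives independence. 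Compatibility with an arbitrary path $\delta : x \to y$ follows by taking $\gamma_y := \gamma_x \gg \delta$.

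\emph{Surjectivity.} This is the step requiring a construction, and I expect it to be the main obstacle, mainly because one must verify that the torsor data glue functorially. Given a functor $g : S \to \bullet_{G(A)}$, first extend it to all of $\Pi_1 M$. Using the chosen paths $\gamma_x$ (with $\gamma_s = \id_s$ for $s \in S$), set $P_x := G_A$ (trivial torsor) for every $x$. For $\delta : x \to y$, define $P_\delta$ to be left multiplication by $g_{\gamma_x \gg \delta \gg \gamma_y^{-1}}$, with the order adjusted to the convention above. Functoriality reduces to that of $g$, because the intermediate $\gamma_y^{-1} \gg \gamma_y$ cancel in $\Pi_1 M$. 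Take $p_s := 1 \in G(A)$. Then the monodromy of $(P,p)$ along $\gamma : s \to t$ is $g_\gamma$, by the choice $\gamma_s = \id$. The only points needing care are left versus right actions and the order of composition; the nonemptiness of $S$ in each component is what guarantees that the paths $\gamma_x$ exist.
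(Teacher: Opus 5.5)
Your proposal is correct and follows essentially the same route as the paper. The map is the same, defined by $P_\gamma(p_s) = p_{s'}\cdot g$, and the inverse construction is the same: trivial torsors $P_x = G_A$, $P_\gamma$ given by left multiplication by the functor's value on $\gamma_x \gg \gamma \gg \gamma_y^{-1}$, and markings $p_s = e$. You go beyond the paper in two ways. You check well-definedness on isomorphism classes and injectivity explicitly. Your normalization $\gamma_s = \id_s$ for $s \in S$ is exactly what is needed, though the paper leaves it implicit: it makes the constructed marked local system have monodromy equal to the given functor rather than a conjugate of it.
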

\begin{proof}
For an object $(P, p) \in \Loc_{G, M, S}(A)$,
the functor $\mon_{G}(A, P)$ is defined
by sending a path $\gamma : s \to s'$ for $s, s' \in S$ to the unique element $g \in G(A)$ 
satisfying
\begin{align}
  P_\gamma (A) (p_s) = p_{s'} \cdot g.
\end{align}
Conversely, suppose that we have a functor $\alpha : S \to \bullet_{G(A)}$.
For any point $x \in M$, we define $P_x \coloneqq G_A$.
For each $x \in M$, we can choose a point $s_x \in S$ and a path $\gamma_x : s_x \to x$.
For each $\gamma : x \to y$ in $M$, we define a path $\tilde{\gamma} : s_x \to s_y$ by 
$\tilde{\gamma} \coloneqq \gamma_x \gg \gamma \gg \gamma_y^{-1}$.
Then we define $P_\gamma: P_x \to P_y$ to be the left multiplication by $\alpha (\tilde{\gamma}) \in G(A)$.
For $s \in S$, we define
$p_s \coloneqq e \in G(A)$, where $e$ is the identity.
\end{proof}

By Lemma \ref{lemma:monodromy of Loc G M S}, 
for any marked local system $P \in \Loc_{G, M, S}(A)$,
any points $x, y \in S$,
and any path $\gamma : x \to y$ in $M$,
we have an element $\mon_P(\gamma) \coloneqq \mon_{G, M} (A, P, \gamma) \in G(A)$, which satisfies 
\begin{equation*}
  \mon_P (\id_x) = e, \quad
  \mon_P (\gamma_1 \gg \gamma_2) = \mon_P (\gamma_2) \mon_P (\gamma_1).
\end{equation*}
The element $\mon_P (\gamma)$ is called the \emph{monodromy} of $P$ along $\gamma$.

\begin{lemma}\label{lemma:monodromy natural}
  The map $\mon_{G, M}(A)$
  is natural in $A$ with respect to the extension of scalars.
\end{lemma}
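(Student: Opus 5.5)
We have to show that for a $k$-algebra map $f : A \to B$, the square formed by the extension of scalars $\Loc_{G,M,S}(A) \to \Loc_{G,M,S}(B)$ and the map $\Hom_{\Grpd}(S, \bullet_{G(A)}) \to \Hom_{\Grpd}(S, \bullet_{G(B)})$ induced by $G(f)$ commutes. The plan is to write out both composites and compare them path by path.

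First we recall what extension of scalars does to a marked local system. For $(P,p) \in \Loc_{G,M,S}(A)$, the object $(P,p)\otimes_A B$ has torsors $(P_x)_B = P_x \times_{\Spec A} \Spec B$. Its transition isomorphisms are $(P_\gamma)_B$, and its marking is $p_s \otimes B$, the image of $p_s$ under $P_s(A) \to P_s(B)$.

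Now fix a path $\gamma : s \to s'$ with $s,s' \in S$, and let $g = \mon_P(\gamma) \in G(A)$. By definition $g$ is the unique element with $P_\gamma(A)(p_s) = p_{s'}\cdot g$. We apply the map $P_{s'}(A) \to P_{s'}(B)$ to both sides. Two functoriality facts are needed here. First, the morphism of schemes $P_\gamma$ commutes with base change on points, so $(P_\gamma)_B(B)(p_s \otimes B)$ is the image of $P_\gamma(A)(p_s)$. Second, the action map $P_{s'} \times G_A \to P_{s'}$ is a morphism over $\Spec A$, so the image of $p_{s'}\cdot g$ is $(p_{s'}\otimes B)\cdot G(f)(g)$. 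Together these give
\[
(P_\gamma)_B(B)(p_s\otimes B) = (p_{s'}\otimes B)\cdot G(f)(g).
\]
By the uniqueness clause in the definition of monodromy, this means $\mon_{P\otimes_A B}(\gamma) = G(f)(\mon_P(\gamma))$.

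Finally, the map is well defined on isomorphism classes: an isomorphism $(P,p) \cong (P',p')$ base-changes to an isomorphism $(P,p)\otimes_A B \cong (P',p')\otimes_A B$. Since the functor $S \to \bullet_{G(A)}$ is determined by its values on paths, the square commutes. The argument is a routine unwinding of definitions. The only point needing care is that base change of a torsor is compatible with its $G$-action and with evaluating points, which is immediate from the universal property of the fiber product.
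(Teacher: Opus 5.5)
Your proposal is correct and follows the paper's approach: the paper's proof simply asserts $G(f)(\mon_P(\gamma)) = \mon_{P \otimes_A B}(\gamma)$ for a $k$-algebra map $f : A \to B$. You verify this identity explicitly by base-changing the defining equation $P_\gamma(A)(p_s) = p_{s'}\cdot g$ and invoking uniqueness, which is exactly the unwinding the paper leaves to the reader.
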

\begin{proof}
  For a map $f : A \to B$ of $k$-algebras,
  we can see that $G(f) (\mon_P(\gamma)) = \mon_{P \otimes_A B} (\gamma)$.
\end{proof}

\begin{lemma}
  The presheaf $\Loc_{G, M, S}$ is represented by a $k$-algebra.
\end{lemma}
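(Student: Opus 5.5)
The plan is to identify $\Loc_{G,M,S}$ with a product of copies of $G$ cut out by relations coming from a finite presentation of the fundamental groupoid restricted to $S$. Since $G$ is affine, such a closed subscheme is affine, so it is represented by a $k$-algebra.

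First, I will show that for every $k$-algebra $A$ the groupoid $\Loc_{G,M,S}(A)$ is equivalent to a discrete groupoid. \Cref{lemma:no auto in Loc G M S} says every object has trivial automorphism group, so the groupoid is equivalent to its set of isomorphism classes. The presheaf in question is therefore, up to equivalence, the set-valued functor $A \mapsto \Obj(\Loc_{G,M,S}(A))/\mathord{\cong}$. By \cref{lemma:monodromy of Loc G M S} this set is in bijection with the set of functors $S \to \bullet_{G(A)}$, and by \cref{lemma:monodromy natural} the bijection is natural in $A$. So it suffices to show that $A \mapsto \Obj(\Hom_{\Grpd}(S,\bullet_{G(A)}))$ is representable.

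Next, I will use that $M$ is a finite CW complex to present the full subgroupoid $S \subset \Pi_1 M$ finitely.

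\begin{enumerate}
\item Choose a maximal tree in the $1$-skeleton of each component, after homotoping points of $S$ to $0$-cells. Equivalently, fix a base point $s_0$ in each component and paths from $s_0$ to each $s \in S$.
\item This gives an equivalence of groupoids between $S$ and $\bigsqcup_{\text{components}} \bullet_{\pi_1(M_j,s_0)}$, together with connecting isomorphisms.
\item It follows that a functor $S \to \bullet_{G(A)}$ is equivalent to the following data: a homomorphism $\pi_1(M_j) \to G(A)$ for each component $M_j$, and an element of $G(A)$ for each $s \in S$ other than the base points, recording the image of the chosen path.
\item Each $\pi_1(M_j)$ is finitely presented, with generators from the $1$-cells off the tree and relations from the $2$-cells. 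Hence $\Hom(\pi_1(M_j), G(A))$ is the set of $A$-points of the closed subscheme of $G^{n_j}$ cut out by the finitely many relation words.
\end{enumerate}

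So the functor is represented by a closed subscheme of $\prod_j G^{n_j} \times G^{|S|-c}$, where $c$ is the number of components. This subscheme is affine because $G$ is affine, so it is $\Spec$ of a $k$-algebra, indeed a finitely generated one.

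The main obstacle is bookkeeping: making the bijection of functors natural and checking that the generator-and-relation description is functorial in $A$. This comes down to the fact that evaluating words in $G(A)$ commutes with $G(f)$.

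One further subtlety is the passage from the pseudofunctor to a set-valued presheaf. Representability should mean equivalence of stacks with the discrete stack of the scheme. I will address this by noting that a pseudofunctor whose values are groupoids with trivial automorphism groups is equivalent to the presheaf of its isomorphism classes. Given that presheaf, sheafiness is automatic once we know it is isomorphic to a representable functor.
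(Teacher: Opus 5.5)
Your proof is correct. Its first half, where trivial automorphism groups together with the natural monodromy bijection reduce everything to representing $A \mapsto \Obj(\Hom_{\Grpd}(S, \bullet_{G(A)}))$, is exactly the paper's reduction. The two arguments diverge only after that point.

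\textbf{The paper's construction.} The paper chooses no presentation at all. It takes one copy of $\mathcal{O}(G)$ for \emph{every} morphism of the groupoid $S$ and forms the filtered-colimit tensor product $\mathcal{O}(G)^{\otimes \Lambda}$. It then quotients by ideals $I_{\mathrm{mul}}$ and $I_{\mathrm{unit}}$, which impose $g_{\alpha \gg \beta} = g_\beta g_\alpha$ and $g_{\mathrm{id}} = e$.
\begin{itemize}
\item This is canonical and choice-free, and checking that it represents the functor is tautological.
\item It never uses that $M$ is a finite CW complex.
\item However, $\Lambda$ is infinite as soon as some $\pi_1$ is infinite, so the resulting algebra is not visibly finitely generated.
\end{itemize}

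\textbf{Your construction.} You use the finiteness of the CW structure: base points $s_0 \in S$, chosen paths to the remaining points of $S$, and a finite presentation of each $\pi_1(M_j,s_0)$. This identifies the functor with a closed subscheme of $\prod_j G^{n_j} \times G^{\lvert S \rvert - c}$. You therefore get a stronger conclusion, namely that $\Loc_{G,M,S}$ is of finite type over $k$, which matches the paper's explicit $T^2$ example. The price is that you must check the bijection in your step 3 is natural in $A$ for the fixed choices. You handle this correctly: evaluating words commutes with $G(f)$.

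\textbf{A convention to adjust.} With the paper's path-ordered composition, the vertex group of $\Pi_1 M$ at $s_0$ is $\pi_1(M_j,s_0)^{\mathrm{op}}$, and $\mon_P(\gamma_1 \gg \gamma_2) = \mon_P(\gamma_2)\mon_P(\gamma_1)$. So in step 3 you should say ``homomorphism from $\pi_1^{\mathrm{op}}$'', or equivalently read the relation words in reverse. This is harmless, since $\pi_1^{\mathrm{op}}$ is finitely presented as well.

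\textbf{A wording point in step 2.} A mere equivalence of groupoids $S \simeq \bigsqcup_j \bullet_{\pi_1}$ would only give an equivalence of functor categories, not a bijection on objects. What gives the bijection you state in step 3 is the explicit isomorphism of each component of $S$ with $\bullet_{\pi_1}$ times the indiscrete groupoid on its points, determined by your chosen paths. Your step 3 uses this correctly, so this is a matter of wording rather than a gap.
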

\begin{proof}
  By Lemma \ref{lemma:no auto in Loc G M S}, \ref{lemma:monodromy of Loc G M S},
  and \ref{lemma:monodromy natural},
  it suffices to show that the presheaf
  \begin{equation*}
    F : \Alg_k \to \Set, \quad A \mapsto \Obj \left(\Hom_{\Grpd} \left(S,\ \bullet_{G(A)} \right)\right)
  \end{equation*}
  is representable.
  Let $\mathcal{O}(G)$ be the coordinate ring of $G$.
  We denote by $\Delta : \mathcal{O}(G) \to \mathcal{O}(G) \otimes_k \mathcal{O}(G)$
  the comultiplication, and by $\varepsilon : \mathcal{O}(G) \to k$ the counit.
  Let $\Lambda$ be the set of morphisms in $S$.
  Let $\Fin (\Lambda)$ be the set of finite subsets of $\Lambda$.
  Let 
  \begin{equation*}
    \mathcal{O}(G)^{\otimes \Lambda} \coloneqq \colim_{N \in \Fin (\Lambda)} \bigotimes_{\gamma \in N} \mathcal{O}(G)
  \end{equation*}
  be the filtered colimit of the finite tensor products for each $N \in \Fin(\Lambda)$.
  For $\gamma \in \Lambda$,
  we denote by $\iota_\gamma : \mathcal{O}(G) \to \mathcal{O}(G)^{\otimes \Lambda}$
  the map $t \mapsto \cdots \otimes 1 \otimes t \otimes 1 \otimes \cdots$
  where $t$ is in the $\gamma$-th factor.
  We denote by $\mu : 
  \mathcal{O}(G)^{\otimes \Lambda} \otimes \mathcal{O}(G)^{\otimes \Lambda} \to \mathcal{O}(G)^{\otimes\Lambda}$
  the multiplication map,
  and by $\eta : k \to {\mathcal{O}(G)^{\otimes \Lambda}}$ the unit map.
  Let $I_{\mathrm{mul}}$ and $I_{\mathrm{unit}}$ be the ideals of $\mathcal{O}(G)^{\otimes \Lambda}$
  defined by
  \begin{align*}
    I_{\mathrm{mul}} &\coloneqq \langle \iota_{\alpha \gg \beta} (t) - (\mu \circ 
      (\iota_\beta \otimes \iota_\alpha) \circ \Delta)(t) \mid 
      x,y,z \in S,\, \alpha : x \to y,\, \beta : y \to z, \, 
      t \in \mathcal{O}(G) \rangle, \\
    I_{\mathrm{unit}} &\coloneqq \langle \iota_{\mathrm{id}_x} (t) - (\eta \circ \varepsilon) (t) \mid 
      x \in S,\, t \in \mathcal{O}(G) \rangle.
  \end{align*}
  We can see that the algebra $\mathcal{O}(G)^{\otimes \Lambda} / (I_{\mathrm{mul}} + I_{\mathrm{unit}})$
  represents $F$.
\end{proof}

\begin{lemma}\label{lemma:loc G as quotient}
  We have an equivalence of stacks:
  \begin{equation}
    \label{eq:loc G as quotient}
    \Loc_{G, M} \simeq [\Loc_{G, M, S} \mathrel{/} G^S].
  \end{equation}
  The right-hand side in \eqref{eq:loc G as quotient} is a quotient stack,
  where the action of $G^S$ on $\Loc_{G, M, S}$
  is given by changing the markings.
  We also have a $2$-isomorphism:
  \begin{equation}
    \begin{tikzpicture}[scale=1.3, commutative diagrams/every diagram, baseline={(current bounding box.center)}]
      \node (a) at (0,0) {$\Loc_{G, M, S}$};
      \node (b) at (2,0) {$\Loc_{G, M}$};
      \node (d) at (2,-1) {$[\Loc_{G, M, S} \mathrel{/} {G^S}]$};
      \draw[morphism] (a) to node[above] {} (b);
      \draw[morphism] (b) to node[right] {} (d);
      \draw[morphism] (a) to node[below] {} (d);
      \node at (barycentric cs:a=1,b=1,d=1) {$\scriptstyle \cong$};
    \end{tikzpicture}
  \end{equation}
  where the vertical arrow is the equivalence in \eqref{eq:loc G as quotient},
  and the top horizontal arrow is the $1$-morphism forgetting the markings.
\end{lemma}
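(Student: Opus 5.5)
We will construct a $1$-morphism $\Loc_{G,M,S} \to \Loc_{G,M}$ (forgetting markings), show that it is a $G^S$-torsor, and then use the universal property of the quotient stack $[\Loc_{G,M,S}/G^S]$. The action of $G^S$ on $\Loc_{G,M,S}$ is $(P,p)\cdot g = (P, (p_s g_s)_s)$, and the forgetful map is $G^S$-invariant on the nose. So it defines a morphism $[\Loc_{G,M,S}/G^S] \to \Loc_{G,M}$, and the $2$-isomorphism in the triangle comes from the canonical $2$-cell of the quotient presentation. It then suffices to show that $\pi:\Loc_{G,M,S}\to\Loc_{G,M}$ is a $G^S$-torsor, i.e.\ fppf locally trivial with simply transitive action. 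A $G^S$-torsor $X\to Y$ induces an equivalence $[X/G^S]\simeq Y$, and the inverse of our morphism is the map classifying this torsor.

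For $T=\Spec A\to \Loc_{G,M}$ given by $P\in\Loc_{G,M}(A)$, we compute the fiber product $T\times_{\Loc_{G,M}}\Loc_{G,M,S}$. For $B$ an $A$-algebra, its objects are markings $p_s\in P_s(B)$ for $s\in S$, together with the data of an isomorphism to $P\otimes_A B$. Since morphisms in $\Loc_{G,M,S}$ must preserve markings, Lemma~\ref{lemma:no auto in Loc G M S} shows that this fiber product is a sheaf of sets. Concretely it is $\prod_{s\in S} P_s$, the fiber product over $A$ of the finitely many torsors $P_s$. This is a $G_A^S$-torsor, because each $P_s$ is a $G_A$-torsor and the action on markings is coordinatewise right multiplication. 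Hence it is fppf locally trivial, with a section after some fppf cover trivializing all $P_s$ simultaneously; finitely many torsors can be trivialized over a common refinement.

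Next we check that the induced morphism $[\Loc_{G,M,S}/G^S]\to\Loc_{G,M}$ is an equivalence. It is fully faithful and essentially surjective fppf locally. Local essential surjectivity is the local existence of markings just shown. For full faithfulness, an object of $[\Loc_{G,M,S}/G^S](A)$ is a $G^S$-torsor $E$ with an equivariant map to $\Loc_{G,M,S}$. Passing to $\Loc_{G,M}$ recovers $E$ as the torsor of markings of the underlying $P$, which is exactly the fiber product computed above. The hom-sets are therefore identified, using the description of morphisms in \eqref{eq:morphism of G-local systems} and the fact that a morphism $\varphi$ of local systems transports markings.

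We expect the main obstacle to be the bookkeeping needed to show that the fiber product is really $\prod_s P_s$ as a sheaf, including the identification of $2$-morphisms. Here we use that $\Loc_{G,M}$ is a stack, namely the mapping stack of Definition~\ref{def:moduli of G-local systems}, via Remark~\ref{rem:moduli of G-local systems}, together with Lemma~\ref{lemma:no auto in Loc G M S}. Once that fiber product is identified, the rest is the standard fact that a morphism which is a $G^S$-torsor exhibits the target as the quotient stack.
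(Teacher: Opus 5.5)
Your proposal is correct and follows the same route as the paper, whose proof simply observes that the fiber of the forgetful map $\Loc_{G,M,S}\to\Loc_{G,M}$ over $P$ is a $G^S$-torsor (two markings of $P$ differ by a unique element of $G(A)^S$) and leaves the standard torsor-to-quotient-stack equivalence implicit. You additionally spell out the fppf-local existence of markings, via the fiber product being the torsor sheaf $\prod_{s\in S}P_s$, which need not have $A$-points. The paper tacitly needs this step, because its phrase ``$G(A)^S$-torsor'' is only literally correct after passing to an fppf cover.
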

\begin{proof}
  This follows because,
  for any $A \in \Alg_k$ and any $P \in \Loc_{G, M}(A)$,
  the fiber of the forgetful map $\Loc_{G, M, S}(A) \to \Loc_{G, M}(A)$
  over $P$ is a $G(A)^S$-torsor, that is,
  two markings of $P$ differ by a unique element of $G(A)^S$.
\end{proof}

\begin{corollary}
  $\Loc_{G, M}$ is an algebraic stack.
\end{corollary}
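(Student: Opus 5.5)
The plan is to deduce algebraicity directly from \cref{lemma:loc G as quotient}, which identifies $\Loc_{G, M}$ with the quotient stack $[\Loc_{G, M, S} \mathrel{/} G^S]$, combined with the lemma stating that the presheaf $\Loc_{G, M, S}$ is represented by a $k$-algebra. First I fix a finite set $S \subset M$ meeting every connected component. This is possible because $M$ is a finite CW complex, so it has finitely many components; for instance, one may take $S$ to be one vertex per component. Then $\Loc_{G, M, S} = \Spec R$ is an affine scheme over $k$. The group $G^S$ is a finite product of copies of the smooth affine algebraic group $G$, so it is again a smooth affine algebraic group over $k$.

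Next I check that the action of $G^S$ on $\Loc_{G, M, S}$ by change of markings is an action of group schemes. Under the monodromy correspondence of \cref{lemma:monodromy of Loc G M S}, $g = (g_s)_{s \in S}$ acts on monodromies by $\mon(\gamma) \mapsto g_{s'}^{-1}\, \mon(\gamma)\, g_s$ for $\gamma : s \to s'$, possibly up to inverting conventions. This formula is functorial in $A$ by \cref{lemma:monodromy natural}. It is therefore given by a morphism $G^S \times \Spec R \to \Spec R$, which I would write on generators $\iota_\gamma(t)$ of $R$ using $\Delta$ and the antipode, and which preserves the ideals $I_{\mathrm{mul}}$ and $I_{\mathrm{unit}}$.

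Finally I invoke the standard fact that the quotient stack $[X/H]$ of an algebraic space $X$ by a smooth affine group scheme $H$ (indeed, by any flat finitely presented one) is an algebraic stack. The atlas is $X \to [X/H]$, which is an $H$-torsor and hence smooth and surjective. The diagonal is representable because the isomorphism functor of two objects is a closed subscheme-type fiber product $H \times X \times_{X\times X} \cdots$ and so is representable. By the $2$-isomorphism in \cref{lemma:loc G as quotient}, the forgetful map $\Loc_{G, M, S} \to \Loc_{G, M}$ is itself a smooth atlas.

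The main subtle point is not finiteness: $R$ need not be of finite type, since $\Lambda$ is infinite. One could note that $R$ is in fact of finite type, because $S$ is a finite groupoid equivalent to a disjoint union of finitely presented groups $\pi_1$ of the finite CW complex. This makes $\Loc_{G, M}$ an algebraic stack locally of finite type. However, algebraicity in the sense of a smooth atlas with representable diagonal does not require this. So the only genuine check is the group-scheme action in the second step, which is routine.
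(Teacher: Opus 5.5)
Your proposal is correct and follows the paper's own route. The paper states the corollary without proof, as an immediate consequence of two facts: $\Loc_{G,M,S}$ is represented by a $k$-algebra, and $\Loc_{G,M} \simeq [\Loc_{G,M,S} \mathrel{/} G^S]$. You make explicit that the marking-change action is algebraic and invoke the standard fact that the quotient of an affine scheme by a smooth affine group is an algebraic stack.

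One small slip: your remark that $R$ ``need not be of finite type'' contradicts your next sentence. The explicit presentation has infinitely many generators, but $R$ is of finite type because fundamental groups of a finite CW complex are finitely presented. Also, $S$ has finitely many objects but is not a finite groupoid. None of this is needed for algebraicity.
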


\begin{remark}\label{rem:loc as rep/G}
When $M$ is connected, the automorphism group of $\Pi_1 M$ at a point $x_0 \in M$
is isomorphic to $\pi_1(M, x_0)^{\mathrm{op}}$, the opposite group of the fundamental group.
The opposite here ensures that the composition of a fundamental groupoid is compatible with
a common convention of the multiplication of a fundamental group that reads loops from left to right,
that is, $\gamma_1 \cdot \gamma_2 = \gamma_2 \circ \gamma_1$ for loops $\gamma_1, \gamma_2 : x_0 \to x_0$.
When $S$ consists of one point $x_0$,
Lemma~\ref{lemma:monodromy of Loc G M S} says that
the scheme $\Loc_{G, M, S}$ is isomorphic to the representation scheme
$\underline{\Hom} (\pi_1(M, x_0)^{\mathrm{op}}, G)$,
which represents the functor
\begin{equation}\label{eq:rep scheme}
  A\mapsto \Hom_{\mathsf{Grp}} (\pi_1(M, x_0)^{\mathrm{op}}, G(A)),
  \quad
  (f : A \to B) \mapsto (\rho \mapsto G(f) \circ \rho).
\end{equation}
By Lemma \ref{lemma:loc as quotient}, we have an equivalence of stacks
\begin{equation}\label{eq:char_stack}
  \Loc_{G, M} \simeq [\underline{\Hom} (\pi_1(M, x_0)^{\mathrm{op}},G) \mathrel{/} G ]
\end{equation}
where $G$ acts on $\underline{\Hom} (\pi_1(M, x_0)^{\mathrm{op}},G)$ by conjugation.
When $G$ is linearly reductive,
the GIT quotient $\underline{\Hom} (\pi_1(M, x_0)^{\mathrm{op}},G) \sslash G$
is called the character scheme.
See \cite{Sikora} for the character scheme (called the character variety therein),
and its applications to $3$-dimensional topology. The quotient stack \eqref{eq:char_stack} is called the \emph{character stack}, for example in \cite{BZBJ}.
\end{remark}

\begin{example}
Let $G=\SL_2$, whose coordinate ring is given by
\begin{align*}
    \cO(G) = k[x_{11},x_{12},x_{21},x_{22}]/ ( x_{11}x_{22} - x_{12}x_{21} - 1).
\end{align*}
Let $M=T^2$ be a torus, and $S\coloneq\{x_0\}$ be a singleton. Then $\Loc_{G,M,S}$ is an affine scheme with coordinate ring generated by $a_{11},a_{12},a_{21},a_{22},b_{11},b_{12},b_{21},b_{22}$, subject to the relations $a_{11}a_{22} - a_{12}a_{21} = 1$, $b_{11}b_{22} - b_{12}b_{21} = 1$, and 
\begin{align*}
    \mtx{a_{11} & a_{12} \\ a_{21} & a_{22}}\mtx{b_{11} & b_{12} \\ b_{21} & b_{22}} = \mtx{b_{11} & b_{12} \\ b_{21} & b_{22}}\mtx{a_{11} & a_{12} \\ a_{21} & a_{22}}.
\end{align*}
Here $a_{ij}$ and $b_{ij}$ correspond to the matrix entries of the monodromies along the generators of $\pi_1(M,x_0) \cong \bZ^2$. 
\end{example}

\subsection{Moduli stack of linear local systems}
\label{sec:moduli of lin local systems}
We denote by $\mathsf{proj}_{\GL}$ the stack of finitely generated projective modules,
and by $\mathsf{proj}_{\SL}$ the stack of finitely generated projective modules with volume forms.
Explicitly, for a $k$-algebra $A$, the groupoids are given as follows:
\begin{enumerate}
  \item $\mathsf{proj}_{\GL}(A)$ is the category whose objects are finitely generated projective $A$-modules,
    and morphisms are isomorphisms between them,
  \item $\mathsf{proj}_{\SL}(A)$ is the category of finitely generated projective $A$-modules $V$ equipped with
    a volume form, namely
    an isomorphism $\mathsf{vol}_V : \det V \cong A$,
    and a morphism from $(V, \mathsf{vol}_V)$ to $(W, \mathsf{vol}_W)$ is an isomorphism $f : V \to W$
    such that $\mathsf{vol}_W \circ \det f = \mathsf{vol}_V$.
\end{enumerate}

Let $M$ be a finite CW complex.
The following definition is a parallel to Definition~\ref{def:moduli of G-local systems}.

\begin{definition}[Moduli stack of local systems]
  \label{def:moduli of lin local systems}
  We define the moduli stack $\Loc_{\GL, M}$ of local systems on $M$,
  the moduli stack $\Loc_{\SL, M}$ of local systems on $M$ with volume forms as 
  the following mapping stacks:
  \begin{equation}\label{eq:loc stack}
    \Loc_{\GL, M} = \iHom (\underline{\Pi_1 M}, \mathsf{proj}_{\GL}), \qquad
    \Loc_{\SL, M} = \iHom (\underline{\Pi_1 M}, \mathsf{proj}_{\SL}).
  \end{equation}
\end{definition}

\begin{remark}\label{rem:lin loc as GL-loc}
  For each $n\geq 0$, let $\mathsf{proj}_{\GL}^{(n)} \subset \mathsf{proj}_{\GL}$
  and $\mathsf{proj}_{\SL}^{(n)} \subset \mathsf{proj}_{\SL}$
  denote the substacks of objects of rank $n$.
  Then we have a canonical equivalences $\mathsf{proj}_{\GL}^{(n)} \simeq \BG{\GL_n}$ and
  $\mathsf{proj}_{\SL}^{(n)} \simeq \BG{\SL_n}$.
  For $V \in \mathsf{proj}_{\GL}^{(n)}(A)$,
  the corresponding $(\GL_n)_A$-torsor
  is given by the set of bases of $V \otimes_A B$ for each $f : A \to B$.
  For $(V, \mathsf{vol}_V) \in \mathsf{proj}_{\SL}^{(n)}(A)$,
  the corresponding $(\SL_n)_A$-torsor
  is given by the set of bases of $V \otimes_A B$ whose wedge product 
  is mapped to $1$ via the volume form for each $f : A \to B$.
  Thus, for a connected $M$, we have decompositions 
  \begin{equation}
    \Loc_{\GL, M} \simeq \coprod\nolimits_{n \in \mathbb{N}} \Loc_{\GL_n, M},
    \qquad
    \Loc_{\SL, M} \simeq \coprod\nolimits_{n \in \mathbb{N}} \Loc_{\SL_n, M}.
  \end{equation}
\end{remark}

\begin{definition}[Moduli stack of marked linear local systems]
  \label{def:moduli of marked lin local systems}
  Let $S \subset M$ be a finite set such that each connected component of $M$ contains at least one point of $S$.
  Let $n=(n_x)_{x \in S} \in \mathbb{N}^S$. 
  We define the marked version of the moduli stacks defined in Definition \ref{def:moduli of lin local systems} 
  as functors
  \begin{align}
    \Loc_{\GL, M, S, n},\ \Loc_{\SL, M, S, n} : \Alg_k \to \Grpd,
  \end{align}
  which are defined as follows:
  for any $k$-algebra $A$, we define the following categories:
  \begin{enumerate}
    \item An object of $\Loc_{\GL, M, S, n}(A)$ consists of 
      an object $\mathcal{L}$ of $\Loc_{\GL, M}(A)$ and
      a basis $b_x = (b_{x, 1}, \dots, b_{x, n_x})$ of $\mathcal{L}_x$ for any $x \in S$.
      A morphism from $(\mathcal{L}, b)$ to $(\mathcal{L}', b')$ is a morphism $\varphi : \mathcal{L} \to \mathcal{L}'$
      in $\Loc_{\GL, M}(A)$ such that $\varphi_x \circ \lin b_x = \lin b'_x$ for any $x \in S$.
    \item An object of $\Loc_{\SL, M, S, n}(A)$ consists of 
      an object $\mathcal{L}$ of $\Loc_{\SL, M}(A)$ and
      a basis $b_x = (b_{x, 1}, \dots, b_{x, n_x})$ of $\mathcal{L}_x$ for any $x \in S$
      such that $\mathsf{vol}_{\mathcal{L}_x} (\wedge b_x) = 1$.
      A morphism from $(\mathcal{L}, b)$ to $(\mathcal{L}', b')$ is a morphism $\varphi : \mathcal{L} \to \mathcal{L}'$
      in $\Loc_{\SL, M}(A)$ such that $\varphi_x \circ \lin b_x = \lin b'_x$ for any $x \in S$.
  \end{enumerate}
  We also define 
  \begin{align}
    \Loc_{\GL, M, S} \coloneqq \coprod\nolimits_{n \in \mathbb{N}^S} \Loc_{\GL, M, S, n}, \quad
    \Loc_{\SL, M, S} \coloneqq \coprod\nolimits_{n \in \mathbb{N}^S} \Loc_{\SL, M, S, n}.
  \end{align}
\end{definition}

Lemmas \ref{lemma:no auto}--\ref{lemma:loc as quotient} below can be proved analogously to the corresponding lemmas 
in the previous section (or can be reduced to them via Remark \ref{rem:lin loc as GL-loc}).

\begin{lemma}\label{lemma:no auto}
  Any object in $\Loc_{\GL, M, S, n}(A)$ has a trivial automorphism group.
  The same assertion holds for $\Loc_{\SL, M, S, n}(A)$.
\end{lemma}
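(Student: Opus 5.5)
The argument mirrors the proof of Lemma~\ref{lemma:no auto in Loc G M S}. Let $\varphi : (\mathcal{L}, b) \to (\mathcal{L}, b)$ be an automorphism in $\Loc_{\GL, M, S, n}(A)$. Then $\varphi$ is a family of $A$-linear isomorphisms $\varphi_x : \mathcal{L}_x \to \mathcal{L}_x$ for $x \in M$, compatible with the parallel transports $\mathcal{L}_\gamma$ as in \eqref{eq:morphism of G-local systems}. We will show $\varphi_x = \id$ for every $x$, first at the marked points and then everywhere.

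\emph{Marked points.} For $s \in S$, the condition $\varphi_s \circ \lin b_s = \lin b_s$ holds. Here $\lin b_s : A^{n_s} \to \mathcal{L}_s$ is the linear map sending the standard basis to $b_s$. Since $b_s$ is a basis, $\lin b_s$ is an isomorphism, so composing with its inverse gives $\varphi_s = \id_{\mathcal{L}_s}$.

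\emph{Arbitrary points.} For $x \in M$, choose $s_x \in S$ in the same connected component as $x$ and a path $\gamma_x : s_x \to x$. This is possible by the hypothesis on $S$. Naturality gives $\varphi_x \circ \mathcal{L}_{\gamma_x} = \mathcal{L}_{\gamma_x} \circ \varphi_{s_x} = \mathcal{L}_{\gamma_x}$. Since $\mathcal{L}_{\gamma_x}$ is an isomorphism, $\varphi_x = \id_{\mathcal{L}_x}$. Hence $\varphi = \id$.

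\emph{The $\SL$ case.} Automorphisms in $\Loc_{\SL, M, S, n}(A)$ are in particular automorphisms of the underlying object of $\Loc_{\GL, M, S, n}(A)$ obtained by forgetting the volume forms. They also preserve the same bases. So the argument above applies verbatim and shows they are trivial.

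Alternatively, one could reduce to Lemma~\ref{lemma:no auto in Loc G M S} via Remark~\ref{rem:lin loc as GL-loc}, identifying a marked linear local system with a marked $\GL_n$- or $\SL_n$-local system. A basis of $\mathcal{L}_s$ corresponds to a section of the frame torsor. However, this requires $n$ to be constant on each connected component, and checking the identification on markings is more bookkeeping than the direct argument. No step is a real obstacle; the only point needing care is that $\lin b_s$ is invertible, which is exactly what "basis" means.
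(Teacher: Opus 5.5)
Your proposal is correct and follows the paper's route: the paper proves this lemma by saying it is analogous to Lemma~\ref{lemma:no auto in Loc G M S}, and your argument is exactly that analogue. The marking forces $\varphi_s = \id$ because $\lin b_s$ is invertible, naturality along a path $\gamma_x : s_x \to x$ then propagates this to every point, and the $\SL$ case follows because its automorphisms are in particular $\GL$ automorphisms fixing the same bases.
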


\begin{lemma}[Monodromy correspondence]\label{lemma:monodromy}
We have bijections
\begin{alignat}{1}
  \label{eq:mon GL}
  \mon_{\GL, M}(A) : \Obj (\Loc_{\GL, M, S}(A)) / \mathord{\cong} &\xlongrightarrow{\cong}
  \Obj \left(\Hom_{\Grpd} \left(S,\ \coprod\nolimits_{n \in \mathbb{N}} \bullet_{\GL_n(A)}\right)\right) \\
  \label{eq:mon SL}
  \mon_{\SL, M}(A) : \Obj (\Loc_{\SL, M, S}(A)) / \mathord{\cong} &\xlongrightarrow{\cong} 
  \Obj \left(\Hom_{\Grpd} \left(S,\ \coprod\nolimits_{n \in \mathbb{N}} \bullet_{\SL_n(A)}\right)\right)
\end{alignat}
\end{lemma}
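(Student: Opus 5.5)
The plan is to mimic the proof of Lemma~\ref{lemma:monodromy of Loc G M S}, working one rank at a time. Both sides decompose over $n \in \mathbb{N}^S$. For $S$ a single point this is immediate. In general, any functor $S \to \coprod_n \bullet_{\GL_n(A)}$ is constant on the rank index along each connected component of $S$, because morphisms of $S$ are paths in $M$ and a functor into a disjoint union of one-object groupoids sends every morphism into a single summand. On the left, a linear local system has locally constant rank along paths, so $n_x = n_y$ whenever $x,y \in S$ are joined by a path. So it suffices to fix $n$ with $n_x = n_y$ for connected $x,y$ and produce a bijection between isomorphism classes in $\Loc_{\GL, M, S, n}(A)$ and functors $S \to \bullet_{\GL_{n}(A)}$ that respect the component-wise rank. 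The $\SL$ case runs in parallel.

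\textbf{Forward map.} Take $(\mathcal{L}, b)$ and a path $\gamma : s \to s'$ with $s, s' \in S$. Since $b_{s'}$ is a basis, there is a unique matrix $g \in \GL_{n_s}(A)$ with
\[
\mathcal{L}_\gamma \circ \lin b_s = \lin b_{s'} \circ g .
\]
We set $\mon(\gamma) = g$. Functoriality, in the form $\mon(\gamma_1 \gg \gamma_2) = \mon(\gamma_2)\mon(\gamma_1)$ together with $\mon(\id) = 1$, follows from $\mathcal{L}_{\gamma_1 \gg \gamma_2} = \mathcal{L}_{\gamma_2}\circ\mathcal{L}_{\gamma_1}$ and uniqueness of $g$. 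An isomorphism $\varphi$ of marked systems intertwines $\mathcal{L}_\gamma$ with $\mathcal{L}'_\gamma$ and carries $\lin b_s$ to $\lin b'_s$, so it preserves $g$; hence the map is well defined on isomorphism classes.

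In the $\SL$ case we additionally need $\det g = 1$. This holds because $\mathcal{L}_\gamma$ preserves volume forms and $\mathsf{vol}(\wedge b_s) = 1 = \mathsf{vol}(\wedge b_{s'})$, so $\det g \cdot \mathsf{vol}(\wedge b_{s'}) = \mathsf{vol}(\wedge b_s)$ forces $\det g = 1$.

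\textbf{Inverse map.} Given $\alpha$, choose for each $x \in M$ a point $s_x \in S$ in its component and a path $\gamma_x : s_x \to x$, taking $\gamma_s = \id_s$ for $s \in S$. Define $\mathcal{L}_x \coloneqq A^{n_{s_x}}$, with the standard volume form in the $\SL$ case, and let $\mathcal{L}_\gamma$ be multiplication by $\alpha(\gamma_x \gg \gamma \gg \gamma_y^{-1})$. Take $b_s$ to be the standard basis. Functoriality holds because the intermediate $\gamma_y^{-1}\gg\gamma_y$ cancels in the groupoid.

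\textbf{Bijectivity.} The composite starting from $\alpha$ returns $\alpha$, since the chosen paths are identities on $S$. For the other composite, given $(\mathcal{L}, b)$, define $\varphi_x : A^{n} \to \mathcal{L}_x$ by $\varphi_x \coloneqq \mathcal{L}_{\gamma_x} \circ \lin b_{s_x}$. This is a morphism of marked local systems from the reconstructed object to $(\mathcal{L}, b)$, and by the defining relation of $\mon$ it is compatible with every $\mathcal{L}_\gamma$.

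\textbf{Main obstacle.} The only subtle point is bookkeeping rather than mathematics: the order convention $\gamma_1 \gg \gamma_2$ versus matrix multiplication, and checking $\det g = 1$ in the $\SL$ case. Alternatively, both bijections can be deduced directly from Lemma~\ref{lemma:monodromy of Loc G M S} with $G = \GL_n$ or $\SL_n$ via the equivalences in Remark~\ref{rem:lin loc as GL-loc}. Under these equivalences a marking by a basis is exactly a section of the frame torsor.
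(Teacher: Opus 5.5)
Your proposal is correct and matches the paper, which gives no separate argument and says this lemma is proved analogously to Lemma~\ref{lemma:monodromy of Loc G M S}, or reduced to it via Remark~\ref{rem:lin loc as GL-loc}. You carry out that analogue in detail, including the $\det g = 1$ check in the $\SL$ case, and you note the same reduction as an alternative.
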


\begin{lemma}
  The maps $\mon_{\SL, M}(A)$ and $\mon_{G, M}(A)$
  are natural in $A$ with respect to the extension of scalars.
\end{lemma}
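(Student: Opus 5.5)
The plan is to unwind both bijections $\mon_{\SL, M}(A)$ and $\mon_{G, M}(A)$ and then check directly that they commute with the maps induced by a $k$-algebra homomorphism $f : A \to B$. This is the same argument as in Lemma~\ref{lemma:monodromy natural}.

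For $\mon_{G, M}(A)$ there is nothing new to prove, since this is exactly Lemma~\ref{lemma:monodromy natural}; I would simply cite it.

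For $\mon_{\SL, M}(A)$, recall from the construction in Lemma~\ref{lemma:monodromy} (parallel to Lemma~\ref{lemma:monodromy of Loc G M S}) how $\mon_{(\mathcal{L}, b)}(\gamma)$ is defined. For a marked object $(\mathcal{L}, b) \in \Loc_{\SL, M, S, n}(A)$ and a path $\gamma : s \to s'$ with $s, s' \in S$, it is the unique matrix $g \in \SL_{n_s}(A)$ (where necessarily $n_s = n_{s'}$) such that
\begin{equation*}
\mathcal{L}_\gamma \circ \lin b_s = \lin b_{s'} \circ g .
\end{equation*}
Volume-preservation of $\mathcal{L}_\gamma$, together with $\mathsf{vol}(\wedge b_s) = \mathsf{vol}(\wedge b_{s'}) = 1$, forces $\det g = 1$.

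Extension of scalars along $f$ sends $(\mathcal{L}, b)$ to $(\mathcal{L} \otimes_A B,\ b \otimes 1)$, with the induced volume forms and transition maps $\mathcal{L}_\gamma \otimes_A B$. Applying $-\otimes_A B$ to the defining identity gives
\begin{equation*}
(\mathcal{L}_\gamma \otimes B) \circ \lin(b_s \otimes 1) = \lin(b_{s'} \otimes 1) \circ \SL_{n_s}(f)(g).
\end{equation*}
By the uniqueness in the monodromy correspondence, this shows $\mon_{(\mathcal{L}, b) \otimes_A B}(\gamma) = \SL_{n}(f)(\mon_{(\mathcal{L}, b)}(\gamma))$. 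This is precisely naturality of \eqref{eq:mon SL}, with functoriality in $\gamma$ being automatic.

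The only point requiring care is that the construction is compatible with isomorphism classes, so that it descends to $\Obj/\mathord{\cong}$. Extension of scalars preserves isomorphisms, and by Lemma~\ref{lemma:no auto} these isomorphisms are unique, so this is immediate. There is no genuine obstacle here; the statement is routine bookkeeping.
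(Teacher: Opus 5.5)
Your proposal is correct and follows essentially the paper's route: the paper proves this by analogy with Lemma~\ref{lemma:monodromy natural}, i.e.\ by checking directly that the monodromy of $(\mathcal{L}, b) \otimes_A B$ along $\gamma$ is the image of $\mon_{(\mathcal{L}, b)}(\gamma)$ under $f$, which is exactly what you do via uniqueness of $g$ in $\mathcal{L}_\gamma \circ \lin b_s = \lin b_{s'} \circ g$. The $\mon_{G, M}$ in the statement is presumably a typo for $\mon_{\GL, M}$, and your argument covers that case verbatim once the determinant check is dropped; also, the appeal to Lemma~\ref{lemma:no auto} is unnecessary, since any functor preserves isomorphism classes.
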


\begin{lemma}
  The presheaves $\Loc_{\GL, M, S}$ and $\Loc_{\SL, M, S}$
  are represented by $k$-algebras.
\end{lemma}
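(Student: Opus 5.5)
The plan mirrors the proof of the corresponding lemma for $\Loc_{G, M, S}$, with Lemmas~\ref{lemma:no auto} and \ref{lemma:monodromy} and the naturality of $\mon_{\GL, M}(A)$ and $\mon_{\SL, M}(A)$ in place of their $G$-counterparts. By Lemma~\ref{lemma:no auto}, the groupoids $\Loc_{\GL, M, S}(A)$ and $\Loc_{\SL, M, S}(A)$ are equivalent to discrete ones. Lemma~\ref{lemma:monodromy}, together with naturality in $A$, then identifies $\Loc_{\GL, M, S}$ with the set-valued presheaf
\begin{equation*}
  F_{\GL} : A \mapsto \Obj \Bigl(\Hom_{\Grpd} \Bigl(S,\ \coprod\nolimits_{n \in \mathbb{N}} \bullet_{\GL_n(A)}\Bigr)\Bigr),
\end{equation*}
and similarly identifies $\Loc_{\SL, M, S}$ with $F_{\SL}$. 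It therefore suffices to represent $F_{\GL}$ and $F_{\SL}$.

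First I split $F_{\GL}$ according to the rank function $n = (n_x)_{x \in S}$. A functor $S \to \coprod_n \bullet_{\GL_n(A)}$ sends each $x \in S$ to some $n_x$. Since every morphism of $S$ must go to a morphism of the coproduct, $n_x = n_y$ whenever $x$ and $y$ lie in the same component of $M$. Thus only the finitely many components of $M$ contribute independent ranks. For each admissible $n$, I build
\begin{equation*}
  R_n \coloneqq \Bigl(\colim_{N \in \Fin(\Lambda)} \bigotimes_{\gamma \in N} \mathcal{O}(\GL_{n_{s(\gamma)}})\Bigr) \Big/ (I_{\mathrm{mul}} + I_{\mathrm{unit}}),
\end{equation*}
where $s(\gamma)$ is the source of $\gamma$, and $I_{\mathrm{mul}}$ and $I_{\mathrm{unit}}$ are defined word for word as in the $G$-case, using the comultiplication and counit of $\mathcal{O}(\GL_n)$. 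For $F_{\SL}$ I use $\mathcal{O}(\SL_n)$ instead. The verification that $R_n$ represents the $n$-th summand is identical to the $G$-case: a $k$-algebra map $R_n \to A$ is a family $(g_\gamma)_{\gamma \in \Lambda}$ with $g_\gamma \in \GL_{n}(A)$ satisfying exactly the functoriality relations.

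Next I assemble the summands into a single representing algebra $R \coloneqq \prod_n R_n$. The step I expect to be the main obstacle is showing that $\Hom_{\Alg_k}(R, A)$ recovers $\coprod_n \Hom(R_n, A)$, since for an infinite product this fails for general $A$. I would first check whether the paper's convention intends the coproduct $\coprod_n$ over the rank-wise pieces, so that "represented by $k$-algebras" means component-wise representation. Otherwise, I would restrict attention to $A$ with connected spectrum, where a map $\prod_n R_n \to A$ factors through a single factor only when it is determined by finitely many idempotents. At that point I would state precisely in which sense the representation holds, rather than claim more than the product algebra gives.
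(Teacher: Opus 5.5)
Your rank-by-rank construction is the paper's argument. The paper only says that these lemmas are proved as in the $G$-case, or reduced to it via \cref{rem:lin loc as GL-loc}. For fixed admissible $n$, your algebra $R_n$ does represent $\Loc_{\GL, M, S, n}$ (resp.\ $\Loc_{\SL, M, S, n}$). Here $R_n$ is built from $\mathcal{O}(\GL_n)$ (resp.\ $\mathcal{O}(\SL_n)$) with the same ideals $I_{\mathrm{mul}}$ and $I_{\mathrm{unit}}$. Equivalently, it is the tensor product, over the components of $M$, of the algebras representing the marked $\GL_{n_c}$-moduli.

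Your worry about the assembly step is justified, but the remedy is to drop $\prod_n R_n$ rather than try to repair it.

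\begin{itemize}
\item Every admissible $n$ gives a nonempty piece, since it contains the trivial local system. So $\Loc_{\GL, M, S}$ has infinitely many nonempty components and cannot be affine.
\item $\prod_n R_n$ has extra points even over fields. Take a nonprincipal ultrafilter $\mathcal{U}$ on the set of admissible $n$. Compose the augmentations $\prod_n R_n \to \prod_n k$ (trivial local systems) with the projection to the ultrapower field $(\prod_n k)/\mathcal{U}$. This field-valued point sends each idempotent $e_{n_0}$ to $0$, so it factors through no $R_{n_0}$. Restricting to $A$ with connected spectrum therefore does not rescue the product.
\item If the coproduct is read pointwise as presheaves, even two nonempty pieces fail to be representable. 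A representable functor turns $A_1 \times A_2$ into a product of values, but the pointwise coproduct misses pairs of local systems of different ranks on the two factors.
\end{itemize}

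So the lemma can only be meant componentwise, as you suspected: each $\Loc_{\GL, M, S, n}$ and $\Loc_{\SL, M, S, n}$ is represented by a $k$-algebra. Then $\Loc_{\GL, M, S}$ is the disjoint union of the affine schemes $\Spec R_n$.

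The paper's one-line proof silently works rank by rank, and that is all it uses later. \Cref{lemma:loc as quotient} is stated rank by rank, and \cref{lemma:fixed betti substack} only treats $\Loc_{\GL, M, S}$ as a scheme. With your last paragraph replaced by this componentwise statement, your proof is complete and agrees with the paper's.
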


For a finite set $S \subset M$ and $n \in \mathbb{N}^S$,
we define
\begin{align}
  \GL_n^S \coloneqq \prod_{x \in S} \GL_{n_x}, \quad
  \SL_n^S \coloneqq \prod_{x \in S} \SL_{n_x}.
\end{align}

\begin{lemma}\label{lemma:loc as quotient}
  We have equivalences of stacks
  \begin{align}
    \label{eq:loc GL as quotient}
    \Loc_{\GL, M} &\simeq 
      \coprod\nolimits_{n \in \mathbb{N}^S}
      [\Loc_{\GL, M, S, n} \mathrel{/} \GL_n^S] \\
    \label{eq:loc SL as quotient}
    \Loc_{\SL, M} &\simeq 
      \coprod\nolimits_{n \in \mathbb{N}^S}
      [\Loc_{\SL, M, S, n} \mathrel{/} \SL_n^S] 
  \end{align}
  The group actions for the quotient stacks in the right-hand sides in \eqref{eq:loc GL as quotient}--\eqref{eq:loc G as quotient}
  are given by changing the markings.
  We also have 2-isomorphisms:
  \begin{equation}
    \begin{tikzpicture}[scale=1.3, commutative diagrams/every diagram, baseline={(current bounding box.center)}]
      \node (a) at (0,0) {$\Loc_{\GL, M, S}$};
      \node (b) at (2,0) {$\Loc_{\GL, M}$};
      \node (d) at (2,-1) {$\coprod_n [\Loc_{\GL, M, S, n} \mathrel{/} {\GL_n^S}]$};
      \draw[morphism] (a) to node[above] {} (b);
      \draw[morphism] (b) to node[right] {} (d);
      \draw[morphism] (a) to node[below] {} (d);
      \node at (barycentric cs:a=1,b=1,d=1) {$\scriptstyle \cong$};
    \end{tikzpicture}
    \begin{tikzpicture}[scale=1.3, commutative diagrams/every diagram, baseline={(current bounding box.center)}]
      \node (a) at (0,0) {$\Loc_{\SL, M, S}$};
      \node (b) at (2,0) {$\Loc_{\SL, G, M}$};
      \node (d) at (2,-1) {$\coprod_n [\Loc_{\SL, M, S, n} \mathrel{/} {\SL_n^S}]$};
      \draw[morphism] (a) to node[above] {} (b);
      \draw[morphism] (b) to node[right] {} (d);
      \draw[morphism] (a) to node[below] {} (d);
      \node at (barycentric cs:a=1,b=1,d=1) {$\scriptstyle \cong$};
    \end{tikzpicture}
  \end{equation}
  where the vertical arrows are the equivalences,
  and the top horizontal arrows are the 1-morphisms forgetting the markings.
\end{lemma}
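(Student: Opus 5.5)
The plan follows the proof of Lemma~\ref{lemma:loc G as quotient}, with $G$-torsors replaced by projective modules (resp.\ projective modules with volume forms) and sections of torsors replaced by bases. I treat $\Loc_{\GL, M}$ in detail; the $\SL$ case only adds the volume-form normalization throughout.

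\textbf{Step 1 (decomposition by rank).} For $\mathcal{L} \in \Loc_{\GL, M}(A)$ and $x \in S$, the module $\mathcal{L}_x$ is finitely generated projective, so its rank is a locally constant function on $\Spec A$. This gives a decomposition of $\Spec A$ into open and closed pieces on which $\operatorname{rank} \mathcal{L}_x = n_x$ for every $x \in S$. Since $S$ meets every component of $M$ and parallel transport is an isomorphism, the rank of $\mathcal{L}_y$ at every other point $y$ is then also determined. This is compatible with base change, so $\Loc_{\GL, M} \simeq \coprod_{n \in \mathbb{N}^S} \Loc_{\GL, M}^{(n)}$. It therefore suffices to fix $n$.

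\textbf{Step 2 (the forgetful map is a torsor).} Fix $n$ and consider $\Loc_{\GL, M, S, n} \to \Loc_{\GL, M}^{(n)}$, which forgets the bases. For $\mathcal{L}$ over $A$, the fiber functor sends $B \mapsto \prod_{x \in S} \{\text{bases of } \mathcal{L}_x \otimes_A B\}$. By Remark~\ref{rem:lin loc as GL-loc}, each factor is the $(\GL_{n_x})_A$-torsor attached to $\mathcal{L}_x$, so the fiber is a $(\GL_n^S)_A$-torsor. Concretely, two markings differ by a unique element of $\GL_n^S(B)$, acting by change of basis, and a marking exists fppf-locally (in fact Zariski-locally, because projective modules are locally free). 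A morphism of marked objects is exactly a morphism of local systems intertwining the markings. By Lemma~\ref{lemma:no auto}, objects of $\Loc_{\GL, M, S, n}$ have no automorphisms, so $\Loc_{\GL, M, S, n}$ is equivalent to its representing affine scheme.

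\textbf{Step 3 (identifying the quotient).} Apply the standard criterion: if $X \to Y$ is a morphism of stacks whose base change along every $\Spec A \to Y$ is a $H$-torsor for a group $H$ acting on $X$ over $Y$, then the induced map $[X/H] \to Y$ is an equivalence. The map $[X/H] \to Y$ comes from the $H$-invariant map $X \to Y$. Full faithfulness and essential surjectivity are checked fppf-locally, using the torsor property together with the descent (stack) condition on both sides. Taking $H = \GL_n^S$ gives \eqref{eq:loc GL as quotient}. The 2-isomorphism in the triangle is the tautological one, because $X \to [X/H]$ composed with $[X/H] \to Y$ is the original forgetful map.

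\textbf{Step 4 (the $\SL$ case).} The same argument works with bases $b_x$ normalized by $\mathsf{vol}(\wedge b_x) = 1$. Any two such bases differ by an element of $\SL_{n_x}$. Such bases exist locally: starting from any local basis, rescale one vector by the inverse of the unit $\mathsf{vol}(\wedge b_x)$.

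The only point needing care, and so the main obstacle, is the bookkeeping in Step 1. One has to check that rank decomposition commutes with the quotient presentation when $\Spec A$ is disconnected, so that the coproduct over $n$ is taken correctly as stacks. Equivalently, one could reduce directly to Lemma~\ref{lemma:loc G as quotient} by applying Remark~\ref{rem:lin loc as GL-loc} componentwise.
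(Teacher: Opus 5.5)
Your proposal is the paper's argument written out in full. The paper proves this lemma by saying it goes exactly as Lemma~\ref{lemma:loc G as quotient} (the fiber of the marking-forgetting map is a torsor under change of markings) or by reduction via Remark~\ref{rem:lin loc as GL-loc}; your Steps 2--3 are precisely that torsor argument, and your Step 1 supplies the rank bookkeeping the paper leaves implicit. One caveat, which you share with the paper rather than introduce: the existence of normalized bases in Step 4 needs $n_x \geq 1$, because a volume form on the zero module is an arbitrary unit $u \in A^{\times}$ and the empty basis satisfies $\mathsf{vol}(\wedge b_x) = 1$ only where $u = 1$, so on components with some $n_x = 0$ one has $\mathsf{proj}_{\SL}^{(0)} \simeq \mathbb{G}_m$ rather than $\BG{\SL_0}$ and the $\SL$-equivalence as stated is not an equivalence there.
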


\begin{corollary}
  $\Loc_{\GL, M}$ and $\Loc_{\SL, M}$ are algebraic stacks.
\end{corollary}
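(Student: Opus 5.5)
The corollary follows once we exhibit $\Loc_{\GL, M}$ and $\Loc_{\SL, M}$ as disjoint unions of quotient stacks of affine schemes by smooth affine group schemes. We use \cref{lemma:loc as quotient}, which gives
\begin{equation*}
  \Loc_{\GL, M} \simeq \coprod\nolimits_{n \in \mathbb{N}^S} [\Loc_{\GL, M, S, n} \mathrel{/} \GL_n^S], \qquad
  \Loc_{\SL, M} \simeq \coprod\nolimits_{n \in \mathbb{N}^S} [\Loc_{\SL, M, S, n} \mathrel{/} \SL_n^S].
\end{equation*}
Here the finite set $S$ meets each connected component of $M$; since $M$ is a finite CW complex, such an $S$ exists. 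By the representability lemma preceding \cref{lemma:loc as quotient}, each $\Loc_{\GL, M, S, n}$ and $\Loc_{\SL, M, S, n}$ is an affine scheme over $k$. The groups $\GL_n^S = \prod_{x \in S} \GL_{n_x}$ and $\SL_n^S$ are finite products of smooth affine group schemes, so they are smooth affine group schemes.

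The plan has three steps.
\begin{enumerate}
  \item Recall the standard fact that for an algebraic space $X$ with an action of a smooth (or merely flat, finitely presented) affine group scheme $H$, the quotient stack $[X/H]$ is algebraic. An atlas is $X \to [X/H]$: it is an $H$-torsor, hence smooth and surjective. The diagonal is representable because it is expressed through the action map $H \times X \to X \times X$.
  \item Apply this to each summand. This shows that every piece $[\Loc_{\GL, M, S, n}/\GL_n^S]$ and $[\Loc_{\SL, M, S, n}/\SL_n^S]$ is algebraic.
  \item Note that a disjoint union of algebraic stacks, indexed here by the countable set $\mathbb{N}^S$, is algebraic. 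The coproduct of the atlases is an atlas, and the diagonal of a coproduct is the coproduct of the diagonals.
\end{enumerate}

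Alternatively, for connected $M$ we could reduce directly to the $G$-case via \cref{rem:lin loc as GL-loc}, using $\Loc_{\GL, M} \simeq \coprod_n \Loc_{\GL_n, M}$ and the corollary already established for $G = \GL_n, \SL_n$. The general case then follows by taking products over the connected components of $M$.

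The only point needing care is step (3): an infinite coproduct is not quasi-compact. So "algebraic stack" must be read as locally of finite type (or merely algebraic) rather than quasi-compact. I expect this convention check to be the only obstacle; otherwise the argument is formal.
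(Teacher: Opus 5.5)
Your argument is correct and is essentially the paper's. The paper states this corollary without a separate proof, as an immediate consequence of the quotient presentation in Lemma~\ref{lemma:loc as quotient}, the representability of the marked moduli $\Loc_{\GL, M, S, n}$ and $\Loc_{\SL, M, S, n}$ by affine schemes, and the standard facts that quotients by smooth affine group schemes and disjoint unions of algebraic stacks are algebraic. Your alternative reduction via Remark~\ref{rem:lin loc as GL-loc} to the $G = \GL_n, \SL_n$ case is also the route the paper explicitly mentions.
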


\subsection{Linear representations of algebraic groups}
\label{sec:representations of algebraic groups}
Let $G$ be a smooth affine algebraic group over $k$.
We denote by $\bullet_{G} : \Alg_k \to \Grpd$ the pseudofunctor defined by
$\bullet_{G}(A) \coloneqq \bullet_{G(A)}$.
We define the categories of linear representations of $G$ as
\begin{alignat}{3}
  \label{eq:rep GL}
  \Rep_{\GL}(G) &\coloneqq \Hom (\BG{G}, \mathsf{proj_{\GL}}) 
  &&\simeq \Hom (\bullet_{G}, \mathsf{proj_{\GL}}), \\
  \label{eq:rep SL}
  \Rep_{\SL}(G) &\coloneqq \Hom(\BG{G}, \mathsf{proj_{\SL}}) 
  &&\simeq \Hom (\bullet_{G}, \mathsf{proj_{\SL}}).
\end{alignat}
The equivalences in \eqref{eq:rep GL} and \eqref{eq:rep SL} follow from the fact that
$\BG{G}$ is a stackification of $\bullet_{G}$.
We see that
giving an object of $\Rep_{\GL}(G)$ (resp. $\Rep_{\SL}(G)$) is equivalent to giving 
a pair $(V, \rho)$ (resp. a triple $(V, \rho, \omega)$) where
$V$ is finitely generated projective $k$-module, and $\rho$ is a 
family of group homomorphisms $\rho_{A} : G(A) \to \GL_A (V \otimes_k A)$
(resp. $\rho_{A} : G(A) \to \SL_A (V \otimes_k A)$)
for each $k$-algebra $A$ such that the left (resp. the right) diagram
\begin{equation}\label{eq:rep naturality}
\begin{tikzpicture}[scale=1.3, commutative diagrams/every diagram, baseline={(current bounding box.center)}]
    \node (A) at (0,1) {$G(A)$};
    \node (A') at (2,1) {$\GL_A (V \otimes_k A)$};
    \node (B) at (0,0) {$G(B)$};
    \node (B') at (2,0) {$\GL_B (V \otimes_k B)$};
    \draw[morphism] (A) -- (A') node[midway, above] {};
    \draw[morphism] (B) -- (B') node[midway, above] {};
    \draw[morphism] (A) -- (B) node[midway, left] {$G(f)$};
    \draw[morphism] (A') -- (B') node[midway, right] {};
\end{tikzpicture}\qquad
\begin{tikzpicture}[scale=1.3, commutative diagrams/every diagram, baseline={(current bounding box.center)}]
  \node (A) at (0,1) {$G(A)$};
  \node (A') at (2,1) {$\SL_A (V \otimes_k A)$};
  \node (B) at (0,0) {$G(B)$};
  \node (B') at (2,0) {$\SL_B (V \otimes_k B)$};
  \draw[morphism] (A) -- (A') node[midway, above] {};
  \draw[morphism] (B) -- (B') node[midway, above] {};
  \draw[morphism] (A) -- (B) node[midway, left] {$G(f)$};
  \draw[morphism] (A') -- (B') node[midway, right] {};
\end{tikzpicture}
\end{equation}
commutes for each $k$-algebra morphism $f:A \to B$.
Here, for a $A$-module $V$ with a volume form $\omega  : \det V \to A$, 
we define
\begin{equation*}
  \SL_A (V) \coloneqq \{ g \in \GL_A (V) \mid \omega \circ \det g = \omega \}.
\end{equation*}

\begin{definition}\label{def:associated lin loc}
  We have a functor
  \begin{equation}\label{eq:associated lin loc}
    \begin{split}
      (-) \times^G (-) : \Rep_{\GL}(G) &\to \Hom (\Loc_{G, M}, \Loc_{\GL, M}), \\
      V &\mapsto ((A, P) \mapsto P \times^G V\coloneq V \circ P) \\
      (\alpha : V \to W) &\mapsto ((A, P) \mapsto \alpha \boxminus \id_P)
    \end{split}
  \end{equation}
  where we regard 
  $V$ as an object of $\Hom(\BG{G}, \mathsf{proj}_{\GL})$, and
  $P$ as an object of $\Hom(\underline{\Pi_1 M} \times A, \BG{G})$.
  The symbol $\boxminus$ in \eqref{eq:associated lin loc} denotes the horizontal composition of $2$-morphisms.
  The $2$-isomorphism 
  \begin{equation}
    \begin{tikzpicture}[scale=1.5, commutative diagrams/every diagram, baseline={(current bounding box.center)}]
      \node (a) at (0,0) {$\Loc_{G, M} (A)$};
      \node (b) at (2,0) {$\Loc_{\GL, M} (A)$};
      \node (d) at (2,-1) {$\Loc_{\GL, M} (B)$};
      \node (c) at (0,-1) {$\Loc_{G, M} (B)$};
      \draw[morphism] (a) to node[above] {$(-)\times^G V$} (b);
      \draw[morphism] (b) to node[right] {$(-) \otimes_A B$} (d);
      \draw[morphism] (a) to node[left] {$(-) \otimes_A B$} (c);
      \draw[morphism] (c) to node[below] {$(-)\times^G V$} (d);
      \node at (barycentric cs:a=1,b=1,c=1,d=1) {$\scriptstyle \cong$};
    \end{tikzpicture}
  \end{equation}
  is given by the corresponding 2-isomorphism for each point $x \in M$.
  Similarly, we have a functor
  \begin{equation}
    \begin{split}
      (-) \times^G (-) : \Rep_{\SL}(G) &\to \Hom (\Loc_{G, M}, \Loc_{\SL, M})
    \end{split}
  \end{equation}
  defined in the same way.
\end{definition}

\begin{definition}\label{def:associated marked lin loc}
  Let $V \in \Rep_{\GL}(G)$,
  and let $v = (v_i)_{i=1}^{n}$ be a basis of $V$.
  We define a map 
  \begin{equation}
    (-) \times^G (V, v) : \Loc_{G, M, S} \to \Loc_{\GL, M, S, n}
  \end{equation}
  where in the right-hand side we regard $n$ as the constant map $n : S \to \mathbb{N}$ with value $n$,
  as follows.
  For a marked $G$-local system $(P, p) \in \Loc_{G, M, S}(A)$,
  we send it to the marked linear local system
  $(P \times^G V, b)$,
  where the basis $b_x$ of $(P \times^G V)_x$
  at each $x \in S$ is defined as follows.
  The marking $p_x \in P_x(A)$ induces an isomorphism of 
  $G_A$-torsor $P_x \cong G_A$,
  which in turn induces an isomorphism of $A$-modules
  $(P \times^G V)_x \cong V \otimes_k A$.
  The basis $b_x$ is defined as the one corresponding to the basis $(v_i \otimes 1 )_{i=1}^{n}$
  of $V \otimes_k A$ via this isomorphism.
  Similarly, for $V \in \Rep_{\SL}(G)$ and a basis $(v_i)_{i=1}^{n}$ of $V$
  such that $\mathsf{vol}_V (\wedge_{i=1}^n v_i) = 1$,
  we have a map 
  \begin{equation}
    (-) \times^G (V, v) : \Loc_{G, M, S} \to \Loc_{\SL, M, S, n}
  \end{equation}
  defined in the same way.
\end{definition}

\subsection{Twisted complex and geometric bases}
\label{sec:twisted complex and geometric bases}
Let $M$ be a Hausdorff space. Recall that an $n$-cell in $M$ is a pair $(e,\varphi)$, where $e \subset M$ is a subset, and $\varphi: D^n \to M$ is a continuous map whose restriction to $\interior D^n$ gives a homeomorphism onto $e$. We call $\varphi$ the characteristic map of $e$. We call $n$ the dimension of the cell $(e,\varphi)$.
A finite CW structure on $M$ is a finite collection $\{(e_\sigma,\varphi_\sigma)\}_{\sigma \in \mathsf{cell}}$ of cells satisfying the conditions
\begin{itemize}
    \item $M=\bigsqcup_{\sigma \in \mathsf{cell}} e_\sigma$;
    \item $\varphi_\sigma(\partial D^{\dim \sigma}) \subset M^{(\dim \sigma -1)}$ for any $\sigma \in \mathsf{cell}$, where $\dim \sigma$ denotes the dimension of $(e_\sigma,\varphi_\sigma)$, and $M^{(n)}\coloneq \bigsqcup\{ e_\tau \mid \dim \tau \leq n\}$ denotes the $n$-skeleton of $M$.
\end{itemize}
We call $(M,\{(e_\sigma,\varphi_\sigma)\}_{\sigma \in \mathsf{cell}})$ a finite CW complex. 
Let $\mathsf{cell}_i\coloneq\{ \sigma \in \mathsf{cell} \mid \dim \sigma = i\}$.
If the CW structure is clear from the context, we simply call $M$ a finite CW complex. 

Recall that the cellular chain complex $C_\bullet(M;\mathcal{L})$ with coefficient $\mathcal{L}$ is defined as the relative singular homology, as 
follows. Define
\begin{equation}
  C_k(M; \mathcal{L}) \coloneq H_k^{\mathrm{sing}}(M^{(k)},M^{(k-1)};\mathcal{L})
\end{equation} 
for $k \geq 0$. 
The boundary map
\begin{align*}
    \partial_k: C_k(M;\mathcal{L}) \to C_{k-1}(M;\mathcal{L})
\end{align*}
is defined to be the connecting homomorphism for the short exact sequence
\begin{align*}
    0 \to C_k^{\mathrm{sing}}(M^{(k-1)},M^{(k-2)};\mathcal{L}) \to C_k^{\mathrm{sing}}(M^{(k)},M^{(k-2)};\mathcal{L}) \to C_k^{\mathrm{sing}}(M^{(k)},M^{(k-1)};\mathcal{L}) \to 0.
\end{align*}

The chain complex $C_\bullet(M;\mathcal{L})$ can be described more explicitly, as follows \cite[Section 9]{Steenrod}. 
For each $\sigma \in \mathsf{cell}_i$, the characteristic map induces 
\begin{align*}
    \varphi_{\sigma,\ast}: H_i^{\mathrm{sing}}(D^i,S^{i-1};\mathcal{L}) \to C_i(M;\mathcal{L}),
\end{align*}
whose images form a basis. 
Let $\varodot_\sigma\coloneq\varphi_\sigma(0) \in M$ denote the central point of $e_\sigma$. Since $D^i$ is contractible, we have $H_i^{\mathrm{sing}}(D^i,S^{i-1};\mathcal{L}) \cong H_i^{\mathrm{sing}}(D^i,S^{i-1};\bZ) \otimes \mathcal{L}_{\varodot_\sigma} \cong \mathcal{L}_{\varodot_\sigma}$. Here, the latter isomorphism is given by the fundamental class $[D^i] \in H_i^{\mathrm{sing}}(D^i,S^{i-1};\bZ)$. 
Therefore we get 
\begin{alignat}{2}
  &C_i(M; \mathcal{L}) \cong \bigoplus_{\sigma \in \mathsf{cell}_i} \mathcal{L}_{\varodot_\sigma}.
\end{alignat}
We will sometimes write $\mathcal{L}_\sigma$ instead of $\mathcal{L}_{\varodot_\sigma}$ for simplicity.
To describe the differential,
consider the map
\begin{align}\label{eq:attaching_sphere}
  S^{i-1} \xrightarrow{\varphi_\tau|_{S^{i-1}}} M^{(i-1)} \hookrightarrow (M^{(i)},M^{(i-1)}) \xrightarrow{\pi_\sigma} (S^{i-1},\ast),
\end{align}
for $\tau \in \mathsf{cell}_i$ and $\sigma \in \mathsf{cell}_{i-1}$,
where $\pi_\sigma$ is the unique map such that
\begin{equation*}
\begin{tikzcd}
   & (S^{i-1},\ast)= (D^{i-1}/\partial D^{i-1}, \ast) \ar[d,"\approx"] \\
  (M^{(i-1)},M^{(i-2)}) \ar[ur,"\pi_\sigma"] \ar[r] & (M^{(i-1)}/ (M^{(i-2)} \cup \bigcup_{\nu \in \mathsf{cell}_{i-1} \setminus \{\sigma\}} \varphi_\nu(D^{i-1})), \ast)
\end{tikzcd}
\end{equation*}
commutes. The vertical map is induced by $\varphi_\sigma$, and the horizontal map is the quotient map. See \cref{fig:attaching_sphere}. 
\begin{figure}[t]
  \centering
\begin{tikzpicture}
\begin{scope}[xshift=-5cm]
\draw[->-] (-1,0,0) arc(-180:0:1cm and 0.3cm);
\draw[dashed] (-1,0,0) arc(180:0:1cm and 0.3cm);
\draw (-1,0,0) arc(180:0:1cm and 0.8cm);
\node at (0,1.1,0) {$D^i$};
\draw[->,thick] (1.3,0.2,0) --node[midway,above]{$\varphi_\tau$} ++(2.5,0,0);
\end{scope}

\draw(-1,0,0) -- (0,0,1);
\draw[red,thick,->-={0.7}{}] (0,0,1) --node[midway,below]{$\sigma$} (1,0,-1);
\draw[dashed] (-1,0,0) -- (1,0,-1);
\draw(2,0,-2) -- (2,0,2) node[right]{$M^{(i-1)}$} -- (-2,0,2) -- (-2,0,-2) --cycle;
\draw (-1,0,0) to[bend left=70]node[midway,above]{$\tau$} (1,0,-1);
\foreach \i in {{-1,0,0},{0,0,1},{1,0,-1}} \fill (\i) circle(1.5pt);

\begin{scope}[xshift=6cm]
\draw[red,thick,->-={0.8}{}] (-1,0,0) ellipse(1cm and 0.3cm);
\fill (-2,0,0) circle(1.5pt);
\node[red] at (0.5,0,0) {$S^{i-1}$};
\draw[red,<-,thick] (-2.2,0,0) --node[midway,below]{$\pi_\sigma$} ++(-2.5,0,0);
\end{scope}

\end{tikzpicture}
  \caption{The map \eqref{eq:attaching_sphere}.}
  \label{fig:attaching_sphere}
\end{figure}
Then the differential $\partial_i=\partial_i(M; \mathcal{L})$ is written as
\begin{align}\label{eq:twisted_differential}
  \partial_i : C_i(M; \mathcal{L}) \to C_{i-1}(M; \mathcal{L}), \quad
  \partial_i f (\sigma) =
  \sum_{\tau \in \mathsf{cell}_{i}} \sum_{p \in S_{\tau,\sigma}} \varepsilon_p \cdot \mathcal{L}_{\varodot_\tau \to p} (f(\tau))
\end{align}
for $f=(f(\tau))_{\tau \in \mathsf{cell}_i} \in C_i(M,\mathcal{L})$ and $\sigma \in \mathsf{cell}_{i-1}$. 
Here,
\begin{itemize}
    \item
    We assume that the map \eqref{eq:attaching_sphere} for $\tau$ and $\sigma$
    is a local homeomorphism for any $\tau \in \mathsf{cell}_i$, and
    we write $S_{\tau,\sigma} \coloneqq \varphi_\tau^{-1}(\varodot_\sigma) \subset \partial D^i$,
    which is a finite set.
    \item $\mathcal{L}_{\varodot_\tau \to p}$ denotes the parallel-transport of the local system $\varphi_\tau^\ast \mathcal{L}$ along the path from the center to $p$ in $D^i$ unique up to homotopy. 
    \item $\varepsilon_p = \pm 1$ is the local mapping degree at $p$ of the map \eqref{eq:attaching_sphere}.
\end{itemize}
See \cref{ex:attaching} for a typical situation. 

When $\mathcal{L}$ is a trivial local system, the formula \eqref{eq:twisted_differential} recovers the differential for the untwisted chain complex by the localization formula of mapping degree: the incidence number is given by $[\tau:\sigma]=\sum_{p \in S_{\tau,\sigma}} \varepsilon_p$.
The cellular cochain complex with coefficient $\mathcal{L}$ is introduced in the dual way. 
We have 
\begin{equation*}
  C^i(M; \mathcal{L}) \cong \prod_{\sigma \in \mathsf{cell}_i} \mathcal{L}_{\varodot_\sigma},
\end{equation*}
and the differential $d^i=d^i(M,\mathcal{L})$ is described as 
\begin{align*}
    d^i : C^i(M; \mathcal{L}) \to C^{i+1}(M; \mathcal{L}), \quad
  d^i f (\sigma) =
  \sum_{\tau \in \mathsf{cell}_{i}}
  \sum_{p \in S_{\sigma,\tau}} \varepsilon_p \cdot \mathcal{L}_{p \to \varodot_\sigma} (f(\tau))
\end{align*}
for $f \in C^i(M,\mathcal{L})$ and $\sigma \in \mathsf{cell}_{i+1}$, under the same assumption on the map \eqref{eq:attaching_sphere}. 

The $i$-th \emph{twisted homology} $H_i(M; \mathcal{L})$ 
and the $i$-th \emph{twisted cohomology} $H^i(M; \mathcal{L})$ 
are the $A$-modules defined as
\begin{align}
  &H_i(M; \mathcal{L}) \coloneqq \Ker (\partial_i (M; \mathcal{L})) / \Img (\partial_{i+1} (M; \mathcal{L})) \\
  &H^i(M; \mathcal{L}) \coloneqq \Ker (d^i (M; \mathcal{L})) / \Img (d^{i-1} (M; \mathcal{L})).
\end{align}
The relative versions $H_i (M, A; \mathcal{L})$ and $H^i (M, A; \mathcal{L})$
for a CW pair $(M, A)$ are defined in a similar way. 

\begin{example}\label{ex:attaching}
Consider an annulus $M$, whose CW structure is given by identifying an opposite sides of a square. Let $\tau$ be the identified edge ($1$-cell), and $\sigma$ the unique $2$-cell with attaching map $\varphi_\sigma: D^2 \to M$. See \cref{fig:attach_example}. 
Then $S_{\sigma,\tau}$ consists of two points $p,p'$, which have local degrees $\varepsilon_p=+1$ and $\varepsilon_{p'}=-1$, respectively. The formula \eqref{eq:twisted_differential} becomes
\begin{align*}
    \partial_2 f (\sigma) = \mathcal{L}_{\varodot_\tau \to p}(f(\tau)) - \mathcal{L}_{\varodot_\tau \to p'}(f(\tau)) = v - g(v),
\end{align*}
where $v\coloneq\mathcal{L}_{\varodot_\tau \to p}(f(\tau))$ and $g\coloneq\mathrm{mon}_{\mathcal{L}}(\gamma)$.   
\end{example}

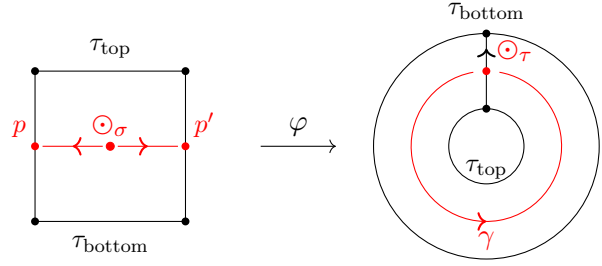
\begin{figure}[ht]
    \centering
\begin{tikzpicture}
\draw (-1,-1) -- (1,-1) -- (1,1) -- (-1,1) --cycle;
\foreach \i in {-1,1} \foreach \j in {-1,1} \fill (\i,\j) circle(1.5pt);
\fill[red] (1,0) circle(1.5pt) node[above right,scale=0.9]{$p'$};
\fill[red] (-1,0) circle(1.5pt) node[above left,scale=0.9]{$p$};
\filldraw[red] (0,0) circle(1.5pt) node[above]{$\varodot_\sigma$};
\draw[red,->-] (0.1,0) -- (0.9,0);
\draw[red,->-] (-0.1,0) -- (-0.9,0);
\node[scale=0.9] at (0,1.3) {$\tau_\mathrm{top}$};
\node[scale=0.9] at (0,-1.3) {$\tau_\mathrm{bottom}$};
\draw[->] (2,0) --node[midway,above]{$\varphi$} ++(1,0);
\begin{scope}[xshift=5cm]
\draw(0,0) circle(1.5cm);
\draw(0,0) circle(0.5cm);
\draw[->-={0.8}{}] (0,0.5) -- (0,1.5);
\fill (0,1.5) circle(1.5pt);
\fill (0,0.5) circle(1.5pt);
\fill[red] (0,1) circle(1.5pt) node[above right]{$\varodot_\tau$};
\node[scale=0.9] at (0,-0.3) {$\tau_\mathrm{top}$};
\node[scale=0.9] at (0,1.8) {$\tau_\mathrm{bottom}$};
\draw[red,->-] (100:1) arc(100:360+80:1) node[midway,below]{$\gamma$};
\end{scope}
\end{tikzpicture}
    \caption{An example of $2$-dimensional CW copmlex.}
    \label{fig:attach_example}
\end{figure}

\begin{remark}\label{rem:H^0_invariant_vector}
Suppose that $M$ is connected, and let $x_0 \in M$.
For a marked linear local system $\mathcal{L} \in \Loc_{\GL, M, \{x_0\}} (A)$,
we have $H^0(M; \mathcal{L}) \cong (\mathcal{L}_{x_0})^{\Img (\mon_{\mathcal{L}})}$, where $\mon_{\mathcal{L}}:\pi_1(M,x_0)^{\mathrm{op}} \to GL(\mathcal{L}_{x_0})$ is the monodromy homomorphism of $\mathcal{L}$ given by \eqref{eq:mon GL}.
\end{remark}

\begin{lemma}\label{lemma:tensor chain cochain}
  Suppose that $\mathcal{L} \in \Loc_{\GL, M} (A)$, and $f : A \to B$ is a morphism of $k$-algebras.
  We have a canonical isomorphism of $B$-modules:
  \begin{alignat}{1}
    \label{eq:tensor chain}
    &C_i(M; \mathcal{L}) \otimes_A B \cong C_i(M; \mathcal{L} \otimes_A B), \\
    \label{eq:tensor cochain}
    &C^i(M; \mathcal{L}) \otimes_A B \cong C^i(M; \mathcal{L} \otimes_A B).
  \end{alignat}
\end{lemma}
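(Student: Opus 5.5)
The plan is to use the explicit cell-by-cell description of the twisted (co)chain complexes given above: $C_i(M;\mathcal{L}) \cong \bigoplus_{\sigma \in \mathsf{cell}_i} \mathcal{L}_{\varodot_\sigma}$ and $C^i(M;\mathcal{L}) \cong \prod_{\sigma \in \mathsf{cell}_i} \mathcal{L}_{\varodot_\sigma}$. Both statements then reduce to the same assertion one point at a time, namely $(\mathcal{L} \otimes_A B)_x = \mathcal{L}_x \otimes_A B$. This holds by the definition of the extension of scalars functor on $\Loc_{\GL,M}$, which comes from the mapping stack $\iHom(\underline{\Pi_1 M}, \mathsf{proj}_{\GL})$ and is computed pointwise: the fibre at $x$ is the image of $\mathcal{L}_x$ under $\mathsf{proj}_{\GL}(A) \to \mathsf{proj}_{\GL}(B)$, and the parallel transports are $\mathcal{L}_\gamma \otimes_A B$.

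First I treat \eqref{eq:tensor chain}. Since $\mathsf{cell}_i$ is finite, tensor product commutes with the direct sum, so
\begin{equation*}
C_i(M;\mathcal{L}) \otimes_A B \cong \bigoplus_{\sigma \in \mathsf{cell}_i} \mathcal{L}_{\varodot_\sigma} \otimes_A B \cong \bigoplus_{\sigma} (\mathcal{L}\otimes_A B)_{\varodot_\sigma} \cong C_i(M;\mathcal{L}\otimes_A B).
\end{equation*}
The cochain case \eqref{eq:tensor cochain} is identical, because a finite product is the same as a finite direct sum.

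Next I check that these isomorphisms are canonical. They do not depend on any choices beyond the CW structure: the identification $H_i^{\mathrm{sing}}(D^i,S^{i-1};\mathcal{L}) \cong \mathcal{L}_{\varodot_\sigma}$ uses only the fundamental class $[D^i]$ and the characteristic map $\varphi_\sigma$, and both are independent of the coefficients. I would also note that the isomorphisms commute with the differentials. By \eqref{eq:twisted_differential}, the differential is a sum of the terms $\varepsilon_p \cdot \mathcal{L}_{\varodot_\tau \to p}$, and tensoring each parallel transport with $B$ gives the parallel transport of $\mathcal{L} \otimes_A B$. So the maps assemble into isomorphisms of complexes, which is what later uses of this lemma will need.

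The only mildly delicate point is making sure the pointwise description of $\mathcal{L} \otimes_A B$ actually matches the stack-theoretic definition. This is where I expect to spend the care. I would settle it by unwinding $\Hom(\underline{\Pi_1 M}\times \Hom(A,-), \mathsf{proj}_{\GL})$ using Remark~\ref{rem:moduli of G-local systems}, which identifies the functor $(-)\otimes_A B$ with postcomposition by $\mathsf{proj}_{\GL}(A)\to\mathsf{proj}_{\GL}(B)$, $V \mapsto V \otimes_A B$. If one prefers the singular definition $C_k = H_k^{\mathrm{sing}}(M^{(k)},M^{(k-1)};\mathcal{L})$, the same conclusion follows from the fact that this relative homology is a finitely generated projective (indeed free) module, so no Tor terms appear; but the cellular description avoids this detour.
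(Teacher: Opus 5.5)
Your proposal is correct and follows the paper's proof: identify $C_i(M;\mathcal{L})$ cellularly with the finite direct sum $\bigoplus_{\sigma\in\mathsf{cell}_i}\mathcal{L}_{\varodot_\sigma}$, use that extension of scalars commutes with finite direct sums and is computed fibrewise, and note that a finite product is a finite direct sum for the cochain case. Your extra check that these isomorphisms commute with the differentials is what the paper records separately as Lemma~\ref{lemma:diff naturality}.
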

\begin{proof}
  The isomorphism \eqref{eq:tensor chain} follows from the fact that 
  the extension of scalars commutes with direct sums.
  Since $\mathsf{cell}_i$ is a finite set, 
  the direct product is the same as the direct sum, which gives
  the isomorphism \eqref{eq:tensor cochain}.
\end{proof}
The following lemma is easily verified from the definitions. 

\begin{lemma}\label{lemma:diff naturality}
  We have commutative diagrams of $B$-modules:
  \begin{align}
    \label{eq: diff naturality hain}
    &\begin{tikzpicture}[scale = 1.5, commutative diagrams/every diagram, baseline={(current bounding box.center)}]
      \node (A) at (0,1) {$C_i(M; \mathcal{L}) \otimes_A B$};
      \node (A') at (3,1) {$C_{i-1}(M; \mathcal{L}) \otimes_A B$};
      \node (B) at (0,0) {$C_i(M; \mathcal{L} \otimes_A B)$};
      \node (B') at (3,0) {$C_{i-1}(M; \mathcal{L} \otimes_A B)$};
      \draw[morphism] (A) -- (A') node[midway, above] {$\partial_i \otimes B$};
      \draw[morphism] (B) -- (B') node[midway, above] {$\partial_i$};
      \draw[morphism] (A) -- (B) node[midway, left] {$\cong$};
      \draw[morphism] (A') -- (B') node[midway, right] {$\cong$};
    \end{tikzpicture}\\
    \label{eq: diff naturality cochain}
    &\begin{tikzpicture}[scale = 1.5, commutative diagrams/every diagram, baseline={(current bounding box.center)}]
      \node (A) at (0,1) {$C^i(M; \mathcal{L}) \otimes_A B$};
      \node (A') at (3,1) {$C^{i+1}(M; \mathcal{L}) \otimes_A B$};
      \node (B) at (0,0) {$C^i(M; \mathcal{L} \otimes_A B)$};
      \node (B') at (3,0) {$C^{i+1}(M; \mathcal{L} \otimes_A B)$};
      \draw[morphism] (A) -- (A') node[midway, above] {$d^i \otimes B$};
      \draw[morphism] (B) -- (B') node[midway, above] {$d^i$};
      \draw[morphism] (A) -- (B) node[midway, left] {$\cong$};
      \draw[morphism] (A') -- (B') node[midway, right] {$\cong$};
    \end{tikzpicture}
  \end{align}
  where the vertical isomorphisms are given by Lemma \ref{lemma:tensor chain cochain}.
\end{lemma}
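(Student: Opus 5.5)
The plan is to work directly with the explicit cellular description of the twisted chain and cochain complexes from \cref{sec:twisted complex and geometric bases}. That is, I would use
\[
C_i(M;\mathcal{L}) \cong \bigoplus_{\sigma \in \mathsf{cell}_i} \mathcal{L}_{\varodot_\sigma}, \qquad C^i(M;\mathcal{L}) \cong \prod_{\sigma\in\mathsf{cell}_i}\mathcal{L}_{\varodot_\sigma},
\]
together with the differential formula \eqref{eq:twisted_differential} and its cochain analogue.

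The first step is to identify the vertical isomorphisms of \cref{lemma:tensor chain cochain} in these terms. On each summand, the isomorphism is the canonical identification $\mathcal{L}_{\varodot_\sigma}\otimes_A B \cong (\mathcal{L}\otimes_A B)_{\varodot_\sigma}$. This identification is built into the definition of extension of scalars on $\Loc_{\GL,M}$, since $\mathcal{L}\otimes_A B$ is defined pointwise by $x\mapsto \mathcal{L}_x\otimes_A B$. The direct sum, or equivalently the finite product, then commutes with $-\otimes_A B$.

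The second step is to compare the two differentials on a homogeneous element $f(\tau)\otimes b$. The map $\partial_i\otimes B$ sends it to
\[
\sum_{p\in S_{\tau,\sigma}} \varepsilon_p\, \mathcal{L}_{\varodot_\tau\to p}(f(\tau))\otimes b
\]
in the $\sigma$-component. The differential of $\mathcal{L}\otimes_A B$ applies the parallel transports $(\mathcal{L}\otimes_A B)_{\varodot_\tau\to p}$ instead. The key observation is that these transports equal $\mathcal{L}_{\varodot_\tau\to p}\otimes_A B$, again by the pointwise definition of base change on morphisms of $\Pi_1 M$. The remaining ingredients do not depend on the coefficients at all: the finite sets $S_{\tau,\sigma}$ and the local degrees $\varepsilon_p$ depend only on the CW structure. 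The two expressions therefore agree. Both sides are $B$-linear, and such elements generate, so the square \eqref{eq: diff naturality hain} commutes. The cochain square \eqref{eq: diff naturality cochain} is verified identically, using transports $\mathcal{L}_{p\to\varodot_\sigma}$.

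The only point requiring care, which I expect to be the main obstacle, is checking that the cellular identification $C_i\cong\bigoplus\mathcal{L}_{\varodot_\sigma}$ is itself compatible with base change. That identification was defined through singular relative homology and the fundamental class $[D^i]$. To handle this, I would note that $H_i^{\mathrm{sing}}(D^i,S^{i-1};\mathcal{L}) \cong H_i^{\mathrm{sing}}(D^i,S^{i-1};\bZ)\otimes\mathcal{L}_{\varodot_\sigma}$ is natural in the coefficient system. The fundamental class is integral and so unaffected by base change. Alternatively, one can simply take the explicit description as the definition, which makes the compatibility tautological.
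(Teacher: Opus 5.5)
Your proposal is correct and matches the paper's approach: the paper gives no argument beyond saying the lemma is easily verified from the definitions, and your comparison via \eqref{eq:twisted_differential} is that verification. The key points are that the parallel transports of $\mathcal{L}\otimes_A B$ are $\mathcal{L}_{\varodot_\tau\to p}\otimes_A B$, while $S_{\tau,\sigma}$ and $\varepsilon_p$ depend only on the CW structure. (Strictly, \eqref{eq:twisted_differential} relies on the paper's local-homeomorphism assumption on \eqref{eq:attaching_sphere}; without it, the differential is still an integral combination of transports along paths determined by the CW structure, for example via the universal cover, so the same argument goes through.)
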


From Lemmas \ref{lemma:tensor chain cochain}--\ref{lemma:diff naturality},
the following definition makes sense.
\begin{definition}\label{def:chain O-module}
  The assignment $(A, \mathcal{L}) \mapsto C_i(M; \mathcal{L})$ defines a
  quasi-coherent module on $\Loc_{\GL, M}$, which we denote by $C_i (M; -)$.
  We also define a chain complex of quasi-coherent modules on $\Loc_{\GL, M}$ by:
  \begin{equation}
    C_{\bullet} (M; -)
    \coloneqq (C_i(M; -), \partial_i(M; -))_i,
  \end{equation}
  where $\partial_i(M; -) : C_i (M; -) \to C_{i-1} (M; -)$ is a morphism of $\mathcal{O}_{\Loc_{\GL, M}}$-modules
  associated with the assignment $(A, \mathcal{L}) \mapsto (\partial_i (M; \mathcal{L}) : C_i(M; \mathcal{L}) \to C_{i-1}(M; \mathcal{L}))$.
  We also define the cochain complex of quasi-coherent modules $C^{\bullet} (M; -)$ in a similar way.
\end{definition}

\begin{definition}\label{def:fixed betti moduli}
  Let $n$ be a natural number,
  and $r = (r_0, \dots, r_n)$ be a sequence of natural numbers.
  We define a pseudofunctor $\Loc_{\GL, M}^{b_* = r} : \Alg_k \to \Grpd$ by
  defining $\Loc_{\GL, M}^{b_* = r}(A)$ for each $k$-algebra $A$ as
  the full subgroupoid of $\Loc_{\GL, M}(A)$ consisting of local systems $\mathcal{L}$ satisfying 
  \begin{equation}
    \begin{split}      
      \text{$H_i(M; \mathcal{L} \otimes_A B)$ is projective of constant rank $r_i$}
    \end{split}
  \end{equation}
  for any $f : A \to B$ and any $i=0,\dots,n$.
  We also define the marked version
  \begin{equation}
    \Loc_{\GL, M, S}^{b_* = r} \coloneqq \Loc_{\GL, M, S} \times_{\Loc_{\GL, M}} \Loc_{\GL, M}^{b_* = r}. 
  \end{equation}
\end{definition}

\begin{lemma}\label{lemma:fixed betti substack}
  $\Loc_{\GL, M, S}^{b_* = r}$ is represented by a 
  locally closed subscheme of $\Loc_{\GL, M, S}$, and
  $\Loc_{\GL, M}^{b_* = r}$ is a 
  locally closed substack of $\Loc_{\GL, M}$.
\end{lemma}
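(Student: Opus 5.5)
We reduce to the marked scheme $X \coloneqq \Loc_{\GL, M, S, n}$ (for each $n$), which is affine by the representability lemma, and then descend. By \cref{lemma:tensor chain cochain} and \cref{lemma:diff naturality}, the cellular chain complex $C_\bullet(M; -)$ restricted to $X$ is a bounded complex of finite free $\mathcal{O}_X$-modules. Indeed, the marking trivializes each stalk $\mathcal{L}_{\varodot_\sigma}$, so each $C_i$ is free of rank $n\cdot|\mathsf{cell}_i|$. Its differentials are matrices with entries in $\mathcal{O}(X)$, given by \eqref{eq:twisted_differential} in terms of monodromies. The condition defining $\Loc_{\GL, M, S}^{b_* = r}$ is therefore the condition that, for every base change $A \to B$, the complex $C_\bullet \otimes B$ has projective homology of constant rank $r_i$ in each degree. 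This is the condition appearing in the flattening stratification of a perfect complex.

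The key step is the standard statement that, for a bounded complex $F_\bullet$ of finite free modules over a Noetherian ring $R$ (here $\mathcal{O}(X)$ is finitely generated over $k$), the functor $B \mapsto \{$all $H_i(F_\bullet \otimes B)$ are projective of rank $r_i\}$ is represented by a locally closed subscheme. I would prove it by descending induction on the degree, starting from the top degree $N$. There, $H_N(F\otimes B) = \Ker(\partial_N \otimes B)$. More usefully, one works with cokernels: $\operatorname{coker}(\partial_{i+1}\otimes B) = \operatorname{coker}(\partial_{i+1}) \otimes B$ by right exactness. The locus where a finitely presented module becomes locally free of a given rank after base change is a locally closed subscheme, namely the Fitting ideal stratification: $\mathrm{Fitt}_{s-1}=0$ and $\mathrm{Fitt}_s$ is the unit ideal. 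If all cokernels $\operatorname{coker}(\partial_{i+1})$ are projective of rank $c_i$, then the complex splits, its homology is projective, and everything commutes with base change. Conversely, if all homologies are projective after every base change, then an inductive argument from the bottom degree shows the cokernels are projective. Concretely, $0 \to H_i \to \operatorname{coker}\partial_{i+1} \to \Img \partial_i \to 0$ with $\Img\partial_i \subset F_{i-1}$, and $\Img \partial_i$ is a kernel of a map onto the projective $\operatorname{coker}\partial_i$. The ranks $c_i$ are determined by $r$ and the ranks of $F_i$. Hence $X^{b_*=r}$ is the intersection of the finitely many Fitting strata for the $\operatorname{coker}\partial_{i+1}$ with ranks $c_i$, which is locally closed.

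Finally, the subfunctor $\Loc_{\GL, M}^{b_* = r}$ is defined by a condition invariant under isomorphism of local systems and stable under base change. Its preimage in $X$ is the subscheme just constructed, and that subscheme is $\GL_n^S$-invariant. By \cref{lemma:loc as quotient}, $\Loc_{\GL, M}^{b_*=r}$ is the quotient $[X^{b_*=r}/\GL_n^S]$, hence a locally closed substack, since locally closed substacks of a quotient stack correspond to invariant locally closed subschemes via the smooth atlas.

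The main obstacle is the converse direction in the key step: showing that the condition ``homology projective of rank $r_i$ after every base change'' is exactly cut out by the Fitting conditions on the cokernels. Here I would use that projectivity of all $H_i$ together with that of the $F_i$ forces the complex to be locally split, arguing by induction from the lowest degree, where $H_0 = \operatorname{coker}\partial_1$.
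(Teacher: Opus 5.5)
Your argument is correct and follows the same route as the paper. You reduce to the affine marked moduli, where $C_\bullet(M;-)$ is a bounded complex of finite free modules (after transporting the marking bases along chosen paths to the cell centers). You then use the locally closed, base-change-compatible stratification by homology ranks, and descend to $\Loc_{\GL, M}$ because the condition is invariant under change of markings. The only difference is that the paper cites \cite[Proposition 23.130]{GortzWedhorn2} for the stratification step, whereas your splitting-plus-Fitting-ideal argument proves it directly in this case: ascending induction shows that projectivity of $H_0,\dots,H_i$ forces $\operatorname{coker}\partial_{i+1}$ to be projective, and conversely.
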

\begin{proof}
  The assertion for $\Loc_{\GL, M, S}^{b_* = r}$ follows from \cite[Proposition 23.130]{GortzWedhorn2}.
  This implies the assertion for $\Loc_{\GL, M}^{b_* = r}$,
  as the condition is invariant under a change of markings.
\end{proof}

We define the \emph{$i$-th betti function} for $M$ by 
\begin{equation}
  \betti_i (M; -) : \lvert \Loc_{\GL, M} \rvert \to \mathbb{N}, \quad
  (K, \mathcal{L}) \mapsto
  \dim_{K} H_i (M; \mathcal{L}).
\end{equation}
As a set, we have
\begin{equation}
  \begin{split}
    \lvert \Loc_{\GL, M}^{b_* = r} \rvert \coloneqq  
    \{ \mathcal{L} \in \lvert \Loc_{\GL, M} \rvert \mid 
    \text{$\betti_i (M; \mathcal{L}) = r_i$ for $i=0,\dots,n$} \}.
  \end{split}
\end{equation}
by~\cite[Proposition 23.130]{GortzWedhorn2},
which justifies the notation.

We also define the following variant:
\begin{definition}\label{def:fixed coBetti moduli}
  Let $n$ be a natural number,
  and $r = (r^0, \dots, r^n)$ be a sequence of natural numbers.
  We define a locally closed substack 
  $\Loc_{\GL, M}^{b^* = r} \subset \Loc_{\GL, M}$
  by the condition that
  $\mathcal{L} \in \Loc_{\GL, M}^{b^* = r}(A)$
  if and only if 
  \begin{equation}
    \begin{split}      
      \text{$H^i(M; \mathcal{L} \otimes_A B)$ is projective of constant rank $r^i$}
    \end{split}
  \end{equation}
  for any $f : A \to B$ and any $i=0,\dots,n$.
\end{definition}

We will use \cref{def:fixed coBetti moduli} for a single sequence $(r)$,
which only specifies the rank of $H^0$.
In this case, we simply write $\Loc_{\GL, M}^{b^0 = r}$ for $\Loc_{\GL, M}^{b^* = (r)}$.

\subsection{Poincar\'{e} duality}
Let $n \geq 0$.
Let $X$ be a finite CW complex, and let $[X] \in H_n(X;\bZ)$.
We call the pair $(X,[X])$ a \emph{Poincar\'e complex of formal dimension $n$} if the map
\begin{align}\label{eq:poincare duality isom}
    \cap [X]: H^p(X;\mathcal{L}) \to H_{n-p}(X;\mathcal{L})
\end{align}
is an isomorphism for any $A \in \Alg_k$, $\mathcal{L} \in \Loc_{\GL, M}(A)$, and $p$.

There is also a pair version of Poincar\'e complexes.
Let $X$ and $Y$ be finite CW complexes such that $Y \subset X$ is a subcomplex,
and let $[X] \in H_n(X, Y; \bZ)$ and $[Y] \in H_{n-1}(Y; \bZ)$.
We call the tuple $(X, Y, [X], [Y])$ a \emph{Poincar\'e pair of formal dimension $n$} 
if the following conditions hold:
\begin{enumerate}
  \item $(X, [X])$ is a Poincar\'e complex of formal dimension $n$,
  \item $(Y, [Y])$ is a Poincar\'e complex of formal dimension $n-1$,
  \item $\partial_n [X] = [Y]$,
  \item the map
  \begin{align}\label{eq:poincare-lefschetz duality isom}
      \cap [X]: H^p(X;\mathcal{L}) \xrightarrow{\cong} H_{n-p}(X, Y;\mathcal{L}).
  \end{align}
  is an isomorphism
  for any $A \in \Alg_k$, $\mathcal{L} \in \Loc_{\GL, M}(A)$, and $p$.
\end{enumerate}

\begin{remark}
Our definitions of Poincar\'e complexes and Poincar\'e pairs are those given in \cite{Wall}, 
except that we work over $k$ rather than $\mathbb{Z}$ and consider only finitely generated projective modules. 
Furthermore, we set the orientation character $w^1(X) \in H^1(X; \mathbb{Z}_2)$ in \cite{Wall} 
to be trivial, as we restrict our attention to orientable manifolds in this paper.
\end{remark}

Given a Poincar\'e complex $M = (X, [X])$, we often write $M$ for the underlying complex $X$.
Similarly, given a Poincar\'e pair $M = (X, Y, [X], [Y])$, we often write $M$ for the 
underlying complex $X$ and $\partial M$ for the subcomplex $Y$. Moreover, we let $\mathsf{cell}(M)$ denote the index set for the CW structure of $X$.

\begin{dfn}\label{dfn:link exterior complex}
  We say that a Poincar\'e pair $M = (X, Y, [X], [Y])$ of formal dimension $3$ is 
  a \emph{link exterior complex} if 
  the Euler characteristic of each connected component of $\partial M$ is zero.
  In addition, we say that a pair $\gamma = (Z, [Z])$ is a \emph{peripheral loop} in $M$
  if it is a Poincar\'e complex of formal dimension $1$
  and $\gamma$ is a subcomplex of $\partial M$.
\end{dfn}

A typical example is given by:
\begin{example}\label{ex:3-mfd complex}
  Let $M$ be an oriented $3$-manifold with boundary consisting of $m$ tori $\partial M=\bigsqcup_{i=1}^m T_i$,
  and $\gamma \coloneqq \bigsqcup_{i=1}^m \gamma_i$ be the disjoint union of $m$ oriented simple closed curves $\gamma_i \subset T_i$ which represent non-trivial homology cycles for $i=1,\dots,m$.
  Then $(M, \partial M, [M], \partial_3 [M] )$ is a link exterior complex,
  and $(\gamma, [\gamma])$ is a peripheral loop in $M$,
  where the fundamental classes $[M]$ and $[\gamma]$ 
  are determined by the orientations of $M$ and $\gamma$, respectively.
\end{example}

\section{Reidemeister torsion on \texorpdfstring{$\Loc_{\SL, M}$}{Loc(SL,M)}}
Let $k$ be a field of characteristic $0$.
We fix an inverse structure $(-)^{\otinv}$ on $\mathcal{P}_A$ for any 
$k$-algebra $A$ pseudonaturally in $A$,
and also fix a determinant functor on $(\mathsf{proj}(A), \mathrm{iso})$ for any $k$-algebra $A$
pseudonaturally in $A$.
See \cref{sec:determinant functor} for the definitions of 
inverse structures and determinant functors.

\subsection{Geometric volume form}
\label{sec:geometric volume form}
Let $A$ be a $k$-algebra.
For a projective $A$-module $L$ of finite rank,
a morphism $\omega : \det L \to (A, \lvert L \rvert)$ in 
$\mathcal{P}_A$ is called a \emph{volume form} on $L$.
For a volume form $\omega$ on $L$,
we denote by $1_{\omega}$ the element $\omega^{-1}(1) \in \det L$.

Let $M$ be a finite CW complex.
We assume that $\mathsf{cell}_i$ is linearly ordered for any $i$.
Let $\mathcal{L} \in \Loc_{\SL, M}(A)$.
We define a volume form $\mathsf{vol}_{M, \mathcal{L}, i}$ on $\det C_i (M; \mathcal{L})$ as 
the composition
\begin{align}\label{eq:geometric volume form at i}
  \det C_i (M; \mathcal{L}) \cong \bigotimes_{\sigma \in \mathsf{cell}_i} \det \mathcal{L}_{\sigma} 
  \xrightarrow{\otimes_{\sigma \in \mathsf{cell}_i} \mathsf{vol}_{\mathcal{L}_{\sigma}}} 
  \bigotimes_{\sigma \in \mathsf{cell}_i} (A, \lvert \mathcal{L}_{\sigma} \rvert)
  \cong
  \biggl(A, \sum_{\sigma \in \mathsf{cell}_i} \lvert \mathcal{L}_{\sigma} \rvert \biggr),
\end{align}
where the last isomorphism is defined by using the 
order $\mathsf{cell}_i = \{\sigma_1, \dots, \sigma_n\}$ as follows:
\begin{align}
  \bigotimes_{\sigma \in \mathsf{cell}_i} (A, \lvert \mathcal{L}_{\sigma} \rvert) \cong 
  (A, \lvert \mathcal{L}_{\sigma_1} \rvert) \otimes \cdots \otimes (A, \lvert \mathcal{L}_{\sigma_n} \rvert) &\cong
  \biggl(A, \sum_{j=1}^n \lvert \mathcal{L}_{\sigma_j} \rvert \biggr)
\end{align}
where the second isomorphism is given by 
$1 \otimes \dots \otimes 1 \mapsto 1 : A \otimes \dots \otimes A \to A$.

Now suppose that 
$\sum_{i \in \mathbb{Z}} (-1)^i \sum_{\sigma \in \mathsf{cell}_i} \lvert \mathcal{L}_{\sigma} \rvert = 0$.
Then we define a volume form $\mathsf{vol}_{M, \mathcal{L}}$ on $\det C_\bullet (M; \mathcal{L})$ as follows:
\begin{align}\label{eq:vol complex}
  \det C_\bullet (M; \mathcal{L}) &\cong
  \bigotimes_{i : \text{even}} \det C_i (M; \mathcal{L})
  \otimes
  \bigotimes_{i : \text{odd}} \bigl( \det C_i (M; \mathcal{L}) \bigr)^{\otinv} \\
  \label{eq:vol complex cell contract}
  &\cong 
  \bigotimes_{i : \text{even}}
  \biggl(A, \sum_{\sigma \in \mathsf{cell}_i} \lvert \mathcal{L}_{\sigma} \rvert\biggr)
  \otimes
  \bigotimes_{i : \text{odd}}
  \biggl(A, \sum_{\sigma \in \mathsf{cell}_i} \lvert \mathcal{L}_{\sigma} \rvert \biggr)^{\otinv} \\
  &\cong 
  \bigotimes_{i : \text{even}}
  \biggl(A, \sum_{\sigma \in \mathsf{cell}_i} \lvert \mathcal{L}_{\sigma} \rvert\biggr)
  \otimes
  \biggl(\bigotimes_{i : \text{odd}}
  \biggl( A, \sum_{\sigma \in \mathsf{cell}_i} \lvert \mathcal{L}_{\sigma} \rvert \biggr) \biggr)^{\otinv} \\
  \label{eq:vol complex degree contract}
  &\cong
  \biggl(A, \sum_{i : \text{even}} \sum_{\sigma \in \mathsf{cell}_i} \lvert \mathcal{L}_{\sigma} \rvert \biggr)
  \otimes
  \biggl(A, \sum_{i : \text{odd}} \sum_{\sigma \in \mathsf{cell}_i} \lvert \mathcal{L}_{\sigma} \rvert \biggr)^{\otinv} \\
  \label{eq:vol complex end}
  &\cong
  1,
\end{align}
where 
\begin{itemize}
  \item \eqref{eq:vol complex cell contract} is given by $\mathsf{vol}_{M, \mathcal{L}, i}$ defined in \eqref{eq:geometric volume form at i},
  \item \eqref{eq:vol complex degree contract} is given by $ \cdots \otimes 1 \otimes 1 \otimes 1 \otimes \cdots \mapsto 1$
  for each even and odd part,
  after ordering the tensor products in the decreasing order on $i$ in the following way:
    \begin{align*}
      &\cdots \otimes \biggl(A, \sum_{\sigma \in \mathsf{cell}_4} \lvert \mathcal{L}_{\sigma} \rvert \biggr)
      \otimes
      \biggl(A, \sum_{\sigma \in \mathsf{cell}_2} \lvert \mathcal{L}_{\sigma} \rvert \biggr)
      \otimes
      \biggl(A, \sum_{\sigma \in \mathsf{cell}_0} \lvert \mathcal{L}_{\sigma} \rvert \biggr) \otimes \cdots
      \cong
      \biggl(A, \sum_{i : \text{even}} \sum_{\sigma \in \mathsf{cell}_i} \lvert \mathcal{L}_{\sigma} \rvert \biggr), \\
      &\cdots \otimes \biggl(A, \sum_{\sigma \in \mathsf{cell}_5} \lvert \mathcal{L}_{\sigma} \rvert \biggr)
      \otimes
      \biggl(A, \sum_{\sigma \in \mathsf{cell}_3} \lvert \mathcal{L}_{\sigma} \rvert \biggr)
      \otimes
      \biggl(A, \sum_{\sigma \in \mathsf{cell}_1} \lvert \mathcal{L}_{\sigma} \rvert \biggr) \otimes \cdots
      \cong
      \biggl(A, \sum_{i : \text{odd}} \sum_{\sigma \in \mathsf{cell}_i} \lvert \mathcal{L}_{\sigma} \rvert \biggr), \\
    \end{align*}
  \item other isomorphisms are given by the braiding and the inverse structures on $\mathcal{P}_A$.
\end{itemize}
We call $\mathsf{vol}_{M, \mathcal{L}}$ the \emph{geometric volume form} associated with $\mathcal{L}$.
When $M$ is clear from the context, we simply write 
$\mathsf{vol}_{\mathcal{L}, i}$ and $\mathsf{vol}_{\mathcal{L}}$ instead of
$\mathsf{vol}_{M, \mathcal{L}, i}$ and $\mathsf{vol}_{M, \mathcal{L}}$.

\subsection{Homology orientations}
Let $M$ be a finite CW complex.

\begin{definition}\label{def:parity local system}
  For a linear local system $\mathcal{L} \in \Loc_{\GL, M}(A)$,
  we define a local system 
  $\parity \mathcal{L} \in \Loc_{\SL, M}(\mathbb{Q})$ as follows.
  If $\mathcal{L}_x$ is of constant rank for any $x \in M$,
  we define
  \begin{equation}\label{eq:parity local system}
    (\parity \mathcal{L})_x \coloneqq 
    \begin{cases}
      0 & \text{if $\rank_A \mathcal{L}_x$ is even}, \\
      \mathbb{Q} & \text{if $\rank_A \mathcal{L}_x$ is odd},
    \end{cases}
  \end{equation}
  and define $(\mathsf{vol}_{\parity \mathcal{L}})_x$ to be the 
  the canonical map $\det 0 \cong (\mathbb{Q}, 0)$ in the even case and
  $\det \mathbb{Q} \cong (\mathbb{Q}, 1)$ in the odd case.
  For an arbitrary $\mathcal{L} \in \Loc_{\GL, M}(A)$,
  we apply \eqref{eq:parity local system} locally on $\Spec A$.
  We call $\parity \mathcal{L}$ the \emph{parity local system} associated with $\mathcal{L}$.
\end{definition}

We call a volume form on $\det H_{\bullet} (M; \mathbb{Q})$ a 
\emph{homology orientation} of $M$.

\begin{definition}
  Let $\mathfrak{o}$ be a homology orientation of $M$.
  Let $\mathcal{L} \in \Loc_{\SL, M}(A)$.
  We define a volume form $\mathfrak{o}_{\mathcal{L}}$ on 
  $\det H_{\bullet} (M; \parity \mathcal{L})$ as follows.
  If $\mathcal{L}_x$ is of constant rank for any $x \in M$,
  we define $\mathfrak{o}_{\mathcal{L}}$ to be the composition
  \begin{equation}\label{eq:homology orientation odd}
    \begin{split}
    \det H_\bullet (M; \parity \mathcal{L})
    &\cong 
    \det H_\bullet (M_{\mathrm{even}}; 0) \otimes \det H_\bullet (M_{\mathrm{odd}}; \mathbb{Q}) \\
    &\xrightarrow{\id \otimes \mathfrak{o}|_{M_{\mathrm{odd}}}}
    1 \otimes (\mathbb{Q}, \chi (M_{\mathrm{odd}})) \\
    &\cong (\mathbb{Q},  \chi (M_{\mathrm{odd}}))
    \end{split}
  \end{equation}
  where $M = M_{\mathrm{even}} \sqcup M_{\mathrm{odd}}$ with
  \begin{equation*}
  M_{\mathrm{even}}=\{x\in M \mid \rank_A \mathcal{L}_x \text{ is even}\},\qquad
  M_{\mathrm{odd}}=\{x\in M \mid \rank_A \mathcal{L}_x \text{ is odd}\},
  \end{equation*}
  and $\chi (M_{\mathrm{odd}})$ is the Euler characteristic of $M_{\mathrm{odd}}$.
  For an arbitrary $\mathcal{L} \in \Loc_{\GL, M}(A)$,
  we apply \eqref{eq:homology orientation odd} locally on $\Spec A$.
\end{definition}

\subsection{Regular local systems}
Let $M$ be a link exterior complex,
and $\gamma$ be a peripheral loop in $M$
(\cref{dfn:link exterior complex}).
Let $i_1: \gamma \to \partial M$, $i_2: \partial M \to M$ denote the inclusions, which induce
\begin{align*}
    \Loc_{\GL, M} \xrightarrow{i_2^\ast} \Loc_{\GL, \partial M} \xrightarrow{i_1^\ast} \Loc_{\GL, \gamma}.
\end{align*}
For any $\mathcal{L} \in \Loc_{\GL, M}$, we write $\mathcal{L}|_{\partial M} \coloneqq i_2^\ast(P)$ and $\mathcal{L}|_\gamma \coloneqq i_1^\ast(P|_{\partial M})$. 
Then we have isomorphisms
\begin{align}
    \cap[\partial M]&: H^0(\partial M;\mathcal{L}|_{\partial M}) \xrightarrow{\cong} H_2(\partial M;\mathcal{L}|_{\partial M}), \label{eq:Poincare_duality_T} \\
    \cap[\gamma]&: H^0(\gamma;\mathcal{L}|_\gamma) \xrightarrow{\cong} H_1(\gamma;\mathcal{L}|_\gamma). \label{eq:Poincare_duality_gamma}
\end{align}
By \eqref{eq:Poincare_duality_T} and \eqref{eq:Poincare_duality_gamma},
we obtain the maps
\begin{align}
  \label{eq:H^0 to H_2}
  &H^0 (\partial M; \mathcal{L}|_{\partial M}) \xlongrightarrow{\cong} H_2 (\partial M; \mathcal{L}|_{\partial M}) 
    \longrightarrow H_2 (M; \mathcal{L}), \\
  \label{eq:H^0 to H_1}
  &H^0 (\partial M; \mathcal{L}|_{\partial M}) \longrightarrow H^0 (\gamma; \mathcal{L}|_{\gamma})
  \xlongrightarrow{\cong} H_1 (\gamma; \mathcal{L}|_{\gamma})
  \longrightarrow H_1 (M; \mathcal{L}).
\end{align}

\begin{definition}\label{def:boundary-regular}
  Let $r$ be a natural number.
  For a local system $\mathcal{L} \in \Loc_{\GL, M}(A)$,
  we consider the following conditions:
  \begin{enumerate}
    \item
      \label{item:boundary-regular locally finite}
      $\mathcal{L} \in \Loc_{\GL, M}^{b_* = (0,r,r,0)}(A)$ (see Definition \ref{def:fixed betti moduli}),
    \item
      \label{item:boundary-regular boundary locally finite}
      $\mathcal{L}|_{\partial M} \in \Loc_{\GL, \partial M}^{b^0 = r} (A)$ (see Definition \ref{def:fixed coBetti moduli}),
    \item
      \label{item:boundary-regular isom}
      the map \eqref{eq:H^0 to H_2} is an isomorphism.
    \item
      \label{item:gamma-regular isom}
      the map \eqref{eq:H^0 to H_1} is an isomorphism.
  \end{enumerate}
  We say that $\mathcal{L}$ is \emph{boundary-regular
  with betti number $r$}
  if the conditions 
  \eqref{item:boundary-regular locally finite}--\eqref{item:boundary-regular isom} are satisfied,
  and we say that $\mathcal{L}$ is \emph{$\gamma$-regular with betti number $r$} if the conditions
  \eqref{item:boundary-regular locally finite}--\eqref{item:gamma-regular isom} are satisfied.
  We denote by $\Loc_{\GL, M}^{\reg{\partial}, (r)}(A)$ the
  category of boundary-regular local systems,
  and by $\Loc_{\GL, M}^{\reg{\gamma}, (r)}(A)$ the
  category of $\gamma$-regular local systems.
\end{definition}

\begin{lemma}\label{lem:boundary regular substack}
  $\Loc_{\GL, M}^{\reg{\partial}, (r)}$ and $\Loc_{\GL, M}^{\reg{\gamma}, (r)}$
  are represented by locally closed substacks of $\Loc_{\GL, M}$.
\end{lemma}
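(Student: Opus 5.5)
Following the proof of \cref{lemma:fixed betti substack}, we first work on the marked scheme $X \coloneqq \Loc_{\GL, M, S}^{b_* = (0,r,r,0)}$ and only at the end descend to $\Loc_{\GL, M}$. All four conditions in \cref{def:boundary-regular} are invariant under change of markings, so once the marked versions are shown to be locally closed subschemes (necessarily $\GL_n^S$-stable), \cref{lemma:loc as quotient} shows that the unmarked versions are locally closed substacks. Condition \eqref{item:boundary-regular locally finite} is already handled by \cref{lemma:fixed betti substack}. For condition \eqref{item:boundary-regular boundary locally finite}, restriction along $i_2$ gives a morphism $X \to \Loc_{\GL, \partial M, S'}$, where $S'$ is a marking set for $\partial M$; on the marked level this is just restriction of monodromies. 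We pull back the locally closed subscheme $\Loc_{\GL, \partial M}^{b^0 = r}$ of \cref{def:fixed coBetti moduli}, whose local closedness follows from the same result \cite[Proposition 23.130]{GortzWedhorn2} applied to the cochain complex. This yields a locally closed subscheme $Y \subset X$.

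On $Y$, base change works well. The complexes $C_\bullet(M;-)$, $C_\bullet(\partial M;-)$ and $C^\bullet(\partial M;-)$ are bounded complexes of finite free modules, and their (co)homology is locally free of constant rank after any base change. Hence cohomology commutes with base change: $H_i(M;\mathcal{L}\otimes_A B)\cong H_i(M;\mathcal{L})\otimes_A B$, and likewise for $\partial M$ and for $\gamma$ in the relevant degrees. Over $Y$ the modules $H^0(\partial M;-)$, $H_2(\partial M;-)$ and $H_2(M;-)$ therefore define locally free sheaves $\mathcal{E}$ and $\mathcal{F}$, both of rank $r$. The map \eqref{eq:H^0 to H_2} is then a morphism $\phi:\mathcal{E}\to\mathcal{F}$ of such sheaves, and it is compatible with base change because cap product and the induced map on homology are defined at the chain level and are natural in $A$ (\cref{lemma:diff naturality}). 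Since both sheaves have rank $r$, the locus where $\phi$ is an isomorphism after every base change is the locus where $\det\phi$ is invertible. This is an open subscheme of $Y$, and it gives $\Loc^{\reg{\partial},(r)}$.

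The $\gamma$ case needs more care, and this is the step I expect to be the main obstacle. The target $H_1(M;-)$ has rank $r$ over $Y$. The intermediate groups $H^0(\gamma;-)$ and $H_1(\gamma;-)$, however, need not be locally free, so the map \eqref{eq:H^0 to H_1} cannot be defined naively as a map of vector bundles. My approach is to compose at the chain level instead. The composite sends a $0$-cocycle on $\partial M$ to its restriction to $\gamma$, caps it with the $1$-chain representing $[\gamma]$, and includes the result into $C_1(M)$ modulo boundaries. This defines a map $\mathcal{E}\to H_1(M;-)$ that commutes with base change, since its source and target are locally free on $Y$ and the formula is natural. The rank of $H^0(\gamma)$ never enters. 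The isomorphism locus inside the boundary-regular locus is then again given by invertibility of a determinant, hence is open. So $\Loc^{\reg{\gamma},(r)}$ is open in $\Loc^{\reg{\partial},(r)}$, and therefore locally closed in $\Loc_{\GL,M}$.
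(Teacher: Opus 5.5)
Your proposal is correct and follows the same route as the paper: impose conditions (1) and (2) by pulling back the locally closed loci $\Loc_{\GL, M}^{b_* = (0,r,r,0)}$ and $\Loc_{\GL, \partial M}^{b^0 = r}$, observe that the maps $H^0(\partial M;-) \to H_2(M;-)$ and $H^0(\partial M;-) \to H_1(M;-)$ are then morphisms of locally free sheaves of rank $r$, and take the open locus where they are isomorphisms. Working on the marked scheme before descending is only a cosmetic difference. Your remark that the $\gamma$-composite is defined by naturality, even though $H^0(\gamma;-)$ and $H_1(\gamma;-)$ need not be locally free, makes explicit a point the paper passes over silently.
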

\begin{proof}
  Set
  \begin{equation*}
    \Loc_{\GL, M}^{\partial b^0 = r}
    \coloneqq
    \Loc_{\GL, \partial M}^{b^0 = r} \times_{\Loc_{\GL, \partial M}} \Loc_{\GL, M},
  \end{equation*}
  and consider the locally closed substack
  \begin{equation*}
    X
    \coloneqq
    \Loc_{\GL, M}^{b_* = (0,r,r,0)} \times_{\Loc_{\GL, M}} \Loc_{\GL, M}^{\partial b^0 = r}
    \ \subset \ \Loc_{\GL, M},
  \end{equation*}
  which represents the conditions \eqref{item:boundary-regular locally finite} and \eqref{item:boundary-regular boundary locally finite}.
  By the defining property of $Y \coloneqq \Loc_{\GL, M}^{b_* = (0,r,r,0)}$,
  we see that $H_1(M; -)$ and $H_2(M; -)$ are locally free $\mathcal{O}_Y$-modules of rank $r$,
  and their pullbacks to $X$ are
  locally free $\mathcal{O}_{X}$-modules of rank $r$.
  Likewise, by the defining property of $Z \coloneqq \Loc_{\GL, \partial M}^{b^0 = r}$,
  the sheaf $H^0(\partial M; (-)|_{\partial M})$ is locally free 
  $\mathcal{O}_Z$-module of rank $r$, and its pullback to $X$ is a locally free $\mathcal{O}_{X}$-module of rank $r$.

  The maps \eqref{eq:H^0 to H_2} and \eqref{eq:H^0 to H_1} therefore define morphisms of
  locally free $\mathcal{O}_X$-modules of the same rank $r$.
  For a morphism of locally free sheaves of finite rank, the locus where it is an isomorphism
  is open.
  Hence the conditions \eqref{item:boundary-regular isom} and \eqref{item:gamma-regular isom}
  cut out open substacks of $X$.
\end{proof}

We define the stack of 
boundary-regular local system by
\begin{equation}
  \Loc_{\GL, M}^{\reg{\partial}} \coloneqq
  \coprod_{r \in \mathbb{N}} \Loc_{\GL, M}^{\reg{\partial}, (r)},
\end{equation}
and its $\SL$-version by
\begin{equation}
  \Loc_{\SL, M}^{\reg{\partial}} \coloneqq
  \Loc_{\SL, M} \times_{\Loc_{\GL, M}}
  \Loc_{\GL, M}^{\reg{\partial}}.
\end{equation}

\subsection{Porti form}
\label{sec:Porti form}
We continue to assume that $M$ is a link exterior complex and 
$\gamma$ is a peripheral loop in $M$.
We have $H_n (M; \mathcal{L}) = 0$ for any $\mathcal{L} \in \Loc_{\GL, M}(A)$ unless $n = 0, 1, 2, 3$, 
since $M$ is of formal dimension $3$. Thus we have
\begin{equation}\label{eq:det unless 0-3}
  \det H_n (M; \mathcal{L}) \cong 1_A
\end{equation}
unless $n = 0, 1, 2, 3$.

Let $\mathcal{L} \in \Loc_{\GL, M}^{\reg{\partial}}(A)$. 
By condition \eqref{item:boundary-regular locally finite} of \cref{def:boundary-regular},
we have
\begin{equation}\label{eq:det H0 H3}
  \det H_0 (M; \mathcal{L}) \cong 1_A, \quad
  \det H_3 (M; \mathcal{L}) \cong 1_A.
\end{equation}
By condition \eqref{item:boundary-regular isom} of Definition \ref{def:boundary-regular},
we have a map
\begin{equation*}\label{eq:H2 to H1}
  \bar{h}_{\gamma, \mathcal{L}} : H_2 (M; \mathcal{L}) \to H_1 (M; \mathcal{L})
\end{equation*}
by composing the inverse of the map \eqref{eq:H^0 to H_2} and
the map \eqref{eq:H^0 to H_1},
which is an $A$-module homomorphism, but not necessarily an isomorphism in general.
By taking the determinant, we get a map
\begin{equation}\label{eq:det H2 to H1}
  \det \bar{h}_{\gamma, \mathcal{L}} : \det H_2 (M; \mathcal{L}) \to \det H_1 (M; \mathcal{L}).
\end{equation}
When $\bar{h}_{\gamma, \mathcal{L}}$ is not an isomorphism,
we regard $\det \bar{h}_{\gamma, \mathcal{L}}$ as its top degree exterior power.
We define a map
\begin{equation}\label{eq:Porti form}
  h_{\gamma, \mathcal{L}} : \det H_{\bullet} (M; \mathcal{L}) \to A
\end{equation}
by composing 
\begin{equation}\label{eq:det H to H1 H2}
  \det H_{\bullet} (M; \mathcal{L})
  \cong
  \det H_2 (M; \mathcal{L}) \otimes (\det H_1 (M; \mathcal{L}))^{\otinv}
\end{equation}
and
\begin{equation}\label{eq:Porti form core}
  \det H_2 (M; \mathcal{L}) \otimes (\det H_1 (M; \mathcal{L}))^{\otinv}
  \xrightarrow{\det \bar{h}_{\gamma, \mathcal{L}} \otimes \id}
  \det H_1 (M; \mathcal{L}) \otimes (\det H_1 (M; \mathcal{L}))^{\otinv}
  \cong
  A,
\end{equation}
where the isomorphism \eqref{eq:det H to H1 H2} is by 
\eqref{eq:det unless 0-3} and \eqref{eq:det H0 H3}.
We call $h_{\gamma, \mathcal{L}}$ the \emph{Porti form} for $\gamma$.

\begin{lemma}\label{lemma:Porti form of gamma-regular}
  Let $\mathcal{L} \in \Loc_{\GL, M}^{\reg{\partial}}(A)$.
  Then the Porti form $h_{\gamma, \mathcal{L}}$ is a volume form on $\det H_{\bullet} (M; \mathcal{L})$ 
  (i.e., it is an isomorphism, and thus is a morphism in $\mathcal{P}_A$) if and only if
  $\mathcal{L}$ is $\gamma$-regular.
\end{lemma}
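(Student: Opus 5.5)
The plan is to reduce everything to the single map $\bar{h}_{\gamma, \mathcal{L}}$. The Porti form $h_{\gamma,\mathcal{L}}$ is by definition the composite of the isomorphism \eqref{eq:det H to H1 H2}, the map $\det \bar{h}_{\gamma, \mathcal{L}} \otimes \id$, and the contraction isomorphism $\det H_1 \otimes (\det H_1)^{\otinv} \cong A$ coming from $\varepsilon$. Since the first and last maps are isomorphisms, $h_{\gamma,\mathcal{L}}$ is an isomorphism if and only if $\det \bar{h}_{\gamma, \mathcal{L}} : \det H_2(M;\mathcal{L}) \to \det H_1(M;\mathcal{L})$ is. For this step I use that $-\otimes \id_{N}$ with $N$ invertible preserves and reflects isomorphisms, since one can tensor back with $N^{\otinv}$ and contract. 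By condition \eqref{item:boundary-regular locally finite}, $H_1$ and $H_2$ are projective of the same constant rank $r$. So I am reduced to the linear-algebra fact that a homomorphism $f : P \to Q$ between finitely generated projective $A$-modules of equal constant rank $r$ is an isomorphism if and only if its top exterior power $\wedge^r f$ is an isomorphism. I will prove this by localizing at primes of $A$ (or working Zariski-locally where both are free): there $\wedge^r f$ is multiplication by $\det$ of a square matrix, which is a unit exactly when the matrix is invertible. Isomorphy of module maps is local, so the equivalence globalizes.

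It remains to show that $\bar{h}_{\gamma,\mathcal{L}}$ is an isomorphism if and only if $\mathcal{L}$ is $\gamma$-regular. Given boundary-regularity, conditions \eqref{item:boundary-regular locally finite}--\eqref{item:boundary-regular isom} already hold. By construction, $\bar{h}_{\gamma,\mathcal{L}}$ is the map \eqref{eq:H^0 to H_1} precomposed with the inverse of the isomorphism \eqref{eq:H^0 to H_2}. Hence $\bar{h}_{\gamma,\mathcal{L}}$ is an isomorphism exactly when \eqref{eq:H^0 to H_1} is, i.e.\ exactly when condition \eqref{item:gamma-regular isom} holds. This gives both directions at once.

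The only point requiring care is the convention that $\det$ of a non-invertible map means its top exterior power $\wedge^r$, and checking that this is compatible with the identifications $\det P = (\wedge^r P, r)$ in $\mathcal{P}_A$ fixed by the chosen determinant functor on $(\mathsf{proj}(A),\mathrm{iso})$. I expect this to be a routine check rather than a real obstacle: any natural identification differs from $\wedge^r$ by a sign or unit, which does not affect invertibility. With that settled, the proof is complete.
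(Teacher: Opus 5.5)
Your proposal is correct and follows the paper's own chain of equivalences. The outer maps in \eqref{eq:det H to H1 H2}--\eqref{eq:Porti form core} are isomorphisms, so $h_{\gamma,\mathcal{L}}$ is invertible iff $\det \bar{h}_{\gamma,\mathcal{L}}$ is, iff $\bar{h}_{\gamma,\mathcal{L}}$ is, iff condition \eqref{item:gamma-regular isom} holds; you also spell out the local linear-algebra step (the top exterior power of a map between projective modules of equal rank detects invertibility), which the paper leaves implicit.
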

\begin{proof}
  $\mathcal{L}$ is $\gamma$-regular if and only if
  the map $\bar{h}_{\gamma, \mathcal{L}}$ in \eqref{eq:H2 to H1} is an isomorphism,
  which is equivalent to that $\det \bar{h}_{\gamma, \mathcal{L}} : \det H_2 (M; \mathcal{L}) \to \det H_1 (M; \mathcal{L})$ is an isomorphism.
  Since the evaluation map in \eqref{eq:Porti form core} is an isomorphism,
  we see that $\det \bar{h}_{\gamma, \mathcal{L}}$ is an isomorphism if and only if $h_{\gamma, \mathcal{L}}$ is a volume form on $\det H_{\bullet} (M; \mathcal{L})$.
\end{proof}

\subsection{Torsion function on the moduli stack of local systems}
We continue to assume that $M$ is a link exterior complex and 
$\gamma$ is a peripheral loop in $M$.

For $\mathcal{L} \in \Loc_{\GL, M}(A)$,
we have an isomorphism
\begin{equation}\label{eq:det C to A}
  \Phi_{\mathcal{L}} : \det C_{\bullet} (M; \mathcal{L}) \cong 
  \det H_{\bullet} (M; \mathcal{L})
\end{equation}
by Definition \ref{theorem:det C to det H}.

\begin{definition}\label{def:torsion function}
  Let $A$ be a $k$-algebra, and $\mathcal{L} \in \Loc_{\SL, M}^{\reg{\partial}}(A)$.
  Let $\mathfrak{o} : \det H_{\bullet} (M; \mathbb{Q}) \cong \mathbb{Q}$ be a homology orientation.
  The \emph{torsion function} on the category of boundary-regular local systems
  is a functor
  \begin{equation}\label{eq:torsion at A}
    \torsion_{M, \gamma, \mathfrak{o}}(A) : \Loc_{\SL, M}^{\reg{\partial}} (A) \to A
  \end{equation}
  defined by
  \begin{equation}\label{eq:def_torsion}
    \torsion_{M, \gamma, \mathfrak{o}}(A, \mathcal{L}) \coloneqq 
    \epsilon_{\mathfrak{o}, \mathcal{L}} \cdot
      h_{\gamma, \mathcal{L}} \bigl (\Phi_{\mathcal{L}} (1_{\mathsf{vol}_{\scriptstyle \mathcal{L}}}) \bigr),
  \end{equation}
  where 
  \begin{itemize}
    \item $1_{\mathsf{vol}_{\mathcal{L}}} \in \det C_{\bullet} (M; \mathcal{L})$
  is the unit element associated with the geometric volume form defined in \eqref{eq:vol complex}-\eqref{eq:vol complex end},
    \item $h_{\gamma, \mathcal{L}} : \det H_\bullet (M; \mathcal{L}) \to A$ is the Porti form defined in \eqref{eq:Porti form}, and
    \item $\epsilon_{\mathfrak{o}, \mathcal{L}} = \pm 1$ is a sign defined by
  \begin{equation}\label{eq:orientation sign}
    \epsilon_{\mathfrak{o}, \mathcal{L}} \coloneqq       
    \sign\bigl(
      \mathfrak{o}_{\mathcal{L}} \bigl(
      \Psi_{\parity \mathcal{L}} (1_{\mathsf{vol}_{\scriptstyle \parity \mathcal{L}}}) \bigr)\bigr) 
  \end{equation}
  where $\parity \mathcal{L} \in \Loc_{\SL, M}(\mathbb{Q})$
  is the parity local system defined in \eqref{eq:parity local system},
  and $\mathfrak{o}_{\mathcal{L}}$ is the volume form on 
  $\det H_{\bullet} (M; \parity \mathcal{L})$ 
  defined by \eqref{eq:homology orientation odd}.
  \end{itemize}
\end{definition}

In the codomain of \eqref{eq:torsion at A}, we regard $A$ as a discrete category whose set of objects is $A$.
In particular, 
the functoriality means $\torsion_{M, \gamma, \mathfrak{o}}(A, \mathcal{L}) = \torsion_{M, \gamma, \mathfrak{o}}(A, \mathcal{L}')$
if $\mathcal{L} \cong \mathcal{L}'$. 

\begin{lemma}\label{lemma:naturality torsion}
  Suppose that $f : A \to B$ is a morphism of $k$-algebras.
  We have a commutative diagram:
  \begin{equation}\label{eq:naturality torsion}
    \begin{tikzpicture}[scale=1.7, commutative diagrams/every diagram,  baseline={(current bounding box.center)}]
      \node (RA) at (0, 0) {$\Loc_{\SL, M}^{\reg{\partial}}(A)$};
      \node (A) at (3, 0) {$A$};
      \node (RB) at (0, -1) {$\Loc_{\SL, M}^{\reg{\partial}}(B)$};
      \node (B) at (3, -1) {$B$};
      \draw[morphism] (RA) -- (A) node[midway, above] {$\torsion_{M, \gamma, \mathfrak{o}}(A)$};
      \draw[morphism] (RA) -- (RB) node[midway, left] {$(-) \otimes_A B$};
      \draw[morphism] (A) -- (B) node[midway, right] {$f$};
      \draw[morphism] (RB) -- (B) node[midway, above] {$\torsion_{M, \gamma, \mathfrak{o}}(B)$};
    \end{tikzpicture}
  \end{equation}
\end{lemma}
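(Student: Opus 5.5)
The plan is to show that every ingredient of \eqref{eq:def_torsion} commutes with base change along $f : A \to B$. Since both sides of \eqref{eq:naturality torsion} are determined by the formula \eqref{eq:def_torsion}, it suffices to show three things. First, $f$ maps $h_{\gamma,\mathcal{L}}(\Phi_{\mathcal{L}}(1_{\mathsf{vol}_{\mathcal{L}}}))$ to the corresponding expression for $\mathcal{L}\otimes_A B$. Second, the sign $\epsilon_{\mathfrak{o},\mathcal{L}}$ is unchanged. Third, $(-)\otimes_A B$ preserves $\Loc_{\SL,M}^{\reg{\partial}}$, which holds because the substack conditions of \cref{def:boundary-regular} are defined by requiring them after every further base change (cf.\ \cref{lem:boundary regular substack}).

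I would proceed in the following order.

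\begin{enumerate}
\item By \cref{lemma:tensor chain cochain} and \cref{lemma:diff naturality}, we have $C_\bullet(M;\mathcal{L})\otimes_A B\cong C_\bullet(M;\mathcal{L}\otimes_A B)$ as complexes.
\item Since the determinant functor and the inverse structure were fixed pseudonaturally in $A$, the determinant of the base-changed complex is identified with $\det C_\bullet(M;\mathcal{L})\otimes_A B$. Under this identification the geometric volume form \eqref{eq:vol complex}--\eqref{eq:vol complex end} goes to the geometric volume form. This is because it is built from the fibre volume forms $\mathsf{vol}_{\mathcal{L}_\sigma}$, which base-change to $\mathsf{vol}_{(\mathcal{L}\otimes B)_\sigma}$ by definition of $\Loc_{\SL,M}$, together with braidings and unit isomorphisms, all of which are compatible with base change.
\item Under the boundary-regular hypothesis, $H_i(M;\mathcal{L})$ and the images and kernels of the differentials are projective of constant rank. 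Hence homology commutes with base change: $H_i(M;\mathcal{L})\otimes_A B\cong H_i(M;\mathcal{L}\otimes_A B)$. I would argue this by splitting the complex locally on $\Spec A$ into an acyclic part and the homology. Consequently the isomorphism $\Phi$ of \cref{theorem:det C to det H} is compatible with base change, using Knudsen's naturality of $\Phi$ for such complexes.
\item The maps \eqref{eq:H^0 to H_2} and \eqref{eq:H^0 to H_1} are induced by cap products with fixed integral classes and by inclusions of subcomplexes. These are defined at the chain level and commute with base change, and $H^0(\partial M;-)$ also base-changes correctly by condition \eqref{item:boundary-regular boundary locally finite}. Hence $\bar h_{\gamma,\mathcal{L}}\otimes B=\bar h_{\gamma,\mathcal{L}\otimes B}$. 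Taking determinants, and using \eqref{eq:det H to H1 H2} together with the evaluation map $\varepsilon$, gives $h_{\gamma,\mathcal{L}}\otimes B=h_{\gamma,\mathcal{L}\otimes B}$.
\item The sign $\epsilon_{\mathfrak{o},\mathcal{L}}$ depends only on $\parity\mathcal{L}$, which is defined over $\mathbb{Q}$ and depends only on the ranks of the fibres, and these are preserved by base change. Strictly, the sign is a locally constant function on $\Spec A$, and it pulls back correctly.
\end{enumerate}

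The main obstacle is step 3: justifying that $\Phi$ commutes with base change. Knudsen's construction needs projectivity of images, kernels and homology. I would verify this using the fixed-rank condition of $\Loc^{b_*=r}$: all homology modules are projective after every base change, so the complex is perfect with projective cohomology and locally split. From local splitness, the projectivity of kernels and images follows by descending induction on degree. I would then invoke the naturality clause of Knudsen's Theorem 3.3 (the isomorphism $\Phi$ is natural for base change of split complexes). The remaining steps are routine bookkeeping of pseudonaturality isomorphisms.
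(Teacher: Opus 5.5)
Your proposal is correct and follows essentially the same route as the paper. The paper's proof likewise checks, ingredient by ingredient, that four things commute with extension of scalars: the geometric volume form, Knudsen's isomorphism $\Phi_{\mathcal{L}}$, the Porti form (because cap products commute with base change), and the sign $\epsilon_{\mathfrak{o},\mathcal{L}}$ (which depends only on ranks). Your extra details — projectivity of kernels, images and homology via the splitting argument, and stability of the boundary-regular locus under $(-)\otimes_A B$ — make explicit what the paper leaves implicit. For the base change of $H^0(\partial M;-)$, the cleanest justification is to compare with the isomorphism $H^0(\partial M;-)\cong H_2(M;-)$ of condition (3) over $B$, using that $H_2(M;-)$ base-changes by your step 3.
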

\begin{proof}
  The geometric volume form $\mathsf{vol}_{\mathcal{L}}$ commutes with extension of scalars
  since the map \eqref{eq:det exact} has this property.
  The map $\Phi_{\mathcal{L}}$ commutes with extension of scalars by results in \cite{Knudsen}.
  The Porti form $h_{\gamma, \mathcal{L}}$ also commutes with extension of scalars
  since taking cap product (used in the Poincar\'e dualities) commutes with extension of scalars.
  Finally, the sign $\epsilon_{\mathfrak{o}, \mathcal{L}}$ is invariant under extension of scalars
  since the rank of $\mathcal{L}$ commutes with extension of scalars.
\end{proof}

By the naturality \eqref{eq:naturality torsion} of the torsion function, we have:
\begin{definition}\label{def:torsion function morphism}
  The \emph{torsion function} on the moduli stack of boundary-regular linear local systems 
  with volume forms is the morphism of stacks over $k$:
  \begin{equation}\label{eq:torsion function morphism}
    \torsion_{M, \gamma, \mathfrak{o}} : \Loc_{\SL, M}^{\reg{\partial}} \to \mathbb{A}^1_k
  \end{equation}
  whose values in a $k$-algebra $A$
  are given by Definition \ref{def:torsion function}.
\end{definition}

In \eqref{eq:torsion function morphism}, we denote by
$\mathbb{A}^1_k$ the affine line over $k$.
Viewed as a functor $\mathbb{A}^1_k : \Alg_k \to \Set$, it sends a $k$-algebra $A$ to its underlying set.

In what follows,
we will denote $\torsion_{M, \gamma, \mathfrak{o}} (A, \mathcal{L})$ by 
$\torsion_{M, \gamma, \mathfrak{o}} (\mathcal{L})$ when there is no risk of confusion.

\begin{lemma}\label{lemma:torsion indep order}
  The torsion function $\torsion_{M, \gamma, \mathfrak{o}}$ does not depend on a 
  choice of the ordering of the cells of $M$.
\end{lemma}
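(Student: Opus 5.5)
Changing the ordering of the cells only affects the geometric volume forms $\mathsf{vol}_{\mathcal{L}}$ and $\mathsf{vol}_{\parity \mathcal{L}}$. The isomorphism $\Phi_{\mathcal{L}}$, the Porti form $h_{\gamma,\mathcal{L}}$ and the orientation form $\mathfrak{o}_{\mathcal{L}}$ are built from the chain complex, the homology and the cap products alone, so they are order-independent. The plan is therefore to compute how $\mathsf{vol}_{M,\mathcal{L}}$ and $\mathsf{vol}_{M,\parity\mathcal{L}}$ change under a reordering, and to show that the two changes cancel in the product \eqref{eq:def_torsion}. The sign $\epsilon_{\mathfrak{o},\mathcal{L}}$ uses the same construction applied to $\parity\mathcal{L}$ through $\Phi$ (written $\Psi$ there), so it is precisely this factor that should absorb the discrepancy.

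\textbf{Step 1 (reduction to transpositions).} Any reordering of $\mathsf{cell}_i$ is a product of adjacent transpositions, and cells in different degrees are never permuted. It therefore suffices to treat swapping two consecutive cells $\sigma,\sigma'\in\mathsf{cell}_i$. Since the claim is local on $\Spec A$, we may also assume every $\mathcal{L}_x$ has constant rank on each component.

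\textbf{Step 2 (effect on $\mathsf{vol}_{\mathcal{L},i}$).} The decomposition $\det C_i(M;\mathcal{L})\cong\bigotimes_\sigma\det\mathcal{L}_\sigma$ comes from the determinant functor applied to the direct sum, and reordering the summands changes it by the braiding $\beta$ of \eqref{eq:sign braiding}. By the commutativity axiom of determinant functors, swapping $\sigma$ and $\sigma'$ multiplies the resulting volume form by $(-1)^{\lvert\mathcal{L}_\sigma\rvert\lvert\mathcal{L}_{\sigma'}\rvert}$. The final contraction to $(A,\sum\lvert\mathcal{L}_\sigma\rvert)$ sends $1\otimes\cdots\otimes1\mapsto1$ and is insensitive to the swap, since the braiding on $(A,a)\otimes(A,b)$ is also $(-1)^{ab}$ and the two signs match. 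The net effect is that $\mathsf{vol}_{\mathcal{L},i}$ is multiplied by $s=(-1)^{\lvert\mathcal{L}_\sigma\rvert\lvert\mathcal{L}_{\sigma'}\rvert}$. By \cref{lemma:smul otinv} this propagates through \eqref{eq:vol complex}--\eqref{eq:vol complex end} as $s^{\pm1}=s$, regardless of whether $i$ is odd. Hence $1_{\mathsf{vol}_{\mathcal{L}}}$ changes by $s$, and so does $h_{\gamma,\mathcal{L}}(\Phi_{\mathcal{L}}(1_{\mathsf{vol}_{\mathcal{L}}}))$.

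\textbf{Step 3 (effect on the sign).} The ranks of $\parity\mathcal{L}$ have the same parities as those of $\mathcal{L}$. Applying Step 2 to $\parity\mathcal{L}$ therefore multiplies $1_{\mathsf{vol}_{\parity\mathcal{L}}}$ by the same $s$, while $\mathfrak{o}_{\mathcal{L}}$ is unchanged. Thus $\epsilon_{\mathfrak{o},\mathcal{L}}$ is multiplied by $s$, and the product in \eqref{eq:def_torsion} is unchanged since $s^2=1$.

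The main obstacle is Step 2. One must check carefully that the isomorphism $\det(V\oplus W)\cong\det V\otimes\det W$ from the fixed determinant functor on $(\mathsf{proj}(A),\mathrm{iso})$ interacts with reordering exactly through the braiding $\beta$. This follows from the commutativity axiom in \cite[Definition 2.4]{Breuning} applied to the split exact sequences $0\to V\to V\oplus W\to W\to 0$ and $0\to W\to V\oplus W\to V\to0$. One must also verify that the canonical contraction of the trivial factors contributes the same sign $s$, so that the change of $\mathsf{vol}_{\mathcal{L}}$ is exactly $s$ and not merely up to a unit.
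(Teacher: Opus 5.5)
Your proposal is correct and is essentially the paper's argument: reduce to swapping two adjacent cells $\sigma,\sigma'$ of the same degree, observe that $\mathsf{vol}_{\mathcal{L}}$ and $\mathsf{vol}_{\parity\mathcal{L}}$ both change by the same factor $(-1)^{nn'}$ with $n=\rank_A\mathcal{L}_\sigma$ and $n'=\rank_A\mathcal{L}_{\sigma'}$, and conclude that the two changes cancel in \eqref{eq:def_torsion}. You give more detail than the paper's two-line proof (propagation through $(-)^{\otinv}$ via \cref{lemma:smul otinv}, and the fact that $\parity\mathcal{L}$ has ranks of the same parity); your slightly muddled remark about the contraction in Step 2 is best replaced by the direct observation $1_{\mathsf{vol}_{\mathcal{L}_{\sigma'}}}\wedge 1_{\mathsf{vol}_{\mathcal{L}_{\sigma}}}=(-1)^{nn'}\,1_{\mathsf{vol}_{\mathcal{L}_{\sigma}}}\wedge 1_{\mathsf{vol}_{\mathcal{L}_{\sigma'}}}$, since both orderings contract $1\otimes 1\mapsto 1$.
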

\begin{proof}
  Let $\mathcal{L} \in \Loc_{\SL, M}^{\reg{\partial}}(A)$.
  We may assume that $\mathcal{L}_x$ is of constant rank for any $x \in M$.
  We compare the sign changes of the volume forms $\mathsf{vol}_{\mathcal{L}}$
  and $\mathsf{vol}_{\parity \mathcal{L}}$.
  When we swap two adjacent cells $\sigma$ and $\sigma'$ in the ordering, 
  both volume forms change by the factor 
  $(-1)^{n n'}$, where $n = \rank_A \mathcal{L}_{\sigma}$ and 
  $n' = \rank_A \mathcal{L}_{\sigma'}$.
  Therefore, the sign changes cancel, and the torsion remains invariant.
\end{proof}

\begin{lemma}\label{lemma:torsion of even local system}
  Let $\mathcal{L} \in \Loc_{\SL, M}^{\reg{\partial}}(A)$
  be a local system such that $\mathcal{L}_x$ is of even rank for any $x \in M$.
  Then we have $\epsilon_{\mathfrak{o}, \mathcal{L}} = 1$.
  In particular, the torsion $\torsion_{M, \gamma, \mathfrak{o}} (\mathcal{L})$
  does not depend on the homology orientation $\mathfrak{o}$.
\end{lemma}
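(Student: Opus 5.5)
If every fiber $\mathcal{L}_x$ has even rank, then by Definition~\ref{def:parity local system} the parity local system $\parity \mathcal{L}$ is the zero local system: $(\parity \mathcal{L})_x = 0$ for all $x \in M$, with volume forms the canonical maps $\det 0 \cong (\mathbb{Q},0)$. The plan is to compute $\epsilon_{\mathfrak{o},\mathcal{L}}$ directly from \eqref{eq:orientation sign}, working locally on $\Spec A$ so that ranks are constant.

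First I will check that the geometric volume form is trivial. Each $C_i(M;\parity\mathcal{L}) = \bigoplus_{\sigma} 0 = 0$, so $\det C_i = 1$. All the isomorphisms in \eqref{eq:geometric volume form at i} and \eqref{eq:vol complex}--\eqref{eq:vol complex end} are then built from unit objects $(\mathbb{Q},0)$, canonical unit isomorphisms, braidings $\beta_{1,1}$, and the structure map $1^{\otinv}\to 1$. The braidings carry sign $(-1)^{0\cdot 0}=1$ by \eqref{eq:sign braiding}. The inverse-structure maps on units are compatible with the unit constraints by \eqref{eq:theta unit} and \eqref{eq:epsilon unit}. Hence $1_{\mathsf{vol}_{\parity\mathcal{L}}}$ is the canonical element $1 \in \mathbb{Q} = \det 0$.

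Next, since the complex $C_\bullet(M;\parity\mathcal{L})$ is identically zero, the isomorphism $\Psi_{\parity\mathcal{L}}=\Phi$ of Theorem~\ref{theorem:det C to det H} from $\det C_\bullet$ to $\det H_\bullet(M;0)$ is the identity of the unit object. This follows from the normalization of Knudsen's construction on the zero complex, as extended by Theorem~\ref{theorem:det functor}.

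Finally, I evaluate $\mathfrak{o}_{\mathcal{L}}$ using \eqref{eq:homology orientation odd}. Here $M_{\mathrm{odd}}=\emptyset$ and $M_{\mathrm{even}}=M$, so the composition is $\det H_\bullet(M;0) \cong 1 \otimes \det H_\bullet(\emptyset;\mathbb{Q}) \to 1 \otimes (\mathbb{Q},0) \cong (\mathbb{Q},0)$. The restriction $\mathfrak{o}|_{\emptyset}$ is the canonical identification of the empty complex's determinant with $1$, and it does not depend on $\mathfrak{o}$. Therefore $\mathfrak{o}_{\mathcal{L}}(1)=1$ and $\epsilon_{\mathfrak{o},\mathcal{L}} = \sign(1) = 1$. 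Because $\mathfrak{o}$ enters \eqref{eq:def_torsion} only through $\epsilon_{\mathfrak{o},\mathcal{L}}$, the torsion is independent of $\mathfrak{o}$.

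The only real obstacle is confirming that every structural isomorphism evaluated on unit objects is the canonical one with no stray sign. This requires the coherence axioms of the inverse structure together with the normalization of the determinant functor on zero objects. I would state these as consequences of \eqref{eq:theta unit}, \eqref{eq:epsilon unit}, and the functoriality of $\detExtend$ rather than verify them by hand.
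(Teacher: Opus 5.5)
Your argument is correct and is essentially the paper's proof. The paper simply notes that $M_{\mathrm{odd}}$ is empty, so the homology orientation plays no role; you additionally spell out why, for the zero parity local system, the geometric volume form and the isomorphism $\Phi$ are the canonical identities on the unit object, which the paper leaves implicit.
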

\begin{proof}
  This follows from the fact that $M_{\mathrm{odd}}$ in \eqref{eq:homology orientation odd}
  is empty in this case.
\end{proof}

\begin{prop}\label{lemma:torsion of gamma regular}
  Let $\mathcal{L} \in \Loc_{\SL, M}^{\reg{\partial}}(A)$.
  Then $\mathcal{L}$ is $\gamma$-regular if and only if
  the torsion $\torsion_{M, \gamma, \mathfrak{o}} (\mathcal{L})$ is a unit in $A$.
\end{prop}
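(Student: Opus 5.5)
The plan is to unpack the definition \eqref{eq:def_torsion} as a product of three factors and show that all of them except the Porti form are automatically units. First, $\epsilon_{\mathfrak{o}, \mathcal{L}} = \pm 1$ is a unit, since it is a sign; if one works locally on $\Spec A$ so that ranks are constant, it is a locally constant sign. Second, $1_{\mathsf{vol}_{\mathcal{L}}}$ is a generator of the invertible module $\det C_\bullet(M;\mathcal{L})$, because $\mathsf{vol}_{\mathcal{L}}$ is an isomorphism in $\mathcal{P}_A$; the Euler characteristic condition needed to define it holds because $\mathcal{L}$ has constant rank on components and $\chi(M)=0$. Third, $\Phi_{\mathcal{L}}$ of \cref{theorem:det C to det H} is an isomorphism, so $\Phi_{\mathcal{L}}(1_{\mathsf{vol}_{\mathcal{L}}})$ is a generator $g$ of the invertible $A$-module $\det H_\bullet(M;\mathcal{L})$. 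Condition \eqref{item:boundary-regular locally finite} makes all homology projective, which is what lets us apply that theorem. Hence the torsion equals $\pm h_{\gamma,\mathcal{L}}(g)$.

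The key step is the elementary fact that an $A$-linear map $h : L \to A$ from an invertible module $L$ sends a generator $g$ to a unit if and only if $h$ is an isomorphism. If $h$ is an isomorphism, then $h(g)$ generates $A$ and is therefore a unit. Conversely, if $h(g)=u\in A^\times$, then $h$ is surjective, and since $L = Ag$ it is also injective: $h(ag)=au=0$ forces $a=0$. So $h$ is an isomorphism.

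Combining this with \cref{lemma:Porti form of gamma-regular}, which says that $h_{\gamma,\mathcal{L}}$ is a volume form (i.e.\ an isomorphism) if and only if $\mathcal{L}$ is $\gamma$-regular, gives the equivalence.

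The only mildly delicate point is bookkeeping. The identification \eqref{eq:det H to H1 H2} and the evaluation map in \eqref{eq:Porti form core} are isomorphisms, so $h_{\gamma,\mathcal{L}}$ is an isomorphism exactly when $\det\bar h_{\gamma,\mathcal{L}}$ is; this is already handled inside the lemma. One also has to note that $\det H_\bullet(M;\mathcal{L})$ sits in graded degree $r-r=0$, so that a map to $A$ makes sense in $\mathcal{P}_A$. If $\Spec A$ is disconnected, everything is checked locally on a Zariski cover where ranks are constant; being a unit is a local property, and so is being an isomorphism.
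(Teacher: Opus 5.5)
Your proof is correct and follows the paper's route. The paper's proof just cites \cref{lemma:Porti form of gamma-regular}; you additionally spell out the routine steps it leaves implicit: $\epsilon_{\mathfrak{o},\mathcal{L}}$ is a sign, $\Phi_{\mathcal{L}}(1_{\mathsf{vol}_{\mathcal{L}}})$ is a basis of the free rank-one module $\det H_\bullet(M;\mathcal{L})$, and a linear form on such a module is an isomorphism exactly when it sends a basis vector to a unit.
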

\begin{proof}
  This follows from \cref{lemma:Porti form of gamma-regular}.
\end{proof}

\begin{prop}\label{lemma:torsion of direct sum}
  Let $\mathcal{L}, \mathcal{L}' \in \Loc_{\SL, M}^{\reg{\partial}}(A)$.
  We have
  \begin{equation}\label{eq:torsion of direct sum}
    \torsion_{M, \gamma, \mathfrak{o}} (\mathcal{L} \oplus \mathcal{L}')
    =
    \torsion_{M, \gamma, \mathfrak{o}} (\mathcal{L}) \cdot
    \torsion_{M, \gamma, \mathfrak{o}} (\mathcal{L}').
  \end{equation}
\end{prop}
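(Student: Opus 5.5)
The plan is to check that each of the three ingredients of \eqref{eq:def_torsion} is multiplicative under direct sums, and then to check that the signs produced by reordering tensor factors are compensated by the orientation sign $\epsilon_{\mathfrak{o}, -}$. First note that $\mathcal{L} \oplus \mathcal{L}'$ is again boundary-regular. Its betti numbers add, so it lies in $\Loc_{\GL, M}^{b_* = (0, r+r', r+r', 0)}$. The maps \eqref{eq:H^0 to H_2} and \eqref{eq:H^0 to H_1} are compatible with direct sums, so condition (3) persists. Hence both sides of \eqref{eq:torsion of direct sum} are defined. We may work locally on $\Spec A$ and assume all ranks are constant.

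\textbf{Step 1 (chain level).} The short exact sequence $0 \to C_\bullet(M;\mathcal{L}) \to C_\bullet(M;\mathcal{L}\oplus\mathcal{L}') \to C_\bullet(M;\mathcal{L}') \to 0$ gives an isomorphism $\det C_\bullet(M;\mathcal{L}\oplus\mathcal{L}') \cong \det C_\bullet(M;\mathcal{L}) \otimes \det C_\bullet(M;\mathcal{L}')$. Compare it with the geometric volume forms. On each $\mathcal{L}_\sigma \oplus \mathcal{L}'_\sigma$, the volume form is the tensor product of the two volume forms. Passing from the cell ordering of $\bigoplus_\sigma (\mathcal{L}_\sigma\oplus\mathcal{L}'_\sigma)$ to the block ordering $\bigoplus_\sigma\mathcal{L}_\sigma \oplus \bigoplus_\sigma \mathcal{L}'_\sigma$ costs a braiding sign $(-1)^{N}$, where $N$ is a sum of products $\lvert\mathcal{L}'_\sigma\rvert\cdot\lvert\mathcal{L}_\tau\rvert$ over $\tau>\sigma$. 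Separating even and odd degrees in \eqref{eq:vol complex}--\eqref{eq:vol complex end} contributes further such signs. Altogether, $1_{\mathsf{vol}_{\mathcal{L}\oplus\mathcal{L}'}}$ corresponds to $(-1)^{N_{\mathrm{tot}}}\, 1_{\mathsf{vol}_{\mathcal{L}}}\otimes 1_{\mathsf{vol}_{\mathcal{L}'}}$. The key point is that $N_{\mathrm{tot}}$ depends only on the parities of the ranks.

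\textbf{Step 2 (homology level).} Since $C_\bullet(M;\mathcal{L}\oplus\mathcal{L}')$ is the direct sum of the two complexes, the long exact sequence splits into short exact sequences $0\to H_i(\mathcal{L})\to H_i(\mathcal{L}\oplus\mathcal{L}')\to H_i(\mathcal{L}')\to 0$. By Knudsen's compatibility of $\Phi$ with short exact sequences of complexes, $\Phi_{\mathcal{L}\oplus\mathcal{L}'}$ corresponds to $\Phi_{\mathcal{L}}\otimes\Phi_{\mathcal{L}'}$ up to the sign in the determinant of the homology long exact sequence. That sign is $(-1)$ raised to a quadratic expression in the ranks $\operatorname{rk} H_1$ and $\operatorname{rk} H_2$.

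Next, $\bar h_{\gamma,\mathcal{L}\oplus\mathcal{L}'} = \bar h_{\gamma,\mathcal{L}}\oplus\bar h_{\gamma,\mathcal{L}'}$, so $\det\bar h$ is multiplicative. The identification \eqref{eq:det H to H1 H2} involves $\det H_2\otimes(\det H_1)^{\otinv}$. Reordering $\det H_2(\mathcal{L})\otimes\det H_2(\mathcal{L}')\otimes(\det H_1(\mathcal{L}))^{\otinv}\otimes(\det H_1(\mathcal{L}'))^{\otinv}$ produces a sign $(-1)^{r' r + r' r}=1$, because $\lvert H_1\rvert=\lvert H_2\rvert$ for each summand. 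The evaluation map behaves compatibly by \eqref{eq:epsilon monoidal}. It then follows that the homological contributions of $h$ are multiplicative, with a sign $(-1)^{Q(r,r')}$ that one checks vanishes or cancels with the sign coming from $\Phi$. I would verify this by direct computation.

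\textbf{Step 3 (sign cancellation).} This is the main obstacle. The total chain-level sign $(-1)^{N_{\mathrm{tot}}}$ depends only on rank parities. The orientation signs satisfy $\epsilon_{\mathfrak{o},\mathcal{L}\oplus\mathcal{L}'}$, $\epsilon_{\mathfrak{o},\mathcal{L}}$, $\epsilon_{\mathfrak{o},\mathcal{L}'}$, which are computed from the parity local systems. I would argue as in the proof of \cref{lemma:torsion indep order}, considering cases on the parities of $n=\operatorname{rk}\mathcal{L}$ and $n'=\operatorname{rk}\mathcal{L}'$.

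If $n$ or $n'$ is even, then $N_{\mathrm{tot}}$ is even. Moreover $\parity(\mathcal{L}\oplus\mathcal{L}')$ equals the parity system of the other summand, and $\epsilon$ of the even summand is $1$ by \cref{lemma:torsion of even local system}, so the claim follows.

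If both $n$ and $n'$ are odd, then $\mathcal{L}\oplus\mathcal{L}'$ is even, so its $\epsilon$ is $1$. We must then show that $(-1)^{N_{\mathrm{tot}}}$, together with the homology-level sign, equals $\epsilon_{\mathfrak{o},\mathcal{L}}\epsilon_{\mathfrak{o},\mathcal{L}'}=\epsilon_{\mathfrak{o},\mathbb{Q}}^2=1$. Equivalently, the combined sign for $\mathbb{Q}\oplus\mathbb{Q}$ must be trivial. I would reduce to this universal case: the sign depends only on parities, cell counts, and Betti data, and all of these are controlled by $\chi(M)=0$ and by Poincaré duality of $M$. For $M$ with $\chi(M)=0$ and trivial rank-one coefficients, the sign should be a square of the same rank-one sign, hence $1$. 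Concretely, I expect the required parity identity to reduce to $\sum_i \#\mathsf{cell}_i$ being even, which follows from $\chi(M)=0$.
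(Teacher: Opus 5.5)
Your strategy is the paper's: its proof is a single sentence citing Knudsen's compatibility of the determinant functor, and of $\Phi$, with short exact sequences. As written, however, your proposal does not close the sign argument that you correctly identify as the real content. Two intermediate claims are also off.

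\emph{Homology-level sign.} The residual sign $(-1)^{Q(r,r')}$ cannot ``cancel'' against anything. Both $\epsilon_{\mathfrak o,-}$ and the chain-level sign depend only on rank parities and cell counts, whereas $r,r'$ do not. So any surviving sign would falsify the statement, e.g.\ for $n,n'$ even and $r,r'$ odd. Your treatment of the case ``$n$ or $n'$ even'' silently assumes this sign is trivial.

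It is trivial, but not by your count. Write $\det\Sigma_C,\det\Sigma_H$ for the isomorphisms attached to the split sequences of chain complexes and of homology. Reordering $\det H_2(\mathcal L)\otimes\det H_2(\mathcal L')\otimes(\det H_1(\mathcal L))^{\otinv}\otimes(\det H_1(\mathcal L'))^{\otinv}$ costs a single Koszul sign $(-1)^{rr'}$, not $(-1)^{2rr'}$. This sign is exactly the braiding $\beta_{M,L^{\otinv}}$, with $L=\det H_1(\mathcal L)$ and $M=\det H_1(\mathcal L')$, that occurs in \eqref{eq:epsilon monoidal}. By naturality of the braiding with respect to $\det\bar h_{\gamma,\mathcal L'}$, you get $(h_{\gamma,\mathcal L}\otimes h_{\gamma,\mathcal L'})\circ\det\Sigma_H=h_{\gamma,\mathcal L\oplus\mathcal L'}$ on the nose. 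This uses that the last arrow in \eqref{eq:Porti form core} is $\varepsilon$, not bare evaluation.

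Likewise, for a split sequence there is no extra ``long exact sequence'' sign. Map $0\to H\to H\oplus H'\to H'\to 0$ into $0\to C\to C\oplus C'\to C'\to 0$ by chosen cycle representatives. Naturality of $\det\Sigma$ in quasi-isomorphisms, together with \cite[Proposition 3.3]{Knudsen}, then gives $\det\Sigma_H\circ\Phi_{\mathcal L\oplus\mathcal L'}=(\Phi_{\mathcal L}\otimes\Phi_{\mathcal L'})\circ\det\Sigma_C$ exactly.

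\emph{Chain-level sign in the odd--odd case.} Your argument here is not a proof. The trivial rank-one system is not boundary-regular, since $H_0\neq 0$. The cross sign is a Koszul sign, not the square of a rank-one sign. Evenness of $\sum_i\#\mathsf{cell}_i$ alone does not suffice.

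Compute it directly, say in the conventions of \cref{sec:computation}, with $C=\sum_i\#\mathsf{cell}_i$ and $N_{\mathrm{odd}}=\sum_{i\ \mathrm{odd}}\#\mathsf{cell}_i$. There are three contributions.
\begin{itemize}
\item Interleaving the cell bases inside each degree, together with the reordering across degrees built into $\det\Sigma_C$, gives $(-1)^{nn'\binom{C}{2}}$.
\item The even/odd regrouping in \eqref{eq:vol complex} contributes nothing, since the discrepancy is $(-1)^{2nn'(\cdots)}$.
\item The final contraction by $\varepsilon$ in \eqref{eq:vol complex end} contributes $(-1)^{NN'}$, with $N=nN_{\mathrm{odd}}$ and $N'=n'N_{\mathrm{odd}}$.
\end{itemize}
Hence $\det\Sigma_C(1_{\mathsf{vol}_{\mathcal L\oplus\mathcal L'}})=(-1)^{nn'(\binom{C}{2}+N_{\mathrm{odd}})}\,1_{\mathsf{vol}_{\mathcal L}}\otimes 1_{\mathsf{vol}_{\mathcal L'}}$. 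This sign is $+1$ because $\chi(M)=0$ forces $C=2N_{\mathrm{odd}}$, so $\binom{C}{2}\equiv N_{\mathrm{odd}}$. Without the $\varepsilon$-term you would be left with $(-1)^{nn'N_{\mathrm{odd}}}$.

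With both the chain-level and homology-level signs equal to $1$ for all parities, your parity case analysis of $\epsilon_{\mathfrak o,-}$ finishes the proof. Note that this use of $\chi(M)=0$ is an input beyond Knudsen's formalism, which the paper's one-line citation also leaves implicit.
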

\begin{proof}
  The assertion follows from
  the properties of the determinant functor
  on complexes given in~\cite[Section 3]{Knudsen}.
\end{proof}

\subsection{Topological invariance of the torsion}
\label{sec:topological invariance}
Here we discuss the invariance of the torsion function  under homeomorphisms. By Chapman's theorem \cite[Theorem 1]{Chapman}, any homeomorphism between two compact CW complexes is a simple homotopy equivalence. Therefore it is enough to prove the invariance under simple homotopy equivalences. 
We refer the reader to \cite{Cohen,Kozlov} for details on simple homotopy equivalence.

First note that if $f: M \to M'$ and $g: M' \to M$ form a homotopy equivalence between two link exterior complexes $M$ and $M'$, we have an equivalence 
$\Loc_{\SL, M} \simeq \Loc_{\SL, M'}$ by sending a local system $\mathcal{L} \in \Loc_{\SL, M}(A)$ to the pull-back local system $g^\ast \mathcal{L} \in \Loc_{\SL, M'}(A)$ and sending a local system $\mathcal{L}' \in \Loc_{\SL, M'}(A)$ to the pull-back local system $f^\ast \mathcal{L}' \in \Loc_{\SL, M}(A)$.
This equivalence restricts to the equivalence
$\Loc_{\SL, M}^{\reg{\partial}} \simeq \Loc_{\SL, M'}^{\reg{\partial}}$ between the stacks of boundary-regular local systems 
since the conditions in \cref{def:boundary-regular} are homotopy invariant.
We are going to verify that the torsion function remain unchanged under simple homotopy equivalences.

\begin{definition}[{\cite[Section 4]{Cohen}}]
Let $M$ and $M'$ be finite CW complexes.
An \emph{elementary expansion} from 
$M$ to $M'$ is a tuple $f = (i, \iota_{\pm}, \varphi)$, where $i \in \mathbb{N}$ with $i \ge 1$,
$\iota_{\pm} : D^{i-1} \to \partial D^i$ is a continuous map,
and $\varphi: D^i \to M'$ is a continuous map, such that these satisfy 
the following conditions:
\begin{enumerate}
    \item $M$ is a subcomplex of $M'$ such that 
    $\mathsf{cell}_j (M) = \mathsf{cell}_j (M')$ for any $j \ne i-1, i$, 
    and the sets $\mathsf{cell}_{i-1} (M') \setminus \mathsf{cell}_{i-1} (M)$ and
    $\mathsf{cell}_i (M') \setminus \mathsf{cell}_i (M)$ both consist of a single element, 
    which we denote by $\sigma^{i-1}$ and $\sigma^i$, respectively.
    \item $\iota_{\pm}$ are homeomorphisms onto their images such that
    $\partial D^i = \iota_+ (D^{i-1}) \cup \iota_- (D^{i-1})$
    and $\iota_+ (D^{i-1}) \cap \iota_- (D^{i-1}) = \iota_+ (\partial D^{i-1}) = \iota_- (\partial D^{i-1})$,
    \item $\varphi$ and $\varphi|_{\partial D^{i}} \circ \iota_+$ are characteristic maps for 
    $\sigma^i$ and $\sigma^{i-1}$, respectively, and 
    $\Img (\varphi|_{\partial D^{i}} \circ \iota_-) \subset M^{(i-1)}$.
\end{enumerate}
We write $f : M \nearrow M'$ to mean that $f$ is an elementary expansion from $M$ to $M'$.
\end{definition}

An \emph{elementary collapse} from $M$ to $M'$ is defined to be an elementary expansion from 
$M'$ to $M$. We write $f : M \searrow M'$ to mean that $f$ is an elementary collapse from $M$ to $M'$.
For an elementary expansion $f : M \nearrow M'$, by abuse of notation, we denote by $f : M \to M'$ the inclusion map
between CW complexes.
For an elementary collapse $f : M \searrow M'$, again by abuse of notation, we denote by $f : M \to M'$ the retraction
between CW complexes.
A map $f : M \to M'$ is said to be a \emph{simple homotopy equivalence} if it is a finite composition of 
elementary expansions and collapses.

\begin{dfn}\label{def:expansion_triple}
Let $M$ and $M'$ be link exterior complexes,
and let $\gamma$ and $\gamma'$ be peripheral loops in $M$ and $M'$, respectively.
An elementary expansion from $(M,\gamma)$ to $(M',\gamma')$ is an elementary 
expansion $f : M \nearrow M'$ that sends the fundamental classes
$[M]$, $[\partial M]$, and $[\gamma]$ to $[M']$, $[\partial M']$, and $[\gamma']$, respectively, 
and either of the following conditions holds:
\begin{enumerate}
    \item $f$ restricts to both $\partial M \nearrow \partial M'$
    and $\gamma \nearrow \gamma'$
    \item $f$ restricts to $\partial M \nearrow \partial M'$, and $\gamma = \gamma'$,
    \item $\partial M = \partial M'$, and $\gamma = \gamma'$.
\end{enumerate}
For an elementary expansion $f : (M,\gamma) \nearrow (M',\gamma')$, by abuse of notation, we have a map 
$f : (M,\gamma) \to (M',\gamma')$ of link exterior complexes with peripheral loops
by 
The fundamental classes are induced by the inclusions. 
We write $f : (M,\gamma) \nearrow (M',\gamma')$ to mean that $f$ is an elementary expansion from $(M,\gamma)$ to $(M',\gamma')$,
\end{dfn}

An \emph{elementary collapse} from $(M, \gamma)$ to $(M', \gamma')$ is defined to be an elementary expansion from 
$(M', \gamma')$ to $(M, \gamma)$. We write $f : (M, \gamma) \searrow (M', \gamma')$ to mean that $f$ is an elementary collapse from $(M, \gamma)$ to $(M', \gamma')$.
For an elementary expansion $f : (M, \gamma) \nearrow (M', \gamma')$, by abuse of notation, 
we denote by $f : (M, \gamma) \to (M', \gamma')$ the inclusion map between link exterior complexes with peripheral loops.
For an elementary collapse $f : M \searrow M'$, again by abuse of notation, we denote by $f : M \to M'$ the retraction
between link exterior complexes with peripheral loops.
If a map $f : (M, \gamma) \to (M', \gamma')$ is a simple homotopy equivalence on $M$, $\partial M$, and $\gamma$, 
then $f$ can be written as a finite composition of elementary expansions and collapses.

Let $(M, \gamma)$ and $(M', \gamma')$ be link exterior complexes with peripheral loops,
and let $f : (M, \gamma) \nearrow (M', \gamma')$.
In particular, the inclusion $\iota: M \to M'$ is a homotopy equivalence, and hence induces an equivalence
$\iota^\ast: \Loc_{M',SL}^{\reg{\partial}} \simeq \Loc_{M,SL}^{\reg{\partial}}$
between the stacks of unimodular local systems. 
For any $\mathcal{L}' \in \Loc_{M'}^{\reg{\partial}}(A)$, we have a quasi-isomorphism
\begin{align*}
    \iota_\ast: C_\bullet (M; \iota^\ast\mathcal{L}') \to C_\bullet (M'; \mathcal{L}').
\end{align*}

We set $\mathcal{L} \coloneqq \iota^\ast \mathcal{L}'$,
and consider the diagram
\begin{equation}\label{eq:torsion simple homotopy}
  \begin{tikzpicture}[scale=1.3, xscale = 1.1, commutative diagrams/every diagram,  baseline={(current bounding box.center)}]
    \node (A0) at (-2.5, 0) {$A$};
    \node (A1) at (-2.5, -1) {$A$};
    \node (RA) at (0, 0) {$\det C_\bullet(M; \mathcal{L})$};
    \node (A) at (2.5, 0) {$\det H_\bullet(M; \mathcal{L})$};
    \node (RB) at (0, -1) {$\det C_\bullet(M'; \mathcal{L}')$};
    \node (B) at (2.5, -1) {$\det H_\bullet(M'; \mathcal{L}')$};
    \node (C0) at (4.5, 0) {$A$};
    \node (C1) at (4.5, -1) {$A$};
    \draw[morphism] (A0) -- (RA) node[midway, above] {$(\mathsf{vol}_{M, \mathcal{L}})^{-1}$};
    \draw[morphism] (A1) -- (RB) node[midway, below] {$(\mathsf{vol}_{M', \mathcal{L}'})^{-1}$};
    \draw[morphism] (A0) -- (A1) node[midway, left] {$\epsilon$};
    \draw[morphism] (RA) -- (A) node[midway, above] {$\Phi_{\mathcal{L}}$};
    \draw[morphism] (RA) -- (RB) node[midway, left] {$\det \iota_\ast$};
    \draw[morphism] (A) -- (B) node[midway, left] {$\det \iota_\ast$};
    \draw[morphism] (RB) -- (B) node[midway, below] {$\Phi_{\mathcal{L}'}$};
    \draw[morphism] (A) -- (C0) node[midway, above] {$h_{\mathcal{L}, \gamma} $};
    \draw[morphism] (B) -- (C1) node[midway, below] {$h_{\mathcal{L}', \gamma'} $};
    \draw[morphism] (C0) -- (C1) node[midway, left] {$\mathrm{id}$};
  \end{tikzpicture}
\end{equation}
The middle square commutes by \cite[Proposition 3.3]{Knudsen}, and the right square commutes by the naturality of taking cap products.
The left square commutes for an appropriate sign $\epsilon$,
which we determine below.

\begin{lemma}\label{lemma:sign of elementary expansion}
  Let $M$ and $M'$ be finite CW complexes such that $M'$ is an elementary expansion of $M$.
  We choose the linear ordering on $\mathsf{cell}(M')$ as 
  an extension of $\mathsf{cell}(M)$ such that 
  $\sigma^{i-1}$ and $\sigma^i$ are the last in 
  $\mathsf{cell}_{i-1}(M')$ and $\mathsf{cell}_{i}(M')$, respectively.
  Let $\mathcal{L}' \in \Loc_{\SL, M'}(A)$ and set 
  $\mathcal{L} \coloneqq \iota^\ast \mathcal{L}' \in \Loc_{\SL, M}(A)$,
  where $\iota : M \to M'$ is the inclusion and $\iota^\ast : \Loc_{\SL, M'}(A) \to \Loc_{\SL, M}(A)$
  is the induced equivalence.
  Suppose that $\mathcal{L}_\tau$ is of constant rank $n_\tau$ for any $\tau \in \mathsf{cell}(M)$,
  and $\mathcal{L}'_{\sigma^{i-1}}$ and $\mathcal{L}'_{\sigma^i}$ are of constant rank $n$.
  Suppose also that $\sum_j (-1)^j \sum_{\sigma \in \mathsf{cell}_j(M)} n_\sigma = 0$.
  Then the diagram
  \begin{equation}\label{eq:torsion simple homotopy sign}
    \begin{tikzpicture}[scale=1.3, xscale = 1.1, commutative diagrams/every diagram,  baseline={(current bounding box.center)}]
      \node (A0) at (-2.5, 0) {$A$};
      \node (A1) at (-2.5, -1) {$A$};
      \node (RA) at (0, 0) {$\det C_\bullet(M; \mathcal{L})$};
      \node (RB) at (0, -1) {$\det C_\bullet(M'; \mathcal{L}')$};
      \draw[morphism] (A0) -- (RA) node[midway, above] {$(\mathsf{vol}_{M, \mathcal{L}})^{-1}$};
      \draw[morphism] (A1) -- (RB) node[midway, below] {$(\mathsf{vol}_{M', \mathcal{L}'})^{-1}$};
      \draw[morphism] (A0) -- (A1) node[midway, left] {$\epsilon$};
      \draw[morphism] (RA) -- (RB) node[midway, left] {$\det \iota_\ast$};
    \end{tikzpicture}
  \end{equation}
  commutes with 
  \begin{align}\label{eq:sign of elementary expansion}
    \epsilon = 
    [\sigma^i:\sigma^{i-1}]^{(-1)^i} \cdot (-1)^\alpha, 
  \end{align}
  where $[\sigma^i:\sigma^{i-1}]$ is the incidence number of $\sigma^i$ and $\sigma^{i-1}$,
  and
  \begin{equation}\label{eq:sign of elementary expansion 1}
    \alpha = 
    \sum_{j : \mathrm{odd}} \sum_{\sigma \in \mathsf{cell}_j (M)} n_\sigma \cdot n
    + \sum_{\substack{j : \mathrm{even} \\ j \le i}} \sum_{\sigma \in \mathsf{cell}_j (M)} n_\sigma \cdot n
    + \sum_{\substack{j : \mathrm{odd} \\ j < i}} \sum_{\sigma \in \mathsf{cell}_j (M)} n_\sigma \cdot n
    + i n^2.
  \end{equation}
\end{lemma}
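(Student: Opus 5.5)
We reduce the comparison to an explicit short exact sequence of complexes and then track signs through the determinant functor. The inclusion $\iota_\ast : C_\bullet(M;\mathcal{L}) \to C_\bullet(M';\mathcal{L}')$ is injective and its cokernel is the two-term complex
\[
  D_\bullet : \ 0 \to \mathcal{L}'_{\sigma^i} \xrightarrow{\ \delta\ } \mathcal{L}'_{\sigma^{i-1}} \to 0
\]
concentrated in degrees $i$ and $i-1$. Here $\delta$ is the component of $\partial_i$ from $\sigma^i$ to $\sigma^{i-1}$. Since $\varphi|_{\partial D^i}\circ\iota_-$ lands in $M^{(i-1)}$ and $\iota_+$ is a homeomorphism onto the face, the set $S_{\sigma^i,\sigma^{i-1}}$ consists of a single point $p$. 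By \eqref{eq:twisted_differential}, $\delta = \varepsilon_p \cdot \mathcal{L}'_{\varodot_{\sigma^i}\to p}$, with $\varepsilon_p = [\sigma^i:\sigma^{i-1}] = \pm 1$. Parallel transport preserves the volume forms, because $\mathcal{L}'$ is an $\SL$-local system. Hence $\det\delta$ sends $1_{\mathsf{vol}}$ to $\varepsilon_p^{\,n}\, 1_{\mathsf{vol}}$. Since $\varepsilon_p=\pm1$, the exponent $(-1)^i$ in \eqref{eq:sign of elementary expansion} only records which of $\det \mathcal{L}'_{\sigma^i}$ and $\det \mathcal{L}'_{\sigma^{i-1}}$ sits in the inverted position. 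The $n$-dependence is absorbed into the form stated there, after reconciling how \eqref{eq:sign of elementary expansion} is meant; this reconciliation is a point to check.

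The key steps are the following.
\begin{enumerate}
  \item By the additivity of the determinant functor on short exact sequences of complexes (\cref{theorem:det functor}, \cite[Section 3]{Knudsen}), write $\det C_\bullet(M') \cong \det C_\bullet(M)\otimes \det D_\bullet$.
  \item Observe that the quasi-isomorphism $\iota_\ast$ corresponds to tensoring with the canonical trivialization $\det D_\bullet \cong 1$ coming from the acyclicity of $D_\bullet$, again by \cite{Knudsen}.
  \item Compute that this trivialization sends $1_{\mathsf{vol}_D}$ to $[\sigma^i:\sigma^{i-1}]^{\pm1}$ times a sign $(-1)^{in^2}$. This sign comes from the placement of $\det\mathcal{L}'_{\sigma^i}$ and $\det\mathcal{L}'_{\sigma^{i-1}}$ at parities $i$ and $i-1$, and from the evaluation $\varepsilon$ versus its left-dual version \eqref{eq:left dual}, which differ by $(-1)^{n\cdot n}$.
  \item Compare $\mathsf{vol}_{M',\mathcal{L}'}$ with $\mathsf{vol}_{M,\mathcal{L}}\otimes\mathsf{vol}_{D}$ by tracking the reorderings in \eqref{eq:vol complex}--\eqref{eq:vol complex end}.
\end{enumerate}

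The main obstacle is the last step, the bookkeeping of braidings. In $\mathsf{vol}_{M'}$, the factor $\det\mathcal{L}'_{\sigma^{i}}$ (degree $i$) is last within $\det C_i$, and similarly $\det\mathcal{L}'_{\sigma^{i-1}}$ is last within $\det C_{i-1}$. One must braid these two factors past the intervening blocks to isolate $\det D_\bullet$.
\begin{itemize}
  \item In the exact sequence convention, $\det D$ sits to the right of all of $\det C(M)$.
  \item In the even/odd regrouping \eqref{eq:vol complex degree contract}, the degree-$i$ and degree-$(i-1)$ lines are inserted in decreasing order of degree, one into the even product and one into the odd product.
  \item Moving the even-degree new line to the end passes the even blocks of degree $\le i$ (or $<i$). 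Moving the odd-degree new line past the odd blocks, and then across the whole odd part when separating the even product from the inverted odd product, gives the terms $\sum_{j\,\mathrm{odd}}$, $\sum_{j\,\mathrm{even},\,j\le i}$ and $\sum_{j\,\mathrm{odd},\,j<i}$ in \eqref{eq:sign of elementary expansion 1}.
\end{itemize}
Each crossing contributes $(-1)^{n_\sigma n}$ by \eqref{eq:sign braiding}. The hypothesis $\sum_j(-1)^j\sum_\sigma n_\sigma=0$ guarantees that the graded degrees of the total even and odd lines agree, so that the final contraction to $1$ introduces no further sign. I would carry out this count first for $i$ even and then for $i$ odd, check that both agree with \eqref{eq:sign of elementary expansion 1}, and use \cref{lemma:smul otinv} wherever a scalar passes through $(-)^{\otinv}$.
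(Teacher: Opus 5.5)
Your strategy is the paper's: the cokernel $Q_\bullet$ of $\iota_\ast$ is the two-term complex on $\sigma^i,\sigma^{i-1}$. The map $(\det\iota_\ast)^{-1}$ is the short-exact-sequence isomorphism followed by the acyclic trivialization of $\det Q_\bullet$ (Knudsen's (d*) and (c)). The incidence factor comes from $\det\partial_i$, and everything else is braiding. The gap is that the two points you defer cannot be matched to the stated formula, because the statement itself is off there.

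First, the incidence factor. Your computation $\det\delta(1_{\mathsf{vol}})=\varepsilon_p^{\,n}\,1_{\mathsf{vol}}$ is correct, and nothing needs reconciling. The paper's proof writes $\mathsf{vol}(\det\partial_i(1_{\mathsf{vol}}))=[\sigma^i:\sigma^{i-1}]\cdot\mathsf{vol}(\alpha(1_{\mathsf{vol}}))$, i.e.\ it uses $\det(c\,\alpha)=c\det\alpha$ instead of $c^n\det\alpha$. The factor should be $[\sigma^i:\sigma^{i-1}]^{n}$. For even $n$ and incidence $-1$, the stated $\epsilon$ therefore has the wrong sign: every term of $\alpha$ is even there, so the true $\epsilon$ is $1$ while the statement gives $-1$.

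Second, the braiding count. Your claim that the final contraction ``introduces no further sign'' is false. The map $(A,E)\otimes(A,E)^{\otinv}\to 1$ is $\varepsilon_{(A,E)}$, which in the conventions of \cref{sec:computation} carries $(-1)^{E(E+1)/2}$, and $E$ grows by $n$ under the expansion. You must also include two further signs, where $N_j\coloneqq\sum_{\sigma\in\mathsf{cell}_j(M)}n_\sigma$:
\begin{itemize}
  \item the sign $(-1)^{nN_{i-1}}$ from separating $\det Q_\bullet$ in the exact-sequence isomorphism;
  \item the sign $(-1)^{n(n+(-1)^i)/2}$ of the pairing in the acyclic trivialization, evaluated on the geometric unit.
\end{itemize}
With all of these I get $\epsilon=[\sigma^i:\sigma^{i-1}]^{n}(-1)^{n\sum_{j\le i-2}N_j}$, which disagrees with \eqref{eq:sign of elementary expansion 1} in general.

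The smallest test case is as follows. Let $M$ be a circle with one $0$-cell $v$ and one $1$-cell $a$, with the trivial rank-one system, and take $i=1$ with new cells $w=\sigma^0$ and $s=\sigma^1$, ordered last. Then:
\begin{itemize}
  \item $\mathsf{vol}_M(a^\vee\otimes v)=1$ and $\mathsf{vol}_{M'}((a\wedge s)^\vee\otimes(v\wedge w))=-1$;
  \item the exact-sequence isomorphism sends $(a\wedge s)^\vee\otimes(v\wedge w)$ to $-(a^\vee\otimes v)\otimes(s^\vee\otimes w)$;
  \item the trivialization sends $s^\vee\otimes w\mapsto[s:w]$.
\end{itemize}
Hence $\epsilon=[s:w]$. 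By contrast, \eqref{eq:sign of elementary expansion 1} gives $\alpha=3$ and hence $\epsilon=-[s:w]$. The conclusion $\epsilon=[s:w]$ is unchanged if the final contraction is read without the $\varepsilon$-sign. This is also where the paper's own bookkeeping, its claim $\epsilon_1=(-1)^\alpha$, breaks down: here $\epsilon_1=+1$.

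None of this damages \cref{thm:topological_invariance}, which only needs $\epsilon$ to depend on the ranks modulo $2$, and the corrected formula has that property. So your method proves the right statement. State and prove that corrected version rather than trying to land on \eqref{eq:sign of elementary expansion}.
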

\begin{proof}
  We compute the map $\det \iota_\ast$ using Definition 2.24 in \cite{Knudsen}.
  To this end, take $Q_\bullet$ such that $0 \to C_\bullet(M) \xrightarrow{\iota_\ast} C_\bullet(M') \to Q_\bullet \to 0$.
  We have $Q_j=0$ for $j \neq i-1,i$,
  and $Q_{i-1} = \mathcal{L}'_{\sigma^{i-1}}$ and $Q_{i} = \mathcal{L}'_{\sigma^i}$.
  Hence, $Q_\bullet$ is the two-term complex 
  $0 \to Q_i \xrightarrow{\partial_i} Q_{i-1} \to 0$, where 
  $\partial_i(v) = [\sigma^i:\sigma^{i-1}] \cdot \alpha (v)$ for $v \in \mathcal{L}'_{\sigma^i}$,
  where $\alpha : \mathcal{L}'_{\sigma^i} \to \mathcal{L}'_{\sigma^{i-1}}$ is the isomorphism 
  given by the local system $\mathcal{L}'$.
  Set 
  \begin{alignat*}{2}
    L_{j} \coloneqq (\det \mathcal{L}'_{\sigma^{j}})^{(\otinv)^{j}}, \quad
    &&\omega_{j} \coloneqq (\mathsf{vol}_{\mathcal{L}'_{\scriptstyle \sigma^{j}}})^{(\otinv)^{j}}
  \end{alignat*}
  for $j = i-1,i$.
  We define a volume form $\mathsf{vol}_{Q}$ on $Q_\bullet$ as the following composition:
  \begin{equation*}
  \det Q_\bullet 
  \cong L_i \otimes L_{i-1}
  \xrightarrow{\omega_i \otimes \omega_{i-1}}
  (A^{(\vee)^i}, (-1)^i n) \otimes (A^{(\vee)^{i-1}}, (-1)^{i-1} n) \to 1.
  \end{equation*}
  We consider the following diagram:
  \begin{equation*}
    \begin{tikzpicture}[scale=1.3, xscale=1.1, commutative diagrams/every diagram,  baseline={(current bounding box.center)}]
      \node (0) at (0, 0) {$\det C_\bullet(M')$};
      \node (1) at (2.5, 0) {$\det C_\bullet (M) \otimes \det Q_\bullet $};
      \node (2) at (5, 0) {$\det C_\bullet (M) \otimes 1$};
      \node (3) at (7.5, 0) {$\det C_\bullet (M)$};
      \node (00) at (0, -1) {$1$};
      \node (10) at (2.5, -1) {$1 \otimes 1$};
      \node (11) at (2.5, -2) {$1$};
      \node (20) at (5, -1) {$1 \otimes 1$};
      \node (21) at (5, -2) {$1$};
      \node (30) at (7.5, -1) {$1$};
      \draw[morphism] (0) -- (1) node[] {};
      \draw[morphism] (1) -- (2) node[] {};
      \draw[morphism] (2) -- (3) node[] {};
      \draw[morphism] (0) -- (00) node[midway, left] {$\mathsf{vol}_{M'}$};
      \draw[morphism] (1) -- (10) node[midway, left] {$\mathsf{vol}_{M} \otimes \mathsf{vol}_{Q}$};
      \draw[morphism] (10) -- (11) node[] {};
      \draw[morphism] (00) -- (11) node[midway, below left] {$\epsilon_1$};
      \draw[morphism] (2) -- (20) node[midway, right] {$\mathsf{vol}_{M} \otimes \id$};
      \draw[morphism] (20) -- (21) node[] {};
      \draw[morphism] (3) -- (30) node[midway, left] {$\mathsf{vol}_{M}$};
      \draw[morphism] (10) -- (20) node[midway, below] {$\id \otimes \epsilon_2$};
      \draw[morphism] (11) -- (21) node[midway, below] {$\epsilon_2$};
      \draw[morphism] (21) -- (30) node[midway, below right] {$\id$};
      \node at (barycentric cs:0=1,1=1,00=1,11=1) {$\scriptstyle \text{(1)}$};
      \node at (barycentric cs:1=1,2=1,10=1,20=1) {$\scriptstyle \text{(2)}$};
    \end{tikzpicture}
  \end{equation*}
  The composition along the top row is the inverse of $\det \iota_\ast$ by \cite[Definition 2.24 (d*)]{Knudsen},
  where in the middle map we use the isomorphism $\det Q_\bullet \cong 1$
  for an acyclic complex given by \cite[Definition 2.24 (c)]{Knudsen}.
  $\epsilon_1,\epsilon_2 \in \{\pm 1\}$ are appropriate signs. 
  The diagram clearly commutes, except possibly in the two squares labeled (1) and (2).
  The square (1) commutes if $\epsilon_1 = (-1)^\alpha$ with $\alpha$ given in \eqref{eq:sign of elementary expansion 1}.
  The first two terms in \eqref{eq:sign of elementary expansion 1} come from braiding for $\sigma^i$ in 
  \eqref{eq:vol complex degree contract}.
  The third term comes from braiding for $\sigma^{i-1}$ in
  \eqref{eq:vol complex degree contract}.
  The last term is nonzero modulo $2$ if and only if $i$ is odd,
  which comes from braiding of $\sigma^{i-1}$ and $\sigma^i$.
  The square (2) commutes if the 
  following diagram commutes:
  \begin{equation*}
    \begin{tikzpicture}[scale=1.3, xscale=1.55, commutative diagrams/every diagram,  baseline={(current bounding box.center)}]
      \node (0) at (0, 0) {$\det Q_\bullet$};
      \node (1) at (2.5, 0) {$L_{i-1}^{\otinv} \otimes L_{i-1}$};
      \node (2) at (5, 0) {$1$};
      \node (00) at (0, -1) {$L_{i} \otimes L_{i-1}$};
      \node (01) at (0, -2) {$A_i \otimes A_{i-1}$};
      \node (02) at (0, -3) {$1$};
      \node (10) at (2.5, -1) {$A_i \otimes A_{i-1}$};
      \node (11) at (2.5, -2) {$1$};
      \draw[morphism] (0) -- (1) node[] {};
      \draw[morphism] (1) -- (2) node[] {};
      \draw[morphism] (0) -- (00) node[midway, left] {};
      \draw[morphism] (00) -- (01) node[midway, left] 
        {$\omega_{i} \otimes \omega_{i-1}$};
      \draw[morphism] (01) -- (02) node[midway, left] {};
      \draw[morphism] (1) -- (10) node[midway, right] 
        {$\omega_{i-1}^{\otinv} \otimes \omega_{i-1}$};
      \draw[morphism] (10) -- (11) node[] {};
      \draw[morphism] (00) -- (1) node[midway, fill=white, auto=false] {$(\det \partial_i)^{(\otinv)^i} \otimes \id$};
      \draw[morphism] (01) -- (10) node[midway, below right] {$\epsilon_2 \otimes \id$};
      \draw[morphism] (02) -- (11) node[midway, below right] {$\epsilon_2$};
      \draw[morphism] (11) -- (2) node[midway, below right] {$\id$};
      \node at (barycentric cs:00=1,1=1,01=1,10=1) {$\scriptstyle \text{(3)}$};
      \node at (barycentric cs:1=0.5,2=1,11=0.5) {$\scriptstyle \text{(4)}$};
    \end{tikzpicture}
  \end{equation*}
  where we write $A_j \coloneqq (A^{(\vee)^j}, (-1)^j n)$ for $j=i-1,i$.
  Note that the square (4) commutes by \eqref{eq:epsilon naturality},
  and the whole diagram commutes except possibly in the square (3).
  If $i$ is even, 
  the (3) commutes if
  \begin{equation}\label{eq:sign of elementary expansion 2}
      \epsilon_2 = (\mathsf{vol}_{\mathcal{L}'_{\scriptstyle \sigma^{i-1}}} \circ 
      \det \partial_i \circ  
      \mathsf{vol}_{\mathcal{L}'_{\scriptstyle \sigma^i}}^{-1}) (1),
  \end{equation}
  and the right-hand side of \eqref{eq:sign of elementary expansion 2}
  is computed as
  \begin{align*}
      (\mathsf{vol}_{\mathcal{L}'_{\scriptstyle \sigma^{i-1}}} \circ 
      \det \partial_i \circ  
      \mathsf{vol}_{\mathcal{L}'_{\scriptstyle \sigma^i}}^{-1}) (1) 
      &= \mathsf{vol}_{\mathcal{L}'_{\scriptstyle \sigma^{i-1}}}
      (\det \partial_i (1_{\mathsf{vol}_{\scriptstyle \mathcal{L}'_{\scriptstyle \sigma^i}}} )) \\
      &= [\sigma^i : \sigma^{i-1}] \cdot
        \mathsf{vol}_{\mathcal{L}'_{\scriptstyle \sigma^{i-1}}} 
        (\alpha (1_{\mathsf{vol}_{\scriptstyle \mathcal{L}'_{\scriptstyle \sigma^i}}} )) \\
      &= [\sigma^i : \sigma^{i-1}] \cdot
        \mathsf{vol}_{\mathcal{L}'_{\scriptstyle \sigma^{i-1}}} (1_{\mathsf{vol}_{\scriptstyle \mathcal{L}'_{\scriptstyle \sigma^{i-1}}}} ) \\
      &= [\sigma^i : \sigma^{i-1}]. 
  \end{align*}
  If $i$ is odd, the (3) commutes if
  $\epsilon_2 = [\sigma^i : \sigma^{i-1}]^{-1}$.
  Thus in general, the (3) commutes if $\epsilon_2 = [\sigma^i : \sigma^{i-1}]^{(-1)^i}$.
\end{proof}

\begin{thm}[Topological invariance]\label{thm:topological_invariance}
  Let $f: (M, \gamma) \to (M', \gamma')$ be a simple homotopy equivalence
  (e.g., a homeomorphism) between link exterior complexes with peripheral loops.
  Then we have the following commutative diagram:
  \begin{equation}
    \begin{tikzpicture}[scale=1.5, commutative diagrams/every diagram, baseline={(current bounding box.center)}]
      \node (a) at (0,0) {$\Loc_{\SL, M}^{\reg{\partial}}$};
      \node (b) at (0,-1) {$\Loc_{\SL, M'}^{\reg{\partial}}$};
      \node (d) at (2,0) {$\mathbb{A}_k^1$};
      \draw[morphism] (a) to node[left] {$\simeq$} (b);
      \draw[morphism] (b) to node[pos=0.35, below right] {$\torsion_{M', \gamma', \mathfrak{o}'}$} (d);
      \draw[morphism] (a) to node[midway, above] {$\torsion_{M, \gamma, \mathfrak{o}}$} (d);
    \end{tikzpicture}
  \end{equation}
  where $\mathfrak{o}' = f_\ast \mathfrak{o}$.
\end{thm}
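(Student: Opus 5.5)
Since $f$ is a finite composition of elementary expansions and collapses of pairs $(M,\gamma)$ (as in \cref{def:expansion_triple}), and collapses are inverse to expansions, I will reduce to a single elementary expansion $f:(M,\gamma)\nearrow(M',\gamma')$. The torsion function is independent of the ordering of cells by \cref{lemma:torsion indep order}, so I order $\mathsf{cell}(M')$ as in \cref{lemma:sign of elementary expansion}, with the new cells $\sigma^{i-1},\sigma^i$ last. I also work locally on $\Spec A$ so that all ranks are constant, and I use the fact that $\iota^\ast$ identifies the boundary-regular substacks. By \cref{lemma:naturality torsion}, it suffices to check equality of the values on each $A$-point $\mathcal{L}'$.

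For such $\mathcal{L}'$ with $\mathcal{L}=\iota^\ast\mathcal{L}'$, I use diagram \eqref{eq:torsion simple homotopy}. The middle square commutes by \cite[Proposition 3.3]{Knudsen}. The right square commutes because $\iota_\ast$ respects the fundamental classes $[M],[\partial M],[\gamma]$, so it is compatible with the cap products in \eqref{eq:H^0 to H_2} and \eqref{eq:H^0 to H_1}, and hence with $\bar h_\gamma$. The left square commutes with the sign $\epsilon$ of \cref{lemma:sign of elementary expansion}. Consequently
\[
h_{\gamma',\mathcal{L}'}\bigl(\Phi_{\mathcal{L}'}(1_{\mathsf{vol}_{\mathcal{L}'}})\bigr)=\epsilon\cdot h_{\gamma,\mathcal{L}}\bigl(\Phi_{\mathcal{L}}(1_{\mathsf{vol}_{\mathcal{L}}})\bigr),
\]
up to the convention for the direction of $\epsilon$. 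This reduces the theorem to showing $\epsilon_{\mathfrak{o}',\mathcal{L}'}=\epsilon\cdot\epsilon_{\mathfrak{o},\mathcal{L}}$.

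\textbf{Main step: comparing signs.} I apply the same three squares to the parity local system $\parity\mathcal{L}'$ over $\mathbb{Q}$, and replace the Porti form by the orientation $\mathfrak{o}_{\mathcal{L}}$. The latter is compatible with $\iota_\ast$ because $\mathfrak{o}'=f_\ast\mathfrak{o}$ and the decomposition $M=M_{\mathrm{even}}\sqcup M_{\mathrm{odd}}$ is preserved. By \cref{lemma:sign of elementary expansion} for $\parity\mathcal{L}'$, this gives $\epsilon_{\mathfrak{o}',\mathcal{L}'}=\epsilon^{\parity}\epsilon_{\mathfrak{o},\mathcal{L}}$, where $\epsilon^{\parity}=[\sigma^i:\sigma^{i-1}]^{(-1)^i}(-1)^{\alpha'}$ and $\alpha'$ is given by \eqref{eq:sign of elementary expansion 1} with every rank replaced by its parity. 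Since $\alpha$ is a sum of products of ranks, $\alpha\equiv\alpha'\pmod 2$. The incidence number $[\sigma^i:\sigma^{i-1}]=\pm1$ is the same factor in both, so $\epsilon=\epsilon^{\parity}$. One subtlety: if $n$ is even, the new cells carry the zero module in $\parity\mathcal{L}'$, but then both $(-1)^{\alpha}$ and $(-1)^{\alpha'}$ are trivial. The incidence factor must also be handled; I expect it appears in $\epsilon$ as $[\sigma^i:\sigma^{i-1}]^{n}$, which again matches parity.

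I expect this bookkeeping of signs to be the main obstacle. It requires checking that \cref{lemma:sign of elementary expansion} applies uniformly to $\mathcal{L}'$ and to $\parity\mathcal{L}'$, including the Euler characteristic hypothesis, which holds since $\chi$ vanishes on a link exterior complex. Once the two signs are shown to agree, they cancel and the torsions coincide. Induction over the factors of $f$ then finishes the proof.
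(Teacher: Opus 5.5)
Your proposal is correct and follows the paper's proof. You reduce to a single elementary expansion, use diagram \eqref{eq:torsion simple homotopy} together with \cref{lemma:sign of elementary expansion}, and observe that the resulting sign depends only on the parities of the ranks, so it agrees for $\mathcal{L}'$ and $\parity\mathcal{L}'$ and cancels against the change in $\epsilon_{\mathfrak{o},\mathcal{L}}$. Your remark that the incidence factor should really be $[\sigma^i:\sigma^{i-1}]^{n}$ (the determinant of $\pm\alpha$ on a rank-$n$ module) is a correct refinement of the lemma as stated, and as you note it does not affect the argument, since it also depends only on $n \bmod 2$.
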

\begin{proof}
  It suffices to show the assertion when $f$ is an elementary expansion.
  This follows from the commutative diagram \eqref{eq:torsion simple homotopy}
  and \cref{lemma:sign of elementary expansion}, since
  the sign change in \eqref{eq:sign of elementary expansion} depends only 
  on the parity of the rank of the local system and hence common in the two terms in \eqref{eq:def_torsion}. 
\end{proof}

\section{Adjoint torsion}
Let $k$ be a field of characteristic zero,
and $G$ a connected semisimple algebraic group over $k$.

\subsection{Lie algebras}
In this section, we review the Lie algebras of algebraic groups.
See, e.g., \cite[Chapter II, Section 4]{DemazureGabriel} for more details.
For any $k$-algebra $A$, we denote by
$A(\varepsilon) \coloneqq A[\varepsilon]/(\varepsilon^2)$ the algebra of dual numbers.
We have morphisms $i : A \to A(\varepsilon)$ and 
$p : A(\varepsilon) \to A$ defined by 
$i(1) = 1$, $p(1) = 1$, and $p(\varepsilon) = 0$.
We denote by $\Lie (G) (A)$ the additive group 
isomorphic to the (multiplicative) group $\Ker (G(p))$.
For an element $x \in \Lie (G) (A)$,
the corresponding element in $\Ker (G(p)) \subset G(A(\varepsilon))$ is denoted by 
$e^{\varepsilon x}$.
We define the scalar product $A \times \Lie (G) (A) \to \Lie (G) (A)$ by 
\begin{equation}
e^{\varepsilon (a x)} \coloneqq 
G(\varepsilon \mapsto a \cdot \varepsilon : A(\varepsilon) \to A(\varepsilon)) (e^{\varepsilon x})
\end{equation}
for $a \in A$ and $x \in \Lie (G) (A)$.
Then $\Lie (G) (A)$ is an $A$-module with this scalar multiplication.

The assignment $A \mapsto \Lie (G) (A)$ extends to a functor $\Lie (G) : \Alg_k \to \Set$.
We have an action
\begin{equation}
\Ad : G \times \Lie (G) \to \Lie (G)
\end{equation} defined by
\begin{align}
  e^{\varepsilon (\Ad_A (g) x)} \coloneqq G(i)(g) \cdot  e^{\varepsilon x} \cdot G(i)(g^{-1})
\end{align}
for $g \in G(A)$ and $x \in \Lie(G)(A)$.
The $k$-module $\Lie(G)(k)$ is called the \emph{Lie algebra} of $G$,
and is denoted by $\mathfrak{g}$.
We have a canonical isomorphism $\mathfrak{g} \otimes_k A \cong \Lie (G) (A)$ (since $G$ is smooth over $k$, see~\cite[II, \S 4, 4.8]{DemazureGabriel}). 
We fix a volume form $\omega_{\mathfrak{g}}$ on $\mathfrak{g}$.
It is known that $(\mathfrak{g}, \omega_{\mathfrak{g}}) \in \Rep_{\SL}(G)$ 
for connected semisimple $G$, where the action is given by $\Ad$.

\subsection{Adjoint torsion function}
\label{sec:adjoint torsion function}
Let $M$ be a link exterior complex (see \cref{dfn:link exterior complex}).
We say that a $G$-local system $P \in \Loc_{G, M}(A)$ is \emph{boundary-adjoint-regular} if
the linear local system $P \times^G \mathfrak{g}$ is boundary-regular with 
betti number $m \cdot \rank G$ in the sense of Definition \ref{def:boundary-regular}.
We define the locally closed substack
\begin{equation}\label{eq:adjoint regular substack}
  \Loc_{G, M}^{\reg{\partial\text{-}\mathrm{Ad}}} \coloneqq
    \Loc_{G, M} \times_{\Loc_{\GL, M}} \Loc_{\GL, M}^{\reg{\partial}, (m \cdot \rank G)}
  \ \subset \ \Loc_{G, M},
\end{equation}
where the pullback is taken by the map $\Loc_{G, M} \to \Loc_{\GL, M}$ induced by the adjoint representation.

\begin{definition}\label{def:adjoint torsion function}
  Let $\mathfrak{o}$ be a homology orientation of $M$.
  We define the \emph{adjoint torsion function}
  \begin{equation}\label{eq:adjoint torsion function}
    \torsion_{G, M, \gamma, \mathfrak{o}}^{\mathrm{Ad}} : \Loc_{G, M}^{\reg{\partial\text{-}\mathrm{Ad}}} \to \mathbb{A}^1_k
  \end{equation}
  by $\torsion_{G, M, \gamma, \mathfrak{o}}^{\mathrm{Ad}} (P) \coloneqq 
  \torsion_{M, \gamma, \mathfrak{o}} (P \times^G \mathfrak{g})$,
  where the right-hand side is the torsion function defined in Definition \ref{def:torsion function}.
\end{definition}

\begin{lemma}\label{lemma:adjoint torsion independence on choices}
  The adjoint torsion function \eqref{eq:adjoint torsion function} does not depend on the choice of a volume form $\omega_{\mathfrak{g}}$ on $\mathfrak{g}$.
  It also does not depend on the choice of a homology orientation $\mathfrak{o}$ of $M$ if $\rank G$ is even.
\end{lemma}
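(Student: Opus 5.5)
We treat the two assertions separately; both reduce to results already proved for the torsion function on $\Loc_{\SL, M}^{\reg{\partial}}$.

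\emph{Independence of $\omega_{\mathfrak{g}}$.} Any two volume forms on $\mathfrak{g}$ differ by a scalar: $\omega'_{\mathfrak{g}} = c\,\omega_{\mathfrak{g}}$ with $c \in k^\times$. The identity of $\mathfrak{g}$ is then not a morphism in $\Rep_{\SL}(G)$, so we compare directly. The underlying linear local system $\mathcal{L} = P \times^G \mathfrak{g}$ is the same for both choices, and the fibrewise volume forms satisfy $\mathsf{vol}'_{\mathcal{L}_\sigma} = c\,\mathsf{vol}_{\mathcal{L}_\sigma}$ for every cell $\sigma$.

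By \eqref{eq:geometric volume form at i}, $\mathsf{vol}'_{\mathcal{L}, i} = c^{\#\mathsf{cell}_i}\,\mathsf{vol}_{\mathcal{L}, i}$. Passing through \eqref{eq:vol complex}--\eqref{eq:vol complex end}, and using \cref{lemma:smul otinv} to handle the odd degrees, we get
\[
\mathsf{vol}'_{\mathcal{L}} = c^{\sum_i (-1)^i \#\mathsf{cell}_i}\,\mathsf{vol}_{\mathcal{L}} = c^{\chi(M)}\,\mathsf{vol}_{\mathcal{L}}.
\]
Hence $1_{\mathsf{vol}'_{\mathcal{L}}} = c^{-\chi(M)}\,1_{\mathsf{vol}_{\mathcal{L}}}$.

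It remains to see that $\chi(M) = 0$. Poincaré–Lefschetz duality for the Poincaré pair of formal dimension $3$ with trivial coefficients $\mathbb{Q}$ gives $\chi(M) = \tfrac12 \chi(\partial M)$. By \cref{dfn:link exterior complex}, each component of $\partial M$ has Euler characteristic zero, so $\chi(M) = 0$ and the geometric volume form is unchanged.

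The other ingredients of \eqref{eq:def_torsion} do not involve $\omega_{\mathfrak{g}}$ at all:
\begin{itemize}
\item the map $\Phi_{\mathcal{L}}$ and the Porti form $h_{\gamma, \mathcal{L}}$ depend only on the $\GL$-local system;
\item the sign $\epsilon_{\mathfrak{o}, \mathcal{L}}$ depends only on ranks, through $\parity \mathcal{L}$.
\end{itemize}
So the torsion is unchanged.

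This step requires a little care, since the fundamental class lives in $H_3(M, \partial M)$. The Euler characteristic identity is nevertheless standard: $\chi(M) = \chi(M, \partial M) + \chi(\partial M)$, and duality gives $\chi(M, \partial M) = -\chi(M)$ in odd dimension. Combining these yields $2\chi(M) = \chi(\partial M) = 0$, so this is the only point where the link-exterior hypothesis is used.

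\emph{Independence of $\mathfrak{o}$ for even rank.} We have $\dim \mathfrak{g} = \rank G + 2\,\#\Phi^+$, whose parity equals that of $\rank G$. So if $\rank G$ is even, every fibre $(P \times^G \mathfrak{g})_x$ has even rank. \Cref{lemma:torsion of even local system} then gives $\epsilon_{\mathfrak{o}, \mathcal{L}} = 1$, so the torsion does not depend on $\mathfrak{o}$. This parity fact, obtained from the root decomposition after base change to the algebraic closure, is the only nontrivial input here.
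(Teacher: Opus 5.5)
Your proposal is correct and follows the paper's proof. A change $\omega_{\mathfrak{g}} \mapsto c\,\omega_{\mathfrak{g}}$ rescales the geometric volume form by $c^{\sum_i(-1)^i\#\mathsf{cell}_i} = c^{\chi(M)} = 1$ via \cref{lemma:smul otinv}, and the orientation claim reduces to \cref{lemma:torsion of even local system}; you also spell out two points the paper leaves implicit, namely $\chi(M)=\tfrac12\chi(\partial M)=0$ from Poincar\'e--Lefschetz duality and $\dim\mathfrak{g}\equiv\rank G \pmod 2$ from the root decomposition.
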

\begin{proof}
  Let $S$ be the set of the centers of cells of $M$.
  We take markings on $S$.
  Then we have a trivialization
  $(P \times^G \mathfrak{g})_\sigma \cong \mathfrak{g} \otimes_k A$ for each $\sigma \in \mathsf{cell}_k$.
  Under this trivialization, the $i$-th component of the geometric volume \eqref{eq:geometric volume form at i} is given by 
  \begin{equation*}
    \otimes_{\sigma \in \mathsf{cell}_i} \mathsf{vol}_{\mathcal{L}_\sigma}
    = \otimes_{\sigma \in \mathsf{cell}_i} (\omega_{\mathfrak{g}} \otimes 1)
  \end{equation*}
  Since the Euler characteristic of each connected component of $M$ is zero,
  we have $\sum_i (-1)^i \lvert \mathsf{cell}_i \rvert = 0$.
  Thus the map \eqref{eq:vol complex cell contract} in the definition of the geometric volume
  form does not depend on a choice of $\omega_{\mathfrak{g}}$ by \cref{lemma:smul otinv}.
  The assertion on the homology orientation follows from \cref{lemma:torsion of even local system}.
\end{proof}

\subsection{Local systems over fields}
A \emph{point} of $\Loc_{G, M}$ is an equivalence class of pairs $(K, P)$,
where $K$ is a field extension of $k$ and $P \in \Loc_{G, M}(K)$.
Two pairs $(K, P)$ and $(K', P')$ are equivalent
if there exists a field extension $\Omega$ of both $K$ and $K'$ such that
$\Omega \otimes_K P \cong \Omega \otimes_{K'} P'$ in $\Loc_{G, M}(\Omega)$.
The set of points of $\Loc_{G, M}$ is denoted by $\lvert \Loc_{G, M} \rvert$.

\begin{prop}\label{prop:adjoint regularity criterion}
  Let $(K, P) \in \lvert \Loc_{G, M} \rvert$.
  Then $P$ is boundary-adjoint-regular if and only if
  the following conditions hold:
  \begin{enumerate}
    \item 
    \label{item:adjoint reg H0}
    $H^0 (M; P \times^G \mathfrak{g}) = 0$,
    \item 
    \label{item:adjoint reg H1}
    $\dim_K H^1 (M; P \times^G \mathfrak{g}) = \frac{1}{2} \dim_K H^1 (\partial M; P|_{\partial M} \times^G \mathfrak{g})$,
    \item 
    \label{item:adjoint reg H0 boundary}
    $\dim_K H^0 (\partial M; P|_{\partial M} \times^G \mathfrak{g}) = m \cdot \rank G$.
  \end{enumerate}
\end{prop}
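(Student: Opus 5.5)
Write $\mathcal{L} \coloneqq P \times^G \mathfrak{g}$ and $r \coloneqq m \cdot \rank G$. Over a field $K$ all the projectivity and constant-rank requirements in \cref{def:boundary-regular} hold automatically: every base change factors through a field extension, and (co)homology dimensions are stable under field extension. So boundary-adjoint-regularity of $(K,P)$ reduces to three statements: $\dim H_0(M;\mathcal{L}) = \dim H_3(M;\mathcal{L}) = 0$ and $\dim H_1 = \dim H_2 = r$; $\dim H^0(\partial M;\mathcal{L}|_{\partial M}) = r$; and the map \eqref{eq:H^0 to H_2} is an isomorphism. Condition \eqref{item:adjoint reg H0 boundary} of the proposition is literally the second statement. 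The plan is to translate the other two using Poincar\'e--Lefschetz duality and the long exact sequence of the pair $(M,\partial M)$.

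Since $\mathfrak{g}$ carries the invariant Killing form (nondegenerate, as $G$ is semisimple in characteristic $0$), we have $\mathcal{L} \cong \mathcal{L}^\vee$. Combining Poincar\'e duality \eqref{eq:poincare-lefschetz duality isom}, the universal coefficient theorem over a field, and $\mathcal{L}\cong\mathcal{L}^\vee$ gives
\begin{equation*}
\dim H_j(M;\mathcal{L}) = \dim H^j(M;\mathcal{L}), \qquad \dim H^j(M;\mathcal{L}) = \dim H_{3-j}(M,\partial M;\mathcal{L}) = \dim H^{3-j}(M,\partial M;\mathcal{L}).
\end{equation*}
On each torus the Euler characteristic vanishes and duality holds, so $\dim H^1(\partial M) = 2\dim H^0(\partial M)$ and $\dim H^2(\partial M) = \dim H^0(\partial M)$. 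Hence under \eqref{item:adjoint reg H0 boundary} condition \eqref{item:adjoint reg H1} reads $\dim H^1(M;\mathcal{L}) = r$. Moreover $\dim H_3(M) = \dim H^0(M,\partial M)$, which is $0$ whenever $H^0(M)=0$, since $H^0(M,\partial M)$ injects into $H^0(M)$. Also $\chi(M)=0$ because $\chi(M)=\tfrac12\chi(\partial M)=0$, which can be read off from duality and the long exact sequence. Therefore, under \eqref{item:adjoint reg H0}, \eqref{item:adjoint reg H1} and \eqref{item:adjoint reg H0 boundary}, the Betti numbers of $\mathcal{L}$ are $(0,r,r,0)$. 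Conversely, if $P$ is regular, then $H^0(M)=0$ by the first equality, and $\dim H^1 = r = \tfrac12\dim H^1(\partial M)$.

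It remains to show that, given the Betti numbers $(0,r,r,0)$ and $\dim H^0(\partial M) = r$, the map \eqref{eq:H^0 to H_2} is automatically an isomorphism; this is the main step. Since the Poincar\'e duality part of \eqref{eq:H^0 to H_2} is already an isomorphism, it suffices that $H_2(\partial M;\mathcal{L}) \to H_2(M;\mathcal{L})$ is an isomorphism. In the exact sequence
\begin{equation*}
H_3(M) \to H_3(M,\partial M) \to H_2(\partial M) \to H_2(M) \to H_2(M,\partial M),
\end{equation*}
we have $H_3(M)=0$ and $H_3(M,\partial M) \cong H^0(M) = 0$ by duality, so the map is injective. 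Both sides have dimension $r$, so it is an isomorphism. The converse direction is trivial, because regularity contains the isomorphism condition.

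I expect the delicate part to be justifying the self-duality $\mathcal{L}\cong\mathcal{L}^\vee$ together with the duality isomorphisms over $K$, including the cohomology-versus-homology identifications. These are routine via the Killing form, and via the half-dimensional Lagrangian argument for the image of $H^1(M)\to H^1(\partial M)$. That argument also shows that \eqref{item:adjoint reg H1} is equivalent to injectivity of this restriction map, which matches the introduction.
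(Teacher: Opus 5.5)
Your proof is correct and follows essentially the paper's argument: Killing-form self-duality to match $\dim H^i$ with $\dim H_i$, Poincar\'e--Lefschetz duality together with vanishing Euler characteristics to obtain the Betti numbers, and injectivity of the boundary map coming from $H^0(M)=0$ plus a dimension count. The only cosmetic differences are that you run the injectivity step in the homology sequence via $H_3(M,\partial M)\cong H^0(M)=0$ rather than through the coboundary $H^0(\partial M)\to H^1(M,\partial M)$, and that you write out the converse the paper leaves as ``similar''; one small correction is that the reduction to dimension counts over $K$ holds because every $K$-algebra $B$ is flat over $K$, so $H_i(M;\mathcal{L}\otimes_K B)\cong H_i(M;\mathcal{L})\otimes_K B$, not because base changes factor through field extensions.
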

\begin{proof}
  For simplicity, we write $H^i(M)$ for $H^i(M; P \times^G \mathfrak{g})$ in the proof,
  and similarly for the homology.
  First, we note that 
  $\dim_K H^i (M) = \dim_K H_i (M)$ 
  since we have the Killing form on $\mathfrak{g}$,
  which is non-degenerate, that enables us to 
  have a perfect pairing between $H^i (M)$ and $H_i (M)$.
  We also have the long exact sequence
  \begin{align}\label{eq:long exact around 0}
    \cdots \to H^0 (M) \to H^0 (\partial M) \xrightarrow{\alpha} H^1 (M, \partial M) 
    \to H^1 (M) \to H^1 (\partial M) \to \cdots.
  \end{align}
  We now prove the if part.
  By condition \eqref{item:adjoint reg H0}, the Poincar\'e-Lefschetz duality, 
  and the fact that the Euler characteristic of $\partial M$ and $M$ are zero,
  we see that 
  \begin{align*}
    &(\dim_K H^0 (M), \dim_K H^1 (M), \dim_K H^2 (M), \dim_K H^3 (M)) = 
    (0, a, a, 0)\\
    &(\dim_K H^0 (\partial M), \dim_K H^1 (\partial M), \dim_K H^2 (\partial M)) =
    (b, 2b, b)
  \end{align*}
  for some $a, b$.
  To see that $\mathcal{L}$ is boundary-adjoint-regular,
  we need to show the map $H_2 (\partial M) \to H_2 (M)$ is an isomorphism.
  Via the Poincar\'e-Lefschetz duality, the latter map in \eqref{eq:H^0 to H_2} corresponds to the connecting homomorphism $\alpha : H^0 (\partial M) \to H^1 (M, \partial M)$. Hence it suffices to show that $\alpha$ is an isomorphism.
  Since the map $\alpha$ is injective by \eqref{eq:long exact around 0} and condition (1),
  it suffices to show that $\dim_K H^0 (\partial M) = \dim_K H^1 (M, \partial M)$,
  which is equivalent to $a = b$ since 
  we have $\dim_K H^1 (M, \partial M) = \dim_K H_2 (M) = \dim_K H^2 (M) = a$.
  The equality $a = b$ follows from condition \eqref{item:adjoint reg H1}.
  By condition \eqref{item:adjoint reg H0 boundary}, we have $a = b = m \cdot \rank G$.
  This completes the proof of the if part.
  The only if part is proved similarly.
\end{proof}

\begin{remark}\label{rem:adjoint regularity criterion}
  By the exact sequence \eqref{eq:long exact around 0},
  one can show that
  under the conditions \eqref{item:adjoint reg H0},
  the condition \eqref{item:adjoint reg H1} is equivalent to the following:
  \begin{enumerate}
    \item[(2')] the restriction map 
      $H^1 (M; P \times^G \mathfrak{g}) \to H^1 (\partial M; P|_{\partial M} \times^G \mathfrak{g})$
      is injective.
  \end{enumerate}
  Moreover, under the conditions \eqref{item:adjoint reg H0} and \eqref{item:adjoint reg H0 boundary},
  the condition \eqref{item:adjoint reg H1} is equivalent to the following:
  \begin{enumerate}
    \item[(2'')] $\dim_K H^1 (M; P \times^G \mathfrak{g}) = m \cdot \rank G$.
  \end{enumerate}
\end{remark}

\begin{corollary}
  Let $(K, P) \in \lvert \Loc_{G, M}^{\reg{\partial\text{-}\mathrm{Ad}}} \rvert$.
  Then the dimensions of homology groups are given as follows:
  \begingroup
  \renewcommand{\arraystretch}{1.1}
  \begin{align*}
    &\begin{array}{c|cccc}
    i & 0 & 1 & 2 & 3 \\ \hline
    \dim_K H_i(M; P \times^G \mathfrak{g}) & 0 & m \cdot \rank G & m \cdot \rank G & 0
    \end{array}
    \\[0.5ex]
    &\begin{array}{c|ccc}
    i & 0 & 1 & 2 \\ \hline
    \dim_K H_i(\partial M; P|_{\partial M} \times^G \mathfrak{g}) & m \cdot \rank G & 2m \cdot \rank G & m \cdot \rank G
    \end{array}
  \end{align*}
  \endgroup
\end{corollary}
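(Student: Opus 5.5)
The corollary is essentially a bookkeeping consequence of \cref{prop:adjoint regularity criterion} and its proof. Let $(K,P)$ be boundary-adjoint-regular and write $H_i(M)$ for $H_i(M; P \times^G \mathfrak{g})$, and similarly for $\partial M$.

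\textbf{Step 1: the homology of $M$.} By definition, $P \times^G \mathfrak{g}$ is boundary-regular with betti number $r = m \cdot \rank G$. Condition \eqref{item:boundary-regular locally finite} of \cref{def:boundary-regular} states that $P \times^G \mathfrak{g} \in \Loc_{\GL, M}^{b_* = (0,r,r,0)}$. Over the field $K$ this means $\dim_K H_i(M) = 0, r, r, 0$ for $i = 0,1,2,3$, which is the first table.

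\textbf{Step 2: $H_0$ and $H_2$ of $\partial M$.} Condition \eqref{item:boundary-regular boundary locally finite} gives $\dim_K H^0(\partial M) = r$. Poincar\'e duality \eqref{eq:Poincare_duality_T} for the formal-dimension-$2$ complex $\partial M$ gives $H_2(\partial M) \cong H^0(\partial M)$, so $\dim_K H_2(\partial M) = r$. For $H_0(\partial M)$ there are two routes:
\begin{enumerate}
\item As in the proof of \cref{prop:adjoint regularity criterion}, the Killing form identifies $P \times^G \mathfrak{g}$ with its dual local system. Over a field, $H_0$ is then dual to $H^0$, so $\dim_K H_0(\partial M) = \dim_K H^0(\partial M) = r$.
\item Alternatively, Poincar\'e duality gives $H_0(\partial M) \cong H^2(\partial M)$, and the Killing-form pairing gives $\dim H^2 = \dim H_2 = r$.
\end{enumerate}

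\textbf{Step 3: $H_1(\partial M)$.} Each component of $\partial M$ has Euler characteristic zero by \cref{dfn:link exterior complex}. The alternating sum of the dimensions of the twisted homology equals $\rank(\mathfrak{g}) \cdot \chi(\partial M) = 0$, computed cell-wise from the cellular chain complex. Hence $\dim_K H_1(\partial M) = 2r$.

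\textbf{Main obstacle.} The only delicate point is the duality of Steps 2 and 3, namely justifying $\dim H^i = \dim H_i$ for the adjoint local system. This is handled exactly as in \cref{prop:adjoint regularity criterion}. The nondegenerate Ad-invariant Killing form is available because $k$ has characteristic zero and $G$ is semisimple. It makes $P \times^G \mathfrak{g}$ self-dual, and over a field the cellular cochain complex of a local system is the linear dual of the chain complex of the dual local system.
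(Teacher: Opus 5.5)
Your proof is correct and follows the same route as the paper. There the corollary is read off from the definition of boundary-adjoint-regularity together with the argument in the proof of \cref{prop:adjoint regularity criterion}: condition (1) gives the first table and condition (2) gives $\dim_K H^0(\partial M)=m\cdot\rank G$, after which Poincar\'e duality on $\partial M$, the Killing-form identification $\dim_K H^i=\dim_K H_i$, and $\chi(\partial M)=0$ yield the pattern $(b,2b,b)$. Two cosmetic points: in Step 3 the multiplier is $\dim\mathfrak{g}$ (the rank of the local system), not the Lie rank, and the Euler-characteristic identity determines $H_1(\partial M)$ only once you also note that $H_i(\partial M)=0$ for $i\ge 3$, which follows from Poincar\'e duality in formal dimension $2$.
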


\subsection{Principal embedding}
In this section, we assume that $k$ is algebraically closed.
We also assume that $G$ has a trivial center. 
For any $k$-point $g \in G(k)$, its centralizer
$Z_{G}(g)=\{ x \in G(k) \mid xg=gx\}$ satisfies $\dim_{k} Z_{G}(g) \geq \rank G$. We call $g$ a \emph{regular element} if the equality $\dim_{k} Z_{G}(g) = \rank G$ holds \cite{Steinberg}.

\begin{dfn}
A morphism $\iota: \PGL_2 \to G$ of algebraic groups is called a \emph{principal embedding} if $\iota(k): \PGL_2(k) \to G(k)$ sends the element $\bmtx{1 & 1 \\ 0 & 1}$ to a regular unipotent element.
\end{dfn}
It is known that regular unipotent elements always exist, and unique up to inner automorphisms. 
For the case where $k$ is not necessarily algebraically closed, we say that $\iota : \PGL_2 \to G$ is a principal embedding if its base change to $\bar{k}$ is a principal embedding.

For any integer $n \geq 1$, let $(V_n,\varsigma_n)$ denote the $n$-dimensional complex irreducible representation of $\SL_2$, which is unique up to isomorphism. It provides a morphism $\varsigma_n: \SL_2 \to \SL(V_n)$. It descends to $\varsigma_n: \PGL_2 \to \SL_n(V_n)$ if and only if $n$ is odd. 

For any principal embedding $\iota:\PGL_2 \to G$, let
$\iota^\ast \mathfrak{g}$ be the adjoint representation of $G$, viewed as an $\PGL_2$-module via $\iota$. 

\begin{prop}[{\cite[Corollary 8.7]{Kostant}}]\label{prop:adjoint_decomp}
For any principal embedding $\iota:\PGL_2 \to G$, we have an irreducible decomposition 
\begin{align}\label{eq:adjoint_decomp}
    \iota^\ast \mathfrak{g} \cong \bigoplus_{i=1}^{\rank G} V_{2m_i+1},
\end{align}
where the numbers $(m_i)_{i=1}^{\rank G}$ are exponents of $\mathfrak{g}$ (Table \ref{table:exponents}). 
\end{prop}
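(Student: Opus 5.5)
We plan to follow Kostant's approach: use the principal $\mathfrak{sl}_2$-triple attached to $\iota$ and the structure of the centralizer of a regular nilpotent element. Differentiating $\iota$ gives a Lie algebra map $d\iota:\mathfrak{sl}_2\to\mathfrak{g}$. Let $(e,h,f)$ be the image of the standard triple. Since $\iota\bigl(\bmtx{1&1\\0&1}\bigr)=\exp(e)$ is regular unipotent, $e$ is a regular nilpotent element of $\mathfrak{g}$, and $\mathfrak{g}$ becomes a finite-dimensional $\mathfrak{sl}_2$-module. The representation factors through $\PGL_2$, so only odd-dimensional irreducibles $V_{2m+1}$ occur, with $m$ a nonnegative integer. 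By complete reducibility (characteristic zero), $\iota^\ast\mathfrak{g}\cong\bigoplus_j V_{2m_j+1}$ for some multiset $\{m_j\}$.

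The first key step is to count the summands. Each $V_{2m+1}$ contains a one-dimensional kernel of $\operatorname{ad} e$ (its highest weight line). Hence the number of summands equals $\dim\mathfrak{z}_{\mathfrak{g}}(e)$. Regularity of $\exp(e)$ gives $\dim Z_G(\exp e)=\rank G$, and in characteristic zero $\operatorname{Lie} Z_G(\exp e)=\mathfrak{z}_{\mathfrak{g}}(e)$. So there are exactly $\rank G$ summands, and no summand is trivial. Indeed, $V_1$ would be $\iota$-invariant, i.e.\ centralized by $h$. But we will show $\mathfrak{z}_{\mathfrak{g}}(h)$ is a Cartan subalgebra, whose $h$-weight-zero part has dimension $\rank G$; this is already accounted for by the zero-weight lines of the summands.

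The second step identifies the $m_j$ with the exponents. Conjugate so that $h\in\mathfrak{t}$ is dominant. Regularity of $e$ forces $\alpha(h)=2$ for every simple root $\alpha$; this is the standard fact that $e$ can be taken to be $\sum_\alpha e_\alpha$, and that the corresponding $h$ is $2\rho^\vee$. Hence the $\operatorname{ad} h$-eigenvalue on $\mathfrak{g}_\beta$ is $2\operatorname{ht}(\beta)$, and eigenvalue $0$ has multiplicity $\rank G$. The multiplicity of summands $V_{2m+1}$ equals
\[
\#\{\beta:\operatorname{ht}\beta=m\}-\#\{\beta:\operatorname{ht}\beta=m+1\}.
\]
This is the dual partition of the height partition of positive roots. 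It equals the multiset of exponents by the Kostant--Macdonald theorem (equivalently, the statement cited as \cite[Corollary 8.7]{Kostant}).

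The main obstacle is this last combinatorial identification. It is nontrivial, with Kostant's proof going through the Coxeter element and the eigenvalues $e^{2\pi i m_j/h}$. I will simply invoke it, checking consistency with Table~\ref{table:exponents}. In type $A_n$, for example, heights $1,\dots,n$ occur $n,n-1,\dots,1$ times, giving exponents $1,\dots,n$.
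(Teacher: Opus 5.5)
Your proposal is correct in outline and follows the same route as the paper. The paper gives no argument and simply cites Kostant. You reconstruct the standard reduction: count summands via $\dim\mathfrak{z}_{\mathfrak{g}}(e)=\rank G$, conjugate $h$ to $2\rho^\vee$, and read multiplicities off the height grading. You then defer the key input to Kostant's theorem, namely that the dual of the height partition of the positive roots is the multiset of exponents.

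One sub-argument does not work as written: your reason for excluding trivial summands. Every odd-dimensional summand, including a copy of $V_1$, contributes exactly one line to the zero eigenspace of $\operatorname{ad} h$. So $\dim\mathfrak{z}_{\mathfrak{g}}(h)=\rank G$ follows automatically from the summand count and is compatible with some summands being trivial. Likewise, being centralized by $h$ alone does not single out $V_1$.

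The exclusion follows instead from the $m=0$ case of your own multiplicity formula, read with the zero eigenspace equal to the Cartan subalgebra $\mathfrak{t}$. The multiplicity of $V_1$ is then $\dim\mathfrak{t}-\#\{\text{simple roots}\}=\rank G-\rank G=0$. Equivalently, argue directly: a trivial summand is spanned by some $x\in\mathfrak{z}_{\mathfrak{g}}(h)=\mathfrak{t}$ with $[x,e]=0$. Write $e=\sum_\alpha c_\alpha e_\alpha$ over the simple roots. All $c_\alpha\neq 0$, since $\operatorname{ad} e$ maps the weight-$0$ space onto the weight-$2$ space. Hence $\alpha(x)=0$ for every simple root, so $x=0$ because $\mathfrak{g}$ is semisimple.
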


A principal embedding $\iota: \PGL_2 \to G$ induces a morphism
\begin{align*}
    \iota: \Loc_{\PGL_2,M} \to \Loc_{G,M}. 
\end{align*}
In the next subsection, we investigate the image of the geometric representation under $\iota$. 

\subsection{Regularity of the geometric local system}
In this section, we take the base field $k=\bC$.
Let $M$ be a compact $3$-manifold whose interior is an orientable complete hyperbolic $3$-manifold. 
Let $\mathsf{geom} \in \Loc_{\PGL_2,M}(\bC)$ denote the $\PGL_2$-local system 
associated with the complete hyperbolic structure on the 
interior of $M$.

We investigate the following local systems on $M$:
\begin{itemize}
    \item $\mathcal{L}_{\hol}^{(n)}\coloneq \mathsf{geom} \times^{\PGL_2} V_n \in \Loc_{\SL,M}$ for $n \geq 3$ odd, where $V_n$ is viewed as an $\PGL_2$-module via $\varsigma_n$. 
    \item $\mathfrak{g}_{\hol}\coloneq \mathsf{geom} \times^{\PGL_2} \iota^\ast\mathfrak{g} =\iota(\mathsf{geom}) \times^{G} \mathfrak{g}$ for a principal embedding $\iota$. 
\end{itemize}

\begin{thm}[{\cite[Theorem 0.1]{MFP_tw}}]\label{thm:MFP_Lagrangian}
  Let $M$ be a compact $3$-manifold whose interior is an orientable complete, non-elementary hyperbolic 3-manifold.
  Let $n \geq 3$ odd. Then the inclusion $\partial M \to M$ induces
  an injection,
  \begin{equation*}
    H^1(M;\mathcal{L}_{\hol}^{(n)}) \to H^1(\partial M;\mathcal{L}_{\hol}^{(n)})
  \end{equation*}
  with $\dim_\bC H^1(M;\mathcal{L}_{\hol}^{(n)}) = \frac{1}{2} \dim_\bC H^1(\partial M;\mathcal{L}_{\hol}^{(n)})$,
  and an isomorphism
  \begin{equation*}
    H^2(M;\mathcal{L}_{\hol}^{(n)}) \xrightarrow{\cong} H^2(\partial M;\mathcal{L}_{\hol}^{(n)}).
  \end{equation*}
\end{thm}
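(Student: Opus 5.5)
We would follow the strategy of Menal-Ferrer and Porti: reduce everything to the vanishing of the natural map $H^1(M,\partial M;\mathcal{L}_{\hol}^{(n)}) \to H^1(M;\mathcal{L}_{\hol}^{(n)})$, and then use duality and the long exact sequence of the pair $(M,\partial M)$ for the remaining assertions. Write $\mathcal{L} = \mathcal{L}_{\hol}^{(n)}$ throughout.

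\emph{Preliminary facts.} Since $n$ is odd, $V_n$ carries a nondegenerate $\PGL_2$-invariant bilinear form, which is symmetric. Hence $\mathcal{L}$ is self-dual, and Poincar\'e--Lefschetz duality gives perfect pairings $H^p(M;\mathcal{L}) \times H^{3-p}(M,\partial M;\mathcal{L}) \to \bC$. The geometric representation has Zariski dense image, because $M$ is non-elementary, and $V_n$ is irreducible. Therefore
\[
H^0(M;\mathcal{L}) = 0 \quad\text{and}\quad H^3(M,\partial M;\mathcal{L}) \cong H_0(M;\mathcal{L}) = 0 .
\]

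\emph{Step 1: the key vanishing.} We show that $j\colon H^1(M,\partial M;\mathcal{L}) \to H^1(M;\mathcal{L})$ is zero. This is the analytic heart and the main obstacle.

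Identify $H^1(M,\partial M;\mathcal{L})$ with compactly supported cohomology of the interior. Equip the flat bundle with the canonical Hermitian metric coming from the hyperbolic structure (Matsushima--Murakami). Then a class in the image of $j$ is represented by an $L^2$ harmonic $1$-form with values in $\mathcal{L}$. The Weitzenb\"ock formula for $\mathrm{Sym}^{n-1}$ with $n \ge 3$ is the Raghunathan vanishing argument; for $n \ge 3$ the representation is not fixed by the Cartan involution twist. This formula should force such forms to vanish.

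The difficulty is at the cusps. On each cusp $T^2 \times [0,\infty)$ we must check that the harmonic representative of a compactly supported class is genuinely $L^2$, so that the integration by parts has no boundary term. To do this, decompose $V_n$ along the parabolic (unipotent) holonomy of the cusp, and verify exponential decay of the non-invariant components. If that estimate fails, we would instead follow the $L^2$-cohomology comparison of Zucker and Borel, Garland. Concretely, that means proving that $L^2$ cohomology in degree $1$ vanishes and that the image of compactly supported cohomology injects into it.

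\emph{Step 2: the $H^1$ statements.} Since $j = 0$, the long exact sequence
\[
H^1(M,\partial M) \xrightarrow{j} H^1(M) \to H^1(\partial M) \to H^2(M,\partial M)
\]
shows that the restriction $H^1(M;\mathcal{L}) \to H^1(\partial M;\mathcal{L})$ is injective. For the dimension count, use the standard Lagrangian argument. The cup product with the invariant form makes $H^1(\partial M;\mathcal{L})$ a nondegenerate pairing space. By duality and exactness, the image of $H^1(M)$ is its own annihilator, so its dimension is half of $\dim H^1(\partial M)$. Injectivity then gives the stated equality of dimensions.

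\emph{Step 3: the $H^2$ isomorphism.} Consider the sequence
\[
H^2(M,\partial M) \to H^2(M) \to H^2(\partial M) \to H^3(M,\partial M) = 0 .
\]
This shows that $H^2(M) \to H^2(\partial M)$ is surjective. The first map $H^2(M,\partial M) \to H^2(M)$ is Poincar\'e--Lefschetz dual to $j$, which vanishes by Step 1. Hence the first map is zero, so $H^2(M) \to H^2(\partial M)$ is also injective and therefore an isomorphism.
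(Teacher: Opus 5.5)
The paper does not prove this statement; it is quoted from \cite{MFP_tw} and used as a black box, so your outline has to stand on its own. Its architecture is right, and your preliminaries and Steps~2--3 are correct. Irreducibility and self-duality give $H^0(M;\mathcal{L})=0=H^3(M,\partial M;\mathcal{L})$. The image of $H^1(M;\mathcal{L})$ in $H^1(\partial M;\mathcal{L})$ is Lagrangian. And $H^2(M,\partial M;\mathcal{L})\to H^2(M;\mathcal{L})$ is Poincar\'e--Lefschetz dual to $j$.

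The gap is Step~1. By exactness, $j=0$ is \emph{equivalent} to the injectivity assertion, so it is the whole theorem. You leave it at ``should force such forms to vanish'', and you locate the difficulty where there is none. Let $\alpha$ be a compactly supported closed $\mathcal{L}$-valued $1$-form. Decompose it in $L^2\Omega^1=\mathcal{H}^1\oplus\overline{d\Omega^0_c}\oplus\overline{\delta\Omega^2_c}$. Since $d\alpha=0$ it is orthogonal to the last summand, so $\alpha=h+\lim d\eta_i$ with $h$ an $L^2$ harmonic form and $\eta_i$ compactly supported. Thus $h$ is in $L^2$ by construction. Moreover $\alpha-h$ pairs to zero with every compactly supported closed $2$-form, so $j[\alpha]=[h]$ by Poincar\'e duality $H^1(\interior M;\mathcal{L})\cong H^2_c(\interior M;\mathcal{L})^\ast$. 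No decay analysis of non-invariant components at the cusps is needed.

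What must actually be supplied is a uniform estimate
\[
\lVert d\psi\rVert^2+\lVert\delta\psi\rVert^2\ \ge\ c\,\lVert\psi\rVert^2,\qquad c>0,
\]
for compactly supported $\mathcal{L}$-valued $1$-forms, with respect to the Matsushima--Murakami metric.

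Here is how to get it. Lift $K$-equivariantly to $\Gamma\backslash\SL_2(\bC)$, with Cartan decomposition $\mathfrak{sl}_2(\bC)=\mathfrak{k}\oplus\mathfrak{p}$, and let $\rho$ denote the action on $V_n$. Write $d=d_1+d_2$ with $d_1=\sum_i e^i\wedge X_i$ and $d_2=\sum_i e^i\wedge\rho(X_i)$, where $X_i$ is an orthonormal basis of $\mathfrak{p}$. Since $X_i$ is skew-adjoint, $\rho(X_i)$ is self-adjoint and the two commute, the cross terms in $dd^\ast+d^\ast d$ cancel. So the left-hand side is at least $\lVert d_2\psi\rVert^2$.

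For $n\ge 2$ the zeroth-order operator $d_2$ is injective on $\mathfrak{p}^\ast\otimes V_n$. Up to Hodge star and a nonzero constant it equals $\sum_i\operatorname{ad}(Y_i)\otimes\rho(Y_i)$ with $Y_i=-\sqrt{-1}X_i\in\mathfrak{k}$. This operator acts on $\mathfrak{p}^\ast\otimes V_n\cong V_{n+2}\oplus V_n\oplus V_{n-2}$ (as $K$-modules) by scalars proportional to $-\frac{n-1}{2}$, $1$, $\frac{n+1}{2}$. This is Raghunathan's positivity, and it is uniform in the point by homogeneity.

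Now apply the estimate to $\phi_R h$, where $\lvert d\phi_R\rvert\le 1/R$ and $\phi_R\to 1$; this is where completeness enters. Using $dh=\delta h=0$ gives $c\lVert\phi_R h\rVert^2\le 2R^{-2}\lVert h\rVert^2$, hence $h=0$ and $j=0$.

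This argument is blind to the type of end. So it also covers the higher-genus boundary components of infinite-volume manifolds, which the statement allows (only completeness and non-elementarity are assumed) and which your torus-cusp discussion does not address. The Zucker/Borel--Garland route you hold in reserve is this same argument, not an alternative.
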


\begin{thm}[{\cite[Corollary 3.7]{MFP_tw}}]\label{thm:MFP_H0}
  Let $M$, $n$ be as in \cref{thm:MFP_Lagrangian}. Then
  \begin{align*}
    H^0 (M;\mathcal{L}_{\hol}^{(n)}) = 0.
  \end{align*}
\end{thm}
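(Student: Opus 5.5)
By \cref{rem:H^0_invariant_vector}, after choosing a base point $x_0$ and a marking, we have $H^0(M;\mathcal{L}_{\hol}^{(n)}) \cong V_n^{\Gamma}$, where $\Gamma \subset \PGL_2(\bC)$ is the image of the monodromy of $\mathsf{geom}$, i.e.\ the holonomy group of the complete hyperbolic structure, acting on $V_n$ through $\varsigma_n$. The plan is to show that $\Gamma$ is Zariski dense in $\PGL_2$. Then $V_n^{\Gamma} = V_n^{\PGL_2}$, because the subgroup of $\PGL_2$ fixing a given vector is Zariski closed. Since $V_n$ is irreducible and nontrivial for odd $n \ge 3$, this forces $V_n^{\PGL_2}=0$.

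To prove Zariski density, let $H$ be the Zariski closure of $\Gamma$. Then $H$ is a closed subgroup of $\PGL_2(\bC)$, and $\Gamma$ is a discrete, non-elementary subgroup of finite covolume. The proper closed subgroups of $\PGL_2$ are of three kinds:
\begin{itemize}
\item finite groups;
\item subgroups of a Borel subgroup, i.e.\ groups fixing a point of $\mathbb{P}^1$;
\item subgroups of the normalizer of a torus, i.e.\ groups preserving a pair of points.
\end{itemize}
In each case $\Gamma$ would be elementary: it would be finite, or fix a point at infinity, or preserve a pair of points at infinity. That contradicts non-elementarity, which holds because the interior of $M$ is hyperbolic of finite volume. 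Hence $H=\PGL_2$.

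There is a more hands-on variant, which may be cleaner to write. Pick a parabolic element $u \in \Gamma$ coming from a cusp; such an element exists because $\partial M \neq \emptyset$. Conjugating in $\PGL_2(\bC)$, we may take $u=\bmtx{1&1\\0&1}$, which fixes $\infty$. Identify $V_n$ with homogeneous polynomials of degree $n-1$ in $x,y$. The fixed space of the unipotent $u$ on $V_n$ is one-dimensional, spanned by the highest weight vector $x^{n-1}$. This line corresponds to the fixed point $\infty$ of $u$ in $\mathbb{P}^1$. By non-elementarity, some $g\in\Gamma$ satisfies $g(\infty)\neq\infty$. Then $g$ sends the line $\bC x^{n-1}$ to the line of the $(n-1)$-st power of a different linear form. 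So no nonzero vector is fixed by both $u$ and $g$, and $V_n^\Gamma=0$.

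The only step needing care is the non-elementarity input, namely that $\Gamma$ has no global fixed point in $\mathbb{P}^1$. This is standard for lattices in $\PSL_2(\bC)$: a group fixing a point of $\partial\mathbb{H}^3$ is elementary and cannot have finite covolume. I expect this step to be routine once it is cited, and the rest of the argument to be linear algebra.
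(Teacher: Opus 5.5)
Your argument is correct. The paper gives no proof of this statement: it is quoted from Menal-Ferrer--Porti \cite[Corollary 3.7]{MFP_tw}. So your proposal is a self-contained substitute for a citation rather than a reconstruction of an argument in the text.

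Both of your variants work, and they buy different things.

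\textbf{Zariski-density route.} This is the more robust one. The classification of proper closed subgroups of $\PGL_2$ (finite, inside a Borel, inside the normalizer of a torus) matches exactly the three ways a discrete group can be elementary. Hence it uses only non-elementarity and needs no cusp. It also gives $V^{\Gamma}=V^{\PGL_2}$ for every algebraic $\PGL_2$-module $V$. Applied directly to $\iota^*\mathfrak{g}$, where every exponent satisfies $m_i\ge 1$, it proves part (2) of \cref{cor:twisted_dimension} in one stroke. Note also that your proof of density avoids Borel density, which would require finite covolume.

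\textbf{Unipotent variant.} This one is pure linear algebra. It fits with the cusp computation behind \cref{thm:MFP_H0_boundary}: the invariants of a cusp group are the line spanned by the $(n-1)$-st power of the linear form of its fixed point. However, it needs a parabolic element. That exists only because, in the paper's setting, $\partial M$ is a nonempty union of tori whose peripheral groups have parabolic holonomy. You should say this rather than just ``$\partial M\neq\emptyset$''.

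Two small corrections.
\begin{enumerate}
\item The hypothesis of \cref{thm:MFP_Lagrangian} is non-elementarity, not finite covolume. The statement as written allows any compact $M$ with complete hyperbolic interior. You should invoke non-elementarity directly, rather than deducing it from a finite-volume assumption that is not made.
\item In the unipotent variant, spell out the last step. A $\Gamma$-invariant vector must be $c\,x^{n-1}$. Invariance under $g$ then forces $g\cdot x^{n-1}\in\bC^{\times}x^{n-1}$, i.e.\ $\ell^{n-1}\in\bC^{\times}x^{n-1}$ for the linear form $\ell$ of $g(\infty)$. By unique factorization $\ell\in\bC^{\times}x$, so $g(\infty)=\infty$, a contradiction unless $c=0$.
\end{enumerate}
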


\begin{thm}[{\cite[Corollary 3.6]{MFP_tw}}]\label{thm:MFP_H0_boundary}
  Let $M$, $n$ be as in \cref{thm:MFP_Lagrangian}, and suppose $\partial M$ consists of $m$ tori. Then
  \begin{align*}
    \dim_\bC H^0(\partial M;\mathcal{L}_{\hol}^{(n)}) = m .
  \end{align*}
\end{thm}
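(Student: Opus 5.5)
Since $\partial M = \bigsqcup_{i=1}^m T_i$, the cohomology splits as $H^0(\partial M;\mathcal{L}_{\hol}^{(n)}) \cong \bigoplus_{i=1}^m H^0(T_i;\mathcal{L}_{\hol}^{(n)}|_{T_i})$. So it suffices to show that each summand is one-dimensional. The plan is to compute each summand as a space of invariant vectors and then use the structure of unipotent elements acting on $V_n$.

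\textbf{Step 1: reduction to invariants.} Choose a base point $x_i \in T_i$ and a marking. By \cref{rem:H^0_invariant_vector}, $H^0(T_i;\mathcal{L}_{\hol}^{(n)}|_{T_i})$ is the subspace of $V_n$ fixed by the image of $\pi_1(T_i,x_i)\cong\mathbb{Z}^2$ under $\varsigma_n\circ\mathsf{geom}$.

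\textbf{Step 2: the cusp holonomy.} I will use the standard fact about complete finite-volume (non-elementary) hyperbolic structures. The holonomy $\mathsf{geom}:\pi_1(M)\to\PGL_2(\bC)$ is faithful with discrete, torsion-free image. Each peripheral subgroup $\pi_1(T_i)$ maps to a rank-two parabolic subgroup fixing a single point of $\partial\mathbb{H}^3$. After conjugation, the images of the two generators are therefore $\bmtx{1 & a\\ 0 & 1}$ and $\bmtx{1 & b\\ 0 & 1}$ with $a,b\in\bC^\times$, and $a,b$ are $\mathbb{R}$-linearly independent; in particular both are nontrivial. Since $n$ is odd, $\varsigma_n$ factors through $\PGL_2$, so no sign ambiguity arises from lifting to $\SL_2$. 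I expect this step to be the main obstacle: it is the only genuinely geometric input, namely that cusp holonomy is nontrivially parabolic rather than trivial or loxodromic. I would cite it from the thick–thin decomposition and the classification of cusps.

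\textbf{Step 3: invariants of a unipotent element on $V_n$.} Write $\bmtx{1&a\\0&1}=\exp(a e)$, where $e$ is the standard raising operator in $\mathfrak{sl}_2$. In $V_n = \mathrm{Sym}^{n-1}(k^2)$, the operator $\varsigma_n(e)$ maps each weight space onto the next one, and its kernel is exactly the highest-weight line. Hence $\varsigma_n(e)$ is a single nilpotent Jordan block of size $n$. It follows that $\exp(a\varsigma_n(e))-1 = a\varsigma_n(e)\,(1+\text{nilpotent})$ has the same kernel as $\varsigma_n(e)$ whenever $a\neq 0$. Therefore each generator fixes precisely the highest-weight line, and this line is common to both generators. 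So $\dim H^0(T_i;\mathcal{L}_{\hol}^{(n)}|_{T_i})=1$, and summing over the $m$ tori gives $\dim_\bC H^0(\partial M;\mathcal{L}_{\hol}^{(n)})=m$.
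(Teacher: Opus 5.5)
The paper gives no proof of this statement: it is quoted directly from \cite[Corollary 3.6]{MFP_tw}, so there is nothing internal to compare against. Your argument is correct, and it is a self-contained proof of the cited fact along the standard lines. $H^0$ of each boundary torus is the space of invariants of the peripheral group $\pi_1(T_i)$. Under $\varsigma_n$, a nontrivial unipotent element becomes a single Jordan block, whose fixed space is the highest-weight line.

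What your route adds over the bare citation is that it shows where each hypothesis is used.
\begin{itemize}
\item Non-elementarity, together with irreducibility of $M$, gives incompressibility of each $T_i$. Hence $\pi_1(T_i)\cong\mathbb{Z}^2$ injects into $\pi_1(M)$. Discreteness and torsion-freeness then force its image to be parabolic, since a discrete torsion-free group preserving a geodesic axis is cyclic, so the image cannot be loxodromic. This argument uses only completeness, not finite volume, so you need not phrase Step 2 in terms of finite-volume structures.
\item Oddness of $n$ makes $\varsigma_n$ factor through $\PGL_2$, so the sign ambiguity in a parabolic element of $\PSL_2(\mathbb{C})$ does not matter. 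For even $n$, a lift of the form $-\exp(aN)$ has no fixed vectors at all, which is why \cite{MFP_tw} treat that case separately using a lift to $\SL_2$.
\end{itemize}
In return, the citation also covers more general situations (non-torus boundary components, even $n$) that are not needed here.

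A small simplification: you only need one generator to be a nontrivial parabolic. The other generator commutes with it, so it preserves the one-dimensional fixed line. Being unipotent, it acts on that line by the scalar $1$, i.e.\ trivially.
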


In~\cite{MFP_tw}, there is also an explicit description of the cases when $n$ is even,
where they consider a $\SL_2$-local system $\widetilde{\hol}$ that is a lift of $\mathsf{geom}$.
Also, we focus on the case where $\partial M$ consists only of torus cusps. See \cite{MFP_tw} for the general case.
We get the following from \cref{thm:MFP_Lagrangian,thm:MFP_H0}:
\begin{cor}\label{cor:twisted_dimension}
  Let $M$, $n$ be as in \cref{thm:MFP_Lagrangian}. Then
  \begin{enumerate}
    \item the map
    \begin{equation*}
      H^1(M;\mathfrak{g}_{\hol}) \to H^1(\partial M;\mathfrak{g}_{\hol})
    \end{equation*} 
    is injective with 
    $\dim_\bC H^1(M;\mathfrak{g}_{\hol}) = \frac{1}{2} \dim_\bC H^1(\partial M;\mathfrak{g}_{\hol})$,
    \item $H^0(M;\mathfrak{g}_{\hol}) = 0$,
    \item if $\partial M$ consists of $m$ tori, we have
      \begin{align*}
        \dim_\bC H^0(\partial M;\mathfrak{g}_{\hol}) = m \cdot \rank G.
      \end{align*}
  \end{enumerate}
\end{cor}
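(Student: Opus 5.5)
The plan is to decompose $\mathfrak{g}_{\hol}$ via \cref{prop:adjoint_decomp} and then read off each statement summand by summand. Since $\mathfrak{g}_{\hol} = \mathsf{geom} \times^{\PGL_2} \iota^\ast\mathfrak{g}$ and the associated-bundle construction of \cref{def:associated lin loc} commutes with direct sums, \eqref{eq:adjoint_decomp} gives a decomposition of linear local systems
\begin{equation*}
  \mathfrak{g}_{\hol} \cong \bigoplus_{i=1}^{\rank G} \mathcal{L}_{\hol}^{(2m_i+1)}.
\end{equation*}
The same decomposition holds after restriction to $\partial M$. Every exponent satisfies $m_i \geq 1$ (Table \ref{table:exponents}; equivalently, $\mathfrak{g}$ has no $\PGL_2$-invariants because $G$ has trivial center and $\iota$ is principal). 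Hence each $n_i \coloneqq 2m_i+1$ is odd and at least $3$, so \cref{thm:MFP_Lagrangian,thm:MFP_H0,thm:MFP_H0_boundary} apply to every summand.

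Next, twisted cellular (co)chain complexes are additive in the coefficient local system. So $H^j(M;\mathfrak{g}_{\hol}) \cong \bigoplus_i H^j(M;\mathcal{L}_{\hol}^{(n_i)})$, and likewise on $\partial M$. The restriction map $H^1(M;-) \to H^1(\partial M;-)$ is natural in the coefficients, so it is the direct sum of the restriction maps for the summands.

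With these two facts the three claims follow directly.
\begin{enumerate}
  \item A direct sum of injective maps is injective, and the dimension identity $\dim H^1(M;\mathcal{L}_{\hol}^{(n_i)}) = \frac{1}{2}\dim H^1(\partial M;\mathcal{L}_{\hol}^{(n_i)})$ from \cref{thm:MFP_Lagrangian} adds up over $i$.
  \item $H^0(M;\mathfrak{g}_{\hol}) = \bigoplus_i H^0(M;\mathcal{L}_{\hol}^{(n_i)}) = 0$ by \cref{thm:MFP_H0}.
  \item By \cref{thm:MFP_H0_boundary} each summand contributes exactly $m$ to $\dim H^0(\partial M;-)$, and there are $\rank G$ summands, giving $m \cdot \rank G$.
\end{enumerate}

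The only point needing care is the compatibility of \cref{prop:adjoint_decomp}, which is stated for $\iota^\ast\mathfrak{g}$ as a representation, with the local-system construction. This amounts to functoriality of $(-)\times^{\PGL_2}(-)$ in the representation variable, which is immediate from \cref{def:associated lin loc}. One should also check that the Menal-Ferrer–Porti statements concern the same local systems $\mathsf{geom}\times^{\PGL_2}V_n$ for odd $n$, which descend from $\SL_2$ to $\PGL_2$. I expect no genuine obstacle beyond this bookkeeping.
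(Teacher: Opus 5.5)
Your proposal is correct and follows the paper's proof: decompose $\mathfrak{g}_{\hol}$ via Kostant's decomposition (\cref{prop:adjoint_decomp}) and apply \cref{thm:MFP_Lagrangian,thm:MFP_H0,thm:MFP_H0_boundary} summand by summand, using additivity of twisted cohomology. Your explicit check that every $2m_i+1 \geq 3$, so that the cited theorems apply, is a detail the paper leaves implicit. One small correction: the absence of $\PGL_2$-invariants in $\mathfrak{g}$ comes from semisimplicity of $\mathfrak{g}$ together with principality of $\iota$, not from the trivial center of $G$.
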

\begin{proof}
The first assertion follows from the \cref{prop:adjoint_decomp,thm:MFP_Lagrangian}.
The second follows from \cref{prop:adjoint_decomp,thm:MFP_H0}.
By \cref{prop:adjoint_decomp,thm:MFP_H0_boundary}, we get
\begin{align*}
    \dim_\bC H^0(\partial M;\mathfrak{g}_{\hol}) = \sum_{i=1}^{\rank G} \dim_\bC H^0(\partial M;\mathcal{L}_{\hol}^{(2m_i+1)}) =  \sum_{i=1}^{\rank G} m = m \cdot \rank G,
\end{align*}
as desired.
\end{proof}

\begin{thm}\label{thm:geom_regular}
$\iota(\mathsf{geom})$ is boundary-adjoint-regular. 
\end{thm}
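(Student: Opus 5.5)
The plan is to reduce the statement to the three-condition criterion of \cref{prop:adjoint regularity criterion}, and then supply each condition from \cref{cor:twisted_dimension}. The point $(\bC, \iota(\mathsf{geom})) \in \lvert \Loc_{G, M} \rvert$ is defined over the field $\bC$. We have $\iota(\mathsf{geom}) \times^G \mathfrak{g} = \mathfrak{g}_{\hol}$, and for this point boundary-adjoint-regularity is, by definition, boundary-regularity of $\mathfrak{g}_{\hol}$ with betti number $m \cdot \rank G$. So it suffices to check conditions \eqref{item:adjoint reg H0}, \eqref{item:adjoint reg H1} and \eqref{item:adjoint reg H0 boundary} of \cref{prop:adjoint regularity criterion} for $P = \iota(\mathsf{geom})$.

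The three conditions are supplied by \cref{cor:twisted_dimension} as follows:
\begin{itemize}
\item Condition \eqref{item:adjoint reg H0}, namely $H^0(M; \mathfrak{g}_{\hol}) = 0$, is part (2).
\item Condition \eqref{item:adjoint reg H1}, the half-dimension equality $\dim_\bC H^1(M;\mathfrak{g}_{\hol}) = \frac12 \dim_\bC H^1(\partial M; \mathfrak{g}_{\hol})$, is part (1).
\item Condition \eqref{item:adjoint reg H0 boundary} is part (3), since $\partial M$ consists of $m$ tori.
\end{itemize}
\cref{cor:twisted_dimension} applies because the interior of $M$ is a complete hyperbolic manifold with torus cusps, hence non-elementary. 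It follows that $\iota(\mathsf{geom}) \in \lvert \Loc_{G,M}^{\reg{\partial\text{-}\mathrm{Ad}}} \rvert$.

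The main point needing care is the hypothesis "$G$ has trivial center" under which principal embeddings $\PGL_2 \to G$ were discussed. If $G$ is only semisimple, I would either assume adjoint type as in that subsection, or note that the adjoint representation factors through $G^{\mathrm{ad}}$. The twisted cohomologies only see $\mathfrak{g}_{\hol}$, and the decomposition \eqref{eq:adjoint_decomp} into the $V_{2m_i+1}$ is what feeds \cref{cor:twisted_dimension}. A second point is that the "only if/if" criterion in \cref{prop:adjoint regularity criterion} is stated for points over fields, whereas membership in the locally closed substack \eqref{eq:adjoint regular substack} is a condition on $\Loc_{G,M}(\bC)$. Over a field, however, every finite-dimensional homology group is free of constant rank, so the two notions agree for $A = K = \bC$. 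I will remark on this explicitly, with projectivity being automatic over a field, rather than treat it as a genuine obstacle.
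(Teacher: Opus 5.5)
Your proposal is correct and follows the paper's proof: the paper also verifies the three conditions of \cref{prop:adjoint regularity criterion} directly from the three parts of \cref{cor:twisted_dimension}, matched exactly as you do. Your extra remarks, on the trivial-centre hypothesis and on field-valued points, are harmless side checks. For the second remark, constancy of rank under every base change $\bC \to B$ holds because $B$ is flat over $\bC$.
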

\begin{proof}
The conditions in \cref{prop:adjoint regularity criterion} hold by \cref{cor:twisted_dimension}.
\end{proof}

\begin{thm}\label{thm:adjoint_torsion_hol}
We have
\begin{align*}
  \torsion_{G, M,\gamma,\mathfrak{o}}^{\Ad}(\iota(\mathsf{geom})) 
  = \prod_{i=1}^{\rank G}
  \torsion_{M,\gamma,\mathfrak{o}}(\mathcal{L}_{\hol}^{(2m_i + 1)}).
\end{align*}
\end{thm}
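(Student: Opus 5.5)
The proof reduces the statement to the multiplicativity of \cref{lemma:torsion of direct sum}, applied to the decomposition of \cref{prop:adjoint_decomp}. It has three steps.

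\textbf{Step 1: split the adjoint local system.} Fix an isomorphism of $\PGL_2$-modules $\iota^\ast\mathfrak{g}\cong\bigoplus_{i=1}^{\rank G}V_{2m_i+1}$ as in \eqref{eq:adjoint_decomp}. Applying the functor $\mathsf{geom}\times^{\PGL_2}(-)$ of \cref{def:associated lin loc} gives an isomorphism of linear local systems
\[
\mathfrak{g}_{\hol}\cong\bigoplus_{i=1}^{\rank G}\mathcal{L}_{\hol}^{(2m_i+1)} .
\]
This is a priori only an isomorphism in $\Loc_{\GL,M}(\bC)$. On the left we use the volume form $\omega_{\mathfrak{g}}$. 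On the right we use the direct-sum volume form built from the volume forms on the $V_{2m_i+1}$ (each $\varsigma_n$ lands in $\SL(V_n)$).

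\textbf{Step 2: reconcile the volume forms.} The two volume forms on $\mathfrak{g}$ differ by a scalar $c\in\bC^\times$. I would rerun the argument of \cref{lemma:adjoint torsion independence on choices}. Since $\sum_j(-1)^j\lvert\mathsf{cell}_j\rvert=0$ on each component of $M$, rescaling the fibre volume form by $c$ multiplies the geometric volume form \eqref{eq:vol complex} by $c^{\sum_j(-1)^j\lvert\mathsf{cell}_j\rvert}=1$, using \cref{lemma:smul otinv} for the odd-degree factors. Hence $\torsion_{M,\gamma,\mathfrak{o}}(\mathfrak{g}_{\hol})$ equals the torsion of the direct sum equipped with the product volume form. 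The latter is now an isomorphism in $\Loc_{\SL,M}(\bC)$, and torsion is constant on isomorphism classes by functoriality.

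\textbf{Step 3: check each summand and apply multiplicativity.} To invoke \cref{lemma:torsion of direct sum}, each summand $\mathcal{L}_{\hol}^{(n)}$ with $n=2m_i+1\ge3$ must lie in $\Loc_{\SL,M}^{\reg{\partial}}$, with betti number $m$. For this I would repeat the proof of \cref{prop:adjoint regularity criterion} with $\mathfrak{g}$ replaced by $V_n$. That proof only used two inputs.
\begin{itemize}
\item A nondegenerate invariant bilinear form on the fibre, giving $\dim H^i=\dim H_i$. The module $V_n$ carries an $\SL_2$-invariant nondegenerate form, symmetric for $n$ odd.
\item The three hypotheses: $H^0=0$, the Lagrangian dimension count, and $\dim H^0(\partial M)=m$. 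These are exactly \cref{thm:MFP_H0}, \cref{thm:MFP_Lagrangian} and \cref{thm:MFP_H0_boundary}.
\end{itemize}
So each summand is boundary-regular with betti number $m$, and the direct sum is boundary-regular with betti number $m\cdot\rank G$. Conditions \eqref{item:boundary-regular isom} and the Porti-form maps are compatible with direct sums, since cap products and long exact sequences split summandwise. Iterating \cref{lemma:torsion of direct sum} then yields the stated product.

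\textbf{Main obstacle.} I expect the delicate point to be the sign bookkeeping hidden in \cref{lemma:torsion of direct sum}. The geometric volume form, the Porti form $h_\gamma$ and the orientation sign $\epsilon_{\mathfrak{o},\mathcal{L}}$ must all be multiplicative under direct sum with respect to the symmetric monoidal structure of $\mathcal{P}_A$. Each $\mathcal{L}_{\hol}^{(n)}$ has odd rank, so the orientation signs $\epsilon_{\mathfrak{o},-}$ are nontrivial. The braiding signs that arise when $\det(\mathcal{L}\oplus\mathcal{L}')$ is reordered into $\det\mathcal{L}\otimes\det\mathcal{L}'$ must cancel against the corresponding reordering for the parity local systems, in the same way as in the proof of \cref{lemma:torsion indep order}. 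I would check this explicitly for two odd-rank summands and then induct on the number of summands.
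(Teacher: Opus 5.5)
Your proposal is correct and follows the paper's proof, which is exactly \cref{lemma:torsion of direct sum} applied to the decomposition of \cref{prop:adjoint_decomp}. Your Steps 2 and 3 spell out what the paper leaves implicit: absorbing the scalar between $\omega_{\mathfrak{g}}$ and the product volume form via the vanishing Euler characteristic, as in \cref{lemma:adjoint torsion independence on choices}, and using \cref{thm:MFP_H0,thm:MFP_Lagrangian,thm:MFP_H0_boundary} to see that each $\mathcal{L}_{\hol}^{(2m_i+1)}$ is boundary-regular with betti number $m$, so that the right-hand side is defined.

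One correction to your closing remark. On connected $M$, $\epsilon_{\mathfrak{o},\mathcal{L}}$ depends only on the parity of the rank, so these signs are automatically multiplicative, and there is no parity-system reordering to cancel against (for two odd summands $\parity(\mathcal{L}\oplus\mathcal{L}')=0$). The remaining signs come from the braiding and inverse-structure isomorphisms on $\det C_\bullet$ and $\det H_\bullet$. In any case your argument, like the paper's, only needs \cref{lemma:torsion of direct sum} as stated.
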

\begin{proof}
  This follows from the decomposition of $\iota^* \mathfrak{g}$ (\cref{prop:adjoint_decomp}) and the multiplicativity of torsion (\cref{lemma:torsion of direct sum}).
\end{proof}

\begin{cor}\label{cor:torsion_geom_nonzero}
  Suppose that each component of $\gamma$ is not null-homotopic in $M$.
  Then 
  \begin{equation*}
    \torsion_{G, M,\gamma,\mathfrak{o}}^{\Ad}(\iota(\mathsf{geom})) \neq 0.
  \end{equation*}
\end{cor}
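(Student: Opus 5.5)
By \cref{thm:adjoint_torsion_hol}, the adjoint torsion of $\iota(\mathsf{geom})$ factors as $\prod_{i=1}^{\rank G}\torsion_{M,\gamma,\mathfrak{o}}(\mathcal{L}_{\hol}^{(2m_i+1)})$. It therefore suffices to show that each factor is nonzero. By \cref{lemma:torsion of gamma regular}, and since $\bC$ is a field, this is equivalent to showing that each $\mathcal{L}^{(n)}_{\hol}$ with $n=2m_i+1$ odd is $\gamma$-regular. Each $\mathcal{L}^{(n)}_{\hol}$ is boundary-regular with betti number $m$: \cref{thm:MFP_Lagrangian,thm:MFP_H0,thm:MFP_H0_boundary} together with the argument of \cref{prop:adjoint regularity criterion} apply verbatim to a single summand, using the invariant bilinear form on $V_n$ in place of the Killing form. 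So the remaining task is to check that the map \eqref{eq:H^0 to H_1},
\[
H^0(\partial M;\mathcal{L}^{(n)}_{\hol})\to H^0(\gamma;\mathcal{L}^{(n)}_{\hol})\cong H_1(\gamma;\mathcal{L}^{(n)}_{\hol})\to H_1(M;\mathcal{L}^{(n)}_{\hol}),
\]
is an isomorphism. Both ends have dimension $m$, so injectivity suffices.

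I would carry this out cusp by cusp. On each torus $T_j$ the holonomy of $\mathsf{geom}$ is parabolic and nontrivial, so its image in $\PGL_2$ is a nontrivial unipotent group fixing a vector. Under $V_n$, the invariants of such a unipotent element are one-dimensional; this is exactly the content of \cref{thm:MFP_H0_boundary}. Moreover, for any nontrivial element of the peripheral group, and in particular for $\gamma_j$ (which is not null-homotopic by hypothesis, hence nontrivial in $\pi_1(T_j)\cong\bZ^2$ because $\mathsf{geom}$ is faithful), the invariants coincide with the full torus invariants. Hence $H^0(T_j)\to H^0(\gamma_j)$ is an isomorphism of one-dimensional spaces.

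It remains to show that $H_1(\gamma;\mathcal{L})\to H_1(M;\mathcal{L})$ is injective on the image. This is the main obstacle. I would use the result of Menal-Ferrer and Porti \cite{MFP_tw}, the twisted analogue of Porti's theorem, which says that for a nontrivial peripheral curve $\gamma_j$ in each cusp, $H_1(\gamma;\mathcal{L}^{(n)}_{\hol})\to H_1(M;\mathcal{L}^{(n)}_{\hol})$ is an isomorphism, i.e.\ that $\mathcal{L}^{(n)}_{\hol}$ is $\gamma$-regular. This is also why the torsions in \cite{MFP_torsion} are well defined and nonzero.

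Failing a direct citation, I would argue by duality. The image of $H_1(\partial M)\to H_1(M)$ is half-dimensional and, by \cref{thm:MFP_Lagrangian}, dual to a Lagrangian in $H^1(\partial M)$. Writing the kernel of $H_1(\partial M)\to H_1(M)$ as the image of $H_2(M,\partial M)$, I would show that the class $[\gamma_j]\otimes v_j$, with $v_j$ the invariant vector, does not lie in this kernel. The method would be to pair it, via the cup product, with the deformation class coming from varying the cusp shape. That is the genuinely geometric input (infinitesimal rigidity in the manner of Calabi–Weil–Garland), and it is the step I expect to require the most care.
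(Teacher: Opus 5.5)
Your argument is correct, but after the common first step (the factorization of \cref{thm:adjoint_torsion_hol}) it takes a different route from the paper.

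The paper never analyzes the summands individually. Every positive integer $m$ is an exponent of $\mathfrak{pgl}_{m+1}$, so the factor $\torsion_{M,\gamma,\mathfrak{o}}(\mathcal{L}_{\hol}^{(2m+1)})$ appears in the product expressing the adjoint $\PGL_{m+1}$-torsion of $\iota(\mathsf{geom})$. That torsion is nonzero by \cite[Theorem 3.4]{KitayamaTerashima}, so all the geometry is delegated to one citation whose hypotheses match the corollary.

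You instead convert nonvanishing into $\gamma$-regularity of each $\mathcal{L}^{(n)}_{\hol}$ via \cref{lemma:torsion of gamma regular}. You verify boundary-regularity and the isomorphism $H^0(\partial M)\to H^0(\gamma)$ by hand from the parabolicity of the cusp holonomy. You cite Menal-Ferrer--Porti only for the isomorphism $\bigoplus_j H_1(\gamma_j;\mathcal{L}^{(n)}_{\hol})\to H_1(M;\mathcal{L}^{(n)}_{\hol})$. This is closer to the mechanism, since the type-$A$ statement the paper cites reduces to the same summand-wise input. It also makes explicit the boundary-regularity of each summand, which the factorization tacitly requires for its right-hand side to be defined.

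Two cautions. First, the isomorphism you cite is not among the results of \cite{MFP_tw} recorded in this paper (\cref{thm:MFP_Lagrangian,thm:MFP_H0,thm:MFP_H0_boundary}). Since it carries the entire geometric content of your argument, you must cite it precisely, from the Menal-Ferrer--Porti work that underlies their torsions for odd $n$ \cite{MFP_torsion}. Second, your fallback sketch would not substitute for it. Infinitesimal deformations of the hyperbolic structure live in $H^1(M;\mathcal{L}^{(3)}_{\hol})$, so pairing with ``the deformation class from varying the cusp shape'' says nothing about the summands with $n\ge 5$.
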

\begin{proof}
  By \cref{thm:adjoint_torsion_hol}, it suffices to show that $\torsion_{M,\gamma,\mathfrak{o}}(\mathcal{L}_{\hol}^{(2m_i +1)}) \neq 0$ for any $i$.
  Since every positive integer occurs as an exponent of $\mathfrak{pgl}_n$ for some $n$
  (see Table \ref{table:exponents}),
  it suffices to show that the assertion in the case $\mathfrak{g} = \mathfrak{pgl}_n$.
  This is proved in \cite[Theorem 3.4]{KitayamaTerashima}.
\end{proof}

\section{Computation}\label{sec:computation}
\subsection{Determinant of complexes in terms of bases}
To compute torsions explicitly,
we will write down the isomorphism in Definition~\ref{theorem:det C to det H} explicitly in terms of bases.
A well-known formula for the torsion of complexes is reproduced (\cref{theorem:torsion explicit}), with an appropriate sign correction
determined by our conventions.
See Lemma \ref{lemma:det homology explicit} and \ref{lemma:det homology explicit sign} below.

We first fix an inverse structures on $\mathcal{P}_A$
following the sign convention in \cite[\S 1.2]{Nicolaescu}.
For $(L, \alpha) \in \mathcal{P}_A$, we define $(L, \alpha)^{\otinv} \coloneqq (L^{\vee}, -\alpha)$ where
$L^{\vee} \coloneqq \Hom_A (L, A)$.
For $L \in \mathcal{P}_A$, the map $\varepsilon_L : L \otimes L^{\otinv} \to 1$
is defined as the evaluation map $L \otimes L^{\vee} \cong A$ 
multiplied by the sign $(-1)^{\frac{1}{2} \lvert L \rvert (\lvert L \rvert + 1)}$.
For a morphism $f : L \to M$, we define $f^{\otinv} : L^{\otinv} \to M^{\otinv}$ by
$f^{\otinv} \coloneqq (f^{\vee})^{-1}$,
where $f^{\vee} : M^{\vee} \to L^{\vee}$ is the dual map of $f$.
For $L, M \in \mathcal{P}_A$,
the map $\theta_{L,M} : (L \otimes M)^{\otinv} \to L^{\otinv} \otimes M^{\otinv}$
is defined as the canonical isomorphism between the underlying modules, without any sign correction.
We define the map $1^{\otinv} \to 1$ as the canonical isomorphism $A^{\vee} \cong A$.

For $L \in \mathcal{P}_A$ and a basis $\ell$ of $L$, 
we denote by $\ell^{\otinv}$ the dual basis of $\ell$,
which is an element of the underlying module $L^{\vee}$ of $L^{\otinv}$.

We next fix a determinant functor on $(\mathsf{proj}(A), \mathrm{iso})$.
We define $\det : \mathsf{proj}(A) \to \mathcal{P}_A$ by
\begin{align}
  \det V \coloneqq (\wedge^{\rank V} V,\ \rank V).
\end{align}
For an isomorphism $f : U \cong V$ in $\mathsf{proj}(A)$,
we define $\det f : \det U \to \det V$ as the map induced by $f$ on the top exterior powers.
For a family of vectors $v = (v_\lambda)_{\lambda \in \Lambda}$ of $V$ 
with the linearly ordered index set $\Lambda$, we define
\begin{align}\label{eq:wedge vector}
  \wedge v \coloneqq v_{\lambda_1} \wedge \cdots \wedge v_{\lambda_r} \in \wedge^{r} V,
\end{align}
where $\lambda_1 < \cdots < \lambda_r$ and $r = \lvert \Lambda \rvert$.
When $v$ is a basis, we have $\rank V = \lvert \Lambda \rvert$ and $\wedge v$ is a basis of $\det V$.

For a short exact sequence of finitely generated projective $A$-modules
\begin{align}
  \Sigma = \quad 0 \to U \xlongrightarrow{f} V \xlongrightarrow{g} W \to 0,
\end{align}
we define the map 
\begin{align}\label{eq:det exact}
  \det \Sigma : \det V \cong \det U \otimes \det W 
\end{align}
such that 
\begin{align}
  (\det \Sigma) ((\wedge f u) \wedge (\wedge w)) = \wedge u \otimes \wedge (g w)
\end{align}
holds locally on $\Spec A$ for a basis $u$ of $U$ and a lift $w$ of a basis of $W$.

We now give an explicit formula for the isomorphism in Definition \ref{theorem:det C to det H}.
Let $C_\bullet = (C_n,\partial_n)_{n \in \mathbb{Z}}$ be a bounded complex of 
finitely generated projective $A$-modules, and 
suppose that $c_i = (c_{i, \lambda})_{\lambda \in \Lambda_{\mathsf{c}, i}}$ is a basis of 
$C_i$ such that the index set $\Lambda_{\mathsf{c}, i}$ is linearly ordered. 
Suppose that we are given
\begin{enumerate}
  \item a family of vectors $b_i = (b_{i, \lambda})_{\lambda \in \Lambda_{\mathsf{b}, i}}$ of $C_{i+1}$ with 
    linearly ordered $\Lambda_{\mathsf{b}, i}$ for each $i$,
  \item a family of vectors $h_i = (h_{i, \lambda})_{\lambda \in \Lambda_{\mathsf{h}, i}}$ of $\Ker \partial_{i}$ with
    linearly ordered $\Lambda_{\mathsf{h}, i}$ for each $i$,
\end{enumerate}
such that 
\begin{enumerate}[resume]
  \item $\partial_{i+1} b_i \coloneqq (\partial_{i+1} b_{i, \lambda})_{\lambda \in \Lambda_{\mathsf{b}, i}}$ is a basis of $\Img \partial_{i+1}$ for each $i$,
  \item $[h_i] \coloneqq ([h_{i, \lambda}])_{\lambda \in \Lambda_{\mathsf{h}, i}}$ is a basis of $H_i$ ($\coloneq \Ker \partial_i / \Img \partial_{i+1}$) for each $i$.
\end{enumerate}
Then we see that $\partial_{i+1} b_{i} \sqcup h_i \sqcup b_{i-1} : 
\Lambda_{\mathsf{b}, i} \sqcup \Lambda_{\mathsf{h}, i} \sqcup \Lambda_{\mathsf{b}, i-1} \to C_i$ 
is a basis of $C_i$.
Let 
\begin{equation}
  \sigma_i : 
  \Lambda_{\mathsf{c}, i}
    \cong \Lambda_{\mathsf{b}, i} \sqcup \Lambda_{\mathsf{h}, i} \sqcup \Lambda_{\mathsf{b}, i-1} 
\end{equation}
be the unique order-preserving bijection.

For an $A$-module $V$ and a family of vectors $v : I \to V$ with a finite index set $I$,
we denote by $\lin v$ the linear map $A^I \to V$ induced by $v$.
When $v$ is a basis, the map $\lin v$ is an isomorphism.

\begin{definition}\label{def:ratio notation}
  For $L \in \mathcal{P}_A$,and bases $u$ and $v$ of $L$,
  we define 
  \begin{align}
    [u / v] \coloneqq ((\lin v)^{-1} \circ \lin u) (1) \in A^{\times}.
  \end{align}
  Moreover, for a finitely generated projective $A$-module $V$
  and bases $u, v : I \to V$,
  we define 
  \begin{align}
    [u / v] \coloneqq [\wedge u / \wedge v] = \det \bigl((((\lin v)^{-1} \circ \lin u) (e_i))_j \bigr)_{i,j} \in A^{\times},
  \end{align}
  where 
  $(e_i)_{i \in I}$ is the standard basis of $A^I$, and
  $\det$ is the determinant of $I \times I$ matrices over $A$.
\end{definition}

\begin{lemma}\label{lemma:ratio basic}
  Let $L, L' \in \mathcal{P}_A$. 
  Let $u$ and $v$ be bases of $L$,
  and $u'$ and $v'$ be bases of $L'$.
  Then we have the following:
  \begin{enumerate}
    \item $u = [u / v] \cdot v$
    \item $[u^{\otinv} / v^{\otinv}] = [v / u]^{-1}$
    \item $[u \otimes u' / v \otimes v'] = [u / v] \cdot [u' / v']$
  \end{enumerate}
\end{lemma}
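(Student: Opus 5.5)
The plan is to reduce all three items to the defining identity $u=[u/v]\cdot v$ of \cref{def:ratio notation}, which holds because $L$ is invertible. Items (1) and (3) are then immediate, and item (2) is handled by transporting scalars through the inverse structure. Throughout, fix $c\coloneqq[u/v]\in A^\times$ and $c'\coloneqq[u'/v']\in A^\times$.

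For (1), note that $\lin u=\lin v\circ(\lin v)^{-1}\circ\lin u$, and that $(\lin v)^{-1}\circ\lin u$ is multiplication by $c$ on $A$. Evaluating at $1$ gives $u=c\cdot v$. Applying this with the roles of $u$ and $v$ exchanged gives $v=[v/u]\cdot u$, so $[v/u]=c^{-1}$. This reformulation is what (2) uses.

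For (3), write $u\otimes u'=(cv)\otimes(c'v')=cc'\,(v\otimes v')$ by bilinearity. Hence $\lin(u\otimes u')=cc'\cdot\lin(v\otimes v')$ on $A\otimes A\cong A$, and therefore $[u\otimes u'/v\otimes v']=cc'$.

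For (2), I would not compute dual bases by hand. Instead I would apply \cref{lemma:smul otinv} to the equality of morphisms $\lin u=c\cdot\lin v\colon 1\to L$ in $\mathcal P_A$. This gives $(\lin u)^{\otinv}=c^{-1}\cdot(\lin v)^{\otinv}\colon 1^{\otinv}\to L^{\otinv}$. Composing with the fixed isomorphism $1^{\otinv}\cong 1$, the morphisms $(\lin u)^{\otinv}$ and $(\lin v)^{\otinv}$ become the linear maps attached to the elements $u^{\otinv}$ and $v^{\otinv}$. Concretely, $(\lin u)^{\otinv}=((\lin u)^\vee)^{-1}$ sends the generator to the functional $\varphi$ with $\varphi(u)=1$, which is exactly the dual basis element. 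It follows that $[u^{\otinv}/v^{\otinv}]$ is the scalar relating these two morphisms, and is expressed as a power of $c$. Finally I would rewrite the answer in terms of $[v/u]$ using $[v/u]=c^{-1}$ from item (1), to reach the form stated in (2).

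The main obstacle is item (2), specifically the exponent bookkeeping. The scalar $c^{-1}$ enters once through \cref{lemma:smul otinv}, and could enter a second time through the identification $1^{\otinv}=A^\vee\cong A$ and through which side of the ratio is inverted in \cref{def:ratio notation}. I would check the orientation of the final comparison, $(\lin v^{\otinv})^{-1}\circ\lin u^{\otinv}$ versus its inverse, directly on $L=A$ with $v=1$ and $u=c$. In that case the dual elements are the evaluation functionals normalized by $1$ and by $c$, so the correct inverse can be read off with no further computation.

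The same $L=A$ test fixes the constant in (2). With $v=1$ and $u=c$, the functional dual to $u$ is $c^{-1}$ times the functional dual to $v$, so $u^{\otinv}=c^{-1}v^{\otinv}$ and hence $[u^{\otinv}/v^{\otinv}]=c^{-1}=[v/u]$. The identity as printed, $[v/u]^{-1}=c$, is therefore off by one inversion and should presumably read $[u^{\otinv}/v^{\otinv}]=[u/v]^{-1}$, which equals $[v/u]$. I cannot prove the printed form, and the final write-up would state this correction explicitly rather than derive it.
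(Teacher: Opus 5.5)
Your proposal is correct and follows the paper's route, which simply unwinds \cref{def:ratio notation}. Your detour through \cref{lemma:smul otinv} for (2) is valid but unnecessary: $u=cv$ gives $u^{\otinv}(v)=c^{-1}u^{\otinv}(u)=c^{-1}$ for any $L$, so your ``$L=A$ test'' is already a complete proof. You are also right that (2) as printed is off by one inversion. The true identity is $[u^{\otinv}/v^{\otinv}]=[u/v]^{-1}=[v/u]$; for example, $L=\mathbb{Q}$, $v=1$, $u=2$ gives $1/2\neq 2$. This corrected form is exactly what the paper needs in \cref{lemma:det homology explicit}, where odd-degree ratios enter with exponent $(-1)^i=-1$.
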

\begin{proof}
  These easily follow from Definition \ref{def:ratio notation}.
\end{proof}

\begin{lemma}\label{lemma:det homology explicit}
We have
  \begin{align}
    \Phi (\otimes_i (\wedge c_i)^{(\otinv)^i}) =
    (-1)^{\alpha}
    \prod_{i} [c_i / (\partial_{i+1} b_{i} \sqcup h_i \sqcup b_{i-1}) \circ \sigma_i]^{(-1)^i} \cdot
    \otimes_i (\wedge [h_i])^{(\otinv)^i},
  \end{align}
  where $\Phi$ is the isomorphism $\det C_\bullet \cong \det H_\bullet (C_\bullet)$
  given in Definition \ref{theorem:det C to det H},
  and $(-1)^\alpha$ is an appropriate sign factor.
\end{lemma}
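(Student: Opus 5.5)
We will prove the formula by unwinding Knudsen's construction of $\Phi$ (\cref{theorem:det C to det H}) into a composition of maps $\det\Sigma$ for short exact sequences, and then evaluating each map on the explicit bases using the formula \eqref{eq:det exact}. Concretely, for each $i$ we use the two short exact sequences
\begin{equation*}
  0 \to Z_i \to C_i \xrightarrow{\partial_i} B_{i-1} \to 0, \qquad
  0 \to B_i \to Z_i \to H_i \to 0,
\end{equation*}
where $Z_i = \Ker \partial_i$ and $B_i = \Img \partial_{i+1}$. By \cite[Definition 3.1]{Knudsen}, $\Phi$ is the composite of the resulting isomorphisms
\begin{equation*}
  \det C_i \cong \det B_i \otimes \det H_i \otimes \det B_{i-1},
\end{equation*}
raised to the power $(\otinv)^i$. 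These are followed by the cancellations $\det B_i \otimes (\det B_i)^{\otinv} \to 1$, given by $\varepsilon$ or its left-dual variant \eqref{eq:left dual}, between adjacent degrees $i+1$ and $i$.

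The first step is to evaluate these isomorphisms on bases. Applying \eqref{eq:det exact} twice with the basis $\partial_{i+1} b_i$ of $B_i$, the lift $h_i$ of $[h_i]$, and the lift $b_{i-1}$ of the basis $\partial_i b_{i-1}$ of $B_{i-1}$, we obtain
\begin{equation*}
  \wedge(\partial_{i+1} b_i \sqcup h_i \sqcup b_{i-1}) \mapsto \wedge \partial_{i+1} b_i \otimes \wedge [h_i] \otimes \wedge \partial_i b_{i-1}.
\end{equation*}
Next we rewrite $\wedge c_i$ in terms of this element. By \cref{lemma:ratio basic}(1), $\wedge c_i$ equals $[c_i/(\partial_{i+1} b_i \sqcup h_i \sqcup b_{i-1})\circ\sigma_i]$ times the wedge taken in the reordered basis $\sigma_i$. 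Reordering the wedge back to the block order $\partial_{i+1} b_i, h_i, b_{i-1}$ costs only a sign, and this sign is absorbed into $(-1)^\alpha$. Applying $(\otinv)^i$ then turns the scalar into its $(-1)^i$-th power, by \cref{lemma:ratio basic}(2) and \cref{lemma:smul otinv}.

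The second step is the cancellation. In degree $i+1$ the factor $\wedge\partial_{i+1} b_i$ appears (from the last slot of $C_{i+1}$) with exponent $(\otinv)^{i+1}$. In degree $i$ the same element appears (from the first slot of $C_i$) with exponent $(\otinv)^i$. Hence the pairing of these two factors is an evaluation of a basis against its dual basis, which gives $\pm 1$; the sign comes from the factor $(-1)^{\frac12|L|(|L|+1)}$ in $\varepsilon_L$ and from braidings. Multiplicativity over tensor factors is \cref{lemma:ratio basic}(3). What remains is $\prod_i[\cdots]^{(-1)^i} \otimes_i (\wedge[h_i])^{(\otinv)^i}$ up to sign.

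The main obstacle is bookkeeping the signs: the braidings \eqref{eq:sign braiding} needed to bring the $B$-factors of adjacent degrees next to each other, the $\varepsilon$ normalization, and the reordering sign. Since the statement only asserts the existence of an appropriate $(-1)^\alpha$, we note that every such sign depends only on the ranks $|\Lambda_{\mathsf{b},i}|$ and $|\Lambda_{\mathsf{h},i}|$ and on $\sigma_i$. It is therefore a well-defined $\pm1$, and we simply collect it into $\alpha$, leaving its precise value to Lemma \ref{lemma:det homology explicit sign}. One subtlety is that the bases of $B_i$, $Z_i$ and $H_i$ exist only locally on $\Spec A$, whereas here they are given globally. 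So $\Phi$ may be computed directly, because both sides are compatible with localization by the naturality results of \cite{Knudsen}.
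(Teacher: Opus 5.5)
Your proposal is correct and follows essentially the same route as the paper: push the wedge of the adapted basis $\partial_{i+1} b_i \sqcup h_i \sqcup b_{i-1}$ through the determinant isomorphisms of short exact sequences making up $\Phi$, extract the scalar with \cref{lemma:ratio basic}, and note that the two occurrences of $\wedge \partial_{i+1} b_i$ in adjacent degrees cancel to $\pm 1$. The differences are cosmetic. The paper follows Knudsen's definition literally, through $Z_\bullet \to C_\bullet \to B[1]_\bullet$, $\Cone(\iota)_\bullet$ and the acyclic complex $Q_\bullet$, whereas your degreewise description reproduces that map only up to sign, which is harmless because $(-1)^\alpha$ is left unspecified here. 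Also, the reordering sign you absorb into $(-1)^\alpha$ is actually trivial, since $\sigma_i$ is order-preserving.
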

\begin{proof}
We first recall the definition of the isomorphism $\Phi$ in \cite[Definition 3.1]{Knudsen}.
Let $B_\bullet$ be the complex defined by $B_i = \Img \partial_{i+1}$, with all differentials zero.
Similarly, let $Z_\bullet$ be the complex defined by $Z_i = \Ker \partial_i$, with all differentials zero.
We have an exact sequence
\begin{align}\label{eq:exact Z C B}
  0 \to Z_\bullet \to C_\bullet \to B[1]_\bullet \to 0,
\end{align}
where $(B[1])_i = B_{i-1}$.
We also write $H_\bullet\coloneq H_\bullet (C_\bullet)$, for simplicity.
Let $\iota : B_\bullet \to Z_\bullet$ be the inclusion map, and $\Cone (\iota)_\bullet$ be the mapping cone of $\iota$.
We have $\Cone (\iota)_i = Z_i \oplus B_{i-1}$ and $\partial_i (z, b) = (\iota(b), 0)$.
By definition, we have an exact sequence
\begin{align}\label{eq:exact Z Cone B}
  0 \to Z_\bullet \to \Cone (\iota)_\bullet \to B[1]_\bullet \to 0.
\end{align}
The map $\Cone (\iota)_\bullet \to H_\bullet$ defined by $(z, b) \mapsto [z]$ is an epi quasi-isomorphism.
Thus, we have an exact sequence
\begin{align}\label{eq:exact Q Cone H}
  0 \to Q_\bullet  \to \Cone (\iota)_\bullet \to H_\bullet \to 0,
\end{align}
with an acyclic $Q_\bullet$. Explicitly, $Q_i=B_i \oplus B_{i-1}$.
Since $Q_\bullet$ is acyclic, we have an isomorphism
\begin{align}\label{eq:det Q}
  \det Q_\bullet \cong 1
\end{align}
by \cite[Definition 2.24 (c)]{Knudsen}.
The isomorphism $\Phi$ is defined as the composition
\begin{alignat}{2}
  \label{eq:det C to det H step 1}
  \det C_\bullet &\cong 
  \det Z_\bullet \otimes \det B[1]_\bullet \quad &&\text{by determinant of \eqref{eq:exact Z C B}}\\
  \label{eq:det C to det H step 2}
  &\cong 
  \det \Cone (\iota)_\bullet \quad &&\text{by determinant of \eqref{eq:exact Z Cone B}}\\
  \label{eq:det C to det H step 3}
  &\cong
  \det Q_\bullet \otimes \det H_\bullet \quad &&\text{by determinant of \eqref{eq:exact Q Cone H}}\\
  \label{eq:det C to det H step 4}
  &\cong 
  \det H_\bullet &&\text{by \eqref{eq:det Q}},
\end{alignat}
where isomorphisms associated with short exact sequences are given in each degree by \eqref{eq:det exact} (see \cite[Definition 2.24 (b)]{Knudsen}).

We now compute $\Phi (\otimes_i (\wedge c_i)^{(\otinv)^i})$.
By Lemma \ref{lemma:ratio basic}, it suffices to show
\begin{align}
  \Phi(\otimes_i (\wedge (\partial_{i+1} b_i \sqcup h_i \sqcup b_{i-1}) \circ \sigma_i)^{(\otinv)^i}) = 
  (-1)^\alpha \cdot \otimes_i (\wedge [h_i])^{(\otinv)^i}.
\end{align}
Up to signs, we compute the left-hand side as follows:
\begin{alignat}{2}
  \nonumber
  \otimes_i &(\wedge ((\partial_{i+1} b_i \sqcup h_i \sqcup b_{i-1}) \circ \sigma_i))^{(\otinv)^i}
  \\
  &=
  \label{eq:det C to det H app step 1}
  \otimes_i (\wedge (\partial_{i+1} b_i \sqcup h_i \sqcup b_{i-1}))^{(\otinv)^i} 
  \\
  &\xmapsto{\eqref{eq:det C to det H step 1}}
  \label{eq:det C to det H app step 2}
  \bigl( \otimes_i (\wedge (\partial_{i+1} b_i \sqcup h_i))^{(\otinv)^i} \bigr) \otimes
    \bigl( \otimes_i (\wedge \partial_{i} b_{i-1})^{(\otinv)^{i}} \bigr)
  \\
  &\xmapsto{\eqref{eq:det C to det H step 2}}
  \label{eq:det C to det H app step 3}
  \otimes_i (\wedge (\partial_{i+1} b_i \sqcup h_i \sqcup \partial_{i} b_{i-1}))^{(\otinv)^i}
  \\
  &= \otimes_i (\wedge (\partial_{i+1} b_i \sqcup \partial_{i} b_{i-1} \sqcup h_i))^{(\otinv)^i} 
  \label{eq:det C to det H app step 4}
  \\
  &\xmapsto{\eqref{eq:det C to det H step 3}}
  \label{eq:det C to det H app step 5}
  \bigl(\otimes_i (\wedge (\partial_{i+1} b_i \sqcup \partial_{i} b_{i-1}))^{(\otinv)^i} \bigr)
    \otimes 
    \bigl(\otimes_i (\wedge [h_i])^{(\otinv)^i} \bigr) 
  \\  
  &\xmapsto{\eqref{eq:det C to det H step 4}}
  \label{eq:det C to det H app step 6}
  \otimes_i (\wedge [h_i])^{(\otinv)^i}
\end{alignat}
\end{proof}

We now compute the sign factor.
To do this, we interpret the tensor order as follows:
\begin{align}
  \label{eq:explicit tensor order c}
  \otimes_i c_i^{(\otinv)^i}
  &= \cdots \otimes c_2 \otimes c_1^{\otinv} \otimes c_0 \otimes \cdots,
  \\
  \label{eq:explicit tensor order h}
  \otimes_i [h_i]^{(\otinv)^i}
  &= \cdots \otimes [h_2] \otimes [h_1]^{\otinv} \otimes [h_0] \otimes \cdots.
\end{align}

\begin{lemma}[{cf.~\cite[Proposition 1.17]{Nicolaescu}}]
  \label{lemma:det homology explicit sign}
  The sign factor $(-1)^\alpha$ in Lemma \ref{lemma:det homology explicit} is given by
  \begin{equation}
    \begin{split}
      \alpha \equiv
      \frac{1}{2} \sum_i \rank B_{i} \cdot (\rank B_{i} - (-1)^{i}) 
    \end{split}
  \end{equation}
  modulo $2$.
\end{lemma}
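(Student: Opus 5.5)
We track the sign through the four steps \eqref{eq:det C to det H step 1}--\eqref{eq:det C to det H step 4}, following the chain \eqref{eq:det C to det H app step 1}--\eqref{eq:det C to det H app step 6}. Each step is either reordering a wedge product inside a single degree, an application of \eqref{eq:det exact} in a fixed degree, or a rearrangement of tensor factors across degrees using the braiding \eqref{eq:sign braiding} and the inverse structure (with its sign $(-1)^{\frac12|L|(|L|+1)}$ in $\varepsilon_L$). We work locally on $\Spec A$, so that all ranks are constant. Write $\beta_i=\rank B_i$ and $\eta_i=\rank H_i$.

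\textbf{Steps 1 and 2.} In step \eqref{eq:det C to det H step 1}, the determinant of \eqref{eq:exact Z C B} is applied degreewise. Since $b_{i-1}$ is already last in each $C_i$, \eqref{eq:det exact} introduces no sign in a single degree. The degreewise tensors must then be regrouped from $\otimes_i(\det Z_i\otimes\det B_{i-1})^{(\otinv)^i}$ into $\det Z_\bullet\otimes\det B[1]_\bullet$. This regrouping costs the braiding signs $(-1)^{\beta_{i-1}\cdot(\text{ranks of all factors passed})}$, together with the signs coming from $\theta$ on the odd (dual) degrees. Step \eqref{eq:det C to det H step 2} is the inverse regrouping for the cone, so these braiding signs cancel exactly against those of step 1. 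Hence the net contribution of steps 1 and 2 is trivial, except that in $\Cone(\iota)$ the factor $b_{i-1}$ is replaced by $\partial_i b_{i-1}$.

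\textbf{Step 3.} The equality \eqref{eq:det C to det H app step 4} moves $h_i$ past $\partial_ib_{i-1}$ inside $\wedge$, costing $(-1)^{\eta_i\beta_{i-1}}$. Step \eqref{eq:det C to det H step 3} then separates $\det Q_\bullet$ from $\det H_\bullet$ across all degrees, which produces braiding signs of the form $(-1)^{\eta_i\cdot(\beta_j+\beta_{j-1})}$. Because $\beta_j+\beta_{j-1}$ appears in pairs, I expect these terms to cancel the $\eta$-dependent part together with the previous sign modulo 2. This would leave a sign independent of the homology ranks, as the claimed formula predicts. Verifying this cancellation mod 2 is a careful but routine parity bookkeeping.

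\textbf{Step 4 (main obstacle).} The main obstacle is evaluating the isomorphism $\det Q_\bullet\cong 1$ from \cite[Definition 2.24(c)]{Knudsen} on the element $\otimes_i(\wedge(\partial_{i+1}b_i\sqcup\partial_ib_{i-1}))^{(\otinv)^i}$. The plan is to decompose $Q_\bullet$ as a direct sum of two-term acyclic complexes $B_i\xrightarrow{\id}B_i$ placed in degrees $i+1,i$. For each such piece, the trivialization pairs $(\wedge\partial_{i+1}b_i)$ in degree $i$ with its preimage in degree $i+1$ through $\varepsilon$. In the odd case this produces $(-1)^{\frac12\beta_i(\beta_i+1)}$; in the even case, where the left dual \eqref{eq:left dual} is used, it produces $(-1)^{\frac12\beta_i(\beta_i+1)+\beta_i^2}\equiv(-1)^{\frac12\beta_i(\beta_i-1)}$. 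Combining the two cases gives $(-1)^{\frac12\beta_i(\beta_i-(-1)^i)}$. The splitting into summands, together with moving each $\partial_{i+1}b_i$ adjacent to its partner in degree $i+1$, introduces further braidings between different $B$'s. I expect these to cancel pairwise, as in the classical computation of \cite[Proposition 1.17]{Nicolaescu}, which we would follow line by line after adjusting to our convention for $\varepsilon_L$. Summing over $i$ gives $\alpha\equiv\frac12\sum_i\beta_i(\beta_i-(-1)^i)$.
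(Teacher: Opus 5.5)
Your proposal is correct and follows the paper's proof. The signs of steps 1 and 2 cancel. The $\rank H$-dependent signs also cancel: in the decreasing tensor order the separating braiding is $\sum_{j<i}(\rank B_j+\rank B_{j-1})\rank H_i\equiv\sum_i\rank B_{i-1}\rank H_i$ by telescoping, which exactly offsets reordering $h_i$ past $\partial_i b_{i-1}$; this is the paper's $\alpha_1+\alpha_2\equiv 0$. So the whole sign comes from the trivialization $\det Q_\bullet\cong 1$. You split that last sign differently from the paper, which lists cross-braiding terms next to the contraction signs, but your version is sound and the cross braidings you worry about do not occur: in $\cdots\otimes(\det B_{i+1}\otimes\det B_i)^{(\otinv)^{i+1}}\otimes(\det B_i\otimes\det B_{i-1})^{(\otinv)^{i}}\otimes\cdots$ the two copies of $\det B_i$ are already adjacent, so your per-piece signs $\frac{1}{2}\rank B_i(\rank B_i-(-1)^i)$ sum directly to the stated $\alpha$.
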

\begin{proof}
We compute the sign factor arising in each step of \eqref{eq:det C to det H app step 1}--\eqref{eq:det C to det H app step 6}.
The sign at \eqref{eq:det C to det H app step 1} is trivial since $\sigma_i$ is order-preserving.
The signs at \eqref{eq:det C to det H app step 2} and \eqref{eq:det C to det H app step 3} cancel out together.
The sign at \eqref{eq:det C to det H app step 4} is $(-1)^{\alpha_1}$ 
where
\begin{align*}
  \alpha_1 \equiv \sum_i \rank B_{i-1} \cdot \rank H_i.
\end{align*}
The sign at \eqref{eq:det C to det H app step 5} is $(-1)^{\alpha_2}$, 
where
  \begin{align*}
    \alpha_2 \equiv 
    \sum_{i < j} (\rank B_{i} + \rank B_{i-1}) \cdot \rank H_j.
  \end{align*}
A short calculation shows that $\alpha_1 + \alpha_2 = 0$ modulo $2$.

The sign at \eqref{eq:det C to det H app step 6} is $(-1)^{\alpha_3}$, 
where
\begin{align*}
  \alpha_3 &\equiv 
  \sum_{i < j} \rank B_{i} \cdot \rank B_{j-1}
  + \sum_{i < j} \rank B_{i-1} \cdot \rank B_{j-1}
  + \frac{1}{2} \sum_{i} \rank B_{i} \cdot (\rank B_{i} + (-1)^{i}) \\
  &\equiv
  \sum_{i} \rank B_{i} \cdot \rank B_{i}
  + \frac{1}{2} \sum_{i} \rank B_{i} \cdot (\rank B_{i} + (-1)^{i}) \\
  &\equiv
  \frac{1}{2} \sum_{i} \rank B_{i} \cdot (\rank B_{i} - (-1)^{i}).
\end{align*}
where in the first line, the three terms come from 
the following three isomorphisms in the definition of $\det Q_\bullet \to 1$ (see \cite[Definition 2.24 (c)]{Knudsen}) respectively:
\begin{alignat*}{2}
  \det Q_\bullet &\cong 
  \bigotimes_i (\det B_{i-1})^{(\otinv)^i} \otimes
  \bigotimes_j (\det B_{i})^{(\otinv)^{j}} \\
  &\cong 
  \bigotimes_i ( (\det B_{i-1})^{(\otinv)^i} \otimes (\det B_{i-1})^{(\otinv)^{i-1}}) \\
  &\cong 1.
\end{alignat*}
\end{proof}

\subsection{Explicit formula of the torsion}
Suppose that $M$ is a link exterior complex and 
$\gamma$ is a peripheral loop in $M$. 
Suppose that we have $\partial M = \bigsqcup_{i=1}^m T_i$,
where each $T_i$ is a connected component of $\partial M$,
and also have $\gamma = \bigsqcup_{i=1}^m \gamma_i$ such that each $\gamma_i$ is on $T_i$.
Let $k \subset K$ be a field extension,
$n$ be a natural number,
and $\mathcal{L} \in \Loc_{\SL, M}(K)$ be a local system of rank $n$.
Let $\ell_{\sigma} = (\ell_{\sigma, i})_{i=1, \dots, n}$ be a $K$-basis of $\mathcal{L}_{\sigma}$
for each $\sigma \in \mathsf{cell}(M)$
such that $\mathsf{vol}_{\sigma} (\ell_{\sigma, 1} \wedge \dots \wedge \ell_{\sigma, n}) = 1$.
In other words, $(\mathcal{L}, \ell) \in \Loc_{\SL, M, S, n}(K)$ is a marked local system
in the sense of \cref{def:moduli of marked lin local systems},
where $S$ is the set of the centers of all cells of $M$.
Then we have that
\begin{equation}\label{eq:geom_basis}
  c_k \coloneqq \ell_{\sigma_1^{(k)}, 1} \otimes \sigma_1^{(k)}, \dots, \ell_{\sigma_1^{(k)}, n} \otimes \sigma_1^{(k)},
  \dots, \ell_{\sigma_{c_k}^{(k)}, 1} \otimes \sigma_{c_k}^{(k)}, \dots, \ell_{\sigma_{c_k}^{(k)}, n} \otimes \sigma_{c_k}^{(k)}
\end{equation}
is a $K$-basis of $C_k(M; \mathcal{L})$,
where $\mathsf{cell}_k(M) = \{ \sigma_1^{(k)}, \dots, \sigma_{c_k}^{(k)} \}$.
Suppose that $\dim_K H^0(T_i; \mathcal{L}|_{T_i}) = r$ for $i = 1, \dots, m$,
and $(v_{i, j})_{j=1,\dots, r}$ is a $K$-basis of $H^0(T_i; \mathcal{L}|_{T_i})$.
Then $\mathcal{L}$ is $\gamma$-regular if and only if
the following conditions hold:
\begin{enumerate}
  \item $H_0 (M; \mathcal{L}) = H_3 (M; \mathcal{L}) = 0$,
  \item $\dim_K H_2(M; \mathcal{L}) = \dim_K H_1(M; \mathcal{L}) = m r$,
  \item $v_{i, 1} \cap [T_i], \dots, v_{i, r} \cap [T_i]$ is linearly independent over $K$ in
    $H_2(M; \mathcal{L})$ for each $i = 1, \dots, m$,
  \item $v_{i, 1} \cap [\gamma_i], \dots, v_{i, r} \cap [\gamma_i]$ is linearly independent over $K$ in
    $H_1(M; \mathcal{L})$ for each $i = 1, \dots, m$.
\end{enumerate}
We assume that these conditions are satisfied,
and define bases of $H_1(M; \mathcal{L})$ and $H_2(M; \mathcal{L})$ by
\begin{align}\label{eq:h1 h2 bases}
  h_1 \coloneqq (v_{i, j} \cap [\gamma_i])_{i,j}, \quad
  h_2 \coloneqq (v_{i, j} \cap [T_i])_{i,j}.
\end{align}
We also set $h_0 = h_n \coloneqq \emptyset$ for $n \ge 3$.

\begin{theorem}\label{theorem:torsion explicit}
  We follow the above setting.
  Let $b_k$ be an ordered family of vectors in $C_{k+1}(M; \mathcal{L})$ for each $k$ such that 
  $\partial_{k+1} b_k$ is an ordered basis of $B_k \coloneqq \Img \partial_{k+1}$.
  Then the torsion is given by 
  \begin{equation}\label{eq:torsion explicit}
    \torsion_{M, \gamma, \mathfrak{o}} (\mathcal{L}) =
    \epsilon_\mathfrak{o,\mathcal{L}} \cdot
    \epsilon \cdot
    \prod_{k \ge 0} [c_k / (\partial_{k+1} b_{k} \sqcup h_k \sqcup b_{k-1})]^{(-1)^k}
  \end{equation}
  where $\epsilon_\mathfrak{o, \mathcal{L}}$ is the sign determined from a 
  homology orientation $\mathfrak{o}$ by \eqref{eq:orientation sign},
  and $\epsilon = (-1)^{\alpha}$ is the sign given by 
  \begin{equation}\label{eq:torsion explicit sign}
    \alpha = \frac{1}{2} \sum_i \rank B_{i} \cdot (\rank B_{i} - (-1)^{i}) 
    + \sum_{\substack{i : \mathrm{even},\ j : \mathrm{odd} \\ i < j}} \rank C_i \cdot \rank C_j.
  \end{equation}
  Moreover, if $n$ is even, then $\epsilon_{\mathfrak{o}, \mathcal{L}} = 1$.
\end{theorem}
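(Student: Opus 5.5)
The plan is to unwind \cref{def:torsion function}, $\torsion_{M,\gamma,\mathfrak{o}}(\mathcal{L}) = \epsilon_{\mathfrak{o},\mathcal{L}}\cdot h_{\gamma,\mathcal{L}}(\Phi_{\mathcal{L}}(1_{\mathsf{vol}_{\mathcal{L}}}))$, evaluating each of the three ingredients on the explicit bases $c_k$, $h_k$, $b_k$. The argument has three steps.

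\emph{Step 1: identify $1_{\mathsf{vol}_{\mathcal{L}}}$.} Since each $\ell_\sigma$ has volume $1$, the element $\wedge c_k$ is sent to $1$ by $\mathsf{vol}_{\mathcal{L},k}$ in \eqref{eq:geometric volume form at i}, because the cells are concatenated in the chosen order. Next, \eqref{eq:vol complex}--\eqref{eq:vol complex end} reorder the factors: the even-degree factors go first in decreasing order, followed by the inverse of the odd-degree product, also in decreasing order. The element $\otimes_k (\wedge c_k)^{(\otinv)^k}$ is instead written in the interleaved order \eqref{eq:explicit tensor order c}. Using the braiding \eqref{eq:sign braiding} together with the fact that $\theta$ carries no sign, the comparison gives
\[
1_{\mathsf{vol}_{\mathcal{L}}} = \pm\, \otimes_k (\wedge c_k)^{(\otinv)^k}.
\]
Moving each odd $C_j$ past the even $C_i$ with $i<j$ produces exactly $\sum_{i \text{ even},\, j\text{ odd},\, i<j} \rank C_i \rank C_j$. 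We also need the sign contributed by $\varepsilon$ on the unit objects $(A,d)$. I expect this to be trivial, since the factors are $1$ and the sign of $\varepsilon$ enters only through the final contraction; this should be checked against the convention $(-1)^{\frac12|L|(|L|+1)}$.

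\emph{Step 2: apply $\Phi$.} \cref{lemma:det homology explicit} and \cref{lemma:det homology explicit sign} give
\[
\Phi\bigl(\otimes_k(\wedge c_k)^{(\otinv)^k}\bigr) = (-1)^{\frac12\sum_i \rank B_i(\rank B_i-(-1)^i)} \prod_k [c_k/(\partial_{k+1}b_k\sqcup h_k\sqcup b_{k-1})]^{(-1)^k} \otimes_k (\wedge[h_k])^{(\otinv)^k}.
\]
Here $\sigma_k$ is only a reindexing, so the bracket is exactly the one in \eqref{eq:torsion explicit}. Combining Steps 1 and 2 produces the sign $(-1)^\alpha$ of \eqref{eq:torsion explicit sign}.

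\emph{Step 3: evaluate the Porti form.} Only $h_1$ and $h_2$ are nonempty, so the identification \eqref{eq:det H to H1 H2} sends $\otimes_k(\wedge[h_k])^{(\otinv)^k}$ to $\wedge h_2\otimes(\wedge h_1)^{\otinv}$ with no braiding sign, given the decreasing order \eqref{eq:explicit tensor order h}. By construction $\bar h_{\gamma,\mathcal{L}}(v_{i,j}\cap[T_i]) = v_{i,j}\cap[\gamma_i]$: the inverse of \eqref{eq:H^0 to H_2} sends $v_{i,j}\cap[T_i]$ back to $v_{i,j}$, and then \eqref{eq:H^0 to H_1} restricts to $\gamma_i$ and caps with $[\gamma_i]$. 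Hence $\det\bar h(\wedge h_2) = \wedge h_1$, and the contraction $\varepsilon_{\det H_1}(\wedge h_1\otimes(\wedge h_1)^{\otinv})$ equals $(-1)^{\frac12 mr(mr+1)}$ by the convention of \cref{sec:computation}. This is the main obstacle: the sign must be absorbed somewhere. I would first check whether the convention for $\varepsilon$ on the degree-$(-1)$ factors in Step 1 cancels it, since both arise from the same dual-evaluation convention. If it does not, the correction $\frac12 mr(mr+1)$ should be folded into the bookkeeping of the $\det Q_\bullet\cong 1$ step of \cref{lemma:det homology explicit sign}, which used the same $\varepsilon$.

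Finally, when $n$ is even, every $\mathcal{L}_x$ has even rank, so \cref{lemma:torsion of even local system} gives $\epsilon_{\mathfrak{o},\mathcal{L}}=1$.
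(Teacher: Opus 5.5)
Your route is the paper's. You unwind Definition~\ref{def:torsion function}, get the second term of $\alpha$ by reordering the interleaved product \eqref{eq:explicit tensor order c} into the grouping of \eqref{eq:vol complex degree contract}, and get the bracket product and the first term of $\alpha$ from Lemmas~\ref{lemma:det homology explicit} and~\ref{lemma:det homology explicit sign}. You then evaluate the Porti form and invoke Lemma~\ref{lemma:torsion of even local system} for even $n$.

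The problem is that you never settle the sign, which is the main content of the statement beyond Lemma~\ref{lemma:det homology explicit}. In Step~3 you correctly get $\det\bar h_{\gamma,\mathcal{L}}(\wedge h_2)=\wedge h_1$. You then read the contraction $\det H_1\otimes(\det H_1)^{\otinv}\cong A$ in \eqref{eq:Porti form core} as $\varepsilon_{\det H_1}$ with the sign of Section~\ref{sec:computation}, which leaves you with $(-1)^{\frac12 mr(mr+1)}$ and no way to remove it. Your fallback of folding this into the $\det Q_\bullet\cong 1$ bookkeeping is not a valid move. Lemma~\ref{lemma:det homology explicit sign} computes a definite sign for $\Phi$ that depends only on the complex $C_\bullet$ and knows nothing about $\gamma$ or the Porti form. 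So a factor coming from $h_{\gamma,\mathcal{L}}$ cannot be absorbed there; if it were really present, \eqref{eq:torsion explicit sign} would need an extra term $\frac12 mr(mr+1)$.

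The paper closes this step by the normalization $h_{\gamma}(h_2\otimes h_1^{\otinv})=1$. It takes the isomorphism in \eqref{eq:Porti form core} to be the plain evaluation pairing; the proof of Lemma~\ref{lemma:Porti form of gamma-regular} calls it ``the evaluation map''. With your computation of $\det\bar h_{\gamma,\mathcal{L}}$, the Porti form then contributes exactly $1$ and no sign remains. You should adopt that reading explicitly rather than search for a cancellation.

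The same remark applies to Step~1. ``The factors are $1$'' does not make the final contraction \eqref{eq:vol complex end} sign-free. With the convention of Section~\ref{sec:computation}, $\varepsilon_{(A,N)}(1\otimes 1^{\vee})=(-1)^{\frac12 N(N+1)}$ with $N=\sum_{i\ \mathrm{even}}\rank C_i$, and this need not be $+1$. The paper's proof attributes the second term of $\alpha$ to the reordering alone, so it too treats this contraction as contributing no sign. To obtain \eqref{eq:torsion explicit sign} as stated, you must make the same evaluation normalization there, or else carry $(-1)^{\frac12 N(N+1)}$ explicitly; you cannot expect it to vanish.
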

\begin{proof}
  Noting that the Porti form satisfies $h_{\gamma} (h_2 \otimes h_1^{\otinv}) = 1$,
the equation \eqref{eq:torsion explicit} except for the sign factor follows from 
\cref{lemma:det homology explicit}.
For the sign factor,
the first term in \eqref{eq:torsion explicit sign} comes from
\cref{lemma:det homology explicit sign},
and the second term comes from comparing the tensor orders
\begin{equation*}
  \dots \otimes c_5^{\otinv} \otimes c_4 \otimes c_3^{\otinv} \otimes c_2 \otimes c_1^{\otinv} \otimes c_0
  \quad \text{and} \quad
  (\dots \otimes c_4 \otimes c_2 \otimes c_0) \otimes ( \dots c_5 \otimes c_3 \otimes c_1)^{\otinv}
\end{equation*}
in \eqref{eq:explicit tensor order c} and 
\eqref{eq:vol complex degree contract}, respectively.
The last assertion follows from \cref{lemma:torsion of even local system}.
\end{proof}

\subsection{Projective general symplectic group}

Let $\GSp_{2n}$ be the general symplectic group defined by
\begin{equation*}
    \GSp_{2n} (A) \coloneqq \{ (g, \lambda) \in \GL_{2n} (A) \times A^{\times} 
      \mid g^\top J g = \lambda J \},
\end{equation*}
where the symplectic form $J$ is chosen to be
\begin{equation*}
  J \coloneqq
  \begin{pmatrix}
    0 & \bar{J}_n \\
    -\bar{J}_n^\top & 0
  \end{pmatrix},
  \qquad
  \bar{J}_n \coloneqq
  \begin{pmatrix}
    &&& 1\\
    && -1 & \\
    & \reflectbox{$\ddots$} && \\
    (-1)^{n-1} &&&
  \end{pmatrix}.
\end{equation*}
The second projection defines the similitude character $\lambda: \GSp_{2n} \to \mathbb{G}_m$. We have the short exact sequence
\begin{align*}
    1 \to \Sp_{2n} \to \GSp_{2n} \to \mathbb{G}_m \to 1,
\end{align*}
where $\Sp_{2n}$ is the symplectic group defined by
\begin{align*}
    \Sp_{2n} (A) \coloneqq \{ g \in \GL_{2n} (A) \mid g^\top J g =  J \}.
\end{align*}
Let $\PGSp_{2n} \coloneqq \GSp_{2n} / Z(\GSp_{2n})$,
which is a split adjoint group of type $C_n$. 
For any field extension $k \subset K$, the set of $K$-points are given by
\begin{equation*}
    \PGSp_{2n} (K) \cong 
    \GSp_{2n} (K) / Z(\GSp_{2n})(K) \cong
    \GSp_{2n} (K) / 
    \langle \text{$(g, \lambda) \sim (c g, c^2 \lambda)$ for $c \in K^\times$} \rangle.
\end{equation*}
The Lie algebras of $\GSp_{2n}$, $\PGSp_{2n}$, and $\Sp_{2n}$ are given by 
\begin{align*}
    &\fgsp_{2n} = \{ (X, \lambda) \in \fgl_{2n} \times k \mid 
        X^\top J + J X = \lambda J \}, \\
    &\fpgsp_{2n} = \fgsp_{2n} / \langle \text{$(X, \lambda) \sim (X + c I_{2n}, \lambda + 2c)$ for $c \in k$} \rangle, \\
    &\fsp_{2n} = \{ X \in \fgl_{2n} \mid 
        X^\top J + J X = 0 \}
\end{align*}
respectively.

The morphism $\Sp_{2n} \to \GSp_{2n} \to \PGSp_{2n}$ induces an isomorphism 
\begin{align*}
    \mathfrak{sp}_{2n} \cong \mathfrak{pgsp}_{2n}, \quad X \mapsto [(X,0)]
\end{align*}
of Lie algebras. We will identify these Lie algebras.  
We will often denote $(g, 1) \in \GSp_{2n}(K)$ simply by $g$, which lies in the image of $\Sp_{2n}(K)$.

\subsection{Figure-eight knot complement}

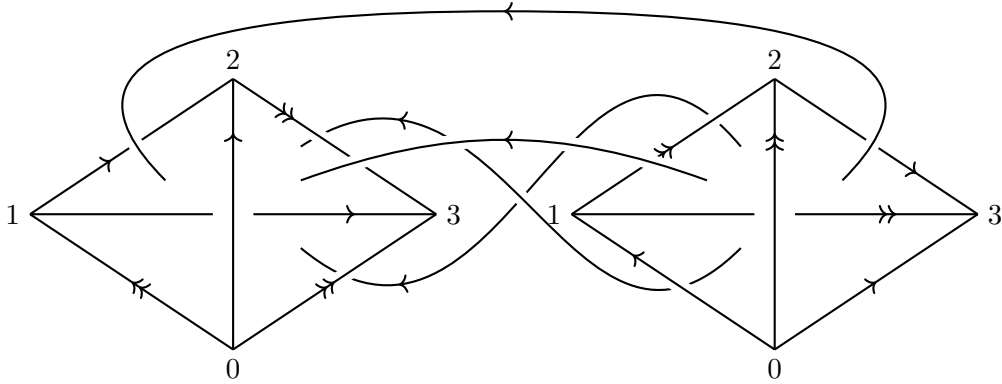
\begin{figure}[t]
  \centering
\begin{tikzpicture}[scale=0.9]
\coordinate (0) at (0,-2);
\coordinate (1) at (-3,0);
\coordinate (2) at (0,2);
\coordinate (3) at (3,0);
\draw[thick,->>-] (0) -- (1);
\draw[thick,->>-] (0) -- (3);
\draw[thick,->-={0.4}{}] (1) -- (2);
\draw[thick,->-={0.8}{}] (1) -- (3);
\crs{0,0}{0.3cm}
\draw[thick,->-={0.8}{}] (0) -- (2);
\draw[thick,->>-={0.3}{}] (2) -- (3);
\node[below] at (0) {$0$};
\node[left] at (1) {$1$};
\node[above] at (2) {$2$};
\node[right] at (3) {$3$};

\begin{scope}[xshift=8cm]
\coordinate (0') at (0,-2);
\coordinate (1') at (-3,0);
\coordinate (2') at (0,2);
\coordinate (3') at (3,0);
\draw[thick,->-={0.7}{}] (0') -- (1');
\draw[thick,->-] (0') -- (3');
\draw[thick,->>-] (1') -- (2');
\draw[thick,->>-={0.8}{}] (1') -- (3');
\crs{0,0}{0.3cm}
\draw[thick,->>-={0.8}{}] (0') -- (2');
\draw[thick,->-={0.7}{}] (2') -- (3');
\node[below] at (0') {$0$};
\node[left] at (1') {$1$};
\node[above] at (2') {$2$};
\node[right] at (3') {$3$};
\end{scope}

\begin{pgfonlayer}{top}
\draw[white,line width=0.2cm] (8+1,0.5) ..controls (8+3,2.5) and (8,3).. (4,3) ..controls (0,3) and (-3,2.5).. (-1,0.5);
\draw[thick,->-] (8+1,0.5) ..controls (8+3,2.5) and (8,3).. (4,3) node[above]{} ..controls (0,3) and (-3,2.5).. (-1,0.5);
\draw[white,line width=0.2cm] (8-1,0.5) to[bend right=20] (1,0.5);
\draw[thick,->-] (8-1,0.5) to[bend right=20] node[midway,above]{} (1,0.5);
\end{pgfonlayer}

\begin{pgfonlayer}{bg}
\draw[thick,->-={0.8}{}] (8-0.5,1) ..controls (8-3,4) and (4,-3).. node[pos=0.8,below]{} (1,-0.5);
\draw[white,line width=0.2cm] (0) -- (3);
\draw[white,line width=0.2cm] (1') -- (2');
\draw[white,line width=0.2cm] (8-0.5,-0.5) ..controls (8-3,-3) and (4,3).. (1,1);
\draw[thick,->-={0.8}{}] (8-0.5,-0.5) ..controls (8-3,-3) and (4,3).. node[pos=0.8,above]{} (1,1);
\draw[white,line width=0.2cm] (2) -- (3);
\draw[white,line width=0.2cm] (0') -- (1');
\end{pgfonlayer}

\end{tikzpicture}
  \caption{Ideal triangulation of the figure-eight knot complement.}
  \label{fig:figure_eight_triangulation}
\end{figure}

Now let $k = \mathbb{Q}$.
Let $4_1 \subset S^3$ be the figure-eight knot, and $M \coloneqq S^3 \setminus \nu (4_1)$ 
its exterior, where $\nu (4_1)$ denotes the tubular neighborhood of $4_1$.
We consider an ideal triangulation of the interior of $M$ shown in \cref{fig:figure_eight_triangulation},
which is considered in \cite[Section 9.5]{Zickert}
to give the Ptolemy relations associated with $M$. 
The associated truncated triangulation of $M$ shown in \cref{fig:figure eight cells} gives a CW structure on $M$.
The cells in $M$ are given by
\begin{itemize}
  \item $0$-cells: $v_0, v_1, v_2, v_3$,
  \item $1$-cells: $E_0, E_1$ (long edges), $e_0, e_1, \dots, e_{11}$ (short edges),
  \item $2$-cells: $H_0$, $H_1$, $H_2$, $H_3$ (hexagons), $t_0, t_1, \dots, t_{7}$ (triangles),
  \item $3$-cells: $B_0, B_1$ (tetrahedra).
\end{itemize}
We endow each $\mathsf{cell}_i$ a linear order as listed above. 

The boundary $\partial M$ is a single torus whose triangulation is shown in \cref{fig:figure-eight boundary torus}.
We also consider the simple closed curve $\mu \coloneqq - e_1$ on $\partial M$, which 
represents the meridian of $4_1$.
With this CW structure, $M$ is a link exterior complex in the sense of \cref{dfn:link exterior complex},
and
$\mu$ is a peripheral loop on $M$.

We denote by $S$ the set of the centers of cells in $M$,
and by $S_0$ the set of $0$-cells in $M$.
By solving the Ptolemy relations associated with the ideal triangulation,
we obtain two concrete examples of marked $\PGSp_4$-local systems on $M$ 
with markings at $S_0$,
whose coefficients are in number fields \cite[Section 9.5]{Zickert}.
These two marked local systems are boundary-unipotent, that is, 
their monodromy along $\partial M$ are upper triangular with all diagonal entries equal to $1$.
We see that one of the local systems is obtained 
from the $\PGL_2$-local system associated with the complete hyperbolic structure via a principal embedding, 
whereas the other is not.

We will compute the torsion of these two local systems. The method is as follows.
First, let $P \in \Loc_{\PGSp_4, M, S_0}(K)$ be a given marked local system. We lift it to a marked local system $\widetilde{P} \in \Loc_{\PGSp_4, M, S}(K)$. This is done by choosing, for each center of cell, a path to one of the $0$-cells. See the proof of \cref{lemma:monodromy of Loc G M S} that contains this construction.
With this choice, 

\begin{itemize}
    \item The differentials in the chain complex $C_\bullet(M;\mathcal{L})$ with $\mathcal{L} \coloneq P \times^{\PGSp_4} \mathfrak{pgsp}_4$ can be computed using formula \eqref{eq:twisted_differential}.
    \item The linear ordering of $\mathsf{cell}_i$ and the ordered basis of $\mathfrak{pgsp}_4$ determines an ordered basis $c_i$ of $C_i(M;\mathcal{L})$ as in \eqref{eq:geom_basis}. We may find bases of $B_i(M;\mathcal{L})$ from the data of differentials. 
    \item We may also find a basis $(v_1,v_2)$ of $H^0(\partial M; \mathcal{L}|_{\partial M}) \cong (\mathfrak{pgsp}_4 \otimes_\bQ K)^{\pi_1 \partial M}$, which determine bases $h_1$ and $h_2$ of
    $H_1 (M; \mathcal{L})$
    and $H_2 (M; \mathcal{L})$,
    respectively,
    in  \eqref{eq:h1 h2 bases}.
\end{itemize}
Finally, we compute the torsion via the explicit formula in \cref{theorem:torsion explicit}. We note that since $\PGSp_4$ has even rank, the torsion is independent of the homological orientation (see \cref{lemma:adjoint torsion independence on choices}).
We will not record the full details of these computations here, but we carried them out using the computer algebra system SageMath \cite{sagemath}. The code is available at \url{https://github.com/yuma-mizuno/pgsp-torsion}.

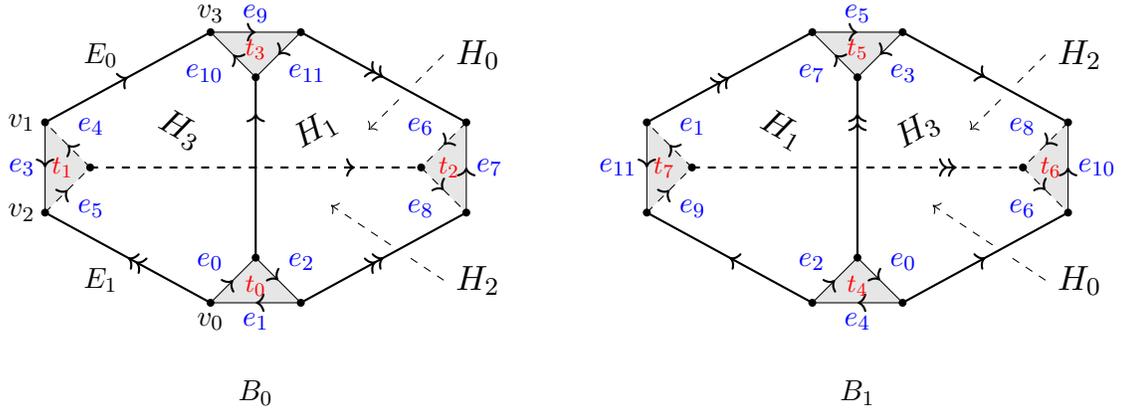
\begin{figure}[t]
  \centering
\begin{tikzpicture}
\coordinate (02) at (0,-1.2);
\coordinate (01) at (-0.6,-1.8);
\coordinate (03) at (0.6,-1.8);
\fill[gray!20] (01) -- (02) -- (03) --cycle;
\draw[->-] (01) --node[blue,midway,above left]{$e_0$} (02);
\draw[->-] (02) --node[blue,midway,above right]{$e_2$} (03);
\draw[->-] (03) --node[blue,midway,below]{$e_1$} (01);
\coordinate (20) at (0,1.2);
\coordinate (21) at (-0.6,1.8);
\coordinate (23) at (0.6,1.8);
\fill[gray!20] (21) -- (20) -- (23) --cycle;
\draw[->-] (21) --node[blue,midway,above]{$e_9$} (23);
\draw[->-] (23) --node[blue,midway,below right]{$e_{11}$} (20);
\draw[->-] (20) --node[blue,midway,below left]{$e_{10}$} (21);
\coordinate (13) at (-2.2,0);
\coordinate (12) at (-2.8,0.6);
\coordinate (10) at (-2.8,-0.6);
\fill[gray!20] (12) -- (10) -- (13) --cycle;
\draw[->-] (12) --node[blue,midway,left]{$e_3$} (10);
\draw[->-,dashed] (10) --node[blue,midway,below right]{$e_5$} (13);
\draw[->-,dashed] (13) --node[blue,midway,above right]{$e_4$} (12);
\coordinate (31) at (2.2,0);
\coordinate (32) at (2.8,0.6);
\coordinate (30) at (2.8,-0.6);
\fill[gray!20] (32) -- (30) -- (31) --cycle;
\draw[->-,dashed] (32) --node[blue,midway,above left]{$e_6$} (31);
\draw[->-,dashed] (31) --node[blue,midway,below left]{$e_8$} (30);
\draw[->-] (30) --node[blue,midway,right]{$e_7$} (32);
\draw[thick,dashed,->-={0.8}{}] (13) -- (31);
\draw[thick,->-={0.8}{}] (02) -- (20); 
\draw[thick,->>-] (01) -- (10); 
\draw[thick,->>-] (03) -- (30); 
\draw[thick,->-] (12) -- (21); 
\draw[thick,->>-] (23) -- (32); 
\node[above left] at ($(12)!0.5!(21)$) {$E_0$};
\node[below left] at ($(01)!0.5!(10)$) {$E_1$};
\foreach \i/\j in {
  0/1,0/2,0/3,
  1/0,1/2,1/3,
  2/0,2/1,2/3,
  3/0,3/1,3/2}
\fill(\i\j) circle(1.5pt);
\node[below] at (01) {$v_0$};
\node[left] at (10) {$v_2$};
\node[left] at (12) {$v_1$};
\node[above] at (21) {$v_3$};
\node[red,scale=0.9] at (0,-1.57){$t_0$};
\node[red,scale=0.9] at (0,1.57){$t_3$};
\node[red,scale=0.9] at (-2.57,0){$t_1$};
\node[red,scale=0.9] at (2.57,0){$t_2$};
\node[scale=1.2,rotate=-30] at (-1,0.5) {$H_3$};
\node[scale=1.2,rotate=30] at (0.8,0.5) {$H_1$};
\begin{pgfonlayer}{bg}
\draw[->,dashed] (2.5,-1.5) node[right,scale=1.2] {$H_2$} -- (1,-0.5);
\draw[white,line width=0.2cm] (03) -- (30);
\draw[->,dashed] (2.5,1.5) node[right,scale=1.2] {$H_0$} -- (1.5,0.5);
\draw[white,line width=0.2cm] (32) -- (23);
\end{pgfonlayer}
\node at (0,-3) {$B_0$};

\begin{scope}[xshift=8cm]
\coordinate (02) at (0,-1.2);
\coordinate (01) at (-0.6,-1.8);
\coordinate (03) at (0.6,-1.8);
\fill[gray!20] (01) -- (02) -- (03) --cycle;
\draw[->-] (01) --node[blue,midway,above left]{$e_2$} (02);
\draw[->-] (02) --node[blue,midway,above right]{$e_0$} (03);
\draw[->-] (03) --node[blue,midway,below]{$e_4$} (01);
\coordinate (20) at (0,1.2);
\coordinate (21) at (-0.6,1.8);
\coordinate (23) at (0.6,1.8);
\fill[gray!20] (21) -- (20) -- (23) --cycle;
\draw[->-] (21) --node[blue,midway,above]{$e_5$} (23);
\draw[->-] (23) --node[blue,midway,below right]{$e_3$} (20);
\draw[->-] (20) --node[blue,midway,below left]{$e_7$} (21);
\coordinate (13) at (-2.2,0);
\coordinate (12) at (-2.8,0.6);
\coordinate (10) at (-2.8,-0.6);
\fill[gray!20] (12) -- (10) -- (13) --cycle;
\draw[->-] (12) --node[blue,midway,left]{$e_{11}$} (10);
\draw[->-,dashed] (10) --node[blue,midway,below right]{$e_9$} (13);
\draw[->-,dashed] (13) --node[blue,midway,above right]{$e_1$} (12);
\coordinate (31) at (2.2,0);
\coordinate (32) at (2.8,0.6);
\coordinate (30) at (2.8,-0.6);
\fill[gray!20] (32) -- (30) -- (31) --cycle;
\draw[->-,dashed] (32) --node[blue,midway,above left]{$e_8$} (31);
\draw[->-,dashed] (31) --node[blue,midway,below left]{$e_6$} (30);
\draw[->-] (30) --node[blue,midway,right]{$e_{10}$} (32);
\draw[thick,dashed,->>-={0.8}{}] (13) -- (31);
\draw[thick,->>-={0.8}{}] (02) -- (20); 
\draw[thick,->-] (01) -- (10); 
\draw[thick,->-] (03) -- (30); 
\draw[thick,->>-] (12) -- (21); 
\draw[thick,->-] (23) -- (32); 
\foreach \i/\j in {
  0/1,0/2,0/3,
  1/0,1/2,1/3,
  2/0,2/1,2/3,
  3/0,3/1,3/2}
\fill(\i\j) circle(1.5pt);
\node[red,scale=0.9] at (0,-1.57){$t_4$};
\node[red,scale=0.9] at (0,1.57){$t_5$};
\node[red,scale=0.9] at (-2.57,0){$t_7$};
\node[red,scale=0.9] at (2.57,0){$t_6$};
\node[scale=1.2,rotate=-30] at (-1,0.5) {$H_1$};
\node[scale=1.2,rotate=30] at (0.8,0.5) {$H_3$};
\begin{pgfonlayer}{bg}
\draw[->,dashed] (2.5,-1.5) node[right,scale=1.2] {$H_0$} -- (1,-0.5);
\draw[white,line width=0.2cm] (03) -- (30);
\draw[->,dashed] (2.5,1.5) node[right,scale=1.2] {$H_2$} -- (1.5,0.5);
\draw[white,line width=0.2cm] (32) -- (23);
\end{pgfonlayer}
\node at (0,-3) {$B_1$};
\end{scope}
\end{tikzpicture}
  \caption{The CW structure of the figure-eight knot exterior obtained by truncating 
  vertices in the ideal triangulation in \cref{fig:figure_eight_triangulation}.}
  \label{fig:figure eight cells}
\end{figure}

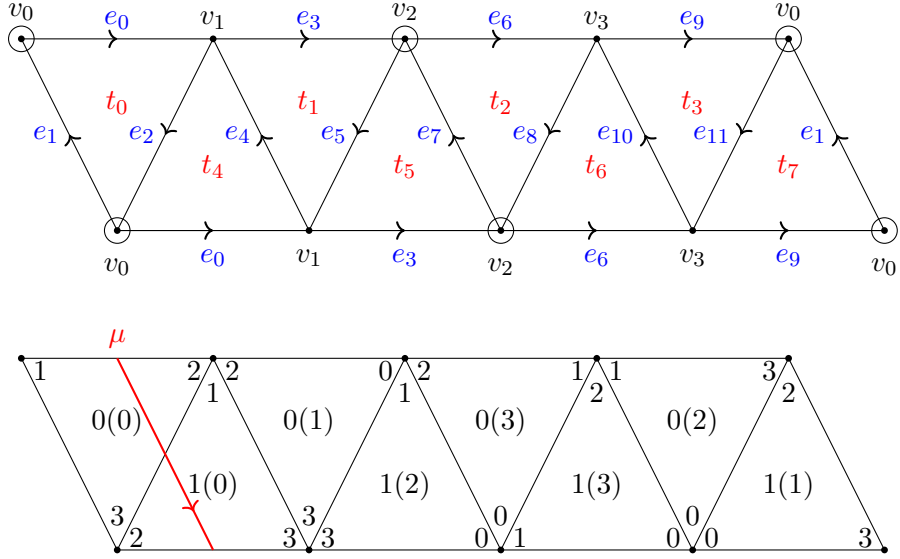
\begin{figure}[t]
  \centering
\begin{tikzpicture}[scale=0.85]
\foreach \i in {0,1,2,3,4} 
\fill (3*\i,0) circle(1.5pt) coordinate(0\i); 
\foreach \i in {0,1,2,3,4} 
\fill (3*\i-1.5,3) circle(1.5pt) coordinate(1\i); 
\foreach \i in {0,2,4} {
    \draw (0\i) circle(0.2cm);
    \draw (1\i) circle(0.2cm);
}
\node[above=0.3em] at (10) {$v_0$};
\node[above] at (11) {$v_1$};
\node[above=0.3em] at (12) {$v_2$};
\node[above] at (13) {$v_3$};
\node[above=0.3em] at (14) {$v_0$};
\node[below=0.6em] at (00) {$v_0$};
\node[below=0.3em] at (01) {$v_1$};
\node[below=0.6em] at (02) {$v_2$};
\node[below=0.3em] at (03) {$v_3$};
\node[below=0.6em] at (04) {$v_0$};

\draw(00) -- (04);
\draw[->-] (00) --node[blue,midway,left]{$e_1$} (10);
\draw[->-] (01) --node[blue,midway,left]{$e_4$} (11);
\draw[->-] (02) --node[blue,midway,left]{$e_7$} (12);
\draw[->-] (03) --node[blue,midway,left]{$e_{10}$} (13);
\draw[->-] (04) --node[blue,midway,left]{$e_1$} (14);
\draw[->-] (11) --node[blue,midway,left]{$e_2$} (00);
\draw[->-] (12) --node[blue,midway,left]{$e_5$} (01);
\draw[->-] (13) --node[blue,midway,left]{$e_8$} (02);
\draw[->-] (14) --node[blue,midway,left]{$e_{11}$} (03);
\draw[->-] (10) --node[blue,midway,above]{$e_0$} (11);
\draw[->-] (11) --node[blue,midway,above]{$e_3$} (12);
\draw[->-] (12) --node[blue,midway,above]{$e_6$} (13);
\draw[->-] (13) --node[blue,midway,above]{$e_9$} (14);
\draw[->-] (00) --node[blue,midway,below, yshift=-0.3em]{$e_0$} (01);
\draw[->-] (01) --node[blue,midway,below, yshift=-0.3em]{$e_3$} (02);
\draw[->-] (02) --node[blue,midway,below, yshift=-0.3em]{$e_6$} (03);
\draw[->-] (03) --node[blue,midway,below, yshift=-0.3em]{$e_9$} (04);

\node[red] at (0,2){$t_0$};
\node[red] at (3,2){$t_1$};
\node[red] at (6,2){$t_2$};
\node[red] at (9,2){$t_3$};
\node[red] at (1.5,1){$t_4$};
\node[red] at (4.5,1){$t_5$};
\node[red] at (7.5,1){$t_6$};
\node[red] at (10.5,1){$t_7$};


\begin{scope}[yshift=-5cm]
\foreach \i in {0,1,2,3,4} 
\fill (3*\i,0) circle(1.5pt) coordinate(0\i); 
\foreach \i in {0,1,2,3,4} 
\fill (3*\i-1.5,3) circle(1.5pt) coordinate(1\i); 
%

\draw(00) -- (04);
\draw(10) -- (14);
\draw(04) -- (14) -- (03) -- (13) -- (02) -- (12) -- (01) -- (11) -- (00) -- (10);

\node at (0,2){$0(0)$};
\node at (3,2){$0(1)$};
\node at (6,2){$0(3)$};
\node at (9,2){$0(2)$};
\node at (1.5,1){$1(0)$};
\node at (4.5,1){$1(2)$};
\node at (7.5,1){$1(3)$};
\node at (10.5,1){$1(1)$};

\node[above=0.5em] at (00) {$3$};
\node[above=0.5em] at (01) {$3$};
\node[above=0.5em] at (02) {$0$};
\node[above=0.5em] at (03) {$0$};
\node[below=0.5em] at (11) {$1$};
\node[below=0.5em] at (12) {$1$};
\node[below=0.5em] at (13) {$2$};
\node[below=0.5em] at (14) {$2$};
\node at ($(00)+(0.3,0.2)$) {$2$};
\node at ($(01)+(0.3,0.2)$) {$3$};
\node at ($(02)+(0.3,0.2)$) {$1$};
\node at ($(03)+(0.3,0.2)$) {$0$};
\node at ($(01)+(-0.3,0.2)$) {$3$};
\node at ($(02)+(-0.3,0.2)$) {$0$};
\node at ($(03)+(-0.3,0.2)$) {$0$};
\node at ($(04)+(-0.3,0.2)$) {$3$};
\node at ($(10)+(0.3,-0.2)$) {$1$};
\node at ($(11)+(0.3,-0.2)$) {$2$};
\node at ($(12)+(0.3,-0.2)$) {$2$};
\node at ($(13)+(0.3,-0.2)$) {$1$};
\node at ($(11)+(-0.3,-0.2)$) {$2$};
\node at ($(12)+(-0.3,-0.2)$) {$0$};
\node at ($(13)+(-0.3,-0.2)$) {$1$};
\node at ($(14)+(-0.3,-0.2)$) {$3$};
\draw[red,thick,->-={0.8}{}] ($(10)!0.5!(11)$) node[above]{$\mu$} -- ($(00)!0.5!(01)$);
\end{scope}
\end{tikzpicture}
  \caption{The CW structure of the boundary torus. In the upper figure, 
  the endpoints of $E_0$ are shown as solid points, and the endpoints of $E_1$ are shown as circled points. 
  The lower figure indicates where each cell lies in the tetrahedra. 
  The label $i(j)$ on a triangle indicates that the triangle is obtained by truncating the $j$-th vertex of the $i$-th tetrahedron. If a corner of the triangle $i(j)$ is labeled by $k$, this means that the vertex corresponding to that corner is an endpoint of the edge $ik$ in the $i$-th tetrahedron.}
  \label{fig:figure-eight boundary torus}
\end{figure}

\subsubsection{Representation from the complete hyperbolic structure}
Let $K$ be a number field defined by 
\begin{align*}
  K \coloneqq \mathbb{Q} [\omega] / (\omega^2 - \omega + 1).
\end{align*}
Note that $K \cong \mathbb{Q}(\sqrt{-3})$ by $\omega \mapsto (1 + \sqrt{-3})/2$.
The following monodromies along the $1$-cells give a representative of 
$\mathsf{geom}$, which is realized as an element of $\Loc_{\PGL_2, M, S_0} (K)$:
\begin{align*}
  \mon_{\mathsf{geom}} (E_0) =
  \begin{pmatrix}
    0 & -\omega^{-1}  \\
    \omega & 0 
  \end{pmatrix}, \quad
  \mon_{\mathsf{geom}} (E_1) =
  \begin{pmatrix}
    0 & 1 \\
    -1 & 0 
  \end{pmatrix}
\end{align*}

\begin{align*}
  \mon_{\mathsf{geom}} (e_{3i}) =
  \begin{pmatrix}
    1 & 1 \\
    0 & 1 
  \end{pmatrix}, \quad
  \mon_{\mathsf{geom}} (e_{3i+1}) =
  \begin{pmatrix}
    1 & -\omega \\
    0 & 1
  \end{pmatrix}, \quad
  \mon_{\mathsf{geom}} (e_{3i+2}) =
  \begin{pmatrix}
    1 & -\omega^{-1} \\
    0 & 1
  \end{pmatrix},
\end{align*}
for $i = 0, 1, 2, 3$.

\begin{lemma}
We have a unique principal embedding $\iota: (\PGL_2)_K \to (\PGSp_4)_K$ whose derivative $(\mathfrak{sl}_2)_K \to (\mathfrak{sp}_4)_K$ is given by
\begin{align*}
    &\mtx{0 & 1 \\ 0 & 0} \mapsto \mtx{
    0 & \frac{9}{4}\omega & 0 & 0 \\ 
    0 & 0 & -\frac{16}{9}\omega^{-1} & 0 \\
    0 & 0 & 0 & \frac{9}{4}\omega \\
    0 & 0 & 0 & 0}, \quad 
    \mtx{0 & 0 \\ 1 & 0} \mapsto \mtx{
    0 & 0 & 0 & 0 \\ 
    \frac{4}{3}\omega^{-1} & 0 & 0 & 0 \\
    0 & -\frac{9}{4}\omega & 0 & 0 \\
    0 & 0 & \frac{4}{3}\omega^{-1} & 0}, \\
    &\mtx{1 & 0 \\ 0 & -1} \mapsto \mtx{3 & 0 & 0 & 0 \\
    0 & 1 & 0 & 0 \\
    0 & 0 & -1 & 0\\
    0 & 0 & 0 & -3}.
\end{align*}
\end{lemma}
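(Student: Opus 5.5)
The plan is to treat this as a concrete computation in characteristic zero, in three parts: build an $\mathfrak{sl}_2$-triple, integrate it to $\PGL_2$, and check regularity and uniqueness.

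\textbf{Step 1: the $\mathfrak{sl}_2$-triple.} Write $X$, $Y$, $H$ for the three displayed $4\times 4$ matrices, the images of $e$, $f$, $h$. I will verify directly that:
\begin{itemize}
\item each of $X$, $Y$, $H$ lies in $\fsp_4$, i.e.\ $Z^\top J + J Z = 0$ with $J$ built from $\bar J_2 = \begin{pmatrix} 0 & 1 \\ -1 & 0\end{pmatrix}$;
\item the $\mathfrak{sl}_2$ relations hold: $[H,X]=2X$, $[H,Y]=-2Y$, and $[X,Y]=H$.
\end{itemize}
The first two relations are immediate, since $H$ is diagonal with entries $3,1,-1,-3$ and $X$ (resp.\ $Y$) is supported on the first superdiagonal (resp.\ subdiagonal). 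For $[X,Y]$, the diagonal entries are the products of consecutive super- and subdiagonal entries:
\[
\tfrac94\omega\cdot\tfrac43\omega^{-1}=3,\qquad -\tfrac{16}{9}\omega^{-1}\cdot\left(-\tfrac94\omega\right)-3=1,
\]
and the remaining two entries follow by symmetry. Only $\omega\omega^{-1}=1$ is used here, so this step works over $\mathbb{Q}(\omega)$, and in fact over any field.

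\textbf{Step 2: integrating to $\PGL_2$.} The resulting Lie algebra map $\mathfrak{sl}_2\to\fsp_4$ makes $K^4$ the irreducible $4$-dimensional module $V_4$, since the weights of $H$ are $3,1,-1,-3$ and the off-diagonal entries of $X$ and $Y$ are nonzero. Because $\SL_2$ is simply connected, the map integrates uniquely to $\SL_2\to\Sp_4$. This homomorphism sends $-I$ to $(-1)^3 I=-I$, which is central. Composing with $\Sp_4\to\PGSp_4$ therefore kills the center of $\SL_2$, so the composite descends to $\PGL_2=\SL_2/\mu_2$.

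A subtle point is that $\PGL_2(K)\neq\SL_2(K)/\pm1$ on $K$-points. I will handle this scheme-theoretically: $\PGL_2=\SL_2/\mu_2$ as fppf quotient sheaves, so the factorization is automatic. Explicitly, the descended map is the action on $V_4=\mathrm{Sym}^3$ twisted by $\det^{-1}$, composed with the map to $\GSp_4$ and then to $\PGSp_4$. The similitude works out because the symplectic form on $\mathrm{Sym}^3$ scales by $\det^3$.

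\textbf{Step 3: principality.} The unipotent $\exp(X)$ acts on $K^4$ as a single Jordan block, since $X$ is nilpotent of rank $3$. A unipotent element of $\Sp_4$ with one Jordan block is regular; I will cite Steinberg, or simply check that its centralizer in $\fsp_4$ is $2$-dimensional, which is $\rank$.

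\textbf{Step 4: uniqueness.} Any morphism of algebraic groups $\PGL_2\to\PGSp_4$ is determined by its derivative. This holds because $\PGL_2$ is connected and we are in characteristic zero, where a morphism of connected algebraic groups is determined by its differential. Hence there is at most one morphism with the given derivative.

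I expect the main obstacle to be the descent to $\PGL_2$ in Step 2, specifically getting the $\GSp$ similitude right. All other steps are routine matrix checks.
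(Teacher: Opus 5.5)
Your proposal is correct and follows essentially the paper's argument: check that the three matrices form an $\mathfrak{sl}_2$-triple in $\fsp_4$, integrate to $\SL_2 \to \Sp_4$ using simple connectivity, and descend to the adjoint groups because $-I \mapsto -I$ is central. Your extra steps fill in points the paper's one-line proof leaves implicit: regularity of $\exp(X)$ via the single Jordan block (equivalently $\dim\ker(\mathrm{ad}\,X)=2$), and uniqueness from the characteristic-$0$ fact that homomorphisms out of a connected group are determined by their differentials. The only nit is that \emph{over any field} in Step 1 should read \emph{in characteristic $\neq 2,3$}, since the entries involve $\tfrac94$, $\tfrac{16}{9}$, $\tfrac43$.
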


\begin{proof}
It is easy to check the assignment defines a Lie algebra homomorphism. Therefore is is integrated into a unique morphism between the simply connected groups, which descends to adjoint groups since it preserves the centers.
\end{proof}

Then we obtain the following marked $\PGSp_4$-local system $\iota(\mathsf{geom})$
that is also obtained from the solution of the 
Ptolemy relations in \cite[(9.4)]{Zickert}:
\begin{align*}
  \setlength{\arraycolsep}{3pt}
  \mon_{\iota(\mathsf{geom})} (E_0) =
  \begin{pmatrix}
    0 & 0 & 0 & -\frac{3}{2} \omega \\
    0 & 0 & \frac{8}{9} \omega & 0 \\
    0 & -\frac{9}{8} \omega^{-1} & 0 & 0 \\
    \frac{2}{3} \omega^{-1} & 0 & 0 & 0
  \end{pmatrix}, \
  \mon_{\iota(\mathsf{geom})} (E_1) =
  \begin{pmatrix}
    0 & 0 & 0 & -\frac{3}{2} \omega \\
    0 & 0 & \frac{8}{9} \omega^{-1} & 0 \\
    0 & -\frac{9}{8} \omega & 0 & 0 \\
    \frac{2}{3} \omega^{-1} & 0 & 0 & 0
  \end{pmatrix},
\end{align*}
\begin{align*}
  \setlength{\arraycolsep}{3pt}
  \mon_{\iota(\mathsf{geom})} (e_{3i}) =
  \begin{pmatrix}
    1 & \frac{9}{4} \omega & -2 & -\frac{3}{2} \omega \\
    0 & 1 & -\frac{16}{9} \omega^{-1} & -2 \\
    0 & 0 & 1 & \frac{9}{4} \omega \\
    0 & 0 & 0 & 1
  \end{pmatrix}, \
  \mon_{\iota(\mathsf{geom})} (e_{3i+1}) =
  \begin{pmatrix}
    1 & \frac{9}{4} \omega^{-1} & 2 \omega^{-1} & -\frac{3}{2} \omega \\
    0 & 1 & \frac{16}{9} & 2 \omega^{-1} \\
    0 & 0 & 1 & \frac{9}{4} \omega^{-1} \\
    0 & 0 & 0 & 1
  \end{pmatrix},
\end{align*}
\begin{align*}
  \setlength{\arraycolsep}{3pt}
  \mon_{\iota(\mathsf{geom})} (e_{3i+2}) =
  \begin{pmatrix}
    1 & -\frac{9}{4} & 2 \omega & -\frac{3}{2} \omega \\
    0 & 1 & \frac{16}{9} \omega & 2 \omega \\
    0 & 0 & 1 & -\frac{9}{4} \\
    0 & 0 & 0 & 1
  \end{pmatrix}, \quad\text{for $i = 0, 1, 2, 3$}
\end{align*}

We see that $\dim_K (\fpgsp_4 \otimes_{\mathbb{Q}} K)^{\pi_1 \partial M} = 2$, and this $K$-vector space is generated by
\begin{align*}
  v_1 =
  \begin{pmatrix}
    0 & 1 & 0 & 0 \\
    0 & 0 & \frac{64}{81} \omega & 0 \\
    0 & 0 & 0 & 1 \\
    0 & 0 & 0 & 0
  \end{pmatrix}, \quad
  v_2 =
  \begin{pmatrix}
    0 & 0 & 0 & 1 \\
    0 & 0 & 0 & 0 \\
    0 & 0 & 0 & 0 \\
    0 & 0 & 0 & 0
  \end{pmatrix}.
\end{align*}
The adjoint $\PGSp_4$-Reidemeister torsion at $\iota(\mathsf{geom})$ is now 
computed as
\begin{align*}
  \torsion_{\PGSp_4, M, \mu}^{\Ad} (\iota(\mathsf{geom})) = 360.
\end{align*}

\subsubsection{Representation not from the complete hyperbolic structure}
Let $K$ be a number field defined by 
\begin{equation}
  K \coloneqq \mathbb{Q} [\omega] / (\omega^6 - \omega^5 + 3\omega^4 - 5\omega^3 + 8\omega^2 - 6\omega + 8).
\end{equation}
From the solution of the Ptolemy relations in \cite[(9.5)]{Zickert},
we get the following marked local system $P \in \Loc_{\PGSp_4, M, S_0} (K)$
whose monodromies along the $1$-cells are given as follows:
\begin{align*}
  \mon_P(E_0) =
  \begin{pmatrix}
  0 & 0 & 0 & -\alpha_0^{-1} \\
  0 & 0 & \beta_0^{-1} & 0 \\
  0 & -\beta_0 & 0 & 0 \\
  \alpha_0 & 0 & 0 & 0
  \end{pmatrix}, \quad
  \mon_P(E_1) =
  \begin{pmatrix}
  0 & 0 & 0 & -\alpha_1^{-1} \\
  0 & 0 & \beta_1^{-1} & 0 \\
  0 & -\beta_1 & 0 & 0 \\
  \alpha_1 & 0 & 0 & 0
  \end{pmatrix}
\end{align*}
where
\begin{align*}
  \alpha_0 &= \frac{3}{32} \omega^{5} - \frac{3}{16} \omega^{4} + \frac{7}{32} \omega^{3} - \frac{11}{16} \omega^{2} + \frac{11}{16} \omega - \frac{1}{4} \\
  \beta_0 &= \frac{1}{32} \omega^{5} + \frac{1}{4} \omega^{4} + \frac{7}{32} \omega^{3} + \frac{7}{16} \omega^{2} + \frac{3}{16} \omega - \frac{5}{4} \\
  \alpha_1 &= -\frac{1}{64} \omega^{5} - \frac{3}{32} \omega^{4} + \frac{3}{64} \omega^{3} - \frac{3}{32} \omega^{2} + \frac{15}{32} \omega - \frac{9}{8} \\
  \beta_1 &= \frac{11}{32} \omega^{5} - \frac{1}{2} \omega^{4} + \frac{29}{32} \omega^{3} - \frac{23}{16} \omega^{2} + \frac{25}{16} \omega - \frac{9}{4}
\end{align*}
and
\begin{align*}
&\mon_P(e_i) = \mon_P(e_{i+6}) =
\begin{pmatrix}
1 & a_i & b_i & c_i \\
0 & 1 & d_i & e_i \\
0 & 0 & 1 & f_i \\
0 & 0 & 0 & 1
\end{pmatrix}
\end{align*}
for $i = 0,1,2,3,4,5$, where
\begin{align*}
  a_0 &= f_0 = a_3 = f_3 = \frac{3}{8} \omega^{5} - \frac{3}{8} \omega^{4} + \frac{3}{4} \omega^{3} - 2 \omega^{2} + \omega - \frac{5}{2}\\
  b_0 &= e_3 = \Bigl(-\frac{1}{4} \omega^{5} + \frac{1}{4} \omega^{4} - \frac{1}{2} \omega^{3} + \omega^{2} - \omega - 1\Bigr)^{-1}\\
  c_0 &= c_3 = -\frac{1}{8} \omega^{5} - \frac{1}{8} \omega^{3} + \frac{1}{2} \omega^{2} + \frac{1}{4} \omega + \frac{3}{2}\\
  d_0 &= d_3 = \Bigl(-\frac{5}{64} \omega^{5} - \frac{9}{32} \omega^{4} - \frac{1}{64} \omega^{3} + \frac{21}{32} \omega^{2} + \frac{3}{32} \omega - \frac{5}{8}\Bigr)^{-1}\\
  e_0 &= b_3 = \Bigl(\frac{1}{4} \omega^{5} - \frac{1}{4} \omega^{4} + \frac{1}{2} \omega^{3} - \omega^{2} + \omega\Bigr)^{-1}\\
  a_1 &= f_1 = a_4 = f_4 = -\frac{1}{16} \omega^{5} - \frac{7}{16} \omega^{3} + \frac{5}{8} \omega^{2} + \frac{5}{8} \omega + \frac{3}{2}\\
  b_1 &= e_1 = b_4 = e_4 = \Bigl(\frac{1}{8} \omega^{5} - \frac{1}{4} \omega^{4} + \frac{1}{8} \omega^{3} - \frac{1}{4} \omega^{2} + \frac{1}{4} \omega - 1\Bigr)^{-1}\\
  c_1 &= \frac{1}{4} \omega^{5} + \frac{3}{8} \omega^{4} + \frac{3}{4} \omega^{3} + \frac{5}{8} \omega^{2} + \frac{1}{4} \omega + \frac{1}{4}\\
  d_1 &= d_4 = \Bigl(-\frac{5}{64} \omega^{5} - \frac{1}{16} \omega^{4} + \frac{9}{64} \omega^{3} - \frac{19}{32} \omega^{2} - \frac{3}{32} \omega - \frac{11}{8}\Bigr)^{-1}\\
  a_2 &= f_2 = a_5 = f_5 = -\frac{5}{16} \omega^{5} + \frac{3}{8} \omega^{4} - \frac{5}{16} \omega^{3} + \frac{11}{8} \omega^{2} - \frac{13}{8} \omega + 1\\
  b_2 &= e_5 = \Bigl(\frac{1}{8} \omega^{5} - \frac{1}{2} \omega^{4} + \frac{3}{8} \omega^{3} - \frac{5}{4} \omega^{2} + \frac{3}{4} \omega - 1\Bigr)^{-1}\\
  c_2 &= c_5 = -\frac{1}{4} \omega^{4} - \frac{1}{4} \omega^{3} - \frac{1}{2} \omega^{2} + \frac{1}{2} \omega - 1\\
  d_2 &= d_5 = \Bigl(\frac{3}{8} \omega^{5} + \frac{7}{32} \omega^{4} + \frac{9}{32} \omega^{3} - \frac{1}{2} \omega^{2} + \frac{1}{16} \omega - \frac{3}{4}\Bigr)^{-1}\\
  e_2 &= b_5 = \Bigl(\frac{1}{8} \omega^{4} + \frac{3}{8} \omega^{2} + \frac{1}{4} \omega - \frac{1}{4}\Bigr)^{-1}\\
  c_4 &= -\frac{1}{8} \omega^{5} + \frac{1}{8} \omega^{3} + \frac{1}{4} \omega^{2} - \frac{3}{4} \omega - 1\\
\end{align*}

We see that $\dim_K (\fpgsp_4 \otimes_{\mathbb{Q}} K)^{\pi_1 \partial M} = 2$, and this $K$-vector space is generated by
\begin{align*}
  v_1 =
  \begin{pmatrix}
    0 & 1 & 0 & 0 \\
    0 & 0 & t & 0 \\
    0 & 0 & 0 & 1 \\
    0 & 0 & 0 & 0
  \end{pmatrix}, \quad
  v_2 =
  \begin{pmatrix}
    0 & 0 & 0 & 1 \\
    0 & 0 & 0 & 0 \\
    0 & 0 & 0 & 0 \\
    0 & 0 & 0 & 0
  \end{pmatrix}
\end{align*}
where $t = (\frac{9}{128} \omega^{5} + \frac{23}{128} \omega^{4} - \frac{21}{32} \omega^{3} - \frac{29}{64} \omega^{2} - \frac{9}{4} \omega - \frac{3}{4})^{-1}$.
The adjoint $\PGSp_4$-Reidemeister torsion at $P$ is now computed as
\begin{align*}
  \torsion_{\PGSp_4, M, \mu}^{\Ad} (P) = \frac{85}{16} \omega^{5} - \frac{33}{8} \omega^{4} + \frac{217}{16} \omega^{3} - \frac{99}{8} \omega^{2} + \frac{321}{8} \omega - 11.
\end{align*}
The complex embeddings of the torsion are numerically given by
\begin{align*}
  \torsion_{\PGSp_4, M, \mu}^{\Ad} (P) =
  \begin{cases}
  - 3.459966243820608\\
  1.104983121910304 + 38.11233948347826 \sqrt{-1}\\
  1.104983121910304 - 38.11233948347826 \sqrt{-1}.
  \end{cases}
\end{align*}
This $\PGSp_4$-local system does not arise from any $\PGL_2$-local system via a principal embedding. Indeed, one can see that the $K$-vector space
\begin{align}\label{eq:commutant in pgsp}
    \{ T \in \End_K (\fpgsp_4 \otimes_{\mathbb{Q}} K) \mid 
    \forall \gamma \in \Gamma,\ T \Ad(\mon_P (\gamma)) = \Ad(\mon_P (\gamma)) T\}
\end{align}
consists only of scalar multiplications,
where $\Gamma = \{ E_0, E_1, e_0, \dots, e_{11} \}$.
This is not possible for a local system arising from a $\PGL_2$-local system, since the space \eqref{eq:commutant in pgsp}
for such a local system has a dimension at least $2$ by the irreducible decomposition $\fpgsp_4 \cong V_3 \oplus V_7$ as $\PGL_2$-modules (\cref{prop:adjoint_decomp}).

\printbibliography

\end{document}